\DeclareFontFamily{U}{rsfs}{%
\skewchar\font127}
\DeclareFontShape{U}{rsfs}{m}{n}{%
<-6>rsfs5<6-8.5>rsfs7<8.5->rsfs10}{}
\DeclareSymbolFont{rsfs}{U}{rsfs}{m}{n}
\DeclareRobustCommand*\rsfs{%
\@fontswitch\relax\mathrsfs}
\theoremstyle{plain}
\newtheorem{thm}{Theorem}[section]
\newtheorem{prop}[thm]{Proposition}
\newtheorem{lem}[thm]{Lemma}
\newtheorem{cor}[thm]{Corollary}
\newtheorem{claim}[thm]{Claim}
\newtheorem{prop-defi}[thm]{Proposition-Definition}
\newtheorem{thm-defi}[thm]{Theorem-Definition}
\newtheorem{lem-defi}[thm]{Lemma-Definition}
\newtheorem{conj}[thm]{Conjecture}
\newdimen\argwidth
\def\db[#1\db]{
 \setbox0=\hbox{$#1$}\argwidth=\wd0
 \setbox0=\hbox{$\left[\box0\right]$}
  \advance\argwidth by -\wd0
 \left[\kern.3\argwidth\box0 \kern.3\argwidth\right]}
\newcommand{\cC}{\mathcal{C}}
\newcommand{\eE}{\mathcal{E}}
\newcommand{\fF}{\mathcal{F}}
\newcommand{\gG}{\mathcal{G}}
\newcommand{\hH}{\mathcal{H}}
\newcommand{\lL}{\mathcal{L}}
\newcommand{\oO}{\mathcal{O}}
\newcommand{\Hom}{\mathop{\rm Hom}\nolimits}
\newcommand{\dR}{\mathbf{R}}
\newcommand{\dL}{\mathbf{L}}
\newcommand{\Hilb}{\mathop{\rm Hilb}\nolimits}
\newcommand{\Pic}{\mathop{\rm Pic}\nolimits}
\newcommand{\id}{\textrm{id}}
\newcommand{\ch}{\mathop{\rm ch}\nolimits}
\newcommand{\Ext}{\mathop{\rm Ext}\nolimits}
\newcommand{\Spec}{\mathop{\rm Spec}\nolimits}
\newcommand{\Coh}{\mathop{\rm Coh}\nolimits}
\newcommand{\cneq}{\mathrel{\raise.095ex\hbox{:}\mkern-4.2mu=}}
\newcommand{\eqcn}{\mathrel{=\mkern-4.5mu\raise.095ex\hbox{:}}}
\newcommand{\ext}{\mathop{\rm ext}\nolimits}
\newcommand{\DT}{\mathop{\rm DT}\nolimits}
\newcommand{\Imm}{\mathop{\rm Im}\nolimits}
\newcommand{\Ker}{\mathop{\rm Ker}\nolimits}
\newcommand{\RHom}{\mathop{\dR\mathrm{Hom}}\nolimits}
\title[{Stable pairs and GV type invariants for CY 4-folds}]
{Stable pairs and Gopakumar-Vafa type invariants \\ for Calabi-Yau 4-folds}
\date{}
\author{Yalong Cao}
\address{Kavli Institute for the Physics and Mathematics of the Universe (WPI),The University of Tokyo Institutes for Advanced Study, The University of Tokyo, Kashiwa, Chiba 277-8583, Japan}
\email{yalong.cao@ipmu.jp}
\author{Davesh Maulik}
\address{Massachusetts Institute of Technology, Departement of Mathematics, 77 Massachusetts Avenue Cambridge, MA 02139, US.}
\email{maulik@mit.edu}
\author{Yukinobu Toda}
\address{Kavli Institute for the Physics and Mathematics of the Universe (WPI),The University of Tokyo Institutes for Advanced Study, The University of Tokyo, Kashiwa, Chiba 277-8583, Japan}
\email{yukinobu.toda@ipmu.jp}
\begin{document}
\maketitle
\begin{abstract}
As an analogy to Gopakumar-Vafa conjecture on CY 3-folds, Klemm-Pandharipande defined GV type invariants on CY 4-folds using GW theory
and conjectured their integrality. In this paper, we define stable pair type invariants on CY 4-folds and use them to interpret these GV type invariants.
Examples are computed for both compact and non-compact CY 4-folds to support our conjectures. 
\end{abstract}

\tableofcontents

\section{Introduction}
\subsection{Background}

Gromov-Witten invariants are rational numbers counting stable maps from complex curves to algebraic varieties (or symplectic manifolds).
They are not necessarily integers because of multiple cover contributions. In~\cite{KP}, Klemm-Pandharipande gave a definition of
Gopakumar-Vafa type invariants on Calabi-Yau 4-folds using GW theory and conjectured that they are integers. 
For dimensional reasons, GW invariants for genus $g\geqslant 2$ always vanish on Calabi-Yau 4-folds, so the integrality conjecture only applies in genus $0$ and $1$.
In our previous paper \cite{CMT}, we gave a sheaf-theoretic interpretation of $g=0$ GV type invariants using 
$\DT_4$ invariants \cite{CL, BJ} of one-dimensional stable sheaves, analogous to the work of Katz for 3-folds \cite{Katz}.
 
In this paper, we propose a sheaf-theoretic approach to both genus $0$ and $1$ GV type invariants using stable pairs on CY 4-folds.
For CY 3-folds, a Pairs/GV conjecture was first developed in work of Pandharipande and Thomas \cite{PT,PT2}. 
Our paper may be viewed as an analogue of their work in the setting of CY 4-folds.

\subsection{GV type invariants on CY 4-folds}
Let $X$ be a smooth projective CY 4-fold.
As mentioned above, Gromov-Witten invariants
vanish for genus $g\geqslant2$ for dimensional reasons, so we only consider the genus 0 and 1 cases.

The genus 0 GW invariants on $X$ are defined using
insertions:
 for integral classes $\gamma_i \in H^{m_i}(X, \mathbb{Z}), \
1\leqslant i\leqslant n$,
one defines
\begin{equation}
\mathrm{GW}_{0, \beta}(\gamma_1, \ldots, \gamma_n)
=\int_{[\overline{M}_{0, n}(X, \beta)]^{\rm{vir}}}
\prod_{i=1}^n \mathrm{ev}_i^{\ast}(\gamma_i),
\nonumber \end{equation}
where $\mathrm{ev}_i \colon \overline{M}_{0, n}(X, \beta)\to X$
is the $i$-th evaluation map.

The invariants
\begin{align}\label{intro:n}
n_{0, \beta}(\gamma_1, \ldots, \gamma_n) \in \mathbb{Q}
\end{align}
are defined in \cite{KP} by the identity
\begin{align*}
\sum_{\beta>0}\mathrm{GW}_{0, \beta}(\gamma_1, \ldots, \gamma_n)q^{\beta}=
\sum_{\beta>0}n_{0, \beta}(\gamma_1, \ldots, \gamma_n) \sum_{d=1}^{\infty}
d^{n-3}q^{d\beta}.
\end{align*}

For genus 1,
virtual dimensions of GW moduli spaces without marked points are zero, so
the GW invariants
\begin{align*}
\mathrm{GW}_{1, \beta}=\int_{[\overline{M}_{1, 0}(X, \beta)]^{\rm{vir}}}
1 \in \mathbb{Q}
\end{align*}
can be defined
without insertions.
The
invariants
\begin{align}\label{intro:n1}
n_{1, \beta} \in \mathbb{Q}
\end{align}
 are defined in~\cite{KP} by the identity
\begin{align*}
\sum_{\beta>0}
\mathrm{GW}_{1, \beta}q^{\beta}=
&\sum_{\beta>0} n_{1, \beta} \sum_{d=1}^{\infty}
\frac{\sigma(d)}{d}q^{d\beta}
+\frac{1}{24}\sum_{\beta>0} n_{0, \beta}(c_2(X))\log(1-q^{\beta}) \\
&-\frac{1}{24}\sum_{\beta_1, \beta_2}m_{\beta_1, \beta_2}
\log(1-q^{\beta_1+\beta_2}),
\end{align*}
where $\sigma(d)=\sum_{i|d}i$ and $m_{\beta_1, \beta_2}\in\mathbb{Z}$ are called meeting invariants which can be inductively determined by genus 0 GW invariants.
In~\cite{KP}, both of the invariants (\ref{intro:n}), (\ref{intro:n1})
are conjectured to be integers, and GW invariants on $X$ are computed  to support the conjectures in many examples
by either localization techniques or mirror symmetry.

\subsection{Our proposal}
The aim of this paper is to give a sheaf-theoretic interpretation for the above GV-type invariants (\ref{intro:n}), (\ref{intro:n1}) via stable pairs, 
using 
Donaldson-Thomas theory for CY 4-folds introduced by Cao-Leung~\cite{CL} and Borisov-Joyce~\cite{BJ}.

We consider the moduli space $P_n(X,\beta)$ of stable pairs $(s:\oO_X\to F)$ with $\ch(F)=(0,0,0,\beta,n)$. 
By Theorem \ref{vir class of pair moduli}, one can construct a virtual class  
\begin{equation}\label{pair moduli vir class intro}[P_n(X, \beta)]^{\rm{vir}}\in H_{2n}\big(P_n(X, \beta),\mathbb{Z}\big), \end{equation}
which depends on the choice of orientation of a certain (real) line bundle over $P_n(X, \beta)$. On each connected component
of $P_n(X, \beta)$, there are two choices of orientation, which affect the corresponding contribution to the virtual class
(\ref{pair moduli vir class intro}) by a sign (for each connected component). 

When $n=0$, the virtual dimension of the virtual class (\ref{pair moduli vir class intro}) is zero. 
By integrating, we define the stable pair invariant 
\begin{equation}
P_{0,\beta}:=\int_{[P_{0}(X,\beta)]^{\rm{vir}}}1 \in \mathbb{Z}. \nonumber \end{equation}
When $n=1$, the (real) virtual dimension of the virtual class (\ref{pair moduli vir class intro}) is two. We use
insertions to define invariants as follows. For integral classes $\gamma_i \in H^{m_i}(X, \mathbb{Z})$, $1\leqslant i\leqslant n$, let 
\begin{align}\label{intro, insert}
\tau \colon H^{m}(X)\to H^{m-2}(P_1(X,\beta)), \
\tau(\gamma)=\pi_{P\ast}(\pi_X^{\ast}\gamma \cup\ch_3(\mathbb{F}) ),
\end{align}
where $\pi_X$, $\pi_P$ are projections from $X \times P_1(X,\beta)$
to corresponding factors, $\mathbb{I}=(\pi_X^*\oO_X\to \mathbb{F})$ is the universal pair, and $\ch_3(\mathbb{F})$ is the
Poincar\'e dual to the fundamental cycle of $\mathbb{F}$.

Then we define stable pair invariants
\begin{align*}P_{1,\beta}(\gamma_1,\ldots,\gamma_n):=\int_{[P_1(X,\beta)]^{\rm{vir}}} \prod_{i=1}^{n}\tau(\gamma_i). \end{align*}

We propose the following interpretation of (\ref{intro:n}), (\ref{intro:n1}) using stable pair invariants.
\begin{conj}\label{intro:conj:GW/GV g=0}\emph{(Conjecture \ref{conj:GW/GV g=0})}
For a suitable choice of orientation, we have 
\begin{align*}
P_{1,\beta}(\gamma_1,\ldots,\gamma_n)=\sum_{\begin{subarray}{c}\beta_1+\beta_2=\beta  \\ \beta_1, \beta_2\geqslant0 \end{subarray} }n_{0,\beta_1}(\gamma_1,\ldots,\gamma_n)\cdot P_{0,\beta_2},  \end{align*}
where the sum is over all possible effective classes, and we set $n_{0,0}(\gamma_1,\ldots,\gamma_n):=0$ and $P_{0,0}:=1$.

In particular, when $\beta$ is irreducible, 
\begin{align*}P_{1,\beta}(\gamma_1,\ldots,\gamma_n)=n_{0,\beta}(\gamma_1,\ldots,\gamma_n). \end{align*}
\end{conj}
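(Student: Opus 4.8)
The plan is to read off the structure of a stable pair and then to compare the virtual class of Theorem \ref{vir class of pair moduli} with the sheaf-theoretic $\DT_4$ model for $n_{0,\beta}$ from \cite{CMT}. Given a stable pair $(s\colon \oO_X\to F)$ with $\ch(F)=(0,0,0,\beta,n)$, set $Q=\Cok(s)$ and $\oO_Z=\im(s)$, giving
\[
0\to \oO_Z\to F\to Q\to 0,\qquad \chi(F)=\chi(\oO_Z)+\length(Q).
\]
For $n=1$ this forces $\chi(\oO_Z)=1-\length(Q)$, so when the support $Z$ is connected of arithmetic genus $g$ one has $g=\length(Q)$, and the primitive case $\length(Q)=0$ is exactly $F=\oO_Z$ for a connected genus-$0$ curve in class $\beta$. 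This is the geometric origin of the genus-$0$ term, and it suggests stratifying $P_1(X,\beta)$ according to the decomposition of the fundamental cycle of $F$ and matching each stratum to a term on the right-hand side.

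First I would settle the irreducible case, i.e.\ the displayed ``in particular'' identity, which is the $\beta_2=0$ contribution. When $\beta$ is irreducible a pure one-dimensional sheaf $F$ with $\chi(F)=1$ is stable, and by Riemann--Roch it carries an essentially unique section with zero-dimensional cokernel, so the forgetful map $(F,s)\mapsto F$ identifies $P_1(X,\beta)$, away from loci of positive codimension, with the moduli space $M_\beta$ of one-dimensional stable sheaves used in \cite{CMT} to define $n_{0,\beta}$. I would then compare the pair obstruction theory $\RHom(\mathbb{I},\mathbb{I})$ with the sheaf obstruction theory $\RHom(\mathbb{F},\mathbb{F})$ and show the induced $\DT_4$ classes agree under this identification; the insertions then automatically coincide, since $\tau(\gamma)$ is built from $\ch_3(\mathbb{F})$, the fundamental cycle of the support, which is the very class entering the definition of $n_{0,\beta}(\gamma_1,\dots,\gamma_n)$ on $M_\beta$.

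Next I would promote this to the full convolution by analysing the stratum of $P_1(X,\beta)$ where the fundamental cycle splits as $\beta_1+\beta_2$, with a connected genus-$0$ piece in class $\beta_1$ and a complementary $\chi=0$ configuration in class $\beta_2$. The Euler-characteristic bookkeeping above shows precisely that one unit of holomorphic Euler characteristic is absorbed by a single genus-$0$ component while the remainder has $\chi=0$, and since each insertion $\tau(\gamma_i)$ only sees the reduced support through $\ch_3(\mathbb{F})$, all insertions localise onto the $\beta_1$ component. On such a stratum the pair should factor, up to the gluing locus, as a genus-$0$ stable pair in class $\beta_1$ times an $n=0$ stable pair in class $\beta_2$, so that the virtual contribution becomes $n_{0,\beta_1}(\gamma_1,\dots,\gamma_n)\cdot P_{0,\beta_2}$. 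Summing over all $\beta_1+\beta_2=\beta$, with the conventions $n_{0,0}=0$ and $P_{0,0}=1$, would then give the stated identity.

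The hard part will be making the two virtual-class comparisons above rigorous. Unlike the $3$-fold DT/PT setting of \cite{PT}, the $\DT_4$ virtual class of \cite{CL,BJ} is a Borisov--Joyce class depending on an orientation, and it is not a priori multiplicative under disjoint unions nor compatible with a stratification in any naive way; in particular the factorisation on the $\beta_1+\beta_2$ stratum requires a genuine $\DT_4$ splitting principle together with control of the excess contribution along the locus where the two pieces meet. I expect the only generally workable route is a $\DT_4$ degeneration/splitting formula, supplemented in the presence of a torus action by $\DT_4$ virtual localisation (reducing $P_1(X,\beta)$, $M_{\beta_1}$ and $P_0(X,\beta_2)$ to their fixed loci and matching the resulting contributions), together with a careful bookkeeping of orientations so that the signs combine into the ``suitable choice of orientation'' demanded in the statement. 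Verifying these virtual compatibilities, rather than the Euler-characteristic combinatorics, is the real obstacle, which is why the identity is posed as a conjecture and tested on the localisable examples.
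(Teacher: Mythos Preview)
The statement is a conjecture, and the paper does not prove it in general either; it offers a heuristic argument in Section~\ref{heuristic argument} under strong ``ideal'' hypotheses and then verifies the identity in families of examples. So your proposal should be read as a competing heuristic, and on that level there are two concrete gaps worth flagging.

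First, in your irreducible case you silently replace $n_{0,\beta}(\gamma_1,\ldots,\gamma_n)$ by the $\DT_4$ invariant of the moduli space $M_\beta$ of one-dimensional stable sheaves from \cite{CMT}. But in the conjecture $n_{0,\beta}$ is the Gromov--Witten defined quantity of Klemm--Pandharipande; the equality with the sheaf-theoretic invariant on $M_\beta$ is precisely \cite[Conjecture~0.2]{CMT}, which is itself open. The paper is explicit about this: when it handles the irreducible case (e.g.\ Corollary~\ref{reduce to Katz conj}, Proposition~\ref{verify g=0 conj for fano hypersurface}) it reduces Conjecture~\ref{conj:GW/GV g=0} to that earlier conjecture and then cites cases where the latter is known. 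Your sketch presents this reduction as if it settled the matter.

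Second, your claim that for irreducible $\beta$ the forgetful map $(F,s)\mapsto F$ identifies $P_1(X,\beta)$ with $M_\beta$ ``away from loci of positive codimension'' is not correct in general: the fibre over $[F]$ is $\mathbb{P}(H^0(X,F))$, and there is no a priori bound on $h^0(F)$. The paper does not argue this way. In the cases it treats it either checks directly that $h^0(F)=1$ (elliptic fibration), or, more robustly, it equips the forgetful map with compatible perfect obstruction theories and invokes Manolache's virtual push-forward (Propositions~\ref{prop on pair obs on CY3} and~\ref{prop on pair obs on Fano3}) to get $f_\ast[P_1]^{\mathrm{vir}}_{\mathrm{pair}}=[M_{1,\beta}]^{\mathrm{vir}}$. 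That is the mechanism doing the work, not a birational identification of moduli spaces.

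Finally, the paper's heuristic for the convolution term is not your general stratification by cycle decomposition; it assumes from the outset that rational curves move in one-dimensional families with normal bundle $\oO(-1,-1,0)$, that elliptic curves are super-rigid and disjoint from them, and that no higher-genus curves exist. Under those hypotheses the support of a $\chi=1$ pair literally splits as a disjoint union of a single rational curve and an elliptic configuration, so the moduli space factors on the nose and no $\DT_4$ splitting principle is needed. Your version, which tries to run the argument on an arbitrary $X$, would indeed require the degeneration and excess machinery you allude to, none of which currently exists for Borisov--Joyce classes; that is why the paper retreats to the ideal picture for the heuristic and to explicit computations for the evidence.
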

\begin{conj}\emph{(Conjecture \ref{conj:GW/GV g=1})}\label{intro:conj:GW/GV g=1}
For a suitable choice of orientation, we have 
\begin{align*}
\sum_{\beta \geqslant 0}
P_{0, \beta}q^{\beta}=
\prod_{\beta>0} M\big(q ^{\beta}\big)^{n_{1, \beta}},
\end{align*}
where $M(q)=\prod_{k\geqslant 1}(1-q^{k})^{-k}$ is the MacMahon function and $P_{0,0}:=1$.
\end{conj}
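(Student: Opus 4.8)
The plan is to construct, for the moduli space $P_0(X,\beta)$ of stable pairs with $\ch(F)=(0,0,0,\beta,0)$, an explicit description of its virtual class in terms of a Hilbert-scheme-type geometry, and then to reproduce the product formula by a combinatorial/torus-localization argument modeled on the analogous $3$-fold statement of Pandharipande--Thomas \cite{PT2}. The first step is to understand the structure of $P_0(X,\beta)$: when $n=0$ a stable pair $(s\colon \oO_X\to F)$ has $\chi(F)=0$, so $F$ is a pure $1$-dimensional sheaf supported on a curve $C$ of class $\beta$ together with a section whose cokernel is $0$-dimensional; thus such pairs are governed by ideal sheaves together with the data of the supporting curve, and the moduli space should stratify according to the Cohen--Macaulay curve $C$ underlying $F$. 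Over the locus where the supporting curve is fixed, the fibre of $P_0(X,\beta)$ is controlled by lengths of the cokernel of $s$, which introduces the MacMahon-type generating series $M(q)$ exactly as in the $3$-fold box-counting picture: the extra length-$k$ "thickening" at a point contributes the partition-counting factor $\prod_k(1-q^k)^{-k}$.

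Concretely, I would first reduce to the local model: the contribution of an isolated, smooth, rigid rational curve $C\subset X$ with the expected normal bundle should be computed directly, showing that its contribution to $\sum_\beta P_{0,\beta}q^\beta$ is precisely $M(q^{[C]})$ (up to the sign ambiguity, which is where the phrase "for a suitable choice of orientation" enters). This is the heart of the matter: one analyzes $P_0$ restricted to pairs supported set-theoretically on $C$, identifies it with a product of punctual Quot/Hilbert schemes of points on $X$ along $C$, equips it with the $\DT_4$ obstruction theory of Theorem \ref{vir class of pair moduli}, and evaluates the resulting $0$-dimensional virtual count. Here I expect to use a $T=(\mathbb{C}^*)$-localization on the local toric model (the total space of a rank-$3$ bundle over $\mathbb{P}^1$, or $\mathbb{C}^4$ for a point), reducing the virtual integral to a sum over $3$d partitions with signs prescribed by the orientation; the bookkeeping of which signs occur, and the verification that they conspire to give the positive MacMahon exponents $n_{1,\beta}$, is the main obstacle. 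One must also confront the fact that on CY $4$-folds the virtual class is only defined up to orientation choices on each connected component, so part of the content of the conjecture is the \emph{assertion} that a global, consistent choice exists making the formula hold; a full proof would need to pin this down, likely via the comparison with the $\DT_4$ invariants of ideal sheaves or with the genus-$1$ GW side through a degeneration/deformation argument.

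The final step is globalization: assuming the local contributions are as above, one would argue that for a generic CY $4$-fold (or in families), the invariant $P_{0,\beta}$ is a sum of such local contributions over the finitely many rigid curves in each class, together with correction terms from curves that move, and that these assemble into the Euler-product $\prod_{\beta>0}M(q^\beta)^{n_{1,\beta}}$ with the exponents $n_{1,\beta}$ being exactly the genus-$1$ GV invariants of \cite{KP}. Matching the exponent requires identifying the "base" genus-$1$ count on the sheaf side with $n_{1,\beta}$, which is presumably done by combining Conjecture \ref{intro:conj:GW/GV g=0} (the genus-$0$ Pairs/GV statement, handling the $\log(1-q^\beta)$ and meeting-invariant terms) with a separate identification of the leading genus-$1$ piece; in practice this is where the definition of $n_{1,\beta}$ via the Klemm--Pandharipande formula gets inverted. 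I expect the cleanest route is not a direct proof but a verification: establish the formula in enough local and global examples (elliptic fibrations, local curves, compact complete intersections) to make the conjecture, which is the spirit in which the statement is offered.
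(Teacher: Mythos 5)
The central mechanism in your proposal is wrong, and it is exactly the point the paper's own (heuristic) justification of this conjecture turns on. You attribute the MacMahon factor $M(q^{[C]})$ to isolated rigid \emph{rational} curves, with the $\prod_k(1-q^k)^{-k}$ arising from point-like thickenings of the cokernel of $s$, as in the $3$-fold PT box-counting picture. On a CY $4$-fold this fails: for a rational curve with normal bundle $\oO_{\mathbb{P}^1}(-1,-1,0)$ (or any Cohen--Macaulay curve supported on such a family) one has $\chi(\oO_C)\geqslant 1$, so there are no stable pairs at all with $\chi(F)=0$ supported there; in the paper's local computation $P_0(X,m[\mathbb{P}^1])=\emptyset$ (Proposition \ref{general local curves}), so rational curves contribute the trivial factor $1$ to $\sum_\beta P_{0,\beta}q^\beta$. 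The MacMahon factors instead come from \emph{super-rigid elliptic curves}: for $X=\mathrm{Tot}_C(L_1\oplus L_2\oplus L_3)$ with $C$ elliptic and $L_i$ general of degree zero, the $T_0$-fixed pairs are the surjections $\oO_X\twoheadrightarrow \bigoplus_{(i_1,i_2,i_3)\in\Delta}L_1^{-i_1}\otimes L_2^{-i_2}\otimes L_3^{-i_3}$ indexed by $3$d Young diagrams (Lemma \ref{lem:ell:fixed}) --- note $\chi=0$ forces $F\cong\oO_Z$, so there is no cokernel contribution --- and each fixed point contributes $\pm 1$ with vanishing virtual normal part because $H^0(L_1^a\otimes L_2^b)=H^1(L_1^a\otimes L_2^b)=0$ for $(a,b)\neq(0,0)$, giving $\sum_m P_{0,m[C]}q^m=M(q)$ (Theorem \ref{local elliptic curve}). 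This is why the exponent in the product formula is the \emph{genus-one} invariant $n_{1,\beta}$, which in the ideal geometry of Section \ref{heuristic argument} is simply the number of such elliptic curves in class $\beta$; your proposed route of "inverting" the Klemm--Pandharipande definition and invoking Conjecture \ref{conj:GW/GV g=0} to absorb the $\log(1-q^\beta)$ and meeting-invariant terms is not needed and would not produce the formula from rational-curve contributions, since those contributions vanish identically on the pair side.

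Two smaller points. First, the statement is a conjecture: the paper does not prove it, but supports it by the heuristic just described plus verifications (elliptic fibrations, $Y\times E$, sextics, local geometries), so your closing remark that the realistic goal is verification in examples is in the right spirit. Second, your stratification of $P_0(X,\beta)$ by the underlying Cohen--Macaulay curve with fibres measured by cokernel length is the $n\geqslant 1$ picture; for $n=0$ the Euler-characteristic constraint $\chi(\oO_C)\geqslant 0$ on the ideal supports collapses that structure, which is precisely what makes the genus-one answer clean.
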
  
For instance, when Picard number of $X$ is one, for an irreducible curve class $\beta$, the above identity implies 
\begin{equation}P_{0,\beta}=n_{1,\beta}, \nonumber \end{equation}
\begin{equation}P_{0,2\beta}=n_{1,2\beta}+3 n_{1,\beta}+\binom{n_{1,\beta}}{2}, \nonumber \end{equation}
\begin{equation}P_{0,3\beta}=n_{1,3\beta}+n_{1,\beta}\cdot n_{1,2\beta}+6 n_{1,\beta}+6\binom{n_{1,\beta}}{2}+\binom{n_{1,\beta}}{3}, \nonumber \end{equation}
by comparing coefficients of $q^{\beta}$, $q^{2\beta}$ and $q^{3\beta}$. \\

One issue with our current proposal (as in our earlier conjecture \cite{CMT}) is that we do not have a general mechanism for choosing the orientation in the above conjectures.  Currently, in the cases we examine in this paper, we choose orientations on a case-by-case basis to show the correct matching.  It would be very interesting to construct canonical choices of orientation for these moduli spaces and study our conjectures using them.
\\

Our proposal is based on a
heuristic argument given in Section~\ref{heuristic argument},
where we show Conjecture \ref{intro:conj:GW/GV g=0}, \ref{intro:conj:GW/GV g=1}
assuming the CY 4-fold $X$ to be `ideal', i.e. curves in $X$ deform in some family of expected dimensions.
Apart from that, we verify our conjecture in examples as follows.

\subsection{Verifications of the conjecture I: compact examples}
We first prove our conjectures for some special compact Calabi-Yau 4-folds.
 
${}$ \\
\textbf{Sextic 4-folds}.
Let $X\subseteq \mathbb{P}^5$ be a degree six smooth hypersurface and $[l]\in H_2(X,\mathbb{Z})\cong H_2(\mathbb{P}^5,\mathbb{Z})$ be the line class. We check our conjectures for $\beta=[l]$ and $2[l]$.
\begin{prop}\emph{(Proposition \ref{sextic g=0}, \ref{sextic g=1})}
Let $X$ be a smooth sextic 4-fold and $[l]\in H_2(X,\mathbb{Z})$ be the line class. 
Then Conjecture \ref{intro:conj:GW/GV g=0} and \ref{intro:conj:GW/GV g=1} are true for $\beta=[l]$ and $2[l]$.
\end{prop}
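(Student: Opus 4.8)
The plan is to verify the two conjectures for the sextic 4-fold $X \subseteq \mathbb{P}^5$ directly, by explicitly describing the relevant stable pair moduli spaces $P_n(X,\beta)$ for $\beta = [l]$ and $2[l]$ and $n = 0, 1$, computing their virtual classes with an appropriate orientation, and matching against the known GV-type invariants $n_{0,\beta}$, $n_{1,\beta}$ of $X$ (which are computed in \cite{KP}). First I would recall the geometry of curves in a sextic 4-fold: lines in $X$ are parametrized by a Fano-type scheme of lines $F_1(X) \subseteq \mathrm{Gr}(2,6)$, of the expected dimension $2 \cdot 6 - 5 - 1 = 6$ (one checks this via the standard incidence-correspondence/Euler-sequence computation for hypersurfaces), and conics in $X$ similarly form a space of expected dimension, with the reducible conics being governed by pairs of incident lines. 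Since $\ch(F) = (0,0,0,\beta,n)$ forces $F$ to be a pure one-dimensional sheaf supported in class $\beta$ with $\chi(F) = n$, for $\beta$ the line class the support is a single line $l \cong \mathbb{P}^1$, and a stable pair $(\oO_X \to F)$ with $n=0$ means $F = \oO_l(-1)$ with $s$ the zero section (no sections), while $n=1$ means $F = \oO_l$ with $s$ the canonical surjection; thus $P_0(X,[l]) \cong P_1(X,[l]) \cong F_1(X)$, a smooth projective $6$-fold.

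The key computational steps are then: (i) For $\beta = [l]$, $n = 0$: the virtual class lives in $H_0$, virtual dimension zero, and I would identify the obstruction theory on $P_0(X,[l]) = F_1(X)$ with (a half of) $\Ext$-groups of the complex $\mathbb{I} = (\oO_X \to \oO_l(-1))$, show the virtual class equals, up to sign from the orientation, the Euler class of an explicit rank-$6$ bundle over $F_1(X)$ (obtained from $H^1(l, N_{l/X} \otimes \cdots)$ or equivalently from the self-dual part of the deformation-obstruction complex), and evaluate $\int_{F_1(X)} e(\text{obstruction bundle}) = \pm P_{0,[l]}$; this should come out to match $n_{1,[l]}$ as Conjecture \ref{intro:conj:GW/GV g=1} predicts in the irreducible case. (ii) For $\beta = [l]$, $n = 1$: virtual dimension two, and with the insertion $\tau(\gamma)$ (pulling back the hyperplane class and capping with $\ch_3(\mathbb{F})$, which is just the class of the line) the integral $\int_{P_1(X,[l])}^{\rm vir} \prod \tau(\gamma_i)$ reduces to a weighted count on $F_1(X)$ that I would identify with the genus-zero GV invariant $n_{0,[l]}(\gamma_1,\dots,\gamma_n)$, using the explicit form of $\overline{M}_{0,n}(X,[l])$ and its virtual class (which also sits over $F_1(X)$). (iii) For $\beta = 2[l]$: here the moduli space acquires several strata — pairs supported on a smooth irreducible conic, pairs supported on a reducible conic $l_1 \cup l_2$, and (for $n$ large enough) pairs with embedded points or non-reduced structure on a single line (the "thickened line" or the divisor $2[l]$ cases). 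I would stratify $P_n(X,2[l])$ accordingly, compute the contribution of each stratum to the virtual class (the smooth-conic stratum contributing the "honest" conic count, the reducible-conic stratum contributing the cross term $n_{0,[l]} \cdot P_{0,[l]}$ in Conjecture \ref{intro:conj:GW/GV g=0} and the $3 n_{1,[l]} + \binom{n_{1,[l]}}{2}$ correction in Conjecture \ref{intro:conj:GW/GV g=1}), and then sum. Throughout, the orientation is chosen stratum-by-stratum so that the signs line up; one then checks consistency of these choices.

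The main obstacle will be step (iii), the analysis of $P_n(X,2[l])$: one has to control the boundary strata where the support curve degenerates or acquires embedded/non-reduced structure, describe the local model of the moduli space and its ($\DT_4$) obstruction theory along each stratum, and show that the contributions assemble into precisely the combinatorial expressions $n_{1,2[l]} + 3n_{1,[l]} + \binom{n_{1,[l]}}{2}$ (for $g=1$) and $n_{0,2[l]} + n_{0,[l]}\cdot P_{0,[l]}$ (for $g=0$). The reducible-conic locus in particular requires understanding how a stable pair on $l_1 \cup l_2$ deforms — including smoothings to an irreducible conic and the gluing/node data — and getting the excess contribution right; this is exactly the CY 4-fold analogue of the subtle wall-crossing/degeneration bookkeeping in \cite{PT,PT2}, now complicated by the sign ambiguity from orientations. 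A secondary technical point is verifying that the insertion $\tau$ behaves as expected on each stratum and that the virtual-class identities are compatible with the localization/Euler-class computations one would want to use. I expect the sextic-specific input (dimension counts for $F_1(X)$ and the conic space, and the values $n_{0,\beta}, n_{1,\beta}$ from \cite{KP}) to make each individual computation tractable, so the work is really in the careful stratification and sign analysis rather than in any single hard estimate.
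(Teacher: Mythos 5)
Two of your basic geometric inputs are incorrect, and they undermine the whole strategy. First, $P_0(X,[l])$ is \emph{not} isomorphic to the Fano scheme of lines: the definition of a stable pair requires $s\colon\oO_X\to F$ to be surjective in dimension one (cokernel at most zero-dimensional), so the ``zero section'' on $F=\oO_l(-1)$ is not a stable pair, and since $\oO_l(-1)$ has no nonzero sections there are no stable pairs with $\chi=0$ supported on a line. More generally, any Cohen--Macaulay curve in class $[l]$ or $2[l]$ has $\chi(\oO_C)\geqslant 1$, so the sequence $0\to\oO_C\to F\to Q\to 0$ with $Q$ zero-dimensional forces $P_0(X,\beta)=\emptyset$ for both classes. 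This emptiness, combined with $n_{1,[l]}=n_{1,2[l]}=0$ from the tables in \cite{KP}, is precisely what makes the genus-one conjecture hold here: there is no Euler-class integral over $F_1(X)$ to compute (and, incidentally, $F_1(X)$ has expected dimension $\dim\mathrm{Gr}(2,6)-h^0(\oO_{\mathbb{P}^1}(6))=8-7=1$, not $6$).

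For genus zero, your plan for $2[l]$ --- stratifying $P_n(X,2[l])$ into smooth conics, reducible conics, and non-reduced/embedded-point loci and computing excess contributions stratum by stratum --- is both far heavier than necessary and missing the structural facts that make the statement tractable. Since $P_{0,\beta}=0$, the only identity to prove is $P_{1,\beta}(\gamma_1,\ldots,\gamma_n)=n_{0,\beta}(\gamma_1,\ldots,\gamma_n)$. Because $\chi(\oO_C)=1$ for Cohen--Macaulay curves in these classes, any pair with $\chi(F)=1$ has $F\cong\oO_C$; and by deformation invariance one may take $X$ general, in which case curves in class $2[l]$ are only smooth conics or pairs of distinct intersecting lines --- there are no double lines, so the non-reduced stratum you worry about does not exist (\cite{Cao2}). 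The paper then shows the forgetful map $P_1(X,\beta)\to M_{1,\beta}(X)$, $(\oO_X\to F)\mapsto F$, is an isomorphism with matching deformation--obstruction spaces ($\Ext^1(I,I)_0\cong\Ext^1(F,F)\cong\mathbb{C}$ and $\Ext^2(I,I)_0\cong\Ext^2(F,F)=0$), hence equal virtual classes for a suitable orientation, and the claim reduces to the genus-zero GV/$\DT_4$ conjecture of \cite{CMT}, already verified for sextics in degrees one and two in \cite{Cao2}. Your route would instead have to redo that comparison with $\overline{M}_{0,n}(X,\beta)$ from scratch, including the gluing analysis at reducible conics and the orientation bookkeeping; without the facts above (emptiness of $P_0$, $F\cong\oO_C$, absence of double lines for general $X$, vanishing of the $n_{1}$'s) the stratified computation you sketch does not get off the ground.
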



${}$ \\
\textbf{Elliptic fibrations}.
We consider a projective CY 4-fold $X$ which admits an elliptic fibration
\begin{align*}\pi: X\to \mathbb{P}^3, \end{align*}
given by a Weierstrass model (\ref{elliptic fib}). Let $f$ be a general fiber of $\pi$ and $h$ be a hyperplane in $\mathbb{P}^3$, set
\begin{align*} 
B=\pi^{\ast}h, \ E=\iota(\mathbb{P}^3)\in H_{6}(X,\mathbb{Z}),
\end{align*}
where $\iota$ is a section of $\pi$.
Then we have
\begin{prop}\emph{(Proposition \ref{prop g=0 elliptic fib}, \ref{prop g=1 elliptic fib})}
\begin{enumerate}
\item Conjecture \ref{intro:conj:GW/GV g=0} is true for fiber class $\beta=[f]$ and $\gamma=B^2$ or $B\cdot E$. 
\item Conjecture \ref{intro:conj:GW/GV g=1} is true for multiple fiber classes $\beta=r[f]$ \emph{($r\geqslant1$)}. 
\end{enumerate}
\end{prop}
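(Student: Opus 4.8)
The plan is to compute both sides of the two conjectural identities directly for the Weierstrass elliptic fibration $\pi:X\to\mathbb P^3$, exploiting the fact that curves in the fiber class $r[f]$ are supported on fibers of $\pi$, so that the relevant stable pair moduli spaces fiber over (an open part of) $\mathbb P^3$ with fibers that are moduli of stable pairs on the elliptic fibers. First I would recall, from the Weierstrass model, that a general fiber $f$ is a smooth elliptic curve, and the discriminant locus $\Delta\subset\mathbb P^3$ over which $f$ degenerates is a divisor; singular fibers along the generic point of $\Delta$ are nodal cubics. For the genus-$0$ statement (1), I need $P_{1,[f]}(\gamma)$ with $\gamma=B^2$ or $B\cdot E$. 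Since $[f]$ is irreducible, Conjecture \ref{intro:conj:GW/GV g=0} reduces to $P_{1,[f]}(\gamma)=n_{0,[f]}(\gamma)$, and the right-hand side is computed in \cite{KP} (or can be recomputed from the Weierstrass data): $n_{0,[f]}(\gamma)$ counts, with sign, rational curves in fiber class meeting the cycle dual to $\gamma$, i.e. nodal fibers over $\Delta\cap(\text{cycle dual to }\gamma)$. So I would identify $P_1(X,[f])$: a stable pair $(\oO_X\to F)$ with $\ch(F)=(0,0,0,[f],1)$ has $F$ supported on a fiber; on a smooth fiber $E$ the pair data is $\oO_E\to L$ with $\deg L=1$ plus a section, giving $E$ itself (length-$0$ cokernel), while over nodal fibers there is an extra component. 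I expect $P_1(X,[f])$ to be, roughly, the universal fiber over $\mathbb P^3$ (a copy of $X$) together with contributions over $\Delta$; then $\tau(\gamma)=\pi_{P*}(\pi_X^*\gamma\cup\ch_3(\mathbb F))$ pulls $\gamma$ back along the support map, and $\int_{[P_1]^{\rm vir}}\tau(\gamma)$ reduces to an integral over the locus of fibers meeting $\gamma$, which I would match with $n_{0,[f]}(\gamma)$ after fixing the orientation so the sign is $+1$.

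For the genus-$1$ statement (2), the target identity is $\sum_{r\geqslant0}P_{0,r[f]}q^{r[f]}=\prod_{r>0}M(q^{r[f]})^{n_{1,r[f]}}$. The strategy is: compute the left-hand series by analyzing $P_0(X,r[f])$, whose points are pure $1$-dimensional sheaves $F$ with $\ch(F)=(0,0,0,r[f],0)$ supported on fibers, together with a surjection $\oO_X\twoheadrightarrow F$; since $\chi(F)=0$, this forces $F$ to be pushed forward from the fibers with the section surjective. Over a smooth fiber $E$, such $F$ with $\chi=0$ supported in class $r[f]$ and admitting $\oO_E\twoheadrightarrow F$... in fact for $n=0$ there is a clean description: these are exactly ideal-sheaf-type / Hilbert-scheme-of-curves data, and $P_0(X,r[f])$ should be closely related to the relative Hilbert scheme of the fibration, whose generating function is governed by a MacMahon-type product because the local model near a smooth elliptic fiber contributes a factor $M(q)$ per unit of $n_1$. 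Concretely I would: (a) stratify $\mathbb P^3$ by fiber type; (b) over the smooth-fiber locus, show the contribution is a product of MacMahon functions weighted by an Euler-characteristic/virtual count that reproduces $n_{1,r[f]}$; (c) handle the discriminant $\Delta$ and worse fibers, showing (by dimension count, using the Calabi-Yau condition $K_X=\pi^*(K_{\mathbb P^3}\otimes(\text{line bundle from Weierstrass data}))$ trivial) that their contribution is already encoded in the GW-side invariants $n_{1,r[f]}$, consistent with the Klemm-Pandharipande formula whose $n_{1,r[f]}$ are known for Weierstrass fibrations. Throughout I would invoke Theorem \ref{vir class of pair moduli} for the existence of $[P_0(X,r[f])]^{\rm vir}$ and choose the orientation component-by-component to kill unwanted signs.

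The main obstacle I anticipate is twofold. First, the orientation issue: as the authors note, there is no canonical choice, so I must check that a single consistent choice makes \emph{all} the signs in the product formula come out right simultaneously — this is where I'd spend the most care, probably by comparing with the localization computation on a toric or Weierstrass-explicit model and matching the $q^{[f]}$, $q^{2[f]}$, $q^{3[f]}$ coefficients against the expansions displayed in the introduction ($P_{0,\beta}=n_{1,\beta}$, $P_{0,2\beta}=n_{1,2\beta}+3n_{1,\beta}+\binom{n_{1,\beta}}{2}$, etc.). Second, controlling $P_0(X,r[f])$ over the singular fibers: the moduli space is not simply the relative Hilbert scheme of a smooth fibration, and I must verify that the excess contributions from nodal (and worse) fibers assemble into exactly the right correction — here I would lean on the heuristic `ideal CY 4-fold' computation of Section \ref{heuristic argument}, check that the Weierstrass fibration is close enough to ideal in fiber classes (curves deform in the expected $\mathbb P^3$-family), and reduce any residual discrepancy to the meeting-invariant and $c_2(X)$ terms in the Klemm-Pandharipande genus-$1$ formula. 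The genus-$0$ part (1) should be comparatively routine once the support map and $\tau$-insertion are understood; it is the genus-$1$ product formula (2) for all $r$, with signs, that is the real content.
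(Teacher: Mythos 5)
There is a genuine gap, and it lies in your strategy for part (2). The paper's proof does not stratify $\mathbb{P}^3$ by fiber type, does not compute local MacMahon contributions over the smooth-fiber locus, and does not treat the discriminant separately; indeed no cosection/localization machinery for the Borisov--Joyce class that would let you localize contributions fiberwise is available, and the ``one factor $M(q)$ per elliptic curve'' computation (Theorem \ref{local elliptic curve}) only applies to super-rigid elliptic curves with general degree-zero normal bundle, which is emphatically not the situation here: the fibers move in a three-dimensional family, and $n_{1,[f]}=-20$ is negative, so it is not a count of isolated elliptic curves in any naive sense. The actual mechanism is global: in Lemma \ref{lem:pair for elliptic} one shows, via a Harder--Narasimhan/Jordan--H\"older argument that forces every stable factor of $F$ to be the structure sheaf of a full (possibly singular) fiber and the section to be a pullback, that $\pi^{\ast}$ induces an isomorphism $P_0(X,r[f])\cong \Hilb^r(\mathbb{P}^3)$ for every $r$; then the spectral sequence with $R\pi_{\ast}\oO_X\cong\oO_{\mathbb{P}^3}\oplus K_{\mathbb{P}^3}[-1]$ identifies $\Ext^1_X(\pi^{\ast}I_Z,\pi^{\ast}I_Z)\cong\Ext^1_{\mathbb{P}^3}(I_Z,I_Z)$ and $\Ext^2_X\cong \Ext^2_{\mathbb{P}^3}\oplus(\Ext^2_{\mathbb{P}^3})^{\vee}$, so the $\DT_4$ virtual class equals $(-1)^r$ times Thomas's $\DT_3$ virtual class of $\Hilb^r(\mathbb{P}^3)$. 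The generating series is then the known degree-zero DT series of $\mathbb{P}^3$ (MNOP/Li/Levine--Pandharipande), giving $\sum_r P_{0,r[f]}q^r=M(q)^{-20}$, which matches $n_{1,[f]}=-20$, $n_{1,k[f]}=0$ $(k\geqslant 2)$ from Klemm--Pandharipande's table. Your proposal contains none of these steps, and as written the plan of ``assembling excess contributions from nodal fibers into the $c_2$ and meeting-invariant corrections'' has no mechanism behind it; it would not produce a proof.

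Part (1) is closer, but your picture of the moduli space is off in a way that matters. There are no extra components of $P_1(X,[f])$ over the discriminant: the paper shows $P_1(X,[f])\cong M_{1,[f]}(X)\cong X$ on the nose, because every stable sheaf in class $[f]$ with $\chi=1$ is $(i_t)_{\ast}m_x^{\vee}$ for a point $x$ of a fiber (including singular fibers, where $m_x^{\vee}$ is stable but not locally free), and $h^0=1$ by Serre duality on the fiber. The real content you have not addressed is the virtual class: $P_1(X,[f])$ is smooth of complex dimension $4$ while the virtual dimension is $1$, so one must identify $\Ext^2_X(I,I)_0\cong\Ext^2_X(F,F)$ (done in the paper by the two distinguished-triangle diagrams) to transport the computation $[M_{1,[f]}(X)]^{\rm vir}=\pm\mathrm{PD}(c_3(X))$ from \cite{CMT}, after which the matching with $n_{0,[f]}(B^2)$ and $n_{0,[f]}(B\cdot E)$ is exactly \cite[Prop.~2.3]{CMT}; your proposed recount of nodal fibers meeting the dual cycle is neither how the GW side is handled nor justified on the pair side.
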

In the above cases, we can directly compute the pair invariants and check the compatibility with the computation 
of GW invariants in~\cite{KP}. 

${}$ \\
\textbf{Product of elliptic curve and Calabi-Yau 3-fold}.
Let $X=Y\times E$ be a product of a Calabi-Yau 3-fold and an elliptic curve $E$. 
We check our conjectures when the curve class comes from either $Y$ or $E$.
\begin{thm}\label{thm intro prod}\emph{(Theorem \ref{thm on g=0 irr from CY3}, \ref{g=1 product of elliptic curve and CY3}, 
Proposition \ref{g=1 conj for irr class from CY3})} 
Let $X=Y\times E$ be given as above. Then
\begin{enumerate}
\item Conjecture \ref{intro:conj:GW/GV g=0} is true for any irreducible curve class $\beta\in H_2(Y)\subseteq H_2(X)$, 
provided that $Y$ is a complete intersection in a product of projective spaces.
\item Conjecture \ref{intro:conj:GW/GV g=1} is true for any irreducible curve class $\beta\in H_2(Y)\subseteq H_2(X)$.
\item Conjecture \ref{intro:conj:GW/GV g=1} is true for classes $\beta=r[E]$ \emph{($r\geqslant1$)}.
\end{enumerate}
\end{thm}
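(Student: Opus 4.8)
The plan is to use the product structure $X=Y\times E$ to reduce all three statements either to facts about the Calabi--Yau $3$-fold $Y$ --- where the stable-pairs/GV picture is available in the cases at hand --- or to a direct computation on a moduli space fibered over $Y$. The first step is to identify the moduli spaces. For irreducible $\beta\in H_2(Y)\subseteq H_2(X)$ (parts (1), (2)), any $1$-dimensional sheaf $F$ with $\ch_3(F)=\beta$ is set-theoretically contracted by the projection $X\to E$, hence supported scheme-theoretically on a single fiber $Y\times\{t\}$, and similarly every stable map of genus $\leqslant 1$ in class $\beta$ is constant into $E$. This gives $P_n(X,\beta)\cong P_n(Y,\beta)\times E$ and $\overline{M}_{g,n}(X,\beta)\cong\overline{M}_{g,n}(Y,\beta)\times E$. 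Dually, for $\beta=r[E]$ (part (3)) the support is contracted by $X\to Y$, so computing $P_{0,r[E]}^X$ amounts to understanding moduli of stable pairs supported on the elliptic fibers $\{y\}\times E$.

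Next I would analyse the virtual classes under these decompositions. The Gromov--Witten side is standard: since $T_E\cong\oO_E$, the $E$-factor contributes $[E]$ in genus $0$ and $[E]$ twisted by a power of the Hodge line bundle in genus $1$; so $\mathrm{GW}$ of $X$ in classes from $Y$ vanish unless exactly one insertion absorbs an $H^2(E)$-class, in which case they reduce to $\mathrm{GW}$ of $Y$, and the Klemm--Pandharipande recursions then express $n_{0,\beta}^X$ and $n_{1,\beta}^X$ in terms of genus-$0$ and genus-$1$ GV invariants of $Y$ (the $n_{0,\beta}(c_2(X))$ term and the meeting invariants reduce similarly). On the $\mathrm{DT}_4$ side, the point is that the self-dual $\mathrm{Ext}$-complex governing $[P_n(X,\beta)]^{\mathrm{vir}}$ should split, after invoking Serre duality on $Y$, into a hyperbolic part coming from the $Y$-directions --- whose Lagrangian (maximal isotropic) half is the $3$-fold PT deformation--obstruction complex on $P_n(Y,\beta)$ --- plus the trivial $E$-translation summand (and a Hodge-type summand along the degenerate loci). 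Choosing the orientation compatibly with this splitting should identify $[P_1(X,\beta)]^{\mathrm{vir}}$ with $\pm[E]$ times the $3$-fold PT virtual class on $P_1(Y,\beta)$, and should reduce $P_{0,r[E]}^X$ to an Euler-class integral over $Y$.

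Granting these reductions, part (1) becomes the stable-pairs/GV correspondence $\mathrm{PT}_{1,\beta}^Y=n_{0,\beta}^Y$ (with matching descendent/cohomology insertions) on the CY $3$-fold $Y$ for irreducible $\beta$; this is precisely where the hypothesis that $Y$ is a complete intersection in a product of projective spaces enters, ensuring that this correspondence and the enumerativity of $n_{0,\beta}^Y$ (in particular the absence of contributing higher-genus curves in the irreducible class) hold in the cases considered --- alternatively, one may check directly that $X=Y\times E$ is ``ideal'' for such $\beta$ in the sense of Section~\ref{heuristic argument}, or relate $P_1(X,\beta)$ to the moduli of $1$-dimensional stable sheaves and invoke the $g=0$ theorem of \cite{CMT}. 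Part (2) follows by the analogous reduction of both $\sum_\beta P_{0,\beta}^X q^\beta$ and $\prod_\beta M(q^\beta)^{n_{1,\beta}^X}$ to $3$-fold invariants of $Y$, using the $\mathrm{DT}/\mathrm{PT}$ and genus-$1$ GV relations on $Y$ (for irreducible $\beta$ this is just $P_{0,\beta}^X=n_{1,\beta}^X$, and one reconciles $\pm\chi$ of the $3$-fold PT moduli with the output of the Klemm--Pandharipande genus-$1$ formula). For part (3) I would compute directly: $P_0(X,[E])\cong Y$, so $P_{0,[E]}^X=\pm\,e(Y)$, which should match $n_{1,[E]}^X$ after choosing the orientation; and for $r\geqslant 2$ one evaluates $P_{0,r[E]}^X$ through the moduli of stable pairs supported on the fiber elliptic curves and checks it against the coefficient of $q^{r[E]}$ in $M(q^{[E]})^{n_{1,[E]}}$, using that $n_{1,r[E]}^X=0$ for $r\geqslant 2$.

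The hard part will be the virtual-class comparison of the second step together with the sign bookkeeping. Since $I^\bullet=(\oO_X\to F)$ does not pull back from $Y$, the decomposition of its $\mathrm{Ext}$-complex --- and hence of the $\mathrm{DT}_4$ virtual class --- must be carried out carefully, e.g.\ via the cone presentation $I^\bullet=I_C$ or via known results on $\mathrm{DT}_4$ invariants of products $Y\times E$; and, since there is no canonical orientation, one must check on each connected component that the orientations can be chosen so that the signs in all three identities come out correctly.
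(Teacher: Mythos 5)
Your overall skeleton (product decompositions of the moduli spaces, splitting of the $\mathrm{DT}_4$ obstruction theory into an isotropic half from $Y$ plus the $E$-translation, and for part (3) the reduction of $P_{0,r[E]}$ to stable pairs on the elliptic fibers, i.e.\ to $\Hilb^r(Y)$ and zero-dimensional DT invariants of $Y$, with $n_{1,[E]}=\chi(Y)$, $n_{1,r[E]}=0$ on the GW side) is the same as the paper's. But there is a genuine gap at the heart of your virtual-class comparison: you identify the maximal isotropic half of $\Ext^2_X(I_X,I_X)_0$ with the \emph{3-fold PT} deformation--obstruction complex $\RHom_Y(I,I)_0$ on $P_1(Y,\beta)$. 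What the computation actually produces (Proposition \ref{prop on pair moduli on CY3}) is the \emph{pairs-with-section} theory $\RHom_Y(I_Y,F)$, whose tangent/obstruction spaces are $\Hom_Y(I_Y,F)$ and $\Ext^1_Y(I_Y,F)$ and whose virtual dimension is $n-1$, not $0$; the paper stresses explicitly that this is not the obstruction theory of complexes used in \cite{PT}. This distinction is what makes the argument work. For part (2), $[P_0(Y,\beta)]^{\mathrm{vir}}_{\mathrm{pair}}$ lies in $A_{-1}$ and vanishes, so $P_{0,\beta}=0$, while $n_{1,\beta}=0$ follows from the elementary product computation of GW invariants (Lemma \ref{compute GW inv of class from Y}); no genus-one GV/PT relation on $Y$ is needed, and none is available. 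Your proposed reconciliation of ``$\pm\chi$ of the 3-fold PT moduli'' with the Klemm--Pandharipande genus-one formula is not the right reduction: the 3-fold PT invariant $P^{Y}_{0,\beta}$ has virtual dimension zero and is in general nonzero, so if it were the isotropic half the statement of part (2) could not be deduced this way.

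The same issue affects part (1). Your primary route needs the stable-pairs/GV correspondence with insertions on $Y$ for irreducible classes on complete intersections in products of projective spaces, which is not an established input. The paper instead uses the forgetful morphism $f\colon P_1(Y,\beta)\to M_{1,\beta}(Y)$ and proves a Manolache virtual push-forward formula (Proposition \ref{prop on pair obs on CY3}) comparing the pair obstruction theory with that of one-dimensional stable sheaves --- a nontrivial truncation-and-cone argument that your sketch omits entirely --- and then reduces to Katz's conjecture (Corollary \ref{reduce to Katz conj}), which is known for primitive classes on such complete intersections by \cite[Cor.\ A.6]{CMT}; this is precisely where the complete-intersection hypothesis enters. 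Your parenthetical alternative (``relate $P_1(X,\beta)$ to the moduli of $1$-dimensional stable sheaves and invoke the $g=0$ theorem of \cite{CMT}'') points to the correct route, but the virtual push-forward comparison is the missing key lemma, and it is only meaningful once the isotropic half is recognized as the pairs theory rather than the PT theory. Similarly, for part (3) you would still need to carry out the identification $P_0(X,r[E])\cong \Hilb^r(Y)$ with the sign comparison to the $\mathrm{DT}_3$ virtual class and the computation $\mathrm{GW}_{1,r[E]}(X)=\chi(Y)\sigma(r)/r$, which your sketch defers.
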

The proof of these results is briefly reviewed here. 

For (1), when $\beta\in H_2(Y)\subseteq H_2(X)$ is an irreducible curve class, we have an isomorphism \begin{equation}P_n(X,\beta)\cong P_n(Y,\beta)\times E. \nonumber \end{equation}
The corresponding virtual class satisfies (see Proposition \ref{prop on pair moduli on CY3}):
\begin{equation}[P_n(X,\beta)]^{\mathrm{vir}}=[P_n(Y,\beta)]_{\mathrm{pair}}^{\mathrm{vir}}\otimes [E], \nonumber \end{equation}
for certain choice of orientation in defining the LHS, where the virtual class of $P_n(Y,\beta)$ is defined using 
the deformation-obstruction theory of pairs (Lemma \ref{lem on pair moduli on CY3}) instead of the deformation-obstruction theory of complexes in the derived category used by \cite{PT}.

In this case, we have a forgetful morphism 
\begin{equation}\label{map f} f: P_1(Y,\beta)\to M_{1,\beta}(Y), \quad (\oO_Y\to F)\mapsto F,  \end{equation}
to the moduli space $M_{1,\beta}(X)$ of 1-dimensional stable sheaves $F$ with $[F]=\beta$ and $\chi(F)=1$.
We show that the map satisfies Manolache's virtual push-forward formula (Proposition \ref{prop on pair obs on CY3}),
\begin{equation}\int_{[P_1(Y,\beta)]_{\mathrm{pair}}^{\mathrm{vir}}}1=\int_{[M_{1,\beta}(Y)]^{\mathrm{vir}}}1. \nonumber \end{equation}
Then Conjecture \ref{intro:conj:GW/GV g=0} can be reduced to Katz's conjecture on CY 3-fold $Y$ \cite{Katz} (Corollary \ref{reduce to Katz conj}).
Combining with our previous proof of Katz's conjecture for primitive classes \cite[Cor. A.6]{CMT}, we can conclude (1) of 
Theorem \ref{thm intro prod}.

As for (3), this is one of few cases where we can compute non-primitive curve classes and form generating series. The point is 
to identify pair moduli spaces on $X$ with Hilbert schemes of points on $Y$ and use computation of zero dimensional 
DT invariants of $Y$.

${}$  \\
\textbf{Hyperk\"ahler 4-folds}. When the CY 4-fold $X$ is a hyperk\"ahler 4-fold, GW invariants 
vanish, and so do the GV type invariants. To verify our conjectures, we are left to prove the vanishing
of pair invariants. A cosection map from the (trace-free) obstruction space is constructed and shown to be surjective and compatible with Serre duality (Proposition \ref{surj cosection}).
We expect the following vanishing result then follows. 
\begin{claim}\emph{(Claim \ref{vanishing for hk4})}
Let $X$ be a projective hyperk\"{a}hler 4-fold and $P_n(X,\beta)$ be the moduli space of stable pairs with $n\neq0$ or $\beta\neq 0$. 
Then the virtual class satisfies
\begin{equation}[P_n(X,\beta)]^{\rm{vir}}=0.  \nonumber \end{equation}\end{claim}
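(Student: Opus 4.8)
The plan is to exploit the cosection already constructed in Proposition \ref{surj cosection}. For a CY $4$-fold, the obstruction theory of stable pairs involves a self-dual (via Serre duality) complex, and the virtual class of \eqref{pair moduli vir class intro} is built from the half-dimensional real obstruction bundle together with a choice of orientation. The point is that a surjective cosection on the (trace-free) obstruction space forces the relevant real obstruction bundle to split off a trivial summand, and hence its (half-)Euler class, which is the localized virtual class, must vanish. So the strategy mirrors the Kiem--Li cosection-localization philosophy, but adapted to the $\DT_4$/Borisov-Joyce setting where one takes a real square root of the obstruction.

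The key steps, in order, are as follows. First, recall the structure of the obstruction theory on $P_n(X,\beta)$ from Theorem \ref{vir class of pair moduli}: at a pair $\I=(\oO_X\to F)$ the relevant deformation-obstruction spaces are governed by $\Ext^\ast_X(\I,\I)_0$ (or the pair-theoretic analogue), and Serre duality on the CY $4$-fold gives a symmetric pairing on the obstruction space $\Ext^2$, from which the real (half-dimensional) obstruction bundle entering the virtual class is extracted. Second, invoke Proposition \ref{surj cosection}: the cosection $\sigma$ from the trace-free obstruction space to $\oO_{P_n(X,\beta)}$ is surjective everywhere on $P_n(X,\beta)$ (using $n\neq 0$ or $\beta\neq 0$ so that the pair is genuinely a curve-counting object and the holomorphic symplectic form produces a nowhere-vanishing section), and it is compatible with the Serre duality pairing. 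Third, show that compatibility with Serre duality means the cosection is, after passing to the real form, a nowhere-vanishing section of the dual of the real obstruction bundle; equivalently the real obstruction bundle has a nowhere-zero cosection, so it contains a trivial real line (or rather the structure forces a trivial factor of the appropriate rank). Fourth, conclude that the half-Euler class / Edidin-Graham square-root Euler class used to define $[P_n(X,\beta)]^{\rm{vir}}$ vanishes, since a bundle with a nowhere-vanishing section has vanishing top Chern/Euler class, and this vanishing survives the square-root construction because the trivial summand is compatible with the quadratic form. Hence $[P_n(X,\beta)]^{\rm{vir}}=0$ regardless of the orientation chosen.

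The main obstacle I expect is the fourth step: making precise that a surjective cosection on the \emph{complex} obstruction space, compatible with a \emph{symmetric} (Serre-duality) pairing, actually produces a nowhere-vanishing section of the \emph{real} half-dimensional obstruction bundle whose Euler class defines the virtual class. In the Borisov-Joyce / Oh-Thomas framework the virtual class is not literally the Euler class of a smooth bundle but is constructed via a real derived structure (or via local Darboux charts and matching data), so one must either (i) work locally in Darboux-type charts, check the cosection trivializes a factor in each chart, and verify the trivializations glue compatibly with the orientation data, or (ii) appeal to an abstract cosection-localization theorem for $\DT_4$ virtual classes if such a statement is available in the literature being cited. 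This gluing/compatibility check — ensuring the trivial summand produced by the cosection is isotropic-compatible with the quadratic form so that the square-root Euler class genuinely vanishes, rather than merely the ordinary Euler class of the doubled bundle — is the technical heart of the argument and the reason the statement is phrased as a Claim rather than a Theorem.
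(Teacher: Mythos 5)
There is a genuine gap, and it is the one you yourself flag in your final paragraph. What you propose is essentially the argument the paper offers only as \emph{evidence}, not as a proof: the statement is deliberately phrased as a Claim because a Kiem--Li-type cosection-localization theory for the Borisov--Joyce (D-manifold) virtual classes does not exist in the literature. Your steps one through three are fine and match the paper's setup: Proposition \ref{surj cosection} does give a surjective cosection on $\Ext^2(I,I)_0$ whenever $\ch_3(I)\neq 0$ or $\ch_4(I)\neq 0$ (which is exactly the hypothesis $\beta\neq 0$ or $n\neq 0$), together with $Q$-orthogonal decompositions splitting off a canonically trivialized summand. When $P_n(X,\beta)$ is smooth, this does kill the half-Euler class $e(\mathrm{Ob},Q)$ and hence the virtual class --- that is precisely the paper's first piece of evidence. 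But your fourth step is not a ``technical check to be verified'': in the general case the virtual class is not the half-Euler class of any bundle over $P_n(X,\beta)$; it is produced by Borisov--Joyce's derived $C^{\infty}$-geometric construction, and no statement is available (in the cited literature or elsewhere at the time of the paper) asserting that a globally surjective cosection compatible with the quadratic form forces that class to vanish. Your option (ii) therefore has nothing to appeal to, and your option (i) --- carrying out the localization by hand in Darboux charts and gluing the trivializations compatibly with the orientation and the square-root structure --- is exactly the missing theory, not a routine verification; it is the whole content of the claim.

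For comparison, the paper's second piece of evidence is different in flavor from your route: it observes that if a complex-analytic version of $(-2)$-shifted symplectic geometry and the corresponding virtual classes existed, one could instead argue by deformation invariance along the $\mathbb{S}^2$-twistor family of the hyperk\"ahler $4$-fold, choosing a generic complex structure admitting no coherent sheaves in the relevant classes. Neither that route nor the cosection route is completed in the paper, so your proposal should be presented as a strategy (equivalent to the paper's heuristic) rather than as a proof; as written, the conclusion $[P_n(X,\beta)]^{\rm vir}=0$ is established only under the additional hypothesis that $P_n(X,\beta)$ is smooth.
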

At the moment, a Kiem-Li type theory of cosection localization for D-manifolds 
is not available in the literature.
We believe that when such a theory is established, our claim should follow automatically. 
Nevertheless, we have the following evidence for the claim.

1. At least when $P_n(X,\beta)$ is smooth, Proposition \ref{surj cosection} gives the vanishing of virtual class.

2. If there is a complex analytic version of $(-2)$-shifted symplectic geometry \cite{PTVV} and the corresponding construction of virtual classes \cite{BJ},
one could prove the vanishing result as in $\mathrm{GW}$ theory, i.e. taking a generic complex structure in the $\mathbb{S}^{2}$-twistor family 
of the hyperk\"ahler 4-fold which does not support coherent sheaves and then vanishing of virtual classes follows from their deformation invariance.  

\subsection{Verifications of the conjecture II: local 3-folds and surfaces}
For a Fano 3-fold $Y$, we consider the non-compact CY 4-fold 
\begin{align*}
X=K_Y.
\end{align*}
In this case, the stable pair moduli space $P_n(X, \beta)$ is compact (Proposition \ref{prop on pair moduli on K_Y}), so we can formulate Conjecture~\ref{intro:conj:GW/GV g=0}, \ref{intro:conj:GW/GV g=1} here (even though the target is not projective).

When the curve class $\beta\in H_2(X)$ is irreducible, we study this as follows.
Similar to the case of the product of a CY 3-fold and an elliptic curve, for a certain choice of orientation,
the virtual class of $P_n(X, \beta)$ satisfies (Proposition \ref{prop on pair moduli on K_Y})
\begin{equation}[P_n(X,\beta)]^{\mathrm{vir}}=[P_n(Y,\beta)]_{\mathrm{pair}}^{\mathrm{vir}}, \nonumber \end{equation}
under the isomorphism 
\begin{equation}P_n(X,\beta)\cong P_n(Y,\beta). \nonumber \end{equation}
And we have a virtual push-forward formula (Proposition \ref{prop on pair obs on Fano3})
\begin{equation}f_*[P_1(Y,\beta)]_{\mathrm{pair}}^{\mathrm{vir}}=[M_{1,\beta}(Y)]^{\mathrm{vir}}, \nonumber \end{equation}
where $f:P_1(Y,\beta)\to M_{1,\beta}(Y)$, $(\oO_X\to F)\mapsto F$ is the morphism forgetting the section, $M_{1,\beta}(Y)$ is the moduli scheme of 1-dimensional stable sheaves $E$ on $Y$ with $[E]=\beta$ and $\chi(E)=1$.
Then Conjecture \ref{intro:conj:GW/GV g=0} is easily reduced to our previous conjecture \cite[Conjecture 0.2]{CMT}. Combined
with computations in \cite{Cao}, we have 
\begin{thm}\emph{(Proposition \ref{verify g=0 conj for fano hypersurface}, \ref{verify g=1 conj for fano hypersurface})}
Let $X=K_Y$ be given as above. Then
\begin{enumerate}
\item Conjecture \ref{intro:conj:GW/GV g=0} is true for any irreducible curve class $\beta\in H_2(X)\cong H_2(Y)$, 
provided that (i) $Y\subseteq \mathbb{P}^4$ is a smooth hypersurface of degree $d\leqslant4$, or (ii) $Y=S\times \mathbb{P}^1$ for a
toric del Pezzo surface $S$.  
\item Conjecture \ref{intro:conj:GW/GV g=1} is true for an irreducible curve class $\beta\in H_2(X)\cong H_2(Y)$
when $Y=\mathbb{P}^3$. 
\end{enumerate}
\end{thm}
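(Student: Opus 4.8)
The plan is to push everything down to the Fano $3$-fold $Y$ and then to the sheaf-counting invariants already studied in \cite{CMT} and \cite{Cao}. First I would use the geometry of $X=K_Y$: since $X$ is the total space of a line bundle over $Y$, every proper curve in $X$ lies in the zero section, so for an effective class $\beta\in H_2(X)\cong H_2(Y)$ any stable pair $(\oO_X\to F)$ with $\ch(F)=(0,0,0,\beta,n)$ has $F$ scheme-theoretically supported on $Y$. This yields the isomorphism $P_n(X,\beta)\cong P_n(Y,\beta)$ of Proposition \ref{prop on pair moduli on K_Y} together with the compatibility $[P_n(X,\beta)]^{\mathrm{vir}}=[P_n(Y,\beta)]^{\mathrm{vir}}_{\mathrm{pair}}$ for a suitable orientation, where the right-hand side is built from the deformation-obstruction theory of pairs (Lemma \ref{lem on pair moduli on CY3}). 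Under these identifications the insertions $\tau(\gamma_i)$ of (\ref{intro, insert}) are pulled back from the corresponding insertions on $P_n(Y,\beta)$, because the pair structure does not affect $\ch_3$ of $F$ and the support cycle of $F$ is unchanged. So all the pair invariants of $X$ appearing in Conjectures \ref{intro:conj:GW/GV g=0} and \ref{intro:conj:GW/GV g=1} become integrals against $[P_n(Y,\beta)]^{\mathrm{vir}}_{\mathrm{pair}}$.

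For part (1) I would take $n=1$ and $\beta$ irreducible, and run the forgetful morphism $f\colon P_1(Y,\beta)\to M_{1,\beta}(Y)$, $(\oO_X\to F)\mapsto F$. The virtual push-forward formula of Proposition \ref{prop on pair obs on Fano3} gives $f_*[P_1(Y,\beta)]^{\mathrm{vir}}_{\mathrm{pair}}=[M_{1,\beta}(Y)]^{\mathrm{vir}}$; since $\ch_3$ of the universal sheaf on $P_1(Y,\beta)\times X$ is the pullback of $\ch_3$ of the universal sheaf on $M_{1,\beta}(Y)\times X$ (twisting by a line bundle from the base does not change the leading Chern character of a sheaf supported in codimension three), each $\tau(\gamma_i)$ descends to an insertion $\widetilde{\tau}(\gamma_i)$ on $M_{1,\beta}(Y)$, and the projection formula turns the pair invariant into
\[
P_{1,\beta}(\gamma_1,\dots,\gamma_n)=\int_{[M_{1,\beta}(Y)]^{\mathrm{vir}}}\prod_{i=1}^n\widetilde{\tau}(\gamma_i).
\]
Because $\beta$ is irreducible, the right-hand side of Conjecture \ref{intro:conj:GW/GV g=0} collapses to $n_{0,\beta}(\gamma_1,\dots,\gamma_n)$, so the conjecture for $X=K_Y$ becomes exactly \cite[Conjecture 0.2]{CMT} for the class $\beta$. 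It then remains to invoke the verification of that conjecture in the required cases: for $Y\subseteq\mathbb{P}^4$ a smooth hypersurface of degree $d\leqslant 4$, and for $Y=S\times\mathbb{P}^1$ with $S$ a toric del Pezzo surface, the moduli space $M_{1,\beta}(Y)$, its virtual class, and the resulting integral are computed in \cite{Cao} and shown to match the genus-$0$ GV invariants of \cite{KP}.

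For part (2), with $Y=\mathbb{P}^3$, we have $n=0$ and, for $\beta$ irreducible, Conjecture \ref{intro:conj:GW/GV g=1} reduces to the single identity $P_{0,\beta}=n_{1,\beta}$. By the reduction above I would compute $P_{0,\beta}=\int_{[P_0(\mathbb{P}^3,\beta)]^{\mathrm{vir}}_{\mathrm{pair}}}1$ directly: one identifies the support curves and the admissible sections, describes $P_0(\mathbb{P}^3,\beta)$ and its pair obstruction theory explicitly, evaluates the virtual count after fixing an orientation, and compares the integer obtained with the value of $n_{1,\beta}$ extracted from the genus-$1$ GV invariants of $K_{\mathbb{P}^3}$ computed in \cite{KP} by localization. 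Matching the two yields (2).

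The hard part is not the formal reduction but two concrete inputs. The first is the virtual push-forward formula of Proposition \ref{prop on pair obs on Fano3}: verifying it requires comparing the pair obstruction theory on $P_1(Y,\beta)$ with the pullback along $f$ of the obstruction theory of $M_{1,\beta}(Y)$ and checking the hypotheses of Manolache's theorem. The second is the explicit geometry of $M_{1,\beta}(Y)$ (respectively $P_0(\mathbb{P}^3,\beta)$): identifying it as a smooth, or at least computable, moduli space is genuinely case-by-case, and this is precisely where the hypotheses $d\leqslant 4$, $Y=S\times\mathbb{P}^1$, and $Y=\mathbb{P}^3$ are used — it is the content of \cite{Cao}. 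Finally, the orientation ambiguity of (\ref{pair moduli vir class intro}) has to be resolved by hand in each case so that the computed virtual count carries the correct sign.
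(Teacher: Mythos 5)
Your treatment of part (1) is essentially the paper's own argument: reduce via the isomorphism $P_n(X,\beta)\cong P_n(Y,\beta)$ and the virtual class identification of Proposition \ref{prop on pair moduli on K_Y}, push forward along $f$ using Proposition \ref{prop on pair obs on Fano3}, match insertions (your remark that twisting by a line bundle from the base does not change $\ch_3$ is exactly the needed point), and then quote the verifications of \cite[Conjecture 0.2]{CMT} in \cite{Cao} for $d\leqslant 4$ hypersurfaces and $S\times\mathbb{P}^1$. That is Proposition \ref{verify g=0 conj for fano hypersurface}.

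For part (2), however, what you write is a plan rather than a proof, and it misses the one observation that makes the statement immediate. For $Y=\mathbb{P}^3$ an irreducible class $\beta$ is the line class, so every curve in class $\beta$ is a line, i.e.\ a smooth rational curve; by Lemma \ref{lem on emply pair moduli} this forces $P_0(Y,\beta)=\emptyset$, and then Proposition \ref{prop on pair moduli on K_Y} gives $P_0(X,\beta)=\emptyset$, hence $P_{0,\beta}=0$ with no virtual-class computation and no orientation issue to resolve. On the other side, $n_{1,\beta}=0$ is read off from \cite[Table 1, pp.\ 31]{KP}, so Conjecture \ref{conj:GW/GV g=1} for irreducible $\beta$ reduces to $0=0$. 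Your proposal instead defers to an unspecified description of $P_0(\mathbb{P}^3,\beta)$, its pair obstruction theory, a virtual count ``after fixing an orientation,'' and an extraction of $n_{1,\beta}$ from the genus-one invariants of $K_{\mathbb{P}^3}$; none of this is carried out, and it is precisely this deferred content that constitutes the proof. The route would not fail — any honest description of the moduli space discovers its emptiness, since a stable pair $(\oO_X\to F)$ with $F$ supported on a line and $\chi(F)=0$ cannot have a section surjective in dimension one — but as written the key step is absent, so part (2) is not established by your argument. A secondary, more minor point: your justification that $F$ is scheme-theoretically supported on the zero section (``every proper curve in $X$ lies in the zero section'') only gives set-theoretic support; ruling out thickening in the fiber direction uses stability of $F$ for irreducible $\beta$, which is the content of Proposition \ref{prop on pair moduli on K_Y} that you are in any case entitled to cite.
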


Similarly for a smooth projective surface $S$, we 
consider the 
non-compact CY 4-fold 
\begin{align*}
X=\mathrm{Tot}_{S}(L_1\oplus L_2),
\end{align*}
where $L_1$, $L_2$ are line bundles on $S$ 
satisfying $L_1\otimes L_2\cong K_S$. 
In particular, when $\beta$ is irreducible and $L_i\cdot \beta<0$ ($i=1,2$), the moduli space $P_n(X, \beta)$ of stable pairs on $X$ is
compact and smooth (Lemma \ref{stable pair on del-pezzo}, Proposition \ref{prop on del-pezzo}).
So pair invariants are well-defined and we can also study our conjectures in this case. In particular, we have 
\begin{prop}\emph{(Proposition \ref{verify conj on del-pezzo surface})}
Let $S$ be a del-Pezzo surface and $L^{-1}_1$, $L^{-1}_2$ be two ample line bundles on $S$ such that $L_1\otimes L_2\cong K_S$. 
Denote $\beta\in H_2(X,\mathbb{Z})\cong H_2(S,\mathbb{Z})$ to be an irreducible curve class on $X=\mathrm{Tot}_S(L_1\oplus L_2)$.
Then Conjecture \ref{intro:conj:GW/GV g=0}, \ref{intro:conj:GW/GV g=1} are true for $\beta$.
\end{prop}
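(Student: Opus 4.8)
The plan is to reduce both conjectures to two explicit intersection numbers on the smooth, compact moduli spaces furnished by Lemma~\ref{stable pair on del-pezzo} and Proposition~\ref{prop on del-pezzo}. First, since $\beta$ is irreducible the conjectures collapse: in Conjecture~\ref{conj:GW/GV g=0} the only surviving term of $\sum_{\beta_1+\beta_2=\beta}n_{0,\beta_1}(\gamma_i)P_{0,\beta_2}$ is $\beta_1=\beta$, $\beta_2=0$, so the $g=0$ statement becomes
\[
P_{1,\beta}(\gamma_1,\dots,\gamma_n)=n_{0,\beta}(\gamma_1,\dots,\gamma_n),
\]
while extracting the coefficient of $q^\beta$ from Conjecture~\ref{conj:GW/GV g=1} turns the $g=1$ statement into $P_{0,\beta}=n_{1,\beta}$. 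So it suffices to prove these two equalities.

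For the pair side I would argue, as in the $K_Y$ case (Proposition~\ref{prop on pair moduli on K_Y}), that under $L_i\cdot\beta<0$ every stable pair in class $\beta$ is scheme-theoretically supported on the zero section $S\subset X$, so $P_n(X,\beta)\cong P_n(S,\beta)$; for $\beta$ irreducible on a surface the latter is the relative Hilbert scheme $\Hilb^{\,n+p_a(\beta)-1}(\cC/B)$ of points on the universal curve $\cC\to B:=|\beta|$, which is smooth. Hence the $\DT_4$ virtual class is the Euler class of an obstruction bundle $\mathrm{Ob}_n$ on this relative Hilbert scheme; via the Koszul resolution of $\oO_S$ on $X$ one identifies $\mathrm{Ob}_n$ with the real square-root piece of $\Ext^2_X(\mathbb{I},\mathbb{I})_0$, assembled from the surface-level groups $\Ext^\bullet_S(I_S,I_S\otimes L_1^{-1})$ and $\Ext^\bullet_S(I_S,I_S\otimes K_S^{-1})$ — the hypothesis $L_1\otimes L_2\cong K_S$ being precisely what makes the $L_1^{-1}$- and $L_2^{-1}$-twisted contributions Serre-dual, hence what makes the square root (and the orientation) available. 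Fixing the corresponding orientation, one then has
\[
P_{0,\beta}=\int_{\Hilb^{\,p_a(\beta)-1}(\cC/B)}e(\mathrm{Ob}_0),\qquad
P_{1,\beta}(\gamma_1,\dots,\gamma_n)=\int_{\Hilb^{\,p_a(\beta)}(\cC/B)}e(\mathrm{Ob}_1)\cup\prod_{i=1}^n\tau(\gamma_i),
\]
where the insertion $\tau(\gamma)=\pi_{P\ast}(\pi_X^\ast\gamma\cup\ch_3(\mathbb{F}))$ restricts on $S$ to the usual incidence class of the universal support curve. These integrals I would evaluate by pushing forward to $B$ and using the known geometry of relative Hilbert schemes of points on a family of curves in a surface (compactified Jacobians), with an outright computation when $p_a(\beta)\le 1$.

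For the Gromov--Witten side, the hypotheses $L_i\cdot\beta<0$ force every stable map in class $\beta\in H_2(S)$ into the zero section, so the moduli stack of stable maps to $X$ in class $\beta$ coincides with that of stable maps to $S$, and $[\overline{M}_{g,k}(X,\beta)]^{\mathrm{vir}}$ is cut down from $[\overline{M}_{g,k}(S,\beta)]^{\mathrm{vir}}$ by the (twisted) contribution of $R^\bullet\pi_\ast f^\ast(L_1\oplus L_2)$; thus $\mathrm{GW}_{0,\beta}(\gamma_i)$ and $\mathrm{GW}_{1,\beta}$ are ordinary local-surface Gromov--Witten numbers. Feeding these into the Klemm--Pandharipande relations and again using that $\beta$ is irreducible (so the multiple-cover and meeting-invariant corrections drop out at order $q^\beta$), one gets $n_{0,\beta}(\gamma_i)=\mathrm{GW}_{0,\beta}(\gamma_i)$ and $n_{1,\beta}=\mathrm{GW}_{1,\beta}+\tfrac1{24}n_{0,\beta}(c_2(X))$. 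For the $g=0$ identity I would then run the argument already used for $K_Y$ and $Y\times E$: identify $P_1(S,\beta)\to M_{1,\beta}(S)$, prove the virtual push-forward formula, identify $M_{1,\beta}(S)$ with the moduli of one-dimensional sheaves on $X$ (again by the negativity), and invoke \cite{CMT} together with the computations of \cite{Cao}, reducing $P_{1,\beta}(\gamma_i)=n_{0,\beta}(\gamma_i)$ to the genus-$0$ conjecture for irreducible classes. What remains is the genuinely new identity $P_{0,\beta}=\mathrm{GW}_{1,\beta}+\tfrac1{24}n_{0,\beta}(c_2(X))$.

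The hard part will be this last, genus-$1$, equality. One must pin down the half-dimensional obstruction bundle $\mathrm{Ob}_0$ on $\Hilb^{\,p_a(\beta)-1}(\cC/B)$ — including the global sign forced by the choice of orientation — and then show that its Euler number matches the local-surface genus-$1$ GW invariant corrected by the single $c_2(X)$-insertion term. Both the precise extraction of the square-root obstruction bundle from the $(-2)$-shifted theory and the ensuing tautological intersection computation on the relative Hilbert scheme are where the real work lies; what makes the whole approach feasible is exactly the compactness and smoothness supplied by Lemma~\ref{stable pair on del-pezzo} and Proposition~\ref{prop on del-pezzo}, which turn both sides into honest integrals over smooth projective varieties.
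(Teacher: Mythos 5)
There is a genuine gap: your outline never actually proves the genus--$1$ identity. You yourself flag $P_{0,\beta}=\mathrm{GW}_{1,\beta}+\tfrac1{24}n_{0,\beta}(c_2(X))$ as ``where the real work lies,'' and the plan you give for it --- extract the square-root obstruction bundle on a relative Hilbert scheme $\Hilb^{p_a(\beta)-1}(\cC/B)$ and match its Euler number against a local genus--$1$ GW computation --- is a program, not an argument. The key observation that makes the statement a short proof rather than a hard tautological computation is one you never use: since $L_1^{-1},L_2^{-1}$ are \emph{ample} and $L_1\otimes L_2\cong K_S$, the surface $S$ contains no $(-1)$-curve (for such a curve $C$ one would get $-1=\deg K_S|_C=\deg L_1|_C+\deg L_2|_C\leqslant -2$), so $S$ is $\mathbb{P}^2$ or $\mathbb{P}^1\times\mathbb{P}^1$ and every curve in an irreducible class $\beta$ is a smooth rational curve. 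By Lemma \ref{lem on emply pair moduli} this forces $P_0(S,\beta)=\emptyset$, hence $[P_0(X,\beta)]^{\rm{vir}}=0$ via Proposition \ref{prop on del-pezzo}, i.e.\ $P_{0,\beta}=0$; on the other side $n_{1,\beta}=0$ by the computations of \cite{KP}. That disposes of Conjecture \ref{conj:GW/GV g=1} with no intersection theory on relative Hilbert schemes at all (equivalently: your $\Hilb^{p_a(\beta)-1}(\cC/B)$ is empty because $p_a(\beta)=0$, a fact your general setup hides).

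The same observation also short-circuits your genus--$0$ step, and there your reduction is aimed at the wrong geometry: the computations of \cite{Cao} (and the $K_Y$ discussion in this paper) concern local Fano $3$-folds $K_Y$, not $X=\mathrm{Tot}_S(L_1\oplus L_2)$, so ``invoke \cite{CMT} together with \cite{Cao}'' does not cover the case at hand. What the paper does instead, once all curves in class $\beta$ are known to be smooth $\mathbb{P}^1$'s, is identify $P_1(X,\beta)\cong P_1(S,\beta)\cong M_{1,\beta}(S)\cong\overline{M}_{0,0}(S,\beta)\cong\overline{M}_{0,0}(X,\beta)$, match the half obstruction space $\Ext^1_S(\oO_C,\oO_C\otimes L_1)$ of Proposition \ref{prop on del-pezzo} with the stable-map obstruction $H^1(\mathbb{P}^1,f^{\ast}TX)$ term by term (using $L_1\otimes L_2\cong K_S$ and Serre duality on $C$), and then verify the insertion-level equalities by the explicit values $P_{1,1}([\mathrm{pt}])=n_{0,1}([\mathrm{pt}])=-1$ for $S=\mathbb{P}^2$ and $P_{1,(1,0)}([\mathrm{pt}])=n_{0,(1,0)}([\mathrm{pt}])=1$ etc.\ for $S=\mathbb{P}^1\times\mathbb{P}^1$. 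Your compactness/smoothness inputs (Lemma \ref{stable pair on del-pezzo}, Proposition \ref{prop on del-pezzo}) and the reduction of both conjectures to $P_{1,\beta}(\gamma)=n_{0,\beta}(\gamma)$ and $P_{0,\beta}=n_{1,\beta}$ are correct, but without the classification of $S$ and of the curves in class $\beta$, the rest of your proposal does not close either identity.
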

In fact such a del Pezzo surface must be $\mathbb{P}^2$ or $\mathbb{P}^1\times \mathbb{P}^1$ (see the proof of Proposition \ref{verify conj on del-pezzo surface}), and the corresponding $X$ is given by 
\begin{align*} \mathrm{Tot}_{\mathbb{P}^2}(\oO(-1)\oplus \oO(-2)), \quad   \mathrm{Tot}_{\mathbb{P}^1\times \mathbb{P}^1}(\oO(-1,-1)\oplus \oO(-1,-1)).  \end{align*}
By using computations due to Kool and Monavari \cite{KM}, one can check Conjecture \ref{intro:conj:GW/GV g=0}, \ref{intro:conj:GW/GV g=1}
for small degree curve classes on such $X$ (see Section \ref{small deg section} for details).

\subsection{Verifications of the conjecture III: local curves}
Let $C$ be a smooth projective curve. 
We consider a CY 4-fold $X$ given by 
\begin{align*}
X=\mathrm{Tot}_{C}(L_1\oplus L_2\oplus L_3),
\end{align*}
 where 
$L_1, L_2, L_3$ are line bundles on $C$ satisfying 
$L_1\otimes L_2\otimes L_3\cong \omega_C$. 
The
three dimensional complex torus $T=(\mathbb{C}^{\ast})^{\times 3}$
acts on $X$ fiberwise over $C$. 
The $T$-equivariant GW invariants
\begin{align*}
\mathrm{GW}_{g, d[C]}(X) \in \mathbb{Q}(\lambda_1, \lambda_2, \lambda_3)
\end{align*}
can be defined via equivariant residue. 
Here $\lambda_i$ are the equivariant parameters
with respect to the $T$-action. 

On the other hand, there 
is a two dimensional subtorus $T_0\subseteq (\mathbb{C}^{\ast})^{3}$ which preserves the CY 4-form on $X$. 
We may define equivariant pair invariants 
\begin{align*}
P_{n,d[C]}(X)\in\mathbb{Q}(\lambda_1, \lambda_2)
\end{align*}
as rational functions in terms of equivariant parameters of $T_0$ following a localization principle for $\mathrm{DT}_4$ invariants 
(see Section \ref{localiza for stable pair}, \cite{CL}, \cite[Sect. 4.2]{CMT}). 

When $C=\mathbb{P}^1$ and $X=\mathcal{O}_{\mathbb{P}^{1}}(l_1,l_2,l_3)$, we explicitly determine $P_{1,d[C]}(X)$ for $d\leqslant2$ (Proposition \ref{compute deg two pair inv}). Note in this case 
$P_{0,[\mathbb{P}^{1}]}(X)=0$ and there are no insertions, so an equivariant analogue of Conjecture \ref{intro:conj:GW/GV g=0} is given by the following conjecture: 
\begin{conj}\label{intro:equi genus zero conj in deg two} \emph{(Conjecture \ref{equi genus zero conj in deg two})}
Let $X=\mathcal{O}_{\mathbb{P}^{1}}(l_1,l_2,l_3)$ for $l_1+l_2+l_3=-2$. Then 
\begin{equation}\mathrm{GW}_{0,2}(X)=P_{1,2[\mathbb{P}^{1}]}(X)+\frac{1}{8}P_{1,[\mathbb{P}^{1}]}(X). \nonumber \end{equation}
\end{conj}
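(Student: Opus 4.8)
The plan is to verify Conjecture \ref{intro:equi genus zero conj in deg two} by explicit computation on both sides, exploiting the fact that for $X=\mathcal{O}_{\mathbb{P}^1}(l_1,l_2,l_3)$ with $l_1+l_2+l_3=-2$ everything localizes to the fixed locus of the torus action and reduces to $T_0$-equivariant residue integrals over spaces with a combinatorially transparent fixed-point structure. First I would compute the left-hand side: the genus $0$, degree $2$ equivariant Gromov-Witten invariant $\mathrm{GW}_{0,2}(X)$ via the multiple-cover/localization formalism of \cite{KP}. Since the only effective classes are $d[\mathbb{P}^1]$ and the Picard rank is one, the degree $2$ GW invariant is determined by $n_{0,1}$ and $n_{0,2}$ through the Klemm-Pandharipande multiple cover identity $\mathrm{GW}_{0,d}=\sum_{k|d}k^{-3}n_{0,d/k}$, so $\mathrm{GW}_{0,2}=n_{0,2}+\tfrac18 n_{0,1}$; here the $\tfrac18=2^{-3}$ is precisely the source of the $\tfrac18$ appearing in the conjectured identity. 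Hence the content of the conjecture is equivalent to the two equalities $P_{1,2[\mathbb{P}^1]}(X)=n_{0,2}$ and $P_{1,[\mathbb{P}^1]}(X)=n_{0,1}$, the latter of which is the (equivariant) irreducible case already isolated in Conjecture \ref{intro:conj:GW/GV g=0}.

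Next I would compute the right-hand side using Proposition \ref{compute deg two pair inv}, which by hypothesis already gives closed formulas for $P_{1,d[\mathbb{P}^1]}(X)$ for $d\le 2$ as rational functions in $\lambda_1,\lambda_2$. The strategy is then to match: substitute these formulas into $P_{1,2[\mathbb{P}^1]}(X)+\tfrac18 P_{1,[\mathbb{P}^1]}(X)$ and compare with the localization expression for $\mathrm{GW}_{0,2}(X)$, the latter computed either from the Aspinwall-Morrison-type multiple cover contributions of a single rigid rational curve (the zero section $\mathbb{P}^1\subset X$) deformed appropriately, or directly by the virtual-localization formula of \cite{KP} on $\overline{M}_{0,0}(X,2)$. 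Concretely, $\overline{M}_{0,0}(X,2)$ fibers over $\overline{M}_{0,0}(\mathbb{P}^1,2)$ and one integrates the equivariant Euler class of the obstruction bundle built from $H^\bullet(\mathbb{P}^1,f^*(L_1\oplus L_2\oplus L_3))$ against the CY$_4$ insertion; restricting to $T_0$ one gets a residue in $\lambda_1,\lambda_2$ which I would evaluate fixed-point by fixed-point (two fixed curves of multiplicity $2$ and one nodal configuration of two degree-one curves meeting at a fixed point).

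The bookkeeping here is standard but delicate: on the pair side the $T$-fixed stable pairs $(\mathcal{O}_X\to F)$ with $\ch(F)=(0,0,0,2,1)$ are supported on (thickenings of) the zero section, and the $\mathrm{DT}_4$ localization of \cite{CL, CMT} assigns to each fixed pair a square-root Euler class contribution $e\big(\text{(half of) }\Ext^\bullet\big)$ with a sign ambiguity from the choice of orientation — so I would need to pin down signs on each of the finitely many fixed components so that the resulting rational function in $\lambda_1,\lambda_2$ agrees with the GW side. The main obstacle is exactly this orientation/sign matching: as the authors themselves flag, there is no canonical recipe, so one must check (as in the compact cases) that a consistent global choice of orientation exists making all fixed-point contributions line up, rather than producing spurious sign discrepancies between components; once the unsigned contributions are shown to agree up to sign, exhibiting such a choice and verifying the final rational-function identity is a finite check. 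A secondary technical point is confirming $P_{0,[\mathbb{P}^1]}(X)=0$ (needed so that the degree $2$ formula is the first nontrivial relation and no $P_{0,\beta_2}$ correction terms intrude), which follows because $P_0(X,[\mathbb{P}^1])$ parametrizes a structure sheaf of a fixed $\mathbb{P}^1$ with a trivial obstruction contribution forcing the virtual class to vanish, and this I would record as a short preliminary lemma before assembling the two sides.
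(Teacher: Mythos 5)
The statement you are trying to prove is a conjecture in the paper, and your proposal does not close the gap that keeps it a conjecture. Your strategy — compute $\mathrm{GW}_{0,2}(X)$ by localization on $\overline{M}_{0,0}(X,2)$, compute $P_{1,2[\mathbb{P}^1]}(X)$ and $P_{1,[\mathbb{P}^1]}(X)$ by $T_0$-equivariant $\mathrm{DT}_4$ localization, and compare rational functions in $\lambda_1,\lambda_2$ — is exactly what the paper does: the GW side is formula (\ref{GW02}), the pair side is the residue formula of Proposition \ref{compute deg two pair inv} (together with the degree one computation $P_{1,[\mathbb{P}^1]}(X)=\lambda_1^{-l_1-1}\lambda_2^{-l_2-1}\lambda_3^{-l_3-1}$). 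But at that point one is left with a family of identities between explicit residue expressions depending on the integers $(l_1,l_2,l_3)$ with $l_1+l_2+l_3=-2$, and no general argument for these identities is known; this is precisely why the statement remains Conjecture \ref{equi genus zero conj in deg two}, with Theorem \ref{g=0 thm on local curve} verifying it only in the finite range $|l_1|\leqslant 10$, $|l_2|\leqslant 10$. Your assertion that "verifying the final rational-function identity is a finite check" conflates the check for a single fixed $(l_1,l_2,l_3)$ (which is indeed finite, and is how the paper verifies its range) with a proof for the infinite two-parameter family, which is what the conjecture asserts. Without a uniform evaluation or manipulation of the residues $A(l_1,l_2,l_3,k)$, $B(l_1,l_2,l_3,k)$ matching (\ref{GW02}) for all $l_i$, the proposal is a restatement of the problem, not a proof.

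Two secondary points. First, you locate the main difficulty in a fixed-point-by-fixed-point orientation/sign matching, but in the paper the sign ambiguity is absorbed once and for all into the definition of the invariant via the choice of square root (\ref{choose square root in deg two}) and the resulting formula (\ref{def of deg two g=0 inv}); the genuine obstruction is the unproven identity of rational functions, not sign bookkeeping. Your preliminary reduction to "$P_{1,d}=n_{0,d}$ for $d=1,2$" is also only a reformulation, since in the non-compact equivariant setting the $n_{0,d}$ have no independent definition beyond the multiple cover identity. Second, your argument for $P_{0,[\mathbb{P}^1]}(X)=0$ is off: it is not that the moduli space carries a trivially vanishing virtual class, but that $P_0(X,[\mathbb{P}^1])$ is empty — a stable pair with $\chi(F)=0$ on an irreducible rational curve would force $F\cong\mathcal{O}_{\mathbb{P}^1}(-1)$, contradicting surjectivity of the section in dimension one (this is the observation preceding Conjecture \ref{equi genus zero conj in deg two} and Lemma \ref{lem on emply pair moduli}).
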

We can verify the above equivariant conjecture in a large number of examples.
\begin{thm} \emph{(Theorem \ref{g=0 thm on local curve})}
Conjecture \ref{intro:equi genus zero conj in deg two} is true if $|l_1|\leqslant10$ and $|l_2|\leqslant10$. 
\end{thm}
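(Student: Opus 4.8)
The plan is to verify the identity $\mathrm{GW}_{0,2}(X)=P_{1,2[\mathbb{P}^1]}(X)+\tfrac18 P_{1,[\mathbb{P}^1]}(X)$ by explicitly computing both sides as rational functions of $(\lambda_1,\lambda_2)$ for all $(l_1,l_2,l_3)$ with $l_1+l_2+l_3=-2$ in the stated range $|l_1|\le 10$, $|l_2|\le 10$ (so $l_3=-2-l_1-l_2$ is determined), and checking equality case by case. First I would assemble the right-hand side: by Proposition \ref{compute deg two pair inv} we have closed formulas for $P_{1,d[\mathbb{P}^1]}(X)$ for $d\le 2$. For $d=1$, $P_1(X,[\mathbb{P}^1])$ parametrizes pairs $(\oO_X\to F)$ with $F$ supported on the zero section $\mathbb{P}^1$, $\chi(F)=1$; its $T_0$-fixed locus and the equivariant Euler class of the (half-)obstruction bundle give $P_{1,[\mathbb{P}^1]}(X)$ as an explicit rational function. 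For $d=2$, the fixed locus decomposes into components according to whether the degree-$2$ stable pair is ``thickened along the fibre directions'' (scheme-theoretic structure in the $L_i$-directions over $\mathbb{P}^1$) or supported on a length-$2$ thickening inside the curve; one runs the localization principle for $\mathrm{DT}_4$ of Section \ref{localiza for stable pair}, taking square roots of equivariant Euler classes with signs fixed by the case-by-case orientation choice alluded to in the introduction, to obtain $P_{1,2[\mathbb{P}^1]}(X)$.

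Next I would compute the left-hand side $\mathrm{GW}_{0,2}(X)$. Degree-$2$ genus-$0$ stable maps to $X=\oO_{\mathbb{P}^1}(l_1,l_2,l_3)$ factor through the zero section $\mathbb{P}^1$, so $\overline{M}_{0,0}(X,2)$ is supported on $\overline{M}_{0,0}(\mathbb{P}^1,2)$, and the $T$-equivariant virtual class is obtained by capping with the Euler class of the obstruction bundle $R^1\pi_*(f^*(L_1\oplus L_2\oplus L_3))$. One then applies the Graber–Pandharipande virtual localization formula with respect to the big torus $T$, sums over the (standard, well-understood) fixed loci of $\overline{M}_{0,0}(\mathbb{P}^1,2)$ — a vertex contribution and the degree-$2$ multiple cover of the single fixed $\mathbb{P}^1$ — restricts the equivariant parameters to the CY subtorus $T_0$ (imposing $\lambda_1+\lambda_2+\lambda_3=0$ after suitable normalization matching the $\omega_C$ condition), and reads off $\mathrm{GW}_{0,2}(X)$ as a rational function in $\lambda_1,\lambda_2$. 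In fact these numbers can also be extracted from the tables/formulas in \cite{KP} for local curves, which gives an independent cross-check.

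With both sides in hand as explicit rational functions, the theorem reduces to a finite symbolic verification: for each of the finitely many pairs $(l_1,l_2)$ in the box, substitute into the two formulas and confirm the identity (after clearing denominators, a polynomial identity in $\lambda_1,\lambda_2$). I would organize this as a short computation, noting any symmetry $l_i\leftrightarrow l_j$ that cuts the number of genuinely distinct cases, and record the matching orientation sign needed in each case so that the ``suitable choice of orientation'' in the conjecture is pinned down explicitly here.

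The main obstacle is the degree-$2$ pair side: getting $P_{1,2[\mathbb{P}^1]}(X)$ right requires a careful analysis of the $T_0$-fixed locus of $P_2(X,2[\mathbb{P}^1])$ (including its scheme structure and the contributions of non-reduced pairs), the correct square-root Euler class in the $\mathrm{DT}_4$ localization, and — most delicately — a consistent choice of orientation on each fixed component so that the signs assemble into the conjectured combination $P_{1,2[\mathbb{P}^1]}+\tfrac18 P_{1,[\mathbb{P}^1]}$. Once Proposition \ref{compute deg two pair inv} supplies the closed formula, the remaining work is bookkeeping, but the sign/orientation matching is where the real content (and the only place an error could hide) lies.
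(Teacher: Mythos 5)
Your proposal is correct and takes essentially the same route as the paper: both sides are put in closed form (the GW side by torus localization, already recorded in (\ref{GW02}); the pair side via Proposition \ref{compute deg two pair inv} together with the degree-one computation of $P_{1,[\mathbb{P}^1]}(X)$), and the theorem then reduces to a finite symbolic check over the box $|l_1|,|l_2|\leqslant 10$. The only differences are cosmetic: the paper delegates that symbolic verification to \cite[Thm. 4.12]{CMT}, whose residue formula agrees with the one in Proposition \ref{compute deg two pair inv}, and the fixed locus you analyze should be that of $P_1(X,2[\mathbb{P}^1])$ (stable pairs with $\chi=1$ and class $2[\mathbb{P}^1]$, thickened only in the fiber directions), not $P_2(X,2[\mathbb{P}^1])$.
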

When $C$ is an elliptic curve and $L_i$'s are general degree zero line bundles on $C$, one can define pair invariants and explicitly compute them.
\begin{thm}\emph{(Theorem \ref{local elliptic curve})}
Let $C$ be an elliptic curve, $L_i \in \Pic^0(C)$ $(i=1,2,3)$ general line bundles satisfying $L_1 \otimes L_2 \otimes L_3 \cong \omega_C$ and $X=\mathrm{Tot}_C(L_1\oplus L_2\oplus L_3)$. 

Then stable pair invariants $P_{0, d[C]}(X)$ are well-defined and fit into the generating series
\begin{align*}
\sum_{d\geqslant 0} P_{0, d[C]}(X)\,q^m=M(q),
\end{align*}
where $M(q):=\prod_{k\geqslant 1}(1-q^{k})^{-k}$ is the MacMahon function.
\end{thm}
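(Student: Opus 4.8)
The plan is to compute both sides directly by identifying the relevant stable pair moduli spaces with punctual Hilbert schemes on the curve $C$, and then to invoke a vertex/localization computation of the equivariant invariants. First I would analyze the geometry of stable pairs $(s\colon \oO_X\to F)$ on $X=\mathrm{Tot}_C(L_1\oplus L_2\oplus L_3)$ with $\ch(F)=(0,0,0,d[C],n)$ and $n=0$. Since $n=\chi(F)-$ corrections vanishes and $F$ is supported on $d$ copies of $C$ (in the fiber directions), the section $s$ together with the cokernel being $0$-dimensional forces $F$ to be a thickening of the zero section cut out by a subscheme; when $d=1$ the pair data is precisely a length-$k$ subscheme of the normal directions, so $P_{0,d[C]}(X)$ is supported (via $T$-fixed loci) on products of punctual Hilbert schemes $\Hilb^{m_i}(\mathbb{C}^3,0)$ attached along $C$. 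The upshot is an identification of the $T_0$-fixed locus, and hence of the equivariant integral, with a sum of local $\mathrm{DT}_4$-type vertex contributions over partitions.

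Second, I would set up the $T_0$-equivariant virtual localization for the $\mathrm{DT}_4$ virtual class following Section \ref{localiza for stable pair} and \cite{CL}, \cite[Sect. 4.2]{CMT}, reducing $P_{0,d[C]}(X)$ to a sum over $T$-fixed pairs of equivariant Euler classes of (half of) the virtual normal bundle, with the sign ambiguity fixed by the orientation choice. The key structural input is that the generating series $\sum_{d\ge 0}P_{0,d[C]}(X)q^d$ factors as a product of local contributions over the fixed points of the $T$-action on $C$ — i.e. a "gluing" or TQFT-type argument — so that it suffices to evaluate the single-point (affine $\mathbb{C}^3$-over-a-point) vertex. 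For $L_i\in\Pic^0(C)$ general on an elliptic curve, $C$ itself carries a translation action making the computation even more uniform: the relevant moduli reduce to $\Hilb^m$ of the total space of a rank-$3$ bundle on $C$, and deformation invariance lets one degenerate $L_i$ so that the answer depends only on the local model.

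Third, I would match the resulting vertex sum with the MacMahon function. The relevant identity is the known evaluation of the equivariant $\mathrm{DT}_4$ (or "magnificent four") vertex: the fully equivariant count of $0$-dimensional subschemes of $\mathbb{C}^4$ (here appearing as $\mathbb{C}^3$ fiber $\times$ one curve direction, with the CY $4$-form constraint cutting $T$ down to $T_0$) gives $\sum_m [\Hilb^m]^{\mathrm{vir}} q^m = M(q)$ after the correct sign/orientation normalization — this is precisely the CY$_4$ analogue of the classical MacMahon computation $\sum_m e(\Hilb^m(\mathbb{C}^3))q^m = M(q)$. Assembling: the product-over-fixed-points structure would naively give $M(q)^{(\text{number of fixed points})}$, but the Calabi–Yau condition $L_1\otimes L_2\otimes L_3\cong\omega_C$ and the genus-$1$, degree-$0$ hypothesis make the "edge" contributions between fixed points trivial and collapse the product to a single factor $M(q)$; for the elliptic curve with general degree-$0$ line bundles the $T$-fixed locus of $C$ is handled by the self-translation argument rather than isolated points, yielding the same clean answer.

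The main obstacle I expect is twofold: (i) pinning down the orientation/sign so that every vertex contribution enters with the sign needed for the product to equal $M(q)$ rather than $M(-q)$ or a sign-twisted variant — as the authors themselves note, there is no canonical recipe, so this must be done by hand and checked for consistency across degrees; and (ii) justifying rigorously the factorization of the generating series into local models together with the vanishing of edge/gluing terms in the non-compact, equivariant $\mathrm{DT}_4$ setting, where the gluing formalism is less developed than in dimension $3$. Once those are in place, the identification with $M(q)$ is essentially the CY$_4$ vertex computation, which I would cite or reproduce from the $\mathrm{DT}_4$ literature.
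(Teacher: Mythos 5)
Your proposal does not follow the mechanism that actually makes this computation work, and two of its load-bearing steps fail. First, the factorization ``as a product of local contributions over the fixed points of the $T$-action on $C$'' has no content here: the torus $T$ (and $T_0$) acts purely on the fibers of $X\to C$ and acts trivially on the elliptic curve, so there are no isolated fixed points on $C$, no vertex/edge decomposition, and no TQFT-type gluing to invoke. Relatedly, your description of the $T_0$-fixed locus as ``products of punctual Hilbert schemes $\Hilb^{m_i}(\mathbb{C}^3,0)$ attached along $C$'' is not what the fixed locus is. The paper's Lemma \ref{lem:ell:fixed} (building on Lemma \ref{lem:Tfix}) shows that, because $\deg L_i=0$ and $\chi(F)=0$, any $T_0$-fixed pair is exactly $\oO_X\twoheadrightarrow\bigoplus_{(i_1,i_2,i_3)\in\Delta}L_1^{-i_1}\otimes L_2^{-i_2}\otimes L_3^{-i_3}$ for a $3$d Young diagram $\Delta$ of size $m$ — a ``cylinder'' over a plane partition with no punctual/embedded pieces ($Z_{i_1,i_2,i_3}=0$), so the fixed locus is a finite set of reduced points indexed by plane partitions.

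Second, the appeal to the equivariant $\mathbb{C}^4$/magnificent-four vertex to produce $M(q)$ is not the right mechanism (and the identity you quote is not correct as stated; the degree-zero $\mathrm{DT}_4$ series involves $M(-q)$-type answers and nontrivial equivariant weights). In the paper, no nontrivial vertex is evaluated at all: each fixed point contributes exactly $\pm 1$ because \emph{both} the $T_0$-fixed and the movable parts of $\Ext^1_X(I,I)_0$ and $\Ext^2_X(I,I)_0$ vanish. The fixed part vanishes by the Behrend--Fantechi vanishing $V^1_{i,i,i}=V^2_{i,i,i}=0$ for monomial ideals in $\mathbb{C}^3$ (via the local-to-global spectral sequence of Lemma \ref{lem:isom:ext}), and the movable part vanishes precisely because for \emph{general} $L_1,L_2\in\Pic^0(C)$ on an elliptic curve one has $H^0(L_1^a\otimes L_2^b)=H^1(L_1^a\otimes L_2^b)=0$ for $(a,b)\neq(0,0)$. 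This generality hypothesis is essential, not a convenience to be removed by ``deformation invariance'' (which is not available in this non-compact equivariant setting, and for special $L_i$, e.g. trivial ones, the fixed loci become positive-dimensional). With all deformation/obstruction spaces zero, the invariant is simply the number of plane partitions of $m$, and $M(q)$ appears combinatorially as their generating function — no square-root Euler class computation or gluing is needed.
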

Similarly, if we have $n_{1,\beta}$ \big($\beta\in H_{2}(X,\mathbb{Z})$\big) many such elliptic curves, then they contribute to pair invariants according to the formula:
\begin{equation}\sum_{\beta\geqslant 0} P_{0, \beta}q^\beta=\prod_{\beta>0}M(q^{\beta})^{n_{1,\beta}}. \nonumber \end{equation}
This calculation arises in the heuristic argument for our genus one conjecture (Conjecture \ref{intro:conj:GW/GV g=1}) in the `ideal' situation as families of rational curves do not contribute to pair invariants $P_{0, \beta}$'s (see Section \ref{heuristic argument} for more details).

\subsection{Speculation on the generating series of stable pair invariants}
\label{subsec:speculation}
As before, if we allow insertions, we can use the virtual class (\ref{pair moduli vir class intro}) and insertions to define 
stable pair invariants of $P_n(X,\beta)$ for any $n$. 

For $\gamma\in H^{4}(X, \mathbb{Z})$, $\tau(\gamma)\in H^2(P_n(X,\beta),\mathbb{Z})$, so we may define
\begin{align*}P_{n,\beta}(\gamma):=\int_{[P_n(X,\beta)]^{\rm{vir}}} \tau(\gamma)^n. \end{align*}
Our computations and geometric arguments 
indicate that we may have the following formula, 
which generalizes the formula in Conjecture~\ref{intro:conj:GW/GV g=0},
\begin{align}\label{form of general P_n}P_{n,\beta}(\gamma)=\sum_{\begin{subarray}{c}\beta_0+\beta_1+\cdots+\beta_n=\beta  \\ \beta_0,\beta_1,\cdots,\beta_n\geqslant0 \end{subarray} }P_{0,\beta_0}\cdot \prod_{i=1}^n n_{0,\beta_i}(\gamma). \end{align}
To group these invariants into a generating series, we introduce notation 
\begin{align*}
\mathrm{PT}(X)(\exp(\gamma))
:= \sum_{n,\beta}\frac{P_{n,\beta}(\gamma)}{n!}y^n q^{\beta}. \end{align*}
Assuming Conjecture \ref{intro:conj:GW/GV g=1}, then (\ref{form of general P_n}) is equivalent to the following 
Gopakumar-Vafa type formula:
\begin{align*}
\mathrm{PT}(X)(\exp(\gamma))
=\prod_{\beta}\Big(\exp(yq^{\beta})^{n_{0,\beta}(\gamma)}\cdot 
M(q^{\beta})^{n_{1,\beta}}\Big), \end{align*}
where $n_{0,\beta}(\gamma)$ and $n_{1,\beta}$ are genus $0$ and $1$ GV type invariants of $X$ (\ref{intro:n}), (\ref{intro:n1}) respectively and $M(q)=\prod_{k\geqslant 1}(1-q^{k})^{-k}$ is the MacMahon function.
As mentioned before, GW invariants on CY 4-folds vanish for $g>1$, so they do not form a nice generating series as
in the 3-folds case. Here the advantage of considering stable pair invariants is we can use them to form 
a generating series which is conjecturally of GV form. 

A heuristic explanation of the formula will be given in Section \ref{heuristic argument}.
Some more analysis will be pursued in a future work.

\subsection{Notation and convention}
In this paper, all varieties and schemes are defined over $\mathbb{C}$. 
For a morphism $\pi \colon X \to Y$ of schemes, 
and for $\fF, \gG \in \mathrm{D^{b}(Coh(\textit{X\,}))}$, we denote by 
$\dR \hH om_{\pi}(\fF, \gG)$ 
the functor $\dR \pi_{\ast} \dR \hH om_X(\fF, \gG)$. 
We also denote by $\mathrm{ext}^i(\fF, \gG)$ the dimension of 
$\Ext^i_X(\fF, \gG)$. 

A class $\beta\in H_2(X,\mathbb{Z})$ is called \textit{irreducible} (resp. \textit{primitive}) if it is not the sum of two non-zero effective classes
(resp. if it is not a positive integer multiple of an effective class).

\subsection{Acknowledgement}
We are very grateful to Martijn Kool and Sergej Monavari for helpful discussions on stable pairs
on local surfaces and generously sharing their computational results.
We would like to thank Dominic Joyce for helpful comments on our preprint and a responsible referee for very careful reading of our paper and
helpful suggestions which improves
the exposition of the paper.
Y. C. is partly supported by the Royal Society Newton International Fellowship, the World Premier International Research Center Initiative (WPI), MEXT, Japan
and JSPS KAKENHI Grant Number JP19K23397. 
D. M. is partly supported by NSF FRG grant DMS-1159265.
Y. T. is supported by World Premier International Research Center Initiative (WPI), MEXT, Japan, 
and Grant-in Aid for Scientific Research grant (No. 26287002) from MEXT, Japan.

\section{Definitions and conjectures}
Throughout this paper, unless stated otherwise,
 $X$ is always denoted to be a smooth projective Calabi-Yau 4-fold, i.e. $K_X\cong \oO_X$.

\subsection{GW/GV conjecture on CY 4-folds}\label{subsection GW/GV conj}
Let $\overline{M}_{g, n}(X, \beta)$
be the moduli space of genus $g$, $n$-pointed stable maps
to $X$ with curve class $\beta$.
Its virtual dimension is given by
\begin{align*}
-K_X \cdot \beta+(\dim X-3)(1-g)+n=1-g+n.
\end{align*}
For integral classes
\begin{align}\label{gamma}
\gamma_i \in H^{m_i}(X, \mathbb{Z}), \
1\leqslant i\leqslant n,
\end{align}
the GW invariant is defined by
\begin{align}\label{GWinv}
\mathrm{GW}_{g, \beta}(\gamma_1, \ldots, \gamma_n)
=\int_{[\overline{M}_{g, n}(X, \beta)]^{\rm{vir}}}
\prod_{i=1}^n \mathrm{ev}_i^{\ast}(\gamma_i),
\end{align}
where $\mathrm{ev}_i \colon \overline{M}_{g, n}(X, \beta)\to X$
is the $i$-th evaluation map.

For $g=0$, the
virtual dimension of $\overline{M}_{0, n}(X, \beta)$
is $n+1$, and (\ref{GWinv})
is zero unless
\begin{align}\label{sum:m}
\sum_{i=1}^{n}(m_i-2)=2.
\end{align}
In analogy with the Gopakumar-Vafa conjecture for CY 3-folds \cite{GV}, Klemm-Pandharipande \cite{KP} defined invariants $n_{0, \beta}(\gamma_1, \ldots, \gamma_n)$ on CY 4-folds by the identity
\begin{align*}
\sum_{\beta>0}\mathrm{GW}_{0, \beta}(\gamma_1, \ldots, \gamma_n)q^{\beta}=
\sum_{\beta>0}n_{0, \beta}(\gamma_1, \ldots, \gamma_n) \sum_{d=1}^{\infty}
d^{n-3}q^{d\beta},
\end{align*}
and conjecture the following
\begin{conj}\emph{(\cite[Conjecture~0]{KP})}\label{KPconj}
The invariants $n_{0, \beta}(\gamma_1, \ldots, \gamma_n)$
are integers.
\end{conj}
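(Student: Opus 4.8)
The plan is to deduce integrality from the stable pair interpretation, i.e. to prove Conjecture \ref{KPconj} as a consequence of Conjecture \ref{intro:conj:GW/GV g=0}. The first, essentially formal, observation is that the stable pair invariants are manifestly integers: by Theorem \ref{vir class of pair moduli} the virtual class $[P_n(X,\beta)]^{\mathrm{vir}}$ lies in $H_{2n}(P_n(X,\beta),\mathbb{Z})$, and for $\gamma_i\in H^{m_i}(X,\mathbb{Z})$ the insertion class $\tau(\gamma_i)=\pi_{P\ast}(\pi_X^{\ast}\gamma_i\cup\ch_3(\mathbb{F}))$ lies in $H^{m_i-2}(P_1(X,\beta),\mathbb{Z})$, since $\ch_3(\mathbb{F})$ is (up to sign) the Poincar\'e dual of an integral cycle and $\pi_{P\ast}$, $\pi_X^{\ast}$ and cup product all preserve integral cohomology. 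Hence $P_{0,\beta}\in\mathbb{Z}$ and $P_{1,\beta}(\gamma_1,\ldots,\gamma_n)\in\mathbb{Z}$ for every $\beta$, independently of the chosen orientation.

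Next I would invert the formula in Conjecture \ref{intro:conj:GW/GV g=0}. Fix $\vec\gamma=(\gamma_1,\ldots,\gamma_n)$ and an integral ample class $H$ on $X$; order effective curve classes by $\beta'\preceq\beta$ iff $\beta-\beta'$ is effective, and recall that effective classes $\beta'$ with $H\cdot\beta'\leqslant H\cdot\beta$ form a finite set, so the sum in Conjecture \ref{intro:conj:GW/GV g=0} is finite. Separating the term $\beta_1=\beta$ (where $P_{0,0}=1$) and using $n_{0,0}(\vec\gamma)=0$ to discard the term $\beta_1=0$, one obtains
\begin{align*}
n_{0,\beta}(\vec\gamma)=P_{1,\beta}(\vec\gamma)-\sum_{\substack{\beta_1+\beta_2=\beta\\ 0\prec\beta_1\prec\beta}} n_{0,\beta_1}(\vec\gamma)\cdot P_{0,\beta_2}.
\end{align*}
Here each such $\beta_1$ satisfies $H\cdot\beta_1<H\cdot\beta$ (since $\beta_2$ is effective and nonzero), so induction on $H\cdot\beta\in\mathbb{Z}_{>0}$ applies, with base case the irreducible classes, where the formula collapses to $n_{0,\beta}(\vec\gamma)=P_{1,\beta}(\vec\gamma)$. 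Every term on the right of the displayed identity is then an integer by the inductive hypothesis together with the previous paragraph, whence $n_{0,\beta}(\vec\gamma)\in\mathbb{Z}$. This completes the deduction, and transfers the entire difficulty onto Conjecture \ref{intro:conj:GW/GV g=0}.

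The genuine main obstacle is therefore a proof of Conjecture \ref{intro:conj:GW/GV g=0} in full generality, and this is where essentially new input is needed. A complete argument would require, first, resolving the orientation ambiguity: constructing orientations on $P_n(X,\beta)$ (and on the moduli spaces of one-dimensional stable sheaves $M_{1,\beta}(X)$ it maps to) that are canonical and compatible both with the forgetful morphisms and with the additive structure on curve classes, so that the signs in the conjectured identity are globally consistent. Second, one would want a comparison theorem between stable pair invariants and genus $0$ Gromov--Witten data, proved by a degeneration or wall-crossing argument inside the $(-2)$-shifted symplectic/$\DT_4$ framework of \cite{CL,BJ}, with the `ideal' heuristic of Section \ref{heuristic argument} as the model; the key technical inputs would be a virtual pushforward formula for $P_1(X,\beta)\to M_{1,\beta}(X)$ valid beyond the product and local cases treated in this paper, control of the contribution of thickened and non-reduced curves to $P_{0}(X,\beta)$, and deformation invariance of the $\DT_4$ virtual classes in order to reduce to geometrically well-behaved targets. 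I expect the orientation problem and the Gromov--Witten comparison --- neither currently available at this level of generality --- to be the hard parts; granting them, the integrality deduction above is purely formal.
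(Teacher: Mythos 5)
The statement you are asked to prove is Klemm--Pandharipande's integrality conjecture (\cite[Conjecture~0]{KP}), and the first thing to be clear about is that the paper does not prove it: it is quoted as Conjecture \ref{KPconj} and the whole point of the paper is to \emph{propose} a stable-pair interpretation (Conjecture \ref{conj:GW/GV g=0}) that would imply it, supported only by a heuristic argument and by verifications in special examples. So there is no ``paper proof'' to match; what you have written is a conditional reduction, not a proof of the statement.

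Your formal part is correct and is in fact the implicit motivation of the paper: $P_{0,\beta}$ and $P_{1,\beta}(\gamma_1,\ldots,\gamma_n)$ are integers because the virtual class of Theorem \ref{vir class of pair moduli} lives in integral homology and the insertions $\tau(\gamma_i)$ are integral classes (and integrality is insensitive to the orientation choice, which only flips signs componentwise); and the identity of Conjecture \ref{conj:GW/GV g=0} can be inverted by induction on $H\cdot\beta$, isolating the term $\beta_1=\beta$, $\beta_2=0$ with $P_{0,0}=1$, so that integrality of all $n_{0,\beta}(\vec\gamma)$ would follow. The genuine gap is that this transfers the entire content onto Conjecture \ref{conj:GW/GV g=0}, which is open: the paper establishes it only in the cases listed in its theorems (sextics in low degree, elliptic fibrations, products $Y\times E$, local Fano 3-folds and surfaces, local curves equivariantly), and even there the orientations are chosen case by case, with no canonical or compatible choice available in general. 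You acknowledge this yourself in your last paragraph, but it means the proposal does not prove Conjecture \ref{KPconj}; it only records (correctly) that the paper's genus-zero conjecture, once proved with a consistent choice of orientations, would yield the KP integrality by a purely formal inversion.
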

For $g=1$, the virtual dimension of
$\overline{M}_{1, 0}(X, \beta)$ is zero, so
no insertions are needed.
The genus one GW invariant
\begin{align*}
\mathrm{GW}_{1, \beta}=
\int_{[\overline{M}_{1, 0}(X, \beta)]^{\rm{vir}}}\in \mathbb{Q}
\end{align*}
is also expected to be described in terms of
certain integer valued invariants.

Let $S_1, \cdots, S_k$ be a basis of the free part of $H^4(X, \mathbb{Z})$ and
\begin{align*}
\sum_{i, j} g^{ij} [S_i \otimes S_j] \in H^8(X \times X, \mathbb{Z})
\end{align*}
be the $(4,4)$-component of K\"unneth
decomposition of the diagonal.
For $\beta_1, \beta_2 \in H_2(X, \mathbb{Z})$,
the meeting number
\begin{align*}
m_{\beta_1, \beta_2} \in \mathbb{Z}
\end{align*}
is introduced in~\cite{KP}
as a virtual number of rational curves
of class $\beta_1$ meeting rational curves of
class $\beta_2$. They are uniquely determined by
the following rules: \\

(i) The meeting invariants are symmetric,
$m_{\beta_1, \beta_2}=m_{\beta_2, \beta_1}$. \\

(ii) If either $\deg(\beta_1)\leqslant 0$ or $\deg(\beta_2)\leqslant 0$,
we have $m_{\beta_1, \beta_2}=0$. \\

(iii)
If $\beta_1 \neq \beta_2$, then
\begin{align*}
m_{\beta_1, \beta_2}=
\sum_{i, j} n_{0, \beta_1}(S_i) g^{ij} n_{0, \beta_2}(S_j)
+m_{\beta_1, \beta_2-\beta_1}+m_{\beta_1-\beta_2, \beta_2}.
\end{align*}

(iv)
If $\beta_1=\beta_2=\beta$, we have
\begin{align*}
m_{\beta, \beta}=n_{0, \beta}(c_2(X))
+\sum_{i, j}n_{0, \beta}(S_i)g^{ij} n_{0, \beta}(S_j)
-\sum_{\beta_1+\beta_2=\beta}
m_{\beta_1, \beta_2}.
\end{align*}
The invariants $n_{1, \beta}$ are
uniquely defined by the identity
\begin{align*}
\sum_{\beta>0}
\mathrm{GW}_{1, \beta}q^{\beta}=
&\sum_{\beta>0}n_{1, \beta}\, \sum_{d=1}^{\infty}
\frac{\sigma(d)}{d}q^{d\beta} \\
&+\frac{1}{24}\sum_{\beta>0} n_{0, \beta}(c_2(X))\log(1-q^{\beta}) \\
&-\frac{1}{24}\sum_{\beta_1, \beta_2}m_{\beta_1, \beta_2}
\log(1-q^{\beta_1+\beta_2}),
\end{align*}
where $\sigma(d)=\sum_{i|d}i$.
\begin{conj}\emph{(\cite[Conjecture~1]{KP})}\label{KPconj 1}
The invariants $n_{1, \beta}$ are integers.
\end{conj}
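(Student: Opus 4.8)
Since this is Klemm--Pandharipande's integrality conjecture for $n_{1,\beta}$, which is not settled in general, the route I would take is to reduce it to the sheaf-theoretic reformulation, Conjecture~\ref{intro:conj:GW/GV g=1}, and then derive integrality from there by a purely formal inversion. Granting that conjecture, for a suitable orientation we have
\[
\sum_{\beta\geqslant 0}P_{0,\beta}\,q^{\beta}=\prod_{\beta>0}M\bigl(q^{\beta}\bigr)^{n_{1,\beta}},\qquad P_{0,0}:=1,
\]
and the left-hand side is a power series with integer coefficients: by Theorem~\ref{vir class of pair moduli}, $P_{0,\beta}=\int_{[P_0(X,\beta)]^{\mathrm{vir}}}1$ is the degree of a $0$-dimensional virtual class, hence lies in $\mathbb{Z}$ for any choice of orientation. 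So the only thing left to check is that the exponents appearing in the MacMahon product are forced to be integers.

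For the inversion I would fix an ample class and induct on $\deg\beta$, using as the single algebraic input that
\[
M(q)^{-1}=\prod_{k\geqslant 1}(1-q^{k})^{k}\in\mathbb{Z}[[q]],
\]
together with the fact that $M(q)=1+q+O(q^{2})$. Assume $n_{1,\beta'}\in\mathbb{Z}$ for all $\beta'$ with $\deg\beta'<d$. Multiplying the product formula by $\prod_{0<\deg\beta'<d}M(q^{\beta'})^{-n_{1,\beta'}}$ keeps the left side a $\mathbb{Z}$-series, while the right side, reduced modulo effective classes of degree $>d$, equals $1+\sum_{\deg\beta'=d}n_{1,\beta'}q^{\beta'}$ (all cross terms $q^{\beta'+\beta''}$ have degree $2d>d$). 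Comparing the coefficient of $q^{\beta}$ for each $\beta$ of degree $d$ then gives $n_{1,\beta}\in\mathbb{Z}$. The very same mechanism, applied to the unitriangular relation $P_{1,\beta}(\gamma)=\sum_{\beta_1+\beta_2=\beta}n_{0,\beta_1}(\gamma)\,P_{0,\beta_2}$ of Conjecture~\ref{intro:conj:GW/GV g=0} together with $P_{1,\beta}(\gamma),P_{0,\beta}\in\mathbb{Z}$, would likewise recover Conjecture~\ref{KPconj} for $n_{0,\beta}$.

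The hard part is, of course, establishing Conjecture~\ref{intro:conj:GW/GV g=1} itself, which in this paper is only verified in families of examples (sextic fourfolds, elliptic fibrations, $Y\times E$, local Fano threefolds and surfaces, local curves). A general proof would need a genuine genus-one correspondence between Gromov--Witten theory and the Borisov--Joyce / Oh--Thomas stable-pair theory on Calabi--Yau $4$-folds: degeneration and deformation-invariance statements for the $\mathrm{DT}_4$ virtual classes of $P_n(X,\beta)$ parallel to the MNOP / Pandharipande--Thomas machinery for $3$-folds, a wall-crossing comparison with moduli of one-dimensional stable sheaves extending the virtual push-forward formulas of Propositions~\ref{prop on pair obs on CY3} and~\ref{prop on pair obs on Fano3}, and, most seriously, a canonical functorial choice of orientation on these moduli spaces so that the signs in the product formula are intrinsic rather than arranged by hand. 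Until such a theory is available, the realistic plan is the case-by-case verification carried out in the rest of the paper, from which integrality of the $n_{1,\beta}$ involved follows by the inversion above.
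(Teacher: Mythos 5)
This statement is Klemm--Pandharipande's integrality conjecture, which the paper does not prove: it is quoted from \cite{KP} as a conjecture, and the paper's contribution is exactly the conjectural sheaf-theoretic reformulation (Conjecture \ref{conj:GW/GV g=1}) that you invoke, together with case-by-case verifications, not an unconditional argument. Your proposal is correct and matches the paper's intended logic: since $P_{0,\beta}\in\mathbb{Z}$ by Theorem \ref{vir class of pair moduli} and $M(q)^{\pm 1}\in\mathbb{Z}[[q]]$ with $M(q)=1+q+O(q^{2})$, your inductive inversion does show that Conjecture \ref{conj:GW/GV g=1} (for a suitable orientation) implies $n_{1,\beta}\in\mathbb{Z}$, and you rightly identify that the genuinely open part is the correspondence conjecture itself.
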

For $g\geqslant 2$, GW invariants vanish for dimension reasons, so the GW/GV type integrality conjecture on CY 4-folds only applies 
for genus 0 and 1.
In \cite{KP}, GW invariants are computed directly in many examples using localization or mirror symmetry to support the conjectures. 

\subsection{Review of $\mathrm{DT_ 4}$ invariants}
Let us first introduce the set-up of $\mathrm{DT_ 4}$ invariants.
We fix an ample divisor $\omega$ on $X$
and take a cohomology class
$v \in H^{\ast}(X, \mathbb{Q})$.

The coarse moduli space $M_{\omega}(v)$
of $\omega$-Gieseker semistable sheaves
$E$ on $X$ with $\ch(E)=v$ exists as a projective scheme.
We always assume that
$M_{\omega}(v)$ is a fine moduli space, i.e.
any point $[E] \in M_{\omega}(v)$ is stable and
there is a universal family
\begin{align}\label{universal}
\eE \in \Coh(X \times M_{\omega}(v)).
\end{align}

In~\cite{BJ, CL}, under certain hypotheses,
the authors construct 
a $\mathrm{DT}_{4}$ virtual
class
\begin{align}\label{virtual}
[M_{\omega}(v)]^{\rm{vir}} \in H_{2-\chi(v, v)}(M_{\omega}(v), \mathbb{Z}), \end{align}
where $\chi(-,-)$ is the Euler pairing.
Notice that this class may not necessarily be algebraic.

Roughly speaking, in order to construct such a class, one chooses at
every point $[E]\in M_{\omega}(v)$, a half-dimensional real subspace
\begin{align*}\Ext_{+}^2(E, E)\subset \Ext^2(E, E)\end{align*}
of the usual obstruction space $\Ext^2(E, E)$, on which the quadratic form $Q$ defined by Serre duality is real and positive definite. 
Then one glues local Kuranishi-type models of form 
\begin{equation}\kappa_{+}=\pi_+\circ\kappa: \Ext^{1}(E,E)\to \Ext_{+}^{2}(E,E),  \nonumber \end{equation}
where $\kappa$ is a Kuranishi map of $M_{\omega}(v)$ at $E$ and $\pi_+$ is the projection 
according to the decomposition $\Ext^{2}(E,E)=\Ext_{+}^{2}(E,E)\oplus\sqrt{-1}\cdot\Ext_{+}^{2}(E,E)$.  \\

In \cite{CL}, local models are glued in three special cases: 
\begin{enumerate}
\item when $M_{\omega}(v)$ consists of locally free sheaves only; 
\item  when $M_{\omega}(v)$ is smooth;
\item when $M_{\omega}(v)$ is a shifted cotangent bundle of a derived smooth scheme. 
\end{enumerate}
And the corresponding virtual classes are constructed using either gauge theory or algebro-geometric perfect obstruction theory.

The general gluing construction is due to Borisov-Joyce \cite{BJ}\,\footnote{One needs to assume that $M_{\omega}(v)$ can be given a $(-2)$-shifted symplectic structure as in Claim 3.29 \cite{BJ} to apply their constructions. In the stable pairs case, we show this can be done in Lemma \ref{exist of -2 str}. }, based on Pantev-T\"{o}en-Vaqui\'{e}-Vezzosi's theory of shifted symplectic geometry \cite{PTVV} and Joyce's theory of derived $C^{\infty}$-geometry.
The corresponding virtual class is constructed using Joyce's
D-manifold theory (a machinery similar to Fukaya-Oh-Ohta-Ono's theory of Kuranishi space structures used in defining Lagrangian Floer theory).

In this paper, all computations and examples will only involve the virtual class constructions in situations (2), (3), mentioned above. We briefly 
review them as follows:  
\begin{itemize}
\item When $M_{\omega}(v)$ is smooth, the obstruction sheaf $Ob\to M_{\omega}(v)$ is a vector bundle endowed with a quadratic form $Q$ via Serre duality. Then the $\DT_4$ virtual class is given by
\begin{equation}[M_{\omega}(v)]^{\rm{vir}}=\mathrm{PD}(e(Ob,Q)).   \nonumber \end{equation}
Here $e(Ob, Q)$ is the half-Euler class of 
$(Ob,Q)$ (i.e. the Euler class of its real form $Ob_+$), 
and $\mathrm{PD}(-)$ is its 
Poincar\'e dual. 
Note that the half-Euler class satisfies 
\begin{align*}
e(Ob,Q)^{2}&=(-1)^{\frac{\mathrm{rk}(Ob)}{2}}e(Ob),  \textrm{ }\mathrm{if}\textrm{ } \mathrm{rk}(Ob)\textrm{ } \mathrm{is}\textrm{ } \mathrm{even}, \\
 e(Ob,Q)&=0, \textrm{ }\mathrm{if}\textrm{ } \mathrm{rk}(Ob)\textrm{ } \mathrm{is}\textrm{ } \mathrm{odd}. 
\end{align*}
\item When $M_{\omega}(v)$ is a shifted cotangent bundle of a derived smooth scheme, roughly speaking, this means that at any closed point $[F]\in M_{\omega}(v)$, we have Kuranishi map of type
\begin{equation}\kappa \colon
 \Ext^{1}(F,F)\to \Ext^{2}(F,F)=V_F\oplus V_F^{*},  \nonumber \end{equation}
where $\kappa$ factors through a maximal isotropic subspace $V_F$ of $(\Ext^{2}(F,F),Q)$. Then the $\DT_4$ virtual class of $M_{\omega}(v)$ is, 
roughly speaking, the 
virtual class of the perfect obstruction theory formed by $\{V_F\}_{F\in M_{\omega}(v)}$. 
When $M_{\omega}(v)$ is furthermore smooth as a scheme, 
then it is
simply the Euler class of the vector bundle 
$\{V_F\}_{F\in M_{\omega}(v)}$ over $M_{\omega}(v)$. 
\end{itemize}
${}$ \\
\textbf{On orientations}.
To construct the above virtual class (\ref{virtual}) with coefficients in $\mathbb{Z}$ (instead of $\mathbb{Z}_2$), we need an orientability result 
for $M_{\omega}(v)$, which is stated as follows.
Let  
\begin{equation}\label{det line bdl}
 \lL:=\mathrm{det}(\dR \hH om_{\pi_M}(\eE, \eE))
 \in \Pic(M_{\omega}(v)), \quad  
\pi_M \colon X \times M_{\omega}(v)\to M_{\omega}(v),
\end{equation}
be the determinant line bundle of $M_{\omega}(v)$, equipped with a symmetric pairing $Q$ induced by Serre duality.  An \textit{orientation} of 
$(\mathcal{L},Q)$ is a reduction of its structure group (from $O(1,\mathbb{C})$) to $SO(1, \mathbb{C})=\{1\}$; in other words, we require a choice of square root of the isomorphism
\begin{equation}\label{Serre duali}Q: \lL\otimes \lL \to \oO_{M_{\omega}(v)}  \end{equation}
to construct the virtual class (\ref{virtual}).
An orientability result was first obtained for $M_{\omega}(v)$ when the CY 4-fold $X$ satisfies
$\mathrm{Hol}(X)=SU(4)$ and $H^{\rm{odd}}(X,\mathbb{Z})=0$ \cite[Theorem 2.2]{CL2} and it has recently 
been generalized to arbitrary CY 4-folds by \cite{CGJ}. 
Notice that, if an orientation exists, the set of orientations forms a torsor for $H^{0}(M_{\omega}(v),\mathbb{Z}_2)$.

\subsection{Stable pair invariants on CY 4-folds}\label{subsection stable pair invs}
The notion of stable pairs on a CY 4-fold $X$ can be defined similarly
as in the case of threefolds ~\cite{PT}. It consists of data
\begin{align*}
(F, s), \ F \in \Coh(X), \ s \colon \oO_X \to F
\end{align*}
where $F$ is a pure one dimensional sheaf
and $s$ is surjective in dimension one.

For $\beta \in H_2(X, \mathbb{Z})$ and $n\in \mathbb{Z}$,
let
\begin{align}\label{PT:CY4}
P_n(X, \beta)
\end{align}
be the moduli space of stable pairs $(F, s)$ on $X$
such that $[F]=\beta$, $\chi(F)=n$.
It is a projective scheme parametrizing two-term complexes
\begin{align*}
I=(\oO_X \stackrel{s}{\to} F) \in D^b(\Coh(X))
\end{align*}
in the derived category of coherent sheaves on $X$.

Similar to moduli spaces of stable sheaves,
the stable pair moduli space (\ref{PT:CY4})
admits a deformation-obstruction theory, whose tangent, obstruction and `higher' obstruction spaces are
given by
\begin{align*}
\Ext^1(I, I)_0, \ \Ext^2(I, I)_{0},  \ \Ext^3(I, I)_{0},
\end{align*}
where $(-)_0$ denotes the trace-free part. Note that Serre duality gives an isomorphism $\Ext_0^1\cong (\Ext_0^3)^{\vee}$
and a non-degenerate quadratic form on $\Ext_0^2$. Moreover, we have
\begin{lem}\label{exist of -2 str}
The stable pair moduli space $P_n(X, \beta)$ can be given the structure of a $(-2)$-shifted symplectic derived scheme in the sense of Pantev-T\"{o}en-Vaqui\'{e}-Vezzosi \cite{PTVV}.
\end{lem}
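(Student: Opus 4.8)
The plan is to exhibit $P_n(X,\beta)$ as a mapping-type moduli stack and invoke the general existence results for shifted symplectic structures from \cite{PTVV} together with the Calabi-Yau condition $K_X\cong\oO_X$. First I would recall that $P_n(X,\beta)$ parametrizes two-term complexes $I=(\oO_X\xrightarrow{s}F)\in D^b(\Coh(X))$, and that such a complex is a perfect complex on $X$ of Tor-amplitude $[0,1]$ with a fixed determinant (trivial, since $\ch_0(I)=0$ forces $\det(I)\cong\oO_X$). Hence $P_n(X,\beta)$ sits inside the derived moduli stack $\mathbf{Perf}(X)$ of perfect complexes on $X$ (or its rigidified, fixed-determinant variant), as the locus cut out by the numerical constraints $\ch(I)=(1,0,0,-\beta,-n)$ and the open stability condition. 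What I need is that the derived enhancement of this locus carries a $(-2)$-shifted symplectic form, compatible with the Serre-duality pairing on $\Ext^2(I,I)_0$ described just above the statement.

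The key input is the PTVV theorem that for a smooth proper Calabi-Yau variety $Y$ of dimension $d$ with $\omega_Y\cong\oO_Y[0]$, the derived stack $\mathbf{Perf}(Y)$ carries a canonical $(2-d)$-shifted symplectic structure, obtained by transgressing (integrating over $Y$, using the CY trivialization) the natural $2$-shifted symplectic form on the classifying stack of the relevant symmetric monoidal category; more precisely one uses that $\mathbf{Perf}(Y)\simeq\mathrm{Map}(Y_{\mathrm{dR}}\text{-type data},\mathbf{Perf})$ and applies the "mapping stack into an $n$-symplectic target from an $\mathcal{O}$-compact $d$-oriented source" construction, with $Y$ carrying a $d$-orientation precisely because $K_Y$ is trivial. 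For $d=4$ this produces a $(-2)$-shifted symplectic form on $\mathbf{Perf}(X)$. The steps then are: (i) take the canonical $(-2)$-shifted symplectic derived stack $\mathbf{Perf}(X)$; (ii) pass to the fixed-determinant substack $\mathbf{Perf}(X)^{\det}$ (or equivalently to $I\mapsto I$ with $\det I$ rigidified), checking that the $(-2)$-shifted form restricts/descends — this is the standard $\mathbb{G}_m$-rigidification argument, and the trace-free decomposition $\Ext^*(I,I)=\Ext^*(I,I)_0\oplus H^*(\oO_X)$ is exactly what makes the restricted form non-degenerate on the traceless part; (iii) restrict further to the open substack $P_n(X,\beta)\subset\mathbf{Perf}(X)^{\det}$ given by the stable-pair condition and the fixed Chern character — an open immersion pulls back a shifted symplectic structure to one, so nothing is lost; (iv) identify the resulting form on tangent complexes: at $[I]\in P_n(X,\beta)$ the tangent complex is $\dR\Hom_X(I,I)_0[1]$, and the $(-2)$-shifted pairing is the Serre-duality pairing $\dR\Hom_X(I,I)_0\otimes\dR\Hom_X(I,I)_0\to\dR\Hom_X(I,I\otimes K_X)\to\oO[-4]$, matching the quadratic form on $\Ext^2(I,I)_0$ recorded in the excerpt.

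The main obstacle — and the only place real work is needed — is step (iii)–(iv): one must be sure that $P_n(X,\beta)$ really is an \emph{open} derived substack of $\mathbf{Perf}(X)$ (resp. its fixed-determinant version), i.e. that a stable pair, viewed as a complex $I$, is a point of $\mathbf{Perf}(X)$ whose deformation theory as a complex agrees with its deformation theory as a pair. This is the derived-categorical reformulation of stable pairs already used in the threefold case following \cite{PT}: the two-term complex $I=(\oO_X\to F)$ has $\Hom_X(I,I)=\mathbb{C}$ (it is simple) and the pair deformation theory $\Ext^i(I,I)_0$ coincides with the traceless part of $\Ext^i_X(I,I)$, so the forgetful map from pairs to complexes is an open immersion onto its image. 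Once this identification is in place, the $(-2)$-shifted symplectic structure is simply inherited, and Serre duality on $X$ (which uses $K_X\cong\oO_X$ in an essential way) supplies the non-degeneracy. The verification that everything is compatible with Claim 3.29 of \cite{BJ} — i.e. that the form is not merely closed but comes with the requisite $2$-form data and isotropic/Lagrangian structures needed to run the Borisov-Joyce machine — then follows formally from the PTVV construction, since the transgressed form is produced together with its closedness datum.
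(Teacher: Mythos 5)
Your proposal follows essentially the same route as the paper: identify $P_n(X,\beta)$ with an open locus (in fact a union of connected components, by \cite[Theorem 2.7]{PT}) of the moduli stack of perfect complexes with trivial determinant on $X$, and pull back the $(-2)$-shifted symplectic structure that \cite{PTVV} provides on that stack and its fixed-determinant substack. The only blemish is a small slip: triviality of $\det(I)$ follows from $\ch_1(I)=0$ (the complex has $\ch_0(I)=1$), not from $\ch_0(I)=0$.
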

\begin{proof}
By \cite[Theorem 2.7]{PT}, $P_n(X, \beta)$ is a disjoint union of connected components of the moduli stack of perfect complexes of coherent sheaves of trivial determinant on $X$, whose $(-2)$-shifted symplectic structure is constructed by~\cite[Theorem~0.1]{PTVV} (see \cite[Sect. 3.2, pp. 48]{PTVV} for pull-back to determinant fixed substack).
\end{proof}
Let $\mathbb{I}=(\mathcal{O}_{X\times P_n(X, \beta)}\rightarrow \mathbb{F})$ be the universal pair, the determinant line bundle
\begin{align*}
\mathcal{L}:=\mathrm{det}(\dR \hH om_{\pi_P}(\mathbb{I}, \mathbb{I})_0)
\in \Pic(P_n(X, \beta))
\end{align*}
is endowed with a non-degenerate quadratic form $Q$ defined by Serre duality,
where $\pi_P\colon X\times P_n(X, \beta)\rightarrow P_n(X, \beta)$ is the projection.
Similarly as before, the orientability issue for the pair moduli space $P_n(X, \beta)$ is whether the structure group of the quadratic line bundle $(\mathcal{L},Q)$ can be reduced from $O(1,\mathbb{C})$ to $SO(1,\mathbb{C})=\{1\}$. 
By \cite{CGJ}, these moduli spaces are always orientable.

\begin{thm}\label{vir class of pair moduli}
Let $X$ be a CY 4-fold and $\beta\in H_{2}(X,\mathbb{Z})$ and $n\in\mathbb{Z}$ be an integer.
Then $P_n(X, \beta)$ has a virtual class 
\begin{align}\label{pair moduli vir class}
[P_n(X, \beta)]^{\rm{vir}}\in H_{2n}\big(P_n(X, \beta),\mathbb{Z}\big),  \end{align}
in the sense of Borisov-Joyce \cite{BJ}, depending on the choice of orientation.
\end{thm}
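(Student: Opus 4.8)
\textbf{Proof strategy for Theorem \ref{vir class of pair moduli}.}
The plan is to exhibit $P_n(X,\beta)$ as (a union of connected components of) a moduli space to which the Borisov--Joyce machinery \cite{BJ} directly applies, and then read off the numerical virtual dimension. By Lemma \ref{exist of -2 str}, $P_n(X,\beta)$ carries a $(-2)$-shifted symplectic structure in the sense of \cite{PTVV}, obtained by restricting the canonical $(-2)$-shifted symplectic form on the moduli stack of perfect complexes of trivial determinant. Together with the orientability of $(\lL,Q)$ supplied by \cite{CGJ}, this is exactly the input needed for the construction in \cite{BJ} (specifically the hypotheses of \cite[Claim 3.29]{BJ}): one chooses at each point $[I]\in P_n(X,\beta)$ a half-dimensional real subspace $\Ext^2(I,I)_+\subset \Ext^2(I,I)_0$ on which the Serre-duality quadratic form is positive definite, forms the local Kuranishi models $\kappa_+\colon \Ext^1(I,I)_0\to \Ext^2(I,I)_+$, and glues them into a D-manifold; the chosen orientation makes the gluing consistent with $\mathbb{Z}$-coefficients rather than $\mathbb{Z}_2$.

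First I would record the relevant deformation theory: the tangent, obstruction, and higher obstruction spaces at $[I]$ are $\Ext^1(I,I)_0$, $\Ext^2(I,I)_0$, $\Ext^3(I,I)_0$, with Serre duality giving $\Ext^1(I,I)_0\cong \Ext^3(I,I)_0^{\vee}$ and a nondegenerate symmetric form $Q$ on $\Ext^2(I,I)_0$; since $X$ is a CY $4$-fold, $\Ext^{i}(I,I)_0$ vanishes outside $i\in\{1,2,3\}$ for $I=(\oO_X\to F)$ with $F$ pure of dimension one (the trace-free part kills $\Ext^0$ and $\Ext^4$). This gives a symmetric perfect obstruction-type structure of the shape required by \cite{BJ}. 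Next I would invoke Lemma \ref{exist of -2 str} and \cite{CGJ} to verify the two standing hypotheses (existence of the shifted symplectic structure; orientability), and then apply \cite{BJ} verbatim to produce a virtual class in homology with a well-defined degree depending only on a choice of orientation, which by \cite{CGJ} exists and forms a torsor under $H^0(P_n(X,\beta),\mathbb{Z}_2)$.

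It remains to compute the degree. The virtual (real) dimension of the Borisov--Joyce class equals $\ext^1(I,I)_0-\tfrac12\,\ext^2(I,I)_0$, i.e. (twice) the holomorphic half of the index; equivalently it is governed by $-\chi(I,I)_0$ computed by Hirzebruch--Riemann--Roch together with the Serre-duality symmetry $\chi(I,I)_0 = \chi(I,I)_0$ forcing the contribution of $\Ext^{2}$ to be halved. A direct Riemann--Roch computation with $\ch(I)=\ch(\oO_X)-\ch(F)$ and $\ch(F)=(0,0,0,\beta,n)$, using $\td(X)$ and $K_X\cong\oO_X$, yields real virtual dimension $2n$; concretely the only surviving term is $\int_X \ch_3(F)\cdot\td_1(X)$-type contribution together with $\chi(F)$, collapsing to $n$ complex (hence $2n$ real) dimensions. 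Hence $[P_n(X,\beta)]^{\rm vir}\in H_{2n}(P_n(X,\beta),\mathbb{Z})$.

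\textbf{Main obstacle.} The conceptual content is entirely in Lemma \ref{exist of -2 str} and the orientability theorem of \cite{CGJ}; granting those, the present statement is a packaging of \cite{BJ}. The one genuinely computational point — and the step most likely to need care — is the Riemann--Roch bookkeeping for the trace-free Euler pairing $\chi(I,I)_0$ to confirm the index is exactly $2n$ (in particular that the $\beta$-dependent and $c_i(X)$-dependent terms cancel, as they must for a CY $4$-fold), and the verification that for a stable pair $I=(\oO_X\to F)$ the groups $\Ext^0(I,I)_0$ and $\Ext^4(I,I)_0$ vanish so that the obstruction theory has the clean three-term shape required to invoke \cite{BJ}.
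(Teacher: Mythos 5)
Your proposal is correct and follows essentially the same route as the paper: the proof there is exactly the combination of Lemma \ref{exist of -2 str} (the $(-2)$-shifted symplectic structure), the orientability result of \cite{CGJ}, and an application of \cite[Thm.~1.1]{BJ}. Your additional Riemann--Roch check that $-\chi(I,I)_0=2n$ (so the class lands in $H_{2n}$) is correct and merely makes explicit a degree count the paper leaves implicit.
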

\begin{proof}
By Lemma \ref{exist of -2 str}, $P_n(X,\beta)$ has a $(-2)$-shifted symplectic structure. By \cite{CGJ}, $P_n(X,\beta)$ is orientable
in the sense stated above. Then we may apply \cite[Thm. 1.1]{BJ} to $P_n(X,\beta)$.
\end{proof}
When $n=0$, the virtual dimension of the virtual class (\ref{pair moduli vir class}) is zero. 
We define the stable pair invariant
\begin{align*}
P_{0, \beta} \cneq \int_{[P_0(X, \beta)]^{\rm{vir}}}1 \in \mathbb{Z},
\end{align*}
as the degree of the virtual class.

When $n=1$, the (real) virtual dimension of the virtual class (\ref{pair moduli vir class}) is two, so we consider
insertions as follows. 
For integral classes $\gamma_i \in H^{m_i}(X, \mathbb{Z})$, $1\leqslant i\leqslant n$, let 
\begin{align*}
\tau \colon H^{m}(X)\to H^{m-2}(P_1(X,\beta)), \quad 
\tau(\gamma):=(\pi_{P})_{\ast}(\pi_X^{\ast}\gamma \cup\ch_3(\mathbb{F}) ),
\end{align*}
where $\pi_X$, $\pi_P$ are projections from $X \times P_1(X,\beta)$
to corresponding factors, $\mathbb{I}=(\pi_X^*\oO_X\to \mathbb{F})$ is the universal pair and $\ch_3(\mathbb{F})$ is the
Poincar\'e dual to the fundamental cycle of $\mathbb{F}$.

We define the stable pair invariant
\begin{align*}P_{1,\beta}(\gamma_1,\ldots,\gamma_n):=\int_{[P_1(X,\beta)]^{\rm{vir}}} \prod_{i=1}^{n}\tau(\gamma_i). \end{align*}

\subsection{Relations with GW/GV conjecture on $\mathrm{CY_4}$}\label{subsec:geometric}
We use the stable pair invariants defined in Section \ref{subsection stable pair invs} to 
give a sheaf-theoretic approach to the GW/GV conjecture in Section \ref{subsection GW/GV conj}.

\begin{conj}\label{conj:GW/GV g=0}(Genus 0)
For a suitable choice of orientation, we have 
\begin{align*}
P_{1,\beta}(\gamma_1,\ldots,\gamma_n)=\sum_{\begin{subarray}{c}\beta_1+\beta_2=\beta  \\ \beta_1, \beta_2\geqslant0 \end{subarray} }n_{0,\beta_1}(\gamma_1,\ldots,\gamma_n)\cdot P_{0,\beta_2},  \end{align*}
where the sum is over all possible effective classes, and we set $n_{0,0}(\gamma_1,\ldots,\gamma_n):=0$, $P_{0,0}:=1$.

\end{conj}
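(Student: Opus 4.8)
The plan is to approach Conjecture \ref{conj:GW/GV g=0} on two fronts: a heuristic derivation that forces the identity when $X$ is ``ideal'', and unconditional verifications in families of examples where both sides can be evaluated.

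\emph{Heuristic derivation.} Suppose $X$ is ideal, meaning that rational curves of class $\beta_1$ sweep out a family of the expected dimension $1$, elliptic curves of class $\beta_2$ form a finite unobstructed set, and there are no curves of genus $\geqslant 2$. A stable pair $(\oO_X \xrightarrow{s} F)$ with $\chi(F)=1$ has pure one-dimensional $F$ with $s$ surjective off finitely many points, so $\oO_C \hookrightarrow F$ with $C=\Supp(F)$ and $F/\oO_C$ of finite length $\ell$, whence $\chi(\oO_C)+\ell = 1$. In the ideal situation the support splits, after absorbing thickenings, as a disjoint union $C=C_0\sqcup C'$ with $C_0$ a rational configuration of class $\beta_1$ on which the pair contributes $\chi=1$, and $C'$ an elliptic configuration of class $\beta_2$ on which it contributes $\chi=0$. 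Because the supports are disjoint, $P_1(X,\beta)$ stratifies over $\beta=\beta_1+\beta_2$, with each stratum, virtually, a product of the moduli of embedded genus-$0$ curves of class $\beta_1$ with the $\chi=0$ stable-pair moduli $P_0(X,\beta_2)$; one checks that the $(-2)$-shifted symplectic structure of Lemma \ref{exist of -2 str}, together with a compatible orientation, splits along this product. Since $\ch_3(\mathbb{F})$ restricted to a point of the first factor records the class of the corresponding rational curve, $\prod_i \tau(\gamma_i)$ integrates over that factor to $n_{0,\beta_1}(\gamma_1,\ldots,\gamma_n)$ --- this is precisely the defining identity for $n_{0,\beta_1}$ in genus-$0$ Gromov--Witten theory, under the degeneracy constraint $\sum_i(m_i-2)=2$ --- while the second factor integrates to $P_{0,\beta_2}$. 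Summing the strata and using the conventions $n_{0,0}=0$, $P_{0,0}=1$ yields the stated formula.

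\emph{Verifications.} When $\beta$ is irreducible the right-hand side collapses to $n_{0,\beta}(\gamma_1,\ldots,\gamma_n)$, and the approach is to compare $P_1(X,\beta)$ with the moduli space $M_{1,\beta}(X)$ of one-dimensional stable sheaves through the forgetful morphism $f\colon(\oO_X\to F)\mapsto F$: one proves a Manolache-type virtual push-forward $f_*[P_1(X,\beta)]^{\mathrm{vir}}=[M_{1,\beta}(X)]^{\mathrm{vir}}$, reducing the conjecture to the sheaf-theoretic (Katz-type) statement of \cite{CMT}, which is known for primitive classes. In the fibered examples ($X=Y\times E$ or $X=K_Y$ with $\beta$ from the base, and $X=\mathrm{Tot}_S(L_1\oplus L_2)$) one first transports the problem to a $\DT_4$-theory over the threefold or surface via isomorphisms such as $P_n(X,\beta)\cong P_n(Y,\beta)\times E$ and the accompanying identification of virtual classes. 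For non-irreducible $\beta$ (e.g.\ $\beta=2[l]$ on the sextic, or $\beta=B\cdot E$ on the elliptic fourfold) the plan is a direct computation: identify the relevant $P_0(X,\beta_2)$ with a Hilbert scheme of points or an explicitly described smooth moduli space, compute its half-Euler virtual class, and match the resulting sum against the genus-$0$ data tabulated in \cite{KP}.

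\emph{Main obstacles.} The idealness hypothesis is essentially never literally satisfied, so the first argument is a heuristic rather than a proof: making rigorous the claim that excess families of curves contribute identically to both sides is exactly the content of the conjecture and is not resolved. Within the verifications, the real technical step is the virtual push-forward $f_*[P_1]^{\mathrm{vir}}=[M_{1,\beta}]^{\mathrm{vir}}$, which demands a careful comparison of $\Ext^{\bullet}(I,I)_0$ with $\Ext^{\bullet}(F,F)_0$ along $f$, verification of the hypotheses for $\DT_4$ virtual push-forward, and control of half-Euler classes. Finally, because no canonical orientation is available, each example must fix orientations by hand so that the signs attached to the connected components of $P_1(X,\beta)$ and of $P_0(X,\beta_2)$ align; I expect this sign bookkeeping, rather than any single deep input, to be the most delicate part of each verification.
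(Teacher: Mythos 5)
Your proposal follows essentially the same route as the paper: the statement is a conjecture, and the paper likewise offers only the heuristic derivation for an ``ideal'' CY 4-fold (Section~\ref{heuristic argument}), where the support of a $\chi=1$ stable pair splits into a rational piece with $\chi=1$ and an elliptic piece with $\chi=0$ and the moduli space and virtual class factor accordingly, together with case-by-case verifications that reduce irreducible classes to the sheaf-counting conjecture of \cite{CMT} via the forgetful morphism and a Manolache-type virtual push-forward, and handle the remaining compact and local examples by direct computation against \cite{KP}. Your assessment of the main obstacles (idealness is never literally satisfied, the push-forward comparison of obstruction theories, and the by-hand choice of orientations) matches the paper's own caveats; the only slips are cosmetic (e.g.\ $B\cdot E$ is an insertion class for the fiber class $[f]$, not a curve class, and for the sextic in degree $\leqslant 2$ one has $P_0(X,\beta)=\emptyset$ rather than a Hilbert-scheme identification).
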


\begin{conj}\label{conj:GW/GV g=1}(Genus 1)
For a suitable choice of orientation, we have  
\begin{align*}
\sum_{\beta \geqslant 0}
P_{0, \beta}\,q^{\beta}=
\prod_{\beta>0} M\big(q ^{\beta}\big)^{n_{1, \beta}},
\end{align*}
where $M(q)=\prod_{k\geqslant 1}(1-q^{k})^{-k}$ is the MacMahon function and $P_{0,0}:=1$.
\end{conj}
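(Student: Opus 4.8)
The plan is to establish the identity along the lines of the heuristic of Section~\ref{heuristic argument}, assuming $X$ is \emph{ideal}: in every curve class the reduced curves in $X$ are either smooth rational curves $C$ moving in a compact family of the expected dimension $1$, or isolated smooth elliptic curves $E$ that are super-rigid (so $N_{E/X}\cong L_1\oplus L_2\oplus L_3$ with $L_i\in\Pic^0(E)$ generic and $L_1\otimes L_2\otimes L_3\cong\omega_E$, forcing $H^0(E,N_{E/X})=0$). Under this hypothesis I would first describe the geometry of $P_0(X,\beta)$: a stable pair $(\oO_X\to F)$ with $\chi(F)=0$ has one-dimensional support whose reduction is a disjoint union of such curves; organising the support into connected pieces and splitting $\beta=\sum_i d_i\beta_i$ accordingly (the $\beta_i$ being the classes of the distinct curves that appear), one expects
\begin{equation}\label{ideal factorization}
\sum_{\beta\geqslant 0}P_{0,\beta}\,q^{\beta}=\prod_{C}\Big(\sum_{d\geqslant 0}P^{\mathrm{loc}}_{0,d[C]}(C)\,q^{d[C]}\Big),
\end{equation}
the product running over all (finitely many, in each class) rational and elliptic curves $C\subset X$, with $P^{\mathrm{loc}}_{0,d[C]}(C)$ the pair invariant of the local model $\mathrm{Tot}_C(N_{C/X})$ of a tubular neighbourhood of $C$. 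This step presupposes a multiplicativity (excision/product) property of the Borisov--Joyce virtual class along such a decomposition.

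Next I would evaluate the two kinds of local factors. For a rational curve $C$ moving in a $1$-dimensional family, the local model deforms to $\oO_{\mathbb{P}^1}(l_1,l_2,l_3)$ with $l_1+l_2+l_3=-2$, and $P_{0,d[\mathbb{P}^1]}=0$ for every $d\geqslant1$: when $d=1$ the moduli space is empty, since a rank-one sheaf $\oO_{\mathbb{P}^1}(k)$ with $\chi=0$ has $k=-1$ and hence no section, and for $d\geqslant2$ the virtual contribution vanishes, e.g.\ by a cosection of the obstruction complex induced by the $1$-parameter family of curves (compare the cosection construction in Proposition~\ref{surj cosection}). Thus each rational curve contributes the trivial factor $1$ to (\ref{ideal factorization}). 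For an isolated super-rigid elliptic curve $E$ of class $\beta_E$, the local model is precisely the one in Theorem~\ref{local elliptic curve}, whose contribution is the MacMahon series $M(q^{\beta_E})$. Substituting into (\ref{ideal factorization}),
\begin{align*}
\sum_{\beta\geqslant 0}P_{0,\beta}\,q^{\beta}=\prod_{E\subset X}M\big(q^{[E]}\big)=\prod_{\beta>0}M\big(q^{\beta}\big)^{N_{\beta}},
\end{align*}
where $N_\beta$ denotes the number of elliptic curves of class $\beta$.

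It then remains to identify $N_\beta$ with $n_{1,\beta}$. On the Gromov--Witten side, in the ideal geometry $[\overline{M}_{1,0}(X,\beta)]^{\mathrm{vir}}$ is a sum of three types of contributions: (a) degree-$d$ covers of the isolated elliptic curves, contributing $\sum_{\beta}N_{\beta}\sum_{d\geqslant1}\frac{\sigma(d)}{d}q^{d\beta}$ by the super-rigid multiple-cover evaluation of \cite{KP} (up to the global sign fixed by the orientation); (b) genus-one maps contracting the elliptic component onto a point of a rational curve, which reproduce the term $\frac{1}{24}\sum_{\beta>0}n_{0,\beta}(c_2(X))\log(1-q^{\beta})$; and (c) genus-one maps whose image is a nodal rational curve of class $\beta_1+\beta_2$, which reproduce the meeting-invariant term $-\frac{1}{24}\sum_{\beta_1,\beta_2}m_{\beta_1,\beta_2}\log(1-q^{\beta_1+\beta_2})$. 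Comparing this expansion with the defining identity for $n_{1,\beta}$ in Section~\ref{subsection GW/GV conj} forces $n_{1,\beta}=N_\beta$, which finishes the heuristic argument; the unconditional checks already recorded for sextic fourfolds, elliptic fibrations, products $Y\times E$, local Fano threefolds and local curves (Propositions~\ref{sextic g=1}, \ref{prop g=1 elliptic fib}, Theorems~\ref{g=1 product of elliptic curve and CY3}, \ref{verify g=1 conj for fano hypersurface}, \ref{local elliptic curve}) then serve as genuine, non-ideal confirmations.

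The main obstacle is that neither ingredient of the heuristic is currently available as a theorem. Analytically, there is no excision or product formula for Borisov--Joyce virtual classes, so the factorisation (\ref{ideal factorization}) and the fibrewise cosection vanishing for families of rational curves remain informal; a rigorous treatment would seem to need a Kiem--Li-type cosection-localisation theory for $(-2)$-shifted symplectic D-manifolds, which does not yet exist (the same gap appears in the hyperk\"ahler vanishing discussed earlier). Geometrically, no CY $4$-fold of interest is literally ideal, and reducing the general case to the ideal one would require deformation invariance of the invariants $P_{0,\beta}$, again unknown. Finally, the statement holds only ``for a suitable choice of orientation'': one has no canonical orientation, and even granting (\ref{ideal factorization}) one must still argue that the signs attached to the connected components of $P_0(X,\beta)$ assemble into the stated product, which is itself a delicate point, exactly as for Conjecture~\ref{conj:GW/GV g=0}.
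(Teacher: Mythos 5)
This statement is a conjecture in the paper; it is not proven, and the paper's only support for it is the heuristic of Section~\ref{heuristic argument} (ideal geometry plus the local elliptic curve computation of Theorem~\ref{local elliptic curve}) together with the case-by-case verifications, so your proposal follows essentially the same route and is as ``correct'' as the paper's own discussion. Two small comparative points. First, the paper disposes of rational-curve contributions more cheaply than you do: under the ideal hypotheses any Cohen--Macaulay curve supported on the families of rational curves (local model $\mathrm{Tot}_{\mathbb{P}^1}(-1,-1,0)$) has $\chi(\oO_C)\geqslant 1$, so a stable pair with $\chi(F)=0$ simply cannot have any rational-curve component in its support and $F$ is forced to be supported on the super-rigid elliptic curves; compare Proposition~\ref{general local curves}, where $P_0(X,m[\mathbb{P}^1])=\emptyset$ for all $m\geqslant 1$. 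Your appeal to a cosection-localization vanishing for multiple classes $d\geqslant 2$ is therefore unnecessary, and it leans on machinery (cosection localization for Borisov--Joyce classes, cf.\ the discussion around Proposition~\ref{surj cosection}) that the paper itself flags as unavailable. Second, your identification of the number $N_\beta$ of elliptic curves with $n_{1,\beta}$ via the Klemm--Pandharipande genus-one expansion (super-rigid multiple covers giving the $\sigma(d)/d$ terms, contracted genus-one components giving the $c_2$ term, nodal rational images giving the meeting-invariant term) is an elaboration that the paper leaves implicit, where it simply posits ``$n_{1,\beta}$ many such elliptic curves''; this is a useful clarification but, like the rest, remains heuristic. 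Your closing list of obstacles (no excision/product or deformation-invariance results for the Borisov--Joyce class, no canonical orientations) accurately reflects why the statement stays a conjecture in the paper.
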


\subsection{Heuristic approach to conjectures}\label{heuristic argument}
In this subsection, we give a heuristic argument to explain why we expect Conjecture \ref{conj:GW/GV g=0}, \ref{conj:GW/GV g=1} (and equality (\ref{form of general P_n})) to be true.  Even in this heuristic discussion, we ignore questions of orientation.

Let $X$ be an `ideal' $\mathrm{CY_{4}}$
in the sense that all curves of $X$ deform in families of expected dimensions, and have expected generic properties, i.e.
\begin{enumerate}
\item
any rational curve in $X$ is a chain of smooth $\mathbb{P}^1$'s with normal bundle $\mathcal{O}_{\mathbb{P}^{1}}(-1,-1,0)$, and
moves in a compact 1-dimensional smooth family of embedded rational curves, whose general member is smooth with 
normal bundle $\mathcal{O}_{\mathbb{P}^{1}}(-1,-1,0)$. 
\item
any elliptic curve $E$ in $X$ is smooth, super-rigid, i.e. 
the normal bundle is 
$L_1 \oplus L_2 \oplus L_3$
for general degree zero line bundle $L_i$ on $E$
satisfying $L_1 \otimes L_2 \otimes L_3=\oO_E$. 
Furthermore any two elliptic curves are 
disjoint and disjoint from all families of rational curves on $X$. 

\item
there is no curve in $X$ with genus $g\geqslant 2$.
\end{enumerate}

${}$ \\
\textbf{$P_0(X, \beta)$ and genus $1$ conjecture}.
Under our ideal assumptions, 
a one-dimensional Cohen-Macaulay scheme $C$ supported in one of our families of rational curves has $\chi(\oO_C)\geqslant 1$, so for any stable pair $I=(\oO_X \to F) \in P_0(X, \beta)$,
the sheaf $F$ can only be supported on some rigid elliptic curves in $X$. 
For a rigid elliptic curve $E$ with $[E]=\beta$ and `general' normal 
bundle (i.e. direct sum of three degree zero general line bundles on $E$), its contribution to the pair invariant is
\begin{equation} \sum_{m\geqslant 0} P_{0, m[E]} q^m=M(q), \textrm{ }\textrm{where} \textrm{ }M(q)=\prod_{k\geqslant 1}(1-q^{k})^{-k},   
\nonumber \end{equation}
by a localization calculation (see Theorem \ref{local elliptic curve}). 
Similarly, if we have $n_{1,\beta}$ \big($\beta\in H_{2}(X,\mathbb{Z})$\big) many such elliptic curves, then they contribute to pair invariants according to the formula:
\begin{equation}\sum_{\beta\geqslant0} P_{0, \beta}q^\beta=\prod_{\beta>0}M(q^{\beta})^{n_{1,\beta}}. \nonumber \end{equation}

${}$ \\
\textbf{$P_1(X, \beta)$ and genus $0$ conjecture}. 
Given a stable pair $I=(\oO_X\to F)\in P_1(X,\beta)$, $F$ may be supported on a union of rational curves and elliptic curves. 
Let $C:=\mathrm{supp}(F)$, then $C=C_1\sqcup C_2$ is a disjoint union of 
`rational curve components' and `elliptic curve components'. Note a Cohen-Macaulay scheme $D$ in $\mathrm{Tot}_{\mathbb{P}^1}(-1,-1,0)$
(resp. in $\mathrm{Tot}_{E}(L_1\oplus L_2\oplus L_3)$, where $E$ a smooth elliptic curve and $L_i$'s are degree zero general line bundles on $E$) satisfies $\chi(\oO_{D})\geqslant 1$ (resp. $\chi(\oO_{D})\geqslant 0$). 

Thus from the exact sequence
\begin{align*}0\to \oO_C\to F \to Q\to0, \end{align*}
we know if $C_1\neq \emptyset$, then $Q=0$ and $F\cong \oO_{C_1\sqcup C_2}$ (with $\chi(\oO_{C_1})=1$, $\chi(\oO_{C_2})=0$). Note that when $C_1=\emptyset$, i.e. when $F$ is supported
on elliptic curves, once we include insertions, these stable pairs do not contribute to the invariant
\begin{equation}\int_{[P_{1}(X,\beta)]^{\rm{vir}}}\tau(\gamma). \nonumber \end{equation}
So we only consider the case when $F\cong \oO_{C_1\sqcup C_2}$ with $C_1$ supported on rational curves in
a one-dimensional family $\{C_t\}_{t\in T}$.   We may further assume the support of $C_1$ is smooth with normal bundle $\mathcal{O}_{\mathbb{P}^{1}}(-1,-1,0)$ due to the presence of insertions, at which point it must have multiplicity $1$ as well.

Since the families of rational curves are disjoint from the elliptic curves, the moduli space $P_1(X,\beta)$ of stable pairs is a disjoint union of product of 
rational curve families (with curve class $\beta_1$) and $P_0(X,\beta_2)$ (where $\beta_1+\beta_2=\beta$).
And a direct calculation shows the corresponding virtual class factors as the product of the fundamental class of those rational curve families and $[P_0(X,\beta_2)]^{\mathrm{vir}}$.
For $\gamma\in H^{4}(X)$, we then have
\begin{align*}\int_{[P_{1}(X,\beta)]^{\rm{vir}}}\tau(\gamma)=\sum_{\begin{subarray}{c}\beta_1+\beta_2=\beta  \\ \beta_1, \beta_2\geqslant0 \end{subarray} }n_{0,\beta_1}(\gamma)\cdot P_{0,\beta_2}.  \end{align*}

${}$ \\
\textbf{$P_n(X, \beta)$ and generating series}. 
For the moduli space $P_{n,\beta}(X)$ of stable pairs with $n\geqslant 1$, we want to compute 
\begin{align*}\int_{[P_{n}(X,\beta)]^{\rm{vir}}}\tau(\gamma)^n, \quad \gamma\in H^4(X,\mathbb{Z}),  \end{align*}
when $X$ is an ideal CY 4-fold.
Let $\{Z_i\}_{i=1}^n$ be 4-cycles which represent the class $\gamma$. For dimension reasons,
we may assume for any $i\neq j$
the rational curves which meet with $Z_i$ are
disjoint from those with $Z_j$. 
The insertions cut out the moduli space and pick up stable pairs whose support intersects with all $\{Z_i\}_{i=1}^n$.
We denote the moduli space of such `incident' stable pairs by
\begin{align*}Q_{n}(X,\beta;\{Z_i\}_{i=1}^n)\subseteq P_{n}(X,\beta).  \end{align*}
Then we claim that 
\begin{align}\label{Q_n:identity}
Q_{n}(X,\beta;\{Z_i\}_{i=1}^n)=\coprod_{\begin{subarray}{c}
\beta_0+\beta_1+\cdots+\beta_n=\beta  
\end{subarray}}P_{0}(X,\beta_0)\times Q_{1}(X,\beta_1;Z_1)\times \cdots \times Q_{1}(X,\beta_n;Z_n), \end{align}
where $Q_{1}(X,\beta_i;Z_i)$ is the moduli space of stable pairs 
supported on rational curves (in class $\beta_i$) which meet with $Z_i$. 

Indeed let us take a stable pair $(\oO_X \to F)$ in 
$Q_{n}(X,\beta;\{Z_i\}_{i=1}^n)$. 
Then $F$ decomposes into a direct sum 
$\oplus_{i=0}^n F_i$, where 
$F_0$ is supported on elliptic curves
and 
each $F_i$ for $1\leqslant i\leqslant n$ is 
supported on rational curves which meet with 
$Z_i$. 
As explained before, a Cohen-Macaulay scheme $C$ supported in the family of rational curves (resp. elliptic curves) satisfies $\chi(\oO_C)\geqslant 1$ (resp. $\chi(\oO_C)\geqslant 0$), so $\chi(F_0)\geqslant 0$ and $\chi(F_i)\geqslant 1$ for $1\leqslant i \leqslant n$. Hence $\chi(F_0)=0$ and 
$\chi(F_i)=1$ for $1\leqslant i \leqslant n$. 
Therefore (\ref{Q_n:identity}) holds. 

Moreover each $Q_{1}(X,\beta_i;Z_i)$ consists of 
finitely many rational curves which meet with $Z_i$, 
whose number is exactly $n_{0, \beta_i}(\gamma)$. 
By counting the number of points in $P_{0}(X,\beta_0)$ and 
$Q_{1}(X,\beta_i;Z_i)$'s, we obtain  
\begin{align*}
P_{n, \beta}(\gamma) :=\int_{[P_{n}(X,\beta)]^{\rm{vir}}}\tau(\gamma)^n=\int_{[Q_{n}(X,\beta;\gamma)]^{\mathrm{vir}}}1 
=\sum_{\begin{subarray}{c}\beta_0+\beta_1+\cdots+\beta_n=\beta  \\ \beta_0,\beta_1,\cdots,\beta_n\geqslant0 \end{subarray} }P_{0,\beta_0}\cdot \prod_{i=1}^n n_{0,\beta_i}(\gamma). \end{align*}
The above arguments give a heuristic explanation for the formula 
\begin{align*}
\sum_{n, \beta}\frac{P_{n, \beta}(\gamma)}{n!}y^n q^{\beta}
=\prod_{\beta}\Big(\exp(yq^{\beta})^{n_{0,\beta}(\gamma)}\cdot 
M(q^{\beta})^{n_{1,\beta}}\Big)
\end{align*}
mentioned in Section~\ref{subsec:speculation}.

\section{Compact examples}
In this section, we verify Conjectures \ref{conj:GW/GV g=0} and \ref{conj:GW/GV g=1} for certain compact Calabi-Yau 4-folds.

\subsection{Sextic 4-folds}
Let $X$ be a smooth sextic 4-fold, i.e. a smooth degree six hypersurface of $\mathbb{P}^{5}$.
By the Lefschetz hyperplane theorem, $H_2(X,\mathbb{Z})\cong H_2(\mathbb{P}^{5},\mathbb{Z})\cong\mathbb{Z}$.
In order to verify our conjectures, we may use deformation invariance and assume $X$ is general in the (projective) space $\mathbb{P}\big(H^0(\mathbb{P}^{5},\oO(6))\big)$ of degree six hypersurfaces. 

${}$ \\
\textbf{Genus 0}.
For the genus 0 conjecture, we have:
\begin{prop}\label{sextic g=0}
Let $X$ be a smooth sextic 4-fold and $[l]\in H_2(X,\mathbb{Z})$ be the line class. 

Then Conjecture \ref{conj:GW/GV g=0} is true for $\beta=[l]$ and $2[l]$.
\end{prop}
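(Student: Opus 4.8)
The plan is to verify the genus $0$ conjecture for $X$ a general sextic $4$-fold by directly analyzing the stable pair moduli spaces $P_1(X,[l])$ and $P_1(X,2[l])$ together with the relevant genus $0$ GV invariants $n_{0,[l]}(\gamma)$ and $n_{0,2[l]}(\gamma)$. Since $H_2(X,\mathbb{Z})\cong\mathbb{Z}$, the class $[l]$ is irreducible, so by the ``in particular'' part of Conjecture \ref{conj:GW/GV g=0} the degree one case only requires showing $P_{1,[l]}(\gamma)=n_{0,[l]}(\gamma)$. For $\beta=2[l]$, the conjectured identity reads $P_{1,2[l]}(\gamma)=n_{0,2[l]}(\gamma)+n_{0,[l]}(\gamma)\cdot P_{0,[l]}$, so I will also need the degree one pair invariant $P_{0,[l]}$, which I expect to vanish (a Cohen--Macaulay curve of degree one in $X$ has $\chi(\oO_C)=1\neq 0$, so $P_0(X,[l])=\emptyset$); this reduces the $2[l]$ case to $P_{1,2[l]}(\gamma)=n_{0,2[l]}(\gamma)$.

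First I would identify the relevant geometry. For a general sextic, the Fano scheme of lines and the space of conics (plane degree two curves) in $X$ should be smooth of the expected dimension; a stable pair $(\oO_X\to F)$ with $\ch(F)=(0,0,0,d[l],1)$ and $d\leqslant 2$ forces $F=\oO_C$ for $C$ a Cohen--Macaulay curve of degree $d$ and arithmetic genus $0$, i.e.\ a line ($d=1$) or a conic or a pair of disjoint lines ($d=2$), with the section $\oO_X\to\oO_C$ the canonical surjection. Thus $P_1(X,[l])$ is identified with the Fano variety of lines $F_1(X)$, and $P_1(X,2[l])$ with the space of genus $0$, degree $2$ curves, which I would analyze as a (possibly disconnected) smooth projective variety. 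Then I would compute the deformation-obstruction theory at such a pair, showing $\Ext^1(I,I)_0$ matches the Zariski tangent space and $\Ext^2(I,I)_0$ is an honest obstruction bundle carrying the Serre-duality quadratic form; in the favorable (smooth) case the virtual class is the half-Euler class $e(Ob,Q)$ of Section 1.2. The insertions $\tau(\gamma)$ for $\gamma\in H^4(X)$ translate, via the incidence interpretation, into the condition that the supporting curve meet a cycle dual to $\gamma$, and integrating $\tau(\gamma)$ against $[P_1(X,\beta)]^{\rm vir}$ counts (with signs determined by the orientation) the curves in the family incident to that cycle.

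The main work, and the main obstacle, is twofold. First, I must pin down the half-Euler class computation and check that the resulting signed count equals the Klemm--Pandharipande number $n_{0,\beta}(\gamma)$, which in turn I would extract from their explicit GW computations in \cite{KP} for the sextic (using the defining identity $\sum\mathrm{GW}_{0,\beta}q^\beta=\sum n_{0,\beta}\sum_d d^{n-3}q^{d\beta}$, so that $n_{0,[l]}(\gamma)=\mathrm{GW}_{0,[l]}(\gamma)$ and $n_{0,2[l]}(\gamma)=\mathrm{GW}_{0,2[l]}(\gamma)-\tfrac12 n_{0,[l]}(\gamma)$ after accounting for the double-cover term). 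Second, and this is the genuinely delicate point flagged in the introduction, the virtual class depends on a choice of orientation, and I have no canonical one; so the proof must exhibit, component by component, an orientation making all signs work out, and verify that this is consistent (a single global choice). I expect the bulk of the argument to be the dimension count showing no unexpected curves or non-reduced structures occur for general $X$, the identification of $Ob$ and its half-Euler class on each component, and the bookkeeping matching the signed intersection numbers to the two or three GV numbers tabulated in \cite{KP}; the degree $2$ case will require care about the locus of reducible conics (pairs of incident lines) versus smooth conics and disjoint line pairs, since these sit in different strata of $P_1(X,2[l])$.
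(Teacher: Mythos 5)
Your overall skeleton is the right one (use $P_{0,[l]}=0$ to reduce both cases to $P_{1,\beta}(\gamma)=n_{0,\beta}(\gamma)$, show every stable pair is $(\oO_X\twoheadrightarrow\oO_C)$ for a degree $\leqslant 2$ rational curve, then compare with the Klemm--Pandharipande numbers), but as written there are concrete errors and the decisive step is left as a plan. First, disjoint pairs of lines do \emph{not} appear in $P_1(X,2[l])$: if $F$ is supported on $L_1\sqcup L_2$ then $F=F_1\oplus F_2$ and each summand receives a section with zero-dimensional cokernel, so $\chi(F)\geqslant \chi(\oO_{L_1})+\chi(\oO_{L_2})=2$. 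The correct statement (and the one the paper uses, citing \cite[Prop.~1.4]{Cao2}) is that for general $X$ the supporting curve is a smooth conic or a pair of \emph{distinct intersecting} lines, so your proposed stratification of $P_1(X,2[l])$, and any count built on it, is off. Second, your multiple-cover bookkeeping is wrong: with a single insertion $\gamma\in H^4(X)$ (the only relevant case, since $H^{\rm odd}(X)=0$) the defining identity gives $d^{n-3}=d^{-2}$, hence $n_{0,2[l]}(\gamma)=\mathrm{GW}_{0,2[l]}(\gamma)-\tfrac14\,n_{0,[l]}(\gamma)$, not $-\tfrac12$. Third, the heart of the matter --- that the (signed) pair count actually equals $n_{0,\beta}(\gamma)$ --- is exactly what you defer to ``bookkeeping,'' and it is not bookkeeping: it requires the geometry of the family of conics on a general sextic, the identification of the obstruction theory, and a comparison with the GW computation; this is the content of a separate paper.

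For comparison, the paper's proof avoids any fresh half-Euler-class computation. After showing $F\cong\oO_C$ by the Euler-characteristic argument you also use, it proves the forgetful map $P_1(X,\beta)\to M_{1,\beta}(X)$ is an isomorphism and identifies $\Ext^1(I,I)_0\cong\Ext^1(F,F)\cong\mathbb{C}$ and $\Ext^2(I,I)_0\cong\Ext^2(F,F)=0$ (so the moduli space is a smooth curve with vanishing obstruction space, and the virtual class is the ordinary fundamental class up to orientation). This reduces Conjecture \ref{conj:GW/GV g=0} in these classes to the earlier $\DT_4$/GV conjecture for one-dimensional stable sheaves \cite[Conjecture 0.2]{CMT}, which was verified for the sextic in degrees one and two in \cite[Theorem 2.4]{Cao2}. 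If you want a self-contained argument along your lines, you would essentially have to reproduce that verification, with the corrections above.
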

\begin{proof}
In such cases, $P_{0,\beta}(X)=0$ by Proposition \ref{sextic g=1}. So we only need to 
show 
\begin{align*}P_{1,\beta}(X)(\gamma_1,\ldots,\gamma_n)=n_{0,\beta}(\gamma_1,\ldots,\gamma_n). \end{align*}
We consider $\beta=2[l]$ as the degree one case follows from the same argument.
A Cohen-Macaulay curve $C$ in $X$ with $[C]=\beta$ has $\chi(\oO_C)=1$. 
For a stable pair $(\oO_X\to F)\in P_1(X,\beta)$, there is an exact sequence 
\begin{align*}0\to \oO_C\to F\to Q\to 0,  \end{align*}
where $C$ is the support of $F$. Since $1=\chi(F)=\chi(\oO_C)+\chi(Q)$, we must have 
$Q=0$ and $F\cong \oO_C$.

When $X$ is a general sextic, $C$ is either 
a smooth conic or a pair of distinct intersecting lines (see e.g. \cite[Proposition 1.4]{Cao2}). 
The morphism\,\footnote{The map is well-defined as $\oO_C$ is stable (\cite[Proposition 2.2]{Cao2}).}
\begin{align*}P_1(X,\beta)\to M_{1,\beta}(X), \quad I=(\oO_X\to F)\mapsto F, \end{align*}
to the moduli space $M_{1,\beta}(X)$ of one dimensional stable sheaves, with $[F]=\beta$ and $\chi(F)=1$, is 
an isomorphism. Furthermore, under the isomorphism, we have identifications 
\begin{align*}\Ext^1(I,I)_0\cong \Ext^1(F,F)\cong \mathbb{C}, \end{align*}
\begin{align*}\Ext^2(I,I)_0\cong \Ext^2(F,F)=0, \end{align*}
of deformation and obstruction spaces (ref. \cite[Proposition 2.2]{Cao2}). So one can identify virtual classes
\begin{align*}[P_1(X,\beta)]^{\mathrm{vir}}=[M_{1,\beta}(X)]^{\mathrm{vir}},  \end{align*}
for a certain choice of orientation. Then Conjecture \ref{conj:GW/GV g=0} reduces to our previous conjecture 
\cite[Conjecture 0.2]{CMT}, which has been verified in this setting in \cite[Theorem 2.4]{Cao2}. 
\end{proof}

${}$ \\
\textbf{Genus 1}.
From \cite[Table 2, pp. 33]{KP}, we know genus one GV type invariants of $X$ are zero for degree one and two classes. 
In these cases, pair invariants are obviously zero.
\begin{prop}\label{sextic g=1}
Let $X$ be a smooth sextic 4-fold and $[l]\in H_2(X,\mathbb{Z})$ be the line class. 

Then Conjecture \ref{conj:GW/GV g=1} is true for $\beta=[l]$ and $2[l]$.
\end{prop}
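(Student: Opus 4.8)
The plan is to reduce Conjecture~\ref{conj:GW/GV g=1} for $\beta=[l]$ and $2[l]$ to the vanishing of the stable pair invariant $P_{0,\beta}(X)$, and then to establish that vanishing directly. First I would observe that, by \cite[Table 2, pp.~33]{KP}, the genus one GV invariants satisfy $n_{1,[l]}=n_{1,2[l]}=0$. Hence the right-hand side of the genus one identity in Conjecture~\ref{conj:GW/GV g=1}, expanded to order $q^{2[l]}$, contributes nothing to the coefficients of $q^{[l]}$ and $q^{2[l]}$: we have $\prod_{\gamma>0}M(q^\gamma)^{n_{1,\gamma}} = 1 + O(q^{3[l]})$ in the relevant range (using $H_2(X,\mathbb{Z})\cong\mathbb{Z}$). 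So the conjecture is equivalent to the statements $P_{0,[l]}(X)=0$ and $P_{0,2[l]}(X)=0$, and the task is to prove these two vanishings.

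Next I would analyze the moduli spaces $P_0(X,[l])$ and $P_0(X,2[l])$. For a stable pair $(\oO_X\to F)\in P_0(X,\beta)$ with $F$ pure one-dimensional and $\chi(F)=0$, the support $C$ of $F$ is a Cohen--Macaulay curve of class $\beta$, and from the defining exact sequence $0\to\oO_C\to F\to Q\to 0$ one gets $\chi(\oO_C)+\chi(Q)=0$. For $X$ a general sextic, lines and conics in $X$ are rigid and, being arithmetically Cohen--Macaulay plane curves, satisfy $\chi(\oO_C)=1$ in the conic case and $\chi(\oO_C)=1$ in the line case; in fact any reduced connected curve of degree $\leqslant 2$ here has $\chi(\oO_C)\geqslant 1$, forcing $\chi(Q)\leqslant -1 <0$, which is impossible since $Q$ has zero-dimensional (hence nonnegative Euler characteristic) or empty support. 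Therefore $P_0(X,[l])=P_0(X,2[l])=\emptyset$, and the virtual class vanishes trivially, giving $P_{0,[l]}(X)=P_{0,2[l]}(X)=0$.

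I would want to double-check the degree two case more carefully, since non-reduced structures (a double line, or a line with an embedded point) could a priori support a pure sheaf with smaller Euler characteristic. Here the geometry of the general sextic again helps: as recalled in the proof of Proposition~\ref{sextic g=0} (citing \cite[Proposition 1.4]{Cao2}), the only Cohen--Macaulay curves of class $2[l]$ in a general sextic 4-fold are smooth conics and pairs of distinct intersecting lines, both of which have $\chi(\oO_C)=1$; there is no room for a genuine double-line structure, so the argument above is complete. By deformation invariance of the virtual class (valid since $[P_n(X,\beta)]^{\rm vir}$ depends only on the deformation class of $X$, all smooth sextics being deformation equivalent), the vanishing of $P_{0,\beta}$ for the general sextic implies it for all smooth sextics. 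This then yields Conjecture~\ref{conj:GW/GV g=1} for $\beta=[l]$ and $2[l]$.

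The main obstacle, such as it is, is the degree two analysis: one must be sure that no purely non-reduced one-dimensional sheaf of class $2[l]$ with $\chi=0$ can exist on the general sextic. This is handled by invoking the classification of Cohen--Macaulay curves of degree two in a general sextic 4-fold, which already appears in the proof of Proposition~\ref{sextic g=0}. Once that classification is in hand, the vanishing of $P_0(X,\beta)$ as a set, and hence of the pair invariant, is immediate, and matching with $n_{1,\beta}=0$ is a triviality.
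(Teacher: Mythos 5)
Your proposal is correct and takes essentially the same route as the paper: reduce to showing $P_{0,[l]}=P_{0,2[l]}=0$ (since $n_{1,[l]}=n_{1,2[l]}=0$ by \cite[Table 2]{KP}) and then prove $P_0(X,\beta)=\emptyset$ because any Cohen--Macaulay curve $C$ in class $[l]$ or $2[l]$ has $\chi(\oO_C)\geqslant 1$, which contradicts $\chi(F)=0$ via the sequence $0\to\oO_C\to F\to Q\to 0$ with $Q$ zero-dimensional. Your extra discussion of non-reduced degree-two supports, handled through the classification in \cite[Proposition 1.4]{Cao2} and deformation invariance, merely spells out the justification of the inequality that the paper's proof asserts directly.
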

\begin{proof}
Let $\beta=[l]$ or $2[l]$. For a stable pair $(\oO_X\to F)\in P_0(X,\beta)$, there is an exact sequence 
\begin{align*}0\to \oO_C\to F\to Q\to 0,  \end{align*}
where $C$ is the support of $F$ and $Q$ is zero dimensional. 
A Cohen-Macaulay curve $C$ in $X$ with $[C]=\beta$ has $\chi(\oO_C)\geqslant 1$, contradicting with $\chi(F)=0$.
So $P_0(X,\beta)=\emptyset$.
\end{proof}

\subsection{Elliptic fibration}
For $Y=\mathbb{P}^3$, we take general elements
\begin{align*}
u \in H^0(Y, \oO_Y(-4K_Y)), \
v \in H^0(Y, \oO_Y(-6K_Y)).
\end{align*}
Let $X$
be a CY 4-fold with an elliptic fibration
\begin{align}\label{elliptic fib}
\pi \colon X \to Y
\end{align}
given by the equation
\begin{align*}
zy^2=x^3 +uxz^2+vz^3
\end{align*}
in the $\mathbb{P}^2$-bundle
\begin{align*}
\mathbb{P}(\oO_Y(-2K_Y) \oplus \oO_Y(-3K_Y) \oplus \oO_Y) \to Y,
\end{align*}
where $[x:y:z]$ is the homogeneous coordinate of the above
projective bundle. A general fiber of
$\pi$ is a smooth elliptic curve, and any singular
fiber is either a nodal or cuspidal plane curve.
Moreover, $\pi$ admits a section $\iota$ whose image
corresond to fiber point $[0: 1: 0]$.

Let $h$ be a hyperplane in $\mathbb{P}^3$, $f$ be a general fiber of $\pi \colon X\rightarrow Y$ and set
\begin{align}\label{div:BE}
B=\pi^{\ast}h, \ E=\iota(\mathbb{P}^3)\in H_{6}(X,\mathbb{Z}).
\end{align}

${}$ \\
\textbf{Genus 0}. We consider the stable pair moduli space $P_{1}(X,[f])$ for the fiber class of $\pi$ and verify Conjecture \ref{conj:GW/GV g=0} in this case.
\begin{lem}
Let $[f]$ be the fiber class of the elliptic fibration (\ref{elliptic fib}). Then we have an isomorphism
\begin{equation}P_{1}(X,[f])\to X, \nonumber \end{equation}
under which the virtual class satisfies 
\begin{equation}[P_{1}(X,[f])]^{\mathrm{vir}}=\pm \mathrm{PD}(c_3(X)), \nonumber \end{equation}
where the sign corresponds to the choice of orientation in defining the LHS.
\end{lem}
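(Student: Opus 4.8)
The plan is to analyze stable pairs $(\oO_X \to F)$ with $[F] = [f]$ and $\chi(F) = 1$, and show each such pair corresponds uniquely to a point of $X$. First I would observe that since $[f]$ is the class of a fiber of $\pi$, any pure one-dimensional sheaf $F$ with $[F] = [f]$ must be scheme-theoretically supported on a single fiber $X_y = \pi^{-1}(y)$ for some $y \in Y$: indeed $\pi_* F$ is a zero-dimensional sheaf on $Y$ of length one by the projection formula and $B \cdot [f] = 0$, so $F$ is supported over a single point $y$, and since the fiber class is primitive, $F$ is supported on the fiber $X_y$ itself (with multiplicity one). When $X_y$ is a smooth elliptic curve, a stable pair on $X_y$ with $\chi(F) = 1$ is exactly $(\oO_{X_y} \to \oO_{X_y}(p))$ for a unique $p \in X_y$; when $X_y$ is a nodal or cuspidal cubic, one argues similarly that $F \cong \oO_{X_y}(p)$ for a point $p$ on the curve (a rank-one torsion-free sheaf of the right Euler characteristic with a section vanishing at one smooth point), and the section is determined up to scalar. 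This produces a bijection $P_1(X, [f]) \to X$ sending $(\oO_X \to F)$ to the point $p = \Supp(\Cok(s))$, and I would upgrade it to an isomorphism of schemes by exhibiting the universal pair: over $X$ itself, with $\Delta \subset X \times_Y X$ the relative diagonal, the pair $(\oO_{X\times_Y X} \to \oO_{X\times_Y X}(\Delta))$ pushed to $X \times X$ gives a family inducing the inverse morphism.

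Next I would compute the deformation-obstruction theory to identify the virtual class. At a pair $I = (\oO_X \to F)$ supported at $p \in X_y$, I expect $\Ext^1(I,I)_0 \cong T_p X$ (four-dimensional: the point can move in the fiber direction, and the fiber can move in the base $Y$), matching $\dim X = 4$, so $P_1(X,[f]) \cong X$ is smooth of the expected dimension. The obstruction space $\Ext^2(I,I)_0$ then has rank equal to $\dim X - \text{vir.dim} = 4 - 2 = 2$, and I would identify the obstruction bundle $Ob$ over $X$. The natural candidate, given the heuristic that a pair supported on a fiber deforms like the fiber plus a point on it, is that $Ob$ fits into an exact sequence relating it to the normal bundle of the fiber and the obstructions to moving $p$; concretely I would expect $Ob \cong \Omega_X$ restricted appropriately, or more precisely a rank-two bundle whose half-Euler class computation yields $c_3(X)$. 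Since $P_1(X,[f])$ is smooth, by the review in Section 0.2 the virtual class is $\mathrm{PD}(e(Ob, Q))$, the half-Euler class of the quadratic obstruction bundle, and $e(Ob,Q)^2 = (-1)^{\rk(Ob)/2} e(Ob) = -e(Ob)$; I want to show $e(Ob,Q) = \pm c_3(X)$.

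The main obstacle, as I see it, is pinning down the obstruction bundle $Ob$ precisely and showing its half-Euler class is $c_3(X)$ rather than some other rank-two characteristic class. I would approach this by a local-to-global spectral sequence for $\dR\HOM_{\pi_P}(\mathbb{I},\mathbb{I})_0$ using the explicit universal pair over $X \times_Y X \hookrightarrow X \times X$: the complex $\mathbb{I}$ restricted to a slice is $(\oO_{X_y} \to \oO_{X_y}(p))$, and I would compute $\dR\HOM$ on the fiber together with the contribution from varying $y \in Y$. A cleaner route may be to use the forgetful map to the moduli of stable sheaves $M_{1,[f]}(X)$ — which should also be isomorphic to $X$ since the section data is rigid here — and transport the obstruction theory, using that $\Ext^2(F,F)$ for $F = \oO_{X_y}(p)$ should be computable fiberwise. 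In either case, once $Ob$ is identified as a rank-two bundle with $e(Ob) = -c_4(X)/(\text{something})$... more carefully: I expect the total obstruction contribution, combined with the identification $TX \cong \Ext^1(I,I)_0$, to force $e(Ob,Q) = \pm c_3(X)$ by comparing with the known GW/GV count $n_{0,[f]}$, which provides an independent consistency check that the sign and class are correct. The remaining verification that the two orientations on the (necessarily connected, since $X$ is) moduli space differ by the global sign is immediate from $H^0(X, \mathbb{Z}_2)$ being a single $\mathbb{Z}_2$.
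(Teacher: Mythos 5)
Your overall strategy (identify $P_1(X,[f])$ with $X$ via the support of the cokernel, then read off the virtual class from the obstruction theory on the smooth moduli space) is the right shape, and your ``cleaner route'' through the forgetful map to $M_{1,[f]}(X)\cong X$ is exactly what the paper does. But there is a genuine gap precisely at the step you yourself flag as the main obstacle, and the concrete numerics you attach to it are wrong. In the $\mathrm{DT}_4$ formalism, when the moduli space is smooth the virtual class is the half-Euler class of the \emph{full} obstruction bundle with fibre $\Ext^2_X(I,I)_0$; its complex rank is governed by $2\,\mathrm{ext}^1_0-\mathrm{ext}^2_0=\mathrm{vir.dim}_{\mathbb{R}}=2$, so with $\Ext^1_X(I,I)_0\cong T_xX$ of dimension $4$ one gets $\mathrm{rk}_{\mathbb{C}}\,\Ext^2_X(I,I)_0=6$, not ``$\dim X-\mathrm{vir.dim}=4-2=2$'' (that formula belongs to ordinary perfect obstruction theories and, as written, also mixes the complex dimension of $X$ with the real virtual dimension). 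A rank-two quadratic bundle would have half-Euler class in $H^2(X)$, which can never equal $c_3(X)\in H^6(X)$, so the computation you sketch cannot close as stated. Moreover you never actually identify the obstruction bundle or compute its half-Euler class: the proposed ``consistency check'' against $n_{0,[f]}$ is circular, since this lemma is exactly what is used to verify the genus-zero conjecture for the fiber class, so the GW number cannot be an input.

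For comparison, the paper closes this gap by transporting the obstruction theory across the forgetful map and then quoting an already-established sheaf computation: applying $\RHom_X(-,F)$ to $I\to\oO_X\to F$ gives $\Ext^1_X(I,F)\cong\Ext^2_X(F,F)$ (using $H^1(X,F)=H^2(X,F)=0$), and a second distinguished triangle involving $\RHom_X(I,I)_0$ gives $\Ext^2_X(I,I)_0\cong\Ext^1_X(I,F)$; hence the obstruction spaces of $P_1(X,[f])$ and of $M_{1,[f]}(X)$ agree, and the identity $[M_{1,[f]}(X)]^{\mathrm{vir}}=\pm\mathrm{PD}(c_3(X))$ is imported from the earlier work \cite{CMT}. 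Two smaller points in your set-theoretic description also need repair: on a singular fiber the sheaf attached to a point $x$ at the node or cusp is $(i_t)_*m_x^{\vee}$, not a Cartier twist $\oO_{X_y}(p)$ at a smooth point, so your parametrization as stated misses those points of $X$; and the fact that the fiber $\mathbb{P}(H^0(X,F))$ of the forgetful map is a single point requires proving $h^0(F)=1$, which the paper gets from $H^1(X,F)\cong H^0(X_t,m_x)^{\vee}=0$ by Serre duality on the fiber together with $\chi(F)=1$, rather than being automatic.
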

\begin{proof}
Since $[f]$ is irreducible, we have a morphism 
\begin{equation}\phi: P_{1}(X,[f])\to M_{1,[f]}(X)\cong X, \nonumber \end{equation}
to the moduli space $M_{1,[f]}(X)$ of 1-dimensional stable sheaves on $X$ with Chern character $(0,0,0,[f],1)$ (which is isomorphic to $X$ by 
\cite[Lem. 2.1]{CMT}). The fiber of $\phi$ over $F$ is $\mathbb{P}(H^0(X,F))$ (ref. \cite[pp. 270]{PT2}).

By \cite[Lem. 2.2]{CMT}, any $F\in M_{1,[f]}(X)$ is scheme-theoretically supported on a fiber, and $F=(i_t)_*m_x^{\vee}$ for some 
$x\in X_t:=\pi^{-1}(t)$, where $i_t: X_t\to X$ is the inclusion and $m_x$ is the maximal ideal sheaf of $x$ in $X_t$. 
By Serre duality, we have  
\begin{equation}H^1(X,F)\cong H^1(X_t,m_x^{\vee})\cong H^0(X_t,m_x)^{\vee}=0. \nonumber \end{equation}
Hence $H^0(X,F)\cong \mathbb{C}$, and $\phi$ is an isomorphism.

Next, we compare the obstruction theories. Let $I=(\oO_X\to F)\in P_{1}(X,[f])$ be a stable pair.
By applying $\RHom_X(-,F)$ to $I\to \oO_X \to F$, we obtain a distinguished triangle 
\begin{equation}\RHom_X(F,F)\to \RHom_X(\oO_X,F) \to \RHom_X(I,F), \nonumber \end{equation}
whose cohomology gives an exact sequence 
\begin{equation}\label{ell fib 1} 0=H^1(X,F) \to \Ext^1_X(I,F) \to \Ext^2_X(F,F) \to H^2(X,F)=0.  \end{equation}
From the distinguished triangle
\begin{align*} 
F \to I[1] \to \oO_X[1], \end{align*}
we have the diagram
\begin{align*}
\xymatrix{    &  \dR \Gamma(\oO_X)[1] \ar@{=}[r]\ar[d] &
\dR \Gamma(\oO_X)[1] \ar[d] \\
\RHom_X(I, F) \ar[r] & \RHom_X(I, I)[1] \ar[r] \ar[d] &
\RHom_X(I, \oO_X)[1] \ar[d] \\
&  \RHom_X(I, I)_0[1]  &  \RHom_X(F, \oO_X)[2], }
\end{align*}
where the horizontal and vertical arrows are distinguished triangles.
By taking cones, we obtain a distinguished triangle 
\begin{align*}
\RHom_X(I, F) \to \RHom_X(I, I)_0[1] \to \RHom_X(F, \oO_X)[2],
\end{align*}
whose cohomology gives an exact sequence 
\begin{equation}\label{ell fib 2} 0\to \Ext^1_X(I,F) \to \Ext^2_X(I,I)_0 \to H^1(X,F)^{\vee}=0.   \end{equation}
Combining (\ref{ell fib 1}) and (\ref{ell fib 2}), we can identify the obstruction spaces
\begin{align*}
\Ext^2_X(I,I)_0\cong \Ext^2_X(F,F). \end{align*}
Then under the isomorphism $\phi$, their virtual classes can be identified. The identification of 
the virtual class of $M_{1,[f]}(X)$ with the Poincar\'e dual of the third Chern class of $X$ can be found in \cite[Lem. 2.1]{CMT}.
\end{proof}
Then by \cite[Prop. 2.3]{CMT}, we have the following
\begin{prop}\label{prop g=0 elliptic fib}
Let $\pi: X\to Y$ be the elliptic fibration (\ref{elliptic fib}). 
Then Conjecture \ref{conj:GW/GV g=0} is true for fiber class $\beta=[f]$ and $\gamma=B^2$ or $B\cdot E$ (\ref{div:BE}).
\end{prop}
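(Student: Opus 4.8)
The plan is to deduce the proposition from the preceding Lemma together with the $\mathrm{DT}_4$ computation of \cite{CMT}, rather than to do any new enumerative geometry. First I would record the reduction coming from irreducibility of $[f]$: the only effective decompositions $\beta_1+\beta_2=[f]$ with $\beta_1,\beta_2\geqslant 0$ are $([f],0)$ and $(0,[f])$, so with the conventions $n_{0,0}(\gamma)=0$ and $P_{0,0}=1$ the right-hand side of Conjecture \ref{conj:GW/GV g=0} collapses to $n_{0,[f]}(\gamma)$ (in particular $P_{0,[f]}$ never enters). Thus it suffices to show $P_{1,[f]}(\gamma)=n_{0,[f]}(\gamma)$ for $\gamma=B^2$ and $\gamma=B\cdot E$.

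Next I would transport the problem to the sheaf side using the Lemma just proved: the forgetful map $\phi\colon P_1(X,[f])\simto M_{1,[f]}(X)\cong X$ is an isomorphism which identifies $\Ext^2_X(I,I)_0\cong\Ext^2_X(F,F)$, and hence identifies the pair virtual class with the $\mathrm{DT}_4$ virtual class, both equal to $\pm\,\mathrm{PD}(c_3(X))$. I would fix the orientation on the pair moduli space so that $[P_1(X,[f])]^{\mathrm{vir}}$ coincides with the $\mathrm{DT}_4$ virtual class of $M_{1,[f]}(X)$ used in \cite{CMT}; this is the \emph{suitable choice of orientation} referred to in the conjecture.

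I would then check that the insertion $\tau(\gamma)=\pi_{P\ast}(\pi_X^\ast\gamma\cup\ch_3(\mathbb{F}))$ is carried by $\phi$ to the analogous insertion built from the universal sheaf $\mathcal{E}$ on $X\times M_{1,[f]}(X)$. Because $H^0(X,F)\cong\mathbb{C}$ rigidifies the section up to scalar, the universal objects $\mathbb{F}$ and $(\phi\times\mathrm{id})^\ast\mathcal{E}$ differ only by the pullback of a line bundle from the base; since both sheaves are supported in codimension $3$ in the $8$-fold $X\times M_{1,[f]}(X)$, all Chern characters of degree $\leqslant 2$ vanish, so $\ch_3$ is unchanged under such a twist and $\ch_3(\mathbb{F})=(\phi\times\mathrm{id})^\ast\ch_3(\mathcal{E})$. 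Consequently $P_{1,[f]}(\gamma)$ equals the $\mathrm{DT}_4$ invariant of $M_{1,[f]}(X)$ with the single insertion attached to $\gamma$.

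Finally I would invoke \cite[Prop. 2.3]{CMT}, where exactly this $\mathrm{DT}_4$ invariant is computed and shown to equal $n_{0,[f]}(\gamma)$ for $\gamma=B^2$ and $\gamma=B\cdot E$; combined with the first paragraph this gives the proposition. The genuinely substantial input — that the sheaf-theoretic count over $M_{1,[f]}(X)$ reproduces the Klemm--Pandharipande invariant for these two classes — is borrowed wholesale from \cite{CMT}, so within this proof the only real work, and the main potential obstacle, is the bookkeeping in the third paragraph: matching the universal objects and their $\ch_3$ classes under $\phi$, and checking that the orientation convention is consistent with the one used in \cite{CMT}.
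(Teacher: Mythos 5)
Your proposal is correct and matches the paper's argument: the paper likewise deduces the proposition directly from the preceding lemma (the isomorphism $P_1(X,[f])\cong M_{1,[f]}(X)\cong X$ with virtual class $\pm\,\mathrm{PD}(c_3(X))$) together with \cite[Prop. 2.3]{CMT}, the paper's proof being essentially just that citation. Your extra bookkeeping (collapsing the sum for the irreducible class $[f]$ and matching $\ch_3$ of the universal objects, which is insensitive to a line-bundle twist from the base) only spells out steps the paper leaves implicit.
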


${}$ \\
\textbf{Genus 1}. We consider the stable pair moduli space $P_{0}(X,r[f])$ for multiple fiber classes $r[f]$ ($r\geqslant1$) of $\pi$ and confirm Conjecture \ref{conj:GW/GV g=1} in this case.
\begin{lem}\label{lem:pair for elliptic}
For any $r \in \mathbb{Z}_{\geqslant1}$, there exists an isomorphism
\begin{equation}P_{0}(X,r[f])\cong \Hilb^{r}(\mathbb{P}^{3}),  \nonumber \end{equation}
under which the virtual class is given by
\begin{equation}[P_{0}(X,r[f])]^{\rm{vir}}=(-1)^r\cdot[\Hilb^{r}(\mathbb{P}^{3})]^{\rm{vir}},  \nonumber \end{equation}
for certain choice of orientation in defining the LHS, where $[\Hilb^{r}(\mathbb{P}^{3})]^{\rm{vir}}$ is the $\mathrm{DT_{3}}$ virtual class \cite{Thomas}.
\end{lem}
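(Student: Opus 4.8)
The plan is to first pin down the moduli space $P_0(X, r[f])$ set-theoretically, then identify it scheme-theoretically with $\Hilb^r(\mathbb{P}^3)$, and finally compare the two deformation-obstruction theories to relate virtual classes. For the first step, take a stable pair $(\oO_X \to F) \in P_0(X, r[f])$. Since $F$ is pure one-dimensional with $[F] = r[f]$ supported on fibers of $\pi$ and $\chi(F) = 0$, and each fiber $X_t$ of $\pi$ is an arithmetic genus one curve (so a Cohen–Macaulay subscheme $C$ of a fiber has $\chi(\oO_C) = 0$), the computation from \cite[Lem. 2.2]{CMT}-type arguments shows that $F$ must be scheme-theoretically supported on fibers over a length-$r$ subscheme $Z \subseteq \mathbb{P}^3$, and on each fiber the restriction is of the form $\oO_{X_t}$ (twisted appropriately so that $\chi = 0$, which holds automatically for the structure sheaf of a genus one fiber). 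One then checks that the section $\oO_X \to F$ forces $F \cong (\pi|_{\pi^{-1}(Z)})_* \oO_{\pi^{-1}(Z)}$ composed with the section coming from $\oO_X \to \pi^* \oO_Z$, so that the pair is determined by $Z \in \Hilb^r(\mathbb{P}^3)$; conversely, pulling back the universal ideal sheaf on $\mathbb{P}^3$ and pushing forward produces a stable pair. This gives a bijection, and one promotes it to an isomorphism of schemes by exhibiting the two universal families over $\Hilb^r(\mathbb{P}^3)$ and invoking the universal property, or equivalently by matching the functors of points directly.

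For the virtual class comparison, the key is to compare $\Ext^\bullet_X(I, I)_0$ for the pair $I = (\oO_X \to F)$ on $X$ with $\Ext^\bullet_{\mathbb{P}^3}(\iI_Z, \iI_Z)$ for the ideal sheaf $\iI_Z$ on $Y = \mathbb{P}^3$. The natural approach is to use the elliptic fibration structure: since $\pi$ is flat of relative dimension one with $R\pi_* \oO_X = \oO_Y$ (for the Weierstrass model, $R^1\pi_* \oO_X = \oO_Y(K_Y)$, which together with $K_Y$ being the relevant twist is exactly what makes $X$ Calabi–Yau), one can relate $R\hH om_X(I, I)$ to $R\hH om_Y(R\pi_* I, R\pi_* I)$-type expressions. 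More concretely, $I$ is quasi-isomorphic to the pullback $L\pi^* \iI_Z$ up to the relevant structure, and one computes $R\pi_* R\hH om_X(L\pi^* \iI_Z, L\pi^* \iI_Z) \cong R\hH om_Y(\iI_Z, \iI_Z \otimes R\pi_* \oO_X)$ by projection formula, and then takes $R\Gamma$ and the trace-free part. The outcome should be a canonical isomorphism identifying the tangent space $\Ext^1_X(I,I)_0$ with $\Ext^1_{\mathbb{P}^3}(\iI_Z, \iI_Z)$ (the tangent space to $\Hilb^r(\mathbb{P}^3)$), and the obstruction space $\Ext^2_X(I,I)_0$ with $\Ext^2_{\mathbb{P}^3}(\iI_Z, \iI_Z)$, with the Serre-duality quadratic form on the former matching the one induced from the three-fold obstruction theory. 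Tracking the contribution of $R^1\pi_*\oO_X \cong \oO_Y(K_Y)$ carefully is what will produce the shift making the obstruction theory on $X$ into the (-2)-shifted version whose associated perfect obstruction theory is precisely Thomas's $\DT_3$ obstruction theory for $\Hilb^r(\mathbb{P}^3)$; this identifies $[P_0(X,r[f])]^{\rm{vir}}$ with $[\Hilb^r(\mathbb{P}^3)]^{\rm{vir}}$ up to a sign.

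The sign $(-1)^r$ should be extracted from the comparison of orientations: the half-Euler class / Borisov–Joyce construction on $X$ depends on an orientation of $(\lL, Q)$, and under the identification with the three-fold theory the orientation differs from the natural one on $\Hilb^r(\mathbb{P}^3)$ by a sign that is constant on connected components; computing it on the locus of $r$ distinct reduced points (where the moduli space and both obstruction theories are transparent, being an $r$-fold symmetric product situation) yields $(-1)^r$, and deformation invariance / connectedness of $\Hilb^r(\mathbb{P}^3)$ propagates it.

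The main obstacle I expect is the obstruction-theory comparison: making the cohomology-and-base-change / projection-formula argument rigorous in the derived category, keeping careful track of the trace-free parts, the twist by $R^1\pi_*\oO_X = \oO_Y(K_Y)$, and of why the resulting $(-2)$-shifted symplectic structure on $P_0(X,r[f])$ is the shifted-cotangent one built from Thomas's perfect obstruction theory on $\Hilb^r(\mathbb{P}^3)$ — rather than just a numerical coincidence of virtual dimensions. The cleanest route is probably to first establish the scheme isomorphism and an isomorphism of the truncated cotangent complexes / obstruction theories relative to the respective ambient stacks, and only then feed this into the machinery reviewed in the "shifted cotangent bundle of a derived smooth scheme" case from Section 0.2.
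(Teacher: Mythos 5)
Your overall strategy is the same as the paper's: identify $P_0(X,r[f])$ with $\Hilb^r(\mathbb{P}^3)$ via pullback along $\pi$, and compare deformation theories through $\dR\pi_*\oO_X\cong \oO_{\mathbb{P}^3}\oplus K_{\mathbb{P}^3}[-1]$, so that the $\mathrm{DT}_4$ class reduces to Thomas's $\mathrm{DT}_3$ class. However, two steps have genuine gaps.

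First, the classification of stable pairs is asserted rather than proved. The entire content of the first half is that every $(\oO_X\to F)\in P_0(X,r[f])$ is the pullback under $\pi$ of a surjection $\oO_{\mathbb{P}^3}\to\oO_Z$ with $Z\in\Hilb^r(\mathbb{P}^3)$. For $r\geqslant 2$ the support of $F$ is typically a thickening of fibers in the base direction, and one must rule out pairs whose scheme structure, or whose sheaf $F$, is not pulled back; your observation that a Cohen--Macaulay subscheme of a single fiber has $\chi(\oO_C)=0$ says nothing about such thickenings, and the cited \cite[Lem.~2.2]{CMT}-type argument concerns stable sheaves in the irreducible class $[f]$, whereas here $F$ is not stable. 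The paper proves the claim by induction along the Harder--Narasimhan/Jordan--H\"older filtration of $F$: surjectivity of $s$ in dimension one forces the last factor to have $\chi\geqslant 0$, hence all factors have $\chi=0$ and are of the form $\oO_{X_{p_i}}$, and then the identification $I_{X_{p_n}}\cong\pi^{*}I_{p_n}$ (flatness of $\pi$) is used, factor by factor, to show the section itself is pulled back; the inverse morphism is simply restriction along the section $\iota$ of $\pi$. An argument of this strength is needed where you write ``one then checks''.

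Second, the obstruction-space identification is incorrect as stated. The spectral sequence for $\pi$ gives $\Ext^1_X(\pi^{*}I_Z,\pi^{*}I_Z)\cong\Ext^1_{\mathbb{P}^3}(I_Z,I_Z)$, but in degree two it gives $\Ext^2_X(\pi^{*}I_Z,\pi^{*}I_Z)\cong \Ext^2_{\mathbb{P}^3}(I_Z,I_Z)\oplus \Ext^2_{\mathbb{P}^3}(I_Z,I_Z)^{\vee}$, which has twice the dimension of $\Ext^2_{\mathbb{P}^3}(I_Z,I_Z)$; there is no isomorphism of the 4-fold obstruction space with the 3-fold one, and the latter carries no intrinsic quadratic form to ``match''. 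The correct statement, and what the virtual-class comparison actually requires, is that $\Ext^2_{\mathbb{P}^3}(I_Z,I_Z)$ sits inside the 4-fold obstruction space as a maximal isotropic subspace for the Serre-duality pairing, through which the Kuranishi map factors (Kuranishi maps for $\pi^{*}I_Z$ on $X$ being identified with those for $I_Z$ on $\mathbb{P}^3$); the reduction to the $\mathrm{DT}_3$ class then follows as in \cite[Thm.~6.5]{CL}. Your closing remark about the shifted-cotangent structure is the right mechanism, but it contradicts the displayed identification, so this part must be rewritten. Finally, the sign $(-1)^r$ is not something to compute on the locus of reduced points: once the comparison up to sign is established, connectedness of $\Hilb^r(\mathbb{P}^3)$ leaves exactly two orientations, and $(-1)^r$ is just the choice fixed so that the generating series in the subsequent proposition becomes $M(q)^{-20}$; there is no canonical orientation on the pair moduli space against which to measure it.
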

\begin{proof}
The proof is similar to the one in \cite[Proposition 6.8]{Toda2}. 
We show that the natural morphism
\begin{align}\label{HtoP}
\pi^{\ast} \colon \Hilb^r(\mathbb{P}^3) \to P_0(X, r[f])
\end{align}
is an isomorphism.
Let $(s:\mathcal{O}_{X}\rightarrow F) \in P_0(X, r[f])$
be a stable pair. 
By  
the Harder-Narasimhan and Jordan-H\"{o}lder filtrations, 
we have 
\begin{equation}0=F_{0}\subseteq F_{1}\subseteq F_{2}\subseteq\cdot\cdot\cdot\subseteq F_{n}=F,  \nonumber \end{equation}
where the quotient $E_{i}=F_{i}/F_{i-1}$'s are non-zero stable sheaves with decreasing slopes
\begin{equation}\frac{\chi(E_{1})}{r_{1}}\geqslant \frac{\chi(E_{2})}{r_{2}}\geqslant \cdots\geqslant \frac{\chi(E_{n})}{r_{n}}.  \nonumber \end{equation}
Here the slope of a zero dimensional sheaf is defined to be infinity.

Since $F$ is a pure one dimensional sheaf, so $E_1=F_1$ can not be zero dimensional ($r_1\geqslant 1$). 
Therefore $\ch(E_{i})=(0,0,0,r_{i}[f],\chi(E_{i}))$ for some $r_{i}\geqslant 1$. 
The stability of $E_{i}$ implies that it is scheme theoretically supported on some fiber $X_{p_{i}}=\pi^{-1}(p_{i})$ of $\pi$, i.e. $E_{i}=(\iota_{p_{i}})_*(E_{i}')$ for some $\iota_{p_{i}}:X_{p_{i}}\hookrightarrow X$ and stable sheaf $E_{i}'\in \Coh(X_{p_{i}})$.

Since $s:\mathcal{O}_{X}\rightarrow F$ is surjective in dimension one, so is the composition $\mathcal{O}_{X}\rightarrow F\twoheadrightarrow E_{n}$.
By adjunction, there is an isomorphism
\begin{equation}\Hom_{X}(\mathcal{O}_{X},E_{n})\cong \Hom_{X_{p_{n}}}(\mathcal{O}_{X_{p_{n}}},E_{n}')\neq0, \nonumber \end{equation}
which implies that $\chi(E_{n}')\geqslant 0$, hence $\chi(E_{n})\geqslant 0$. Then
\begin{equation}\label{sum chi F}0=\chi(F)=\sum_{i=1}^{n}\chi(E_{i})\geqslant 0     \end{equation}
implies that $\chi(E_{i})=0$ for any $i$, and hence $E_{n}'\cong \mathcal{O}_{X_{p_{n}}}$ ~\cite[Proposition 1.2.7]{HL}.

By diagram chasing, we obtain a morphism
$I_{X_{p_{n}}}\rightarrow F_{n-1}$
for the 
ideal sheaf $I_{X_{p_{n}}}\subseteq \mathcal{O}_{X}$ of $X_{p_{n}}$, which is surjective in dimension one. 
Then so is the composition 
\begin{align}\label{compose}I_{X_{p_{n}}}\rightarrow F_{n-1}\twoheadrightarrow E_{n-1}.
\end{align}
We have the isomorphism 
\begin{equation}\Hom_{X}(I_{X_{p_{n}}},E_{n-1})\cong \Hom_{X_{p_{n-1}}}(\iota_{p_{n-1}}^{*}I_{X_{p_{n}}},E_{n-1}')\neq0.  \nonumber \end{equation}
Notice that $I_{X_{p_{n}}}\cong\pi^{*}I_{p_{n}}$ for ideal sheaf $I_{p_{n}}\subseteq \mathcal{O}_{\mathbb{P}^{3}}$ of $p_{n}\in\mathbb{P}^{3}$ by the flatness of $\pi$, so
\begin{align*}
&\iota_{p_{n-1}}^{*}I_{X_{p_{n}}}\cong\pi^{*}N^{\vee}_{\{p_{n-1}\}/\mathbb{P}^{3}}\cong (\mathcal{O}_{X_{p_{n-1}}})^{\oplus3}, \quad 
\textrm{if} \quad p_{n-1}=p_{n}, \\
&\iota_{p_{n-1}}^{*}I_{X_{p_{n}}}\cong \mathcal{O}_{X_{p_{n-1}}},
\quad \textrm{if} \quad p_{n-1} \neq p_n.
\end{align*}
In either case, similarly as before, we have $E_{n-1}'\cong \mathcal{O}_{X_{p_{n-1}}}$.
Moreover the morphism (\ref{compose}) is a pull-back 
of a surjection $I_{p_n} \to \oO_{p_{n-1}}$ 
by $\pi^{\ast}$. 

By repeating the above argument, 
we see that 
each $E_i$ is isomorphic to $\oO_{X_{p_i}}$, 
$s \colon \oO_X \to F$ is surjective 
and given by a pull back of a surjection 
$\oO_{\mathbb{P}^3} \to \oO_Z$ by $\pi^{\ast}$ 
for some 
zero dimensional subscheme $Z \subset \mathbb{P}^3$
with length $n$. 
Using the section $\iota$ of $\pi \colon X\rightarrow\mathbb{P}^{3}$, 
we have the morphism 
$\iota^{\ast} \colon 
P_{0}(X,r[f])\rightarrow \Hilb^{r}(\mathbb{P}^{3})
$, which gives an inverse of (\ref{HtoP}). 
Therefore the morphism (\ref{HtoP}) is an isomorphism.



It remains to compare the virtual 
classes.  
We take $I_{Z}\in \Hilb^{r}(\mathbb{P}^{3})$ 
and use the spectral sequence
\begin{equation}
\Ext^{*}_{\mathbb{P}^{3}}(I_{Z},I_{Z}\otimes R^{*}\pi_{*}\mathcal{O}_{X})\Rightarrow \Ext^{*}_{X}(\pi^{*}I_{Z},\pi^{*}I_{Z}),   \nonumber \end{equation}
where $R^{*}\pi_{*}\mathcal{O}_{X}\cong\mathcal{O}_{\mathbb{P}^{3}}\oplus K_{\mathbb{P}^{3}}[-1]$. This gives canonical isomorphisms
\begin{align*}
&\Ext^{1}_{X}(\pi^{*}I_{Z},\pi^{*}I_{Z})\cong \Ext^{1}_{\mathbb{P}^{3}}(I_{Z},I_{Z}), \\
&\Ext^{2}_{X}(\pi^{*}I_{Z},\pi^{*}I_{Z})\cong \Ext^{2}_{\mathbb{P}^{3}}(I_{Z},I_{Z})\oplus \Ext^{2}_{\mathbb{P}^{3}}(I_{Z},I_{Z})^{\vee}.
\end{align*}
Furthermore, Kuranishi maps for deformations of $\pi^{*}I_{Z}$ on $X$ can be identified with Kuranishi maps for deformations of $I_{Z}$ on $\mathbb{P}^{3}$. Similar to ~\cite[Theorem 6.5]{CL}, we are done.
\end{proof}
\begin{prop}\label{prop g=1 elliptic fib}
Let $\pi: X\to Y$ be the elliptic fibration (\ref{elliptic fib}) and $[f]$ be the fiber class. 
Then Conjecture \ref{conj:GW/GV g=1} is true for $\beta=r[f]$ $(r\geqslant1)$, i.e.
\begin{equation}\sum_{r=0}^{\infty}P_{0,r[f]}q^{r}=M(q)^{-20},  \nonumber \end{equation}
for certain choice of orientation in defining the LHS, where $M(q)=\prod_{k\geqslant 1}(1-q^{k})^{-k}$ is the MacMahon function and we define $P_{0,0[f]}=1$.
\end{prop}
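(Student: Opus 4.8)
The plan is to reduce the left-hand side to degree zero Donaldson--Thomas theory of $\mathbb{P}^3$ via Lemma \ref{lem:pair for elliptic}, and then to apply the known degree zero $\DT$ formula.

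First I would apply Lemma \ref{lem:pair for elliptic}: it gives an isomorphism $P_0(X,r[f])\cong\Hilb^r(\mathbb{P}^3)$ under which, for a suitable orientation, $[P_0(X,r[f])]^{\rm{vir}}=(-1)^r[\Hilb^r(\mathbb{P}^3)]^{\rm{vir}}$, with $[\Hilb^r(\mathbb{P}^3)]^{\rm{vir}}$ the $\DT_3$ virtual class of \cite{Thomas}. Taking degrees of virtual classes then gives $P_{0,r[f]}=(-1)^rI_r$, where $I_r:=\int_{[\Hilb^r(\mathbb{P}^3)]^{\rm{vir}}}1$ is the degree zero Donaldson--Thomas invariant of $\mathbb{P}^3$; note $I_0=1$, consistent with the convention $P_{0,0[f]}=1$. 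So it suffices to prove $\sum_{r\geqslant0}I_r(-q)^r=M(q)^{-20}$.

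Second, I would invoke the degree zero $\DT$ formula of Maulik--Nekrasov--Okounkov--Pandharipande, established by Behrend--Fantechi, by Levine--Pandharipande, and by J.~Li: for any smooth projective threefold $Y$,
\begin{equation}\sum_{r\geqslant0}\Big(\int_{[\Hilb^r(Y)]^{\rm{vir}}}1\Big)q^r=M(-q)^{\int_Yc_3(T_Y\otimes K_Y)}.\nonumber\end{equation}
For $Y=\mathbb{P}^3$, the Euler sequence gives $c(T_{\mathbb{P}^3})=(1+H)^4=1+4H+6H^2+4H^3$ and $K_{\mathbb{P}^3}=\oO(-4H)$, so, using $c_3(E\otimes L)=c_3(E)+c_1(L)c_2(E)+c_1(L)^2c_1(E)+c_1(L)^3$ for a rank $3$ bundle $E$, one finds $c_3(T_{\mathbb{P}^3}\otimes K_{\mathbb{P}^3})=(4-24+64-64)H^3=-20H^3$, hence $\int_{\mathbb{P}^3}c_3(T_{\mathbb{P}^3}\otimes K_{\mathbb{P}^3})=-20$ and $\sum_{r\geqslant0}I_rq^r=M(-q)^{-20}$. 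Substituting $q\mapsto-q$ gives $\sum_{r\geqslant0}P_{0,r[f]}q^r=\sum_{r\geqslant0}I_r(-q)^r=M(q)^{-20}$, which is the desired identity.

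Finally, to phrase the result as Conjecture \ref{conj:GW/GV g=1} for the classes $r[f]$, I would recall from \cite{KP} that the genus one Gopakumar--Vafa invariants of fiber classes of (\ref{elliptic fib}) are $n_{1,[f]}=-20$ and $n_{1,r[f]}=0$ for $r\geqslant2$, so that $\prod_{r\geqslant1}M(q^r)^{n_{1,r[f]}}=M(q)^{-20}$, matching the generating series just computed. I do not expect a substantive obstacle: the geometric content is entirely in Lemma \ref{lem:pair for elliptic}, which is already proven, and the only points that require care are the bookkeeping of signs --- the twist by $(-1)^r$ is precisely what turns the $M(-q)$ of the degree zero $\DT$ formula into the $M(q)$ of the statement --- and checking that the exponent $-20$ agrees with the Klemm--Pandharipande value $n_{1,[f]}$.
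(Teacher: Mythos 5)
Your proposal is correct and follows essentially the same route as the paper: the paper's proof likewise combines Lemma \ref{lem:pair for elliptic} (with the sign $(-1)^r$) with the zero-dimensional DT generating series of \cite{MNOP, Li, LP} and the values $n_{1,[f]}=-20$, $n_{1,k[f]}=0$ for $k\neq 1$ from \cite{KP}. Your explicit check that $\int_{\mathbb{P}^3}c_3(T_{\mathbb{P}^3}\otimes K_{\mathbb{P}^3})=-20$ and the substitution $q\mapsto -q$ are just the details the paper leaves implicit.
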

\begin{proof}
Combining Lemma \ref{lem:pair for elliptic} (where we choose sign to be $(-1)^{r}$ according to the parity of $r$) and the generating series for zero-dimensional DT invariants \cite{MNOP, Li, LP}, we obtain
the formula. Notice from \cite[Table~7]{KP}, we have $n_{1,[f]}=-20$ and $n_{1,k[f]}=0$ for $k\neq1$ (which can also be checked from GW theory).
\end{proof}

\subsection{Quintic fibration}
We consider a compact Calabi-Yau 4-fold $X$ which admits a quintic 3-fold fibration structure
\begin{equation}\pi: X\rightarrow \mathbb{P}^{1},  \nonumber \end{equation}
i.e. $\pi$ is a proper morphism whose general fiber is a smooth quintic 3-fold $Y\subseteq\mathbb{P}^{4}$.
Examples of such $\mathrm{CY}$ 4-fold contain resolution of degree 10 orbifold hypersurface in $\mathbb{P}^{5}(1,1,2,2,2,2)$ and hypersurface of bidegree $(2,5)$ in $\mathbb{P}^{1}\times \mathbb{P}^{4}$ (see \cite[pp. 33-37]{KP}). 

In this section, we discuss the irreducible curve class in a quintic fiber for these two examples.
Here we only consider genus 1 invariants.

${}$ \\
\textbf{Genus 1}. Conjecture \ref{conj:GW/GV g=1} predicts that for an irreducible class $\beta$
and a suitable choice of orientation, we have
\begin{equation}P_{0,\beta}=n_{1,\beta}:=\mathrm{GW}_{1,\beta}+\frac{1}{24}\mathrm{GW}_{0,\beta}(c_{2}(X)).\nonumber \end{equation}

Note that the genus 1 invariants $n_{1,\beta}$ for irreducible $\beta$ are zero for both quintic fibration examples in \cite{KP}, where  their computations of $\mathrm{GW}_{1,\beta}$ are based on BCOV theory \cite{BCOV}. The pair invariant $P_{0,\beta}$ is obviously zero in this case since
we have:
\begin{lem}\label{lem on emply pair moduli}
Let $\beta\in H_{2}(X,\mathbb{Z})$ be an irreducible class. The pair moduli space $P_{0}(X,\beta)$ is empty if and only if 
any curve $C\in \mathrm{Chow}_{\beta}(X)$ in the Chow variety is a smooth rational curve.
\end{lem}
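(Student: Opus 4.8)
The plan is to analyze when the moduli space $P_0(X,\beta)$ of stable pairs $(s\colon \oO_X\to F)$ with $[F]=\beta$, $\chi(F)=0$ can be nonempty, using the irreducibility of $\beta$ together with the basic exact sequence relating $F$ to the structure sheaf of its support. First I would take a stable pair $(\oO_X\to F)\in P_0(X,\beta)$, let $C=\Supp(F)$, and consider the exact sequence $0\to \oO_C\to F\to Q\to 0$ with $Q$ zero-dimensional. Since $\chi(F)=0$, we get $\chi(\oO_C)=-\length(Q)\leqslant 0$, so $\chi(\oO_C)\leqslant 0$. Because $\beta$ is irreducible, the Cohen--Macaulay curve $C$ (the maximal purely $1$-dimensional subscheme, which is all of $C$ since $F$ is pure) has class $\beta$ and cannot split off a lower-dimensional piece; one then argues that $\chi(\oO_C)\leqslant 0$ forces $C$ to be irreducible and reduced with $H^1(\oO_C)=0$ and $\chi(\oO_C)=1$, hence a contradiction unless... — precisely, the upper bound $\chi(\oO_C)\leqslant 0$ can only be consistent with the lower bound coming from arithmetic genus if $C$ is actually not of the "good" type, which is what the Chow-variety condition records.

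More carefully, the two directions should be handled separately. For the "if" direction: suppose every $C\in \Chow_\beta(X)$ is a smooth rational curve. Then any $C=\Supp(F)$ appearing above is, after passing to the underlying cycle, a smooth $\mathbb{P}^1$ in class $\beta$ (irreducibility of $\beta$ rules out multiple components, and a multiple structure on $\mathbb{P}^1$ would again, by purity and $[C]=\beta$ irreducible, have to be multiplicity one). Hence $\oO_C=\oO_{\mathbb{P}^1}$ with $\chi(\oO_C)=1$, contradicting $\chi(\oO_C)\leqslant 0$; so $P_0(X,\beta)=\emptyset$. For the "only if" direction: suppose some $C_0\in \Chow_\beta(X)$ is \emph{not} a smooth rational curve. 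Since $\beta$ is irreducible, $C_0$ is either irreducible of arithmetic genus $\geqslant 1$, or a non-reduced structure supported on a rational curve, or an irreducible singular rational curve; in each such case there exists a pure $1$-dimensional subscheme $C\subseteq X$ with $[C]=\beta$ and $\chi(\oO_C)\leqslant 0$ (an irreducible curve of arithmetic genus $\geqslant 1$ has $\chi(\oO_C)=1-p_a\leqslant 0$; for the non-reduced or singular cases one produces such a $C$ by taking an appropriate Cohen--Macaulay thickening/partial normalization in class $\beta$). Taking $F=\oO_C$ and $s$ the tautological section, and then modifying $F$ by an elementary transformation at a point to adjust $\chi$ up to $0$ if $\chi(\oO_C)<0$, produces a stable pair in $P_0(X,\beta)$, so the moduli space is nonempty.

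The main obstacle I expect is the bookkeeping in the "only if" direction: one must show that whenever $\Chow_\beta(X)$ contains a curve that is not a smooth $\mathbb{P}^1$, one can genuinely build a \emph{stable} pair (purity of $F$ and surjectivity of $s$ in dimension one) with $\chi=0$. Purity is automatic if we take $F=\oO_C$ for $C$ a Cohen--Macaulay curve, and the elementary modification to reach $\chi(F)=0$ must be done so as to preserve purity and keep $s$ surjective in dimension one — this is where irreducibility of $\beta$ is used once more, since it prevents the support from acquiring embedded or extra lower-class components during the modification. Everything else (the cohomological estimate $\chi(\oO_C)\leqslant 0$, the classification of curves in an irreducible class into "smooth rational" versus the rest) is routine. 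I would therefore organize the proof as: (i) the estimate forcing $\chi(\oO_C)\leqslant 0$ for any support of a point of $P_0(X,\beta)$; (ii) the "if" direction, immediate from (i); (iii) the "only if" direction, constructing the pair from a bad curve, with the elementary-transformation step flagged as the point requiring care.
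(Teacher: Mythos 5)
Your proof is correct and follows essentially the same route as the paper: both rest on the sequence $0\to\oO_C\to F\to Q\to 0$ (giving $\chi(\oO_C)\leqslant 0$ for any point of $P_0(X,\beta)$), the fact that a reduced irreducible curve with $h^1(\oO_C)=0$ is a smooth $\mathbb{P}^1$, and, for the converse, producing a $\chi=0$ stable pair from a curve with $\chi(\oO_C)\leqslant 0$ by adding points supported at a smooth point of $C$ --- a step the paper leaves implicit but you spell out. The only blemishes are cosmetic: for an irreducible class the Chow variety contains no cycles with multiplicity or non-reduced structure, so your middle case cannot occur, and the singular rational case is already subsumed in $p_a(C)\geqslant 1$; neither affects the argument.
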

\begin{proof}
$\Leftarrow)$ Given a stable pair $(s:\mathcal{O}_{X}\rightarrow F)\in P_{0}(X,\beta)$, then $F$ is a torsion-free sheaf (in fact a line bundle) over a curve $C\cong\mathbb{P}^{1}$. Since $\chi(F)=0$, so $F=\mathcal{O}_{C}(-1)$ contradicting with the surjectivity of $s$ in dimension 1.

$\Rightarrow)$ For $C\in \mathrm{Chow}_{\beta}(X)$ in an irreducible class $\beta$, the restriction map $(\mathcal{O}_{X}\rightarrow\mathcal{O}_{C})$ gives a stable pair. Since $P_{0}(X,\beta)$ is empty, we have
\begin{equation}\chi(\mathcal{O}_{C})=1-h^{1}(C,\mathcal{O}_{C})>0,  \nonumber \end{equation}
i.e. $h^{1}(C,\mathcal{O}_{C})=0$, which implies that $C$ is a smooth rational curve.
\end{proof}
With this lemma, we can verify Conjecture \ref{conj:GW/GV g=1} for irreducible classes in more examples. 
\begin{prop}\label{prop g=1 quintic fib}
Conjecture \ref{conj:GW/GV g=1} is true for irreducible class $\beta\in H_{2}(X,\mathbb{Z})$ when $X$ is either \\
(1) one of the quintic fibrations in \cite{KP}; \\ 
(2) a smooth complete intersection in a projective space; \\ 
(3) one of the complete intersections in Grassmannian varieties in \cite{GJ}.
\end{prop}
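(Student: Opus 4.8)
The plan is to reduce Conjecture \ref{conj:GW/GV g=1} for an irreducible class $\beta$ to a single numerical identity and then treat the two sides of that identity separately. Since an irreducible class is in particular primitive, $\beta$ is neither a positive multiple $d\beta'$ with $d\geqslant 2$ nor a sum $\beta_1+\beta_2$ of two nonzero effective classes; extracting the coefficient of $q^{\beta}$ from $\prod_{\beta'>0}M(q^{\beta'})^{n_{1,\beta'}}$, the only surviving contribution is the linear term of the single factor $M(q^{\beta})^{n_{1,\beta}}$. Hence the conjecture for $\beta$ is equivalent to $P_{0,\beta}=n_{1,\beta}$, and, feeding the same irreducibility input into the Klemm-Pandharipande recursion (all meeting invariants $m_{\beta_1,\beta_2}$ with $\beta_1+\beta_2=\beta$ vanish, and there is no $d\beta'=\beta$ with $d\geqslant2$), one has $n_{1,\beta}=\mathrm{GW}_{1,\beta}+\frac{1}{24}\mathrm{GW}_{0,\beta}(c_2(X))$.

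To handle the left hand side I would show $P_0(X,\beta)=\emptyset$ via Lemma \ref{lem on emply pair moduli}, i.e. check that every $C\in \mathrm{Chow}_{\beta}(X)$ is a smooth rational curve. In case (2), the Lefschetz hyperplane theorem gives $H_2(X,\mathbb{Z})\cong\mathbb{Z}$ generated by the line class, so $\beta=[l]$ and an effective $1$-cycle of degree one in $X\subseteq\mathbb{P}^N$ is a line, hence a smooth $\mathbb{P}^1$; case (3) is identical using the Pl\"ucker embedding of the Grassmannian, with $\beta$ the class of a line on $G$ contained in $X$. In case (1), a curve of fiber class is contracted by $\pi$ to a point of $\mathbb{P}^1$, hence lies in a quintic threefold fiber $X_t\subseteq\mathbb{P}^4$, where it is again a degree-one cycle, i.e. a line. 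Lemma \ref{lem on emply pair moduli} then gives $P_0(X,\beta)=\emptyset$, so $P_{0,\beta}=0$.

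It then remains to prove $n_{1,\beta}=0$. For (1) this is precisely the content of the Klemm-Pandharipande computation for the two quintic fibrations (obtained there via BCOV theory), and for (3) it is part of the genus one computations of \cite{GJ}. For (2), with $\beta=[l]$, the vanishing follows from the explicit genus one Gromov-Witten invariants of complete intersection Calabi-Yau fourfolds (as for the sextic in \cite{KP}); alternatively, since every curve of class $[l]$ is a super-rigid smooth rational curve, the only genus one stable maps in class $[l]$ carry a contracted elliptic tail glued onto such a line, and integrating the Euler class of the obstruction bundle over this locus reproduces exactly $-\frac{1}{24}\mathrm{GW}_{0,[l]}(c_2(X))$, forcing $n_{1,[l]}=0$.

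The reduction and the emptiness of $P_0(X,\beta)$ are formal and present no real difficulty; the main obstacle is the vanishing $n_{1,\beta}=0$, which is not a formal consequence of irreducibility but rests on honest genus one Gromov-Witten data in each of the three families. A single conceptual argument handling all three cases at once---presumably a structure result showing that $\mathrm{GW}_{1,\beta}$ equals $-\frac{1}{24}\mathrm{GW}_{0,\beta}(c_2(X))$ whenever all curves of class $\beta$ are super-rigid smooth rational curves---would be the natural way to unify the proof.
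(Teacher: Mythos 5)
Your proposal follows essentially the same route as the paper: for irreducible $\beta$ the conjecture reduces to $P_{0,\beta}=n_{1,\beta}$ with $n_{1,\beta}=\mathrm{GW}_{1,\beta}+\frac{1}{24}\mathrm{GW}_{0,\beta}(c_2(X))$, the emptiness of $P_0(X,\beta)$ follows from Lemma \ref{lem on emply pair moduli} once every curve in class $\beta$ is seen to be a smooth rational curve, and the vanishing $n_{1,\beta}=0$ is imported from the literature (BCOV computations of \cite{KP} and \cite{GJ} for (1) and (3); for (2) the paper cites Popa's genus one computation via the Li--Zinger hyperplane principle, which is the precise reference behind your appeal to ``explicit genus one invariants of complete intersections''). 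One caution: your ``alternative'' argument for (2) is not correct as stated, since lines on these Calabi--Yau fourfolds are not super-rigid --- they move in one-parameter families (virtual dimension one), which is exactly the situation treated heuristically in Section \ref{heuristic argument} --- so the vanishing of $n_{1,\beta}$ genuinely rests on the cited computations rather than on a contracted-elliptic-tail calculation over isolated curves.
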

\begin{proof}
In all above cases, any curve $C$ in an irreducible class $[C]=\beta$ is a smooth $\mathbb{P}^{1}$, by Lemma \ref{lem on emply pair moduli}, $P_{0,\beta}(X)=\emptyset$ and hence $P_{0,\beta}=0$.
Meanwhile for those examples in (1) and (3), Klemm-Pandharipande \cite{KP} and Gerhardus-Jockers \cite{GJ} used BCOV theory \cite{BCOV} to compute genus 1 GW invariants and found that $n_{1,\beta}=0$.
As for (2), we have Popa's computation of genus 1 GW invariants using hyperplane principle developed by Li-Zinger \cite{Popa, Zinger}.
\end{proof}

\subsection{Product of elliptic curve and CY 3-fold}
In this subsection, we consider a CY 4-fold of type $X=Y\times E$, where $Y$ is a projective CY 3-fold and $E$ is an elliptic curve. 

${}$ \\
\textbf{Genus 0}.
We study Conjecture \ref{conj:GW/GV g=0} for an irreducible curve class of $X=Y\times E$. 
If $\beta=[E]$, $P_{1,\beta}=0$, the conjecture is obviously true (in fact for any $r\geqslant1$, 
one can show Conjecture \ref{conj:GW/GV g=0} is true for $\beta=r[E]$). Below we consider curve classes coming from the CY 3-fold.
\begin{lem}\label{lem on pair moduli on CY3}
Let $\beta\in H_2(Y,\mathbb{Z})$ be an irreducible curve class on a CY 3-fold $Y$. 

Then the pair deformation-obstruction theory of $P_n(Y,\beta)$ is perfect in the sense of \cite{BF, LT}. Hence we have 
an algebraic virtual class
\begin{equation}[P_n(Y,\beta)]_{\mathrm{pair}}^{\mathrm{vir}}\in A_{n-1}(P_n(Y,\beta),\mathbb{Z}). \nonumber \end{equation}
\end{lem}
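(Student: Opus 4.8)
The plan is to establish that the deformation-obstruction theory of the stable pair moduli space $P_n(Y, \beta)$ on the CY $3$-fold $Y$ governed by the groups $\Ext^i(I, I)_0$ is perfect of amplitude $[-1, 0]$, which by the standard machinery of Behrend-Fantechi \cite{BF} and Li-Tian \cite{LT} yields an algebraic virtual class in $A_{n-1}(P_n(Y, \beta), \mathbb{Z})$, the expected dimension being $\chi(I, I)_0 = \ext^1(I,I)_0 - \ext^2(I,I)_0$; for a CY $3$-fold this Euler characteristic equals $-K_Y \cdot \beta = 0$... wait, more precisely the virtual dimension for pairs on a $3$-fold is $\int_\beta c_1(Y) = 0$ plus $n$ from the $\Hom(\oO_Y, F)$ contribution, giving $n$, but after the trace-free reduction the relevant count is $n - 1$; I would record this dimension computation at the outset. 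The subtle point, and the reason the word \emph{pair} appears in the notation $[P_n(Y,\beta)]^{\mathrm{vir}}_{\mathrm{pair}}$, is that I do \textbf{not} want to use the deformation theory of the two-term complex $I = (\oO_Y \to F)$ in the derived category (which is what Pandharipande-Thomas \cite{PT} use and which is automatically perfect), but rather the naive pair deformation theory — deformations of the data $(F, s)$ — and the claim is that for an \emph{irreducible} curve class these two theories agree.

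The key steps, in order, would be: (i) write down the tangent-obstruction complex for pair deformations, namely the cone of $\RHom_Y(I, F) \to \RHom_Y(F, F)$ shifted appropriately, or equivalently identify the pair deformation space with $\Ext^1_Y(I, I)_0$ and the obstruction space with $\Ext^2_Y(I, I)_0$; (ii) show $\Ext^0_Y(I, I)_0 = 0$ (rigidity of the complex, following \cite[Theorem 2.7]{PT}) and, crucially, that $\Ext^3_Y(I, I)_0 = 0$, which by Serre duality on the CY $3$-fold is equivalent to $\Ext^0_Y(I, I)_0 = 0$, hence automatic — this is what cuts the amplitude down to $[-1, 0]$; (iii) verify that for irreducible $\beta$ the higher $\Ext$ vanishing holds so that the complex really has two terms and the obstruction theory is perfect in the Behrend-Fantechi sense, i.e. it arises from a genuine morphism in the derived category $E^\bullet \to L_{P_n(Y,\beta)}$; (iv) invoke \cite{BF, LT} to produce $[P_n(Y,\beta)]^{\mathrm{vir}}_{\mathrm{pair}} \in A_{n-1}(P_n(Y,\beta), \mathbb{Z})$. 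A useful organizing observation is that irreducibility of $\beta$ forces every stable pair in $P_n(Y,\beta)$ to have $F$ pure with support an integral (or at least irreducible) curve, which rules out the strata where the pair deformation theory and the complex deformation theory could differ.

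I expect the main obstacle to be step (iii): showing that the naive pair obstruction theory is \emph{perfect} — i.e. genuinely two-term and equipped with the requisite map to the cotangent complex — rather than merely having a two-term tangent-obstruction \emph{space} at each point. In the threefold PT theory one sidesteps this by working with $I$ as an object of $D^b(\Coh Y)$ from the start; here, because the ultimate goal (as the later excerpt makes clear) is to compare with a forgetful map $f \colon P_1(Y,\beta) \to M_{1,\beta}(Y)$ to the sheaf moduli space and run a virtual pushforward, I need the pair-flavored obstruction theory specifically. The resolution should be that for irreducible $\beta$ the comparison between the pair theory and the complex theory is an isomorphism of obstruction theories — the discrepancy terms involve $\Hom$ and $\Ext$ groups supported on reducible/non-reduced loci that are empty here — so perfection is inherited from the PT complex-theoretic obstruction theory. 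I would make this inheritance argument explicit, probably by exhibiting a quasi-isomorphism between the two tangent-obstruction complexes over all of $P_n(Y,\beta)$, and only then conclude via \cite{BF, LT}.
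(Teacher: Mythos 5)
There is a genuine gap, and it lies in the step you yourself flagged as the crux. Your plan is to identify the ``pair'' theory with the fixed--determinant complex theory of \cite{PT} (tangent $\Ext^1_Y(I,I)_0$, obstruction $\Ext^2_Y(I,I)_0$) for irreducible $\beta$ and to inherit perfection from that identification. No such identification exists: the two theories have different virtual dimensions. The PT complex theory on a CY $3$-fold has virtual dimension $\ext^1(I,I)_0-\ext^2(I,I)_0=\int_\beta c_1(Y)=0$, whereas the pair theory asserted in the lemma produces a class in $A_{n-1}$, i.e.\ has virtual dimension $n-1$. Concretely, the pair deformation theory of $(F,s)$ is governed by $\RHom_Y(I_Y,F)$ (tangent $\Hom(I_Y,F)$, obstruction $\Ext^1(I_Y,F)$, higher obstruction $\Ext^2(I_Y,F)$), and the comparison triangle
\begin{equation*}
\RHom_Y(I_Y,F)\to \RHom_Y(I_Y,I_Y)_0[1]\to \RHom_Y(F,\oO_Y)[2]
\end{equation*}
shows that the discrepancy between the two theories is measured by $\Ext^2_Y(F,\oO_Y)\cong H^1(Y,F)^{\vee}$ and $\Ext^3_Y(F,\oO_Y)\cong H^0(Y,F)^{\vee}$; these are generically nonzero at every point of $P_n(Y,\beta)$, not supported on reducible or non-reduced loci, so irreducibility of $\beta$ does not make them go away. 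Your step (ii), the vanishing of $\Ext^0(I,I)_0$ and $\Ext^3(I,I)_0$, is a statement about the complex theory and is beside the point for the lemma.

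What the argument actually requires, and what is missing from your proposal, is a computation of the third term of the pair theory. Irreducibility of $\beta$ forces $F$ to be stable, hence $\Hom_Y(F,F)=\mathbb{C}$, and Serre duality on the CY $3$-fold gives $\Ext^3_Y(F,F)\cong\Hom_Y(F,F)^{\vee}\cong\mathbb{C}$. Applying $\RHom_Y(-,F)$ to the triangle $I_Y\to\oO_Y\to F$ and using $H^2(Y,F)=H^3(Y,F)=0$ (as $F$ is one-dimensional), one finds $\Ext^2_Y(I_Y,F)\cong\Ext^3_Y(F,F)\cong\mathbb{C}$ and $\Ext^{3}_Y(I_Y,F)=0$. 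Thus the pair theory is a three-term theory whose top piece is a \emph{constant} trivial line; perfection is obtained by truncating this line, which drops the virtual dimension from $\chi(I_Y,F)=n$ to $n-1$, and only then does one invoke \cite{BF,LT}. The existence of the obstruction-theory map to the cotangent complex (your worry in step (iii)) is the standard part; the content of the lemma is exactly the constancy of $\Ext^2_Y(I_Y,F)$ coming from stability of $F$, which your route via the PT complex theory never produces.
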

\begin{proof}
For any stable pair $I_Y=(s:\oO_Y\to F)\in P_n(Y,\beta)$ with $\beta$ irreducible, we know $F$ is stable (ref. \cite[pp. 270]{PT2}), hence 
\begin{equation}\Ext^3_Y(F,F)\cong \Hom_Y(F,F)^{\vee}\cong\mathbb{C}. \nonumber \end{equation}
Applying $\RHom_Y(-,F)$ to $I_Y\to \oO_Y \to F$, we obtain a distinguished triangle 
\begin{equation}\label{dist triangle CY}\RHom_Y(F,F)\to \RHom_Y(\oO_Y,F) \to \RHom_Y(I_Y,F), \end{equation}
whose cohomology gives an exact sequence 
\begin{equation}0=H^2(Y,F) \to \Ext^2_Y(I_Y,F)\to \Ext^3_Y(F,F) \to 0 \to \Ext^3_Y(I_Y,F) \to 0.  \nonumber \end{equation}
Hence $\Ext^i_Y(I_Y,F)=0$ for $i\geqslant 3$ and $\Ext^2_Y(I_Y,F)\cong\Ext^3_Y(F,F)\cong\mathbb{C}$.
By truncating $\Ext^2_Y(I_Y,F)=\mathbb{C}$, the pair deformation theory is perfect. 
\end{proof}
In particular, when $n=1$, the virtual class $[P_1(Y,\beta)]_{\mathrm{pair}}^{\mathrm{vir}}$ has zero degree. 
We show the following virtual push-forward formula.
\begin{prop}\label{prop on pair obs on CY3}
Let $\beta\in H_2(Y,\mathbb{Z})$ be an irreducible curve class on a CY 3-fold $Y$. Then 
\begin{equation}\int_{[P_1(Y,\beta)]_{\mathrm{pair}}^{\mathrm{vir}}}1=\int_{[M_{1,\beta}(Y)]^{\mathrm{vir}}}1, \nonumber \end{equation}
where $M_{1,\beta}(Y)$ is the moduli scheme of 1-dimensional stable sheaves on $Y$ with Chern character $(0,0,0,\beta,1)$.
\end{prop}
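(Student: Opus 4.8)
\textbf{Proof proposal for Proposition \ref{prop on pair obs on CY3}.}

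The plan is to apply Manolache's virtual pushforward formalism to the forgetful morphism $f\colon P_1(Y,\beta)\to M_{1,\beta}(Y)$, $(\oO_Y\to F)\mapsto F$. First I would observe that since $\beta$ is irreducible, every one-dimensional stable sheaf $F$ with $[F]=\beta$, $\chi(F)=1$ satisfies $\Hom_Y(\oO_Y,F)\neq 0$ (indeed $H^0(Y,F)$ is one-dimensional: stability and $\chi=1$ force $h^0=1$, $h^1=0$), and the nonzero section is automatically surjective in dimension one because any proper subsheaf would have smaller slope. Hence $f$ is not just a morphism but in fact an isomorphism on underlying sets — over each $[F]$ the fiber $\mathbb{P}(H^0(Y,F))$ is a reduced point. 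So as a map of schemes $f$ is a bijective morphism; the real content is that it is compatible with the two obstruction theories in the sense required for virtual pushforward.

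The key step is to compare the pair obstruction theory on $P_1(Y,\beta)$ (whose deformation/obstruction spaces, after the truncation in Lemma \ref{lem on pair moduli on CY3}, are $\Ext^1_Y(I,F)$ with the one-dimensional $\Ext^2_Y(I,I)_0$ discarded — more precisely the theory controlled by $\RHom_Y(I,F)$ truncated in degrees $[1,2]$ but with $\Ext^2_Y(I,F)\cong\mathbb{C}$ split off) with the sheaf obstruction theory on $M_{1,\beta}(Y)$ governed by $\RHom_Y(F,F)_0$. From the distinguished triangle $\RHom_Y(F,F)\to\RHom_Y(\oO_Y,F)\to\RHom_Y(I_Y,F)$ and its trace-free variant one reads off, exactly as in the proof of the preceding lemma, the long exact sequence relating $\Ext^\bullet_Y(I,F)$, $H^\bullet(Y,F)$ and $\Ext^\bullet_Y(F,F)$; combined with $H^1(Y,F)=H^2(Y,F)=0$ this gives $\Ext^1_Y(I,F)\cong\Ext^1_Y(F,F)$ together with a commuting square of obstruction theories over $f$. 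One then checks that $f$ satisfies the hypotheses of \cite[Theorem 4.1]{Man} (or the formulation used elsewhere in this paper): $f$ is proper, and there is a morphism of obstruction theories $f^\ast\mathbb{E}_{M}\to\mathbb{E}_{P}$ fitting into a compatible triangle with the relative cotangent complex $\bL_{P/M}$, which here is zero (or concentrated so that the relative theory has virtual rank zero) because $f$ is an isomorphism on closed points and the fibers carry no extra deformations. Then Manolache's formula yields $f_\ast[P_1(Y,\beta)]^{\mathrm{vir}}_{\mathrm{pair}}=[M_{1,\beta}(Y)]^{\mathrm{vir}}$ in $A_0$, and integrating gives the stated equality of degrees.

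The main obstacle I anticipate is verifying the compatibility of the obstruction theories carefully enough to invoke the virtual pushforward theorem — in particular checking that the cone on $f^\ast\mathbb{E}_M\to\mathbb{E}_P$ really is (quasi-isomorphic to) the relative cotangent complex $\bL_{P_1(Y,\beta)/M_{1,\beta}(Y)}$, rather than merely an abstract complex with the right Euler characteristic. Since $f$ is an isomorphism of schemes here, $\bL_{P/M}\simeq 0$ and one expects $f^\ast\mathbb{E}_M\simeq\mathbb{E}_P$, but making this identification functorially — compatibly with the universal complexes $\mathbb{I}$ on $X\times P$ and $\mathbb{F}$ on $X\times M$ — requires tracking the triangle $F\to I[1]\to\oO_X[1]$ in families and using that $\dR\pi_{P\ast}\oO$ contributes trivially. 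An alternative, should the clean comparison prove delicate, is to bypass Manolache and argue directly: since both sides are degrees of zero-dimensional virtual classes on (set-theoretically) isomorphic moduli spaces, one localizes to a common affine étale chart, where the pair theory presents the invariant as the signed count coming from the Kuranishi map $\Ext^1_Y(I,F)\to\Ext^2_Y(I,F)$ and the sheaf theory from $\Ext^1_Y(F,F)\to\Ext^2_Y(F,F)$, and the identifications above match these Kuranishi data term by term; the signs agree for the appropriate choice, giving the equality.
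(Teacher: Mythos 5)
Your proposal rests on the claim that for every stable sheaf $F$ with $[F]=\beta$ irreducible and $\chi(F)=1$ one has $h^0(F)=1$ and $h^1(F)=0$, so that $f\colon P_1(Y,\beta)\to M_{1,\beta}(Y)$ is bijective with reduced point fibers and $\bL_{P/M}\simeq 0$. This is false in general: stability plus $\chi=1$ does not force $h^1=0$. For instance, if $C\subset Y$ is a smooth genus $2$ curve in an irreducible class, the sheaf $F=\omega_C$ is stable with $\chi(F)=1$ but $h^0(F)=2$, $h^1(F)=1$, so the fiber of $f$ over $[F]$ is $\mathbb{P}(H^0(Y,F))\cong\mathbb{P}^1$, not a point. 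Consequently the identification $\Ext^1_Y(I,F)\cong\Ext^1_Y(F,F)$ you deduce from the long exact sequence (which uses $H^1(Y,F)=0$) also fails, and both your main route (Manolache with trivial relative theory) and your fallback (matching Kuranishi data on a common chart under a set-theoretic isomorphism) break down at the same point. The vanishing $H^1(Y,F)=0$ is a special feature of particular geometries (the paper proves it for fiber classes of the elliptic fibration), not a consequence of irreducibility of $\beta$.

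The paper's actual proof handles the genuinely fibered situation: it identifies $P_1(Y,\beta)$ with the projectivization $\mathbb{P}(\pi_{M\ast}\mathbb{F})$ over $M_{1,\beta}(Y)$, builds the absolute and relative obstruction theories from the triangle
\begin{equation}
\bigl(\dR\hH om_{\pi_P}(\mathbb{F}^{\dag},\mathbb{F}^{\dag})[1]\bigr)^{\vee}\to \bigl(\dR\hH om_{\pi_P}(\mathbb{I},\mathbb{F}^{\dag})\bigr)^{\vee}\to \bigl(\dR\hH om_{\pi_P}(\oO_{Y\times P_1(Y,\beta)},\mathbb{F}^{\dag})\bigr)^{\vee},
\nonumber
\end{equation}
performs two truncations (killing $h^{-2}$ via the trace map and $h^1$ via the inclusion of $\oO_{P_1(Y,\beta)}[1]$) to obtain perfect obstruction theories $\phi_1$, $\phi_2$ and a perfect \emph{relative} obstruction theory $\phi_3$ for $f$, and then invokes Manolache's virtual push-forward to get $f_\ast[P_1(Y,\beta)]^{\mathrm{vir}}_{\mathrm{pair}}=c\cdot[M_{1,\beta}(Y)]^{\mathrm{vir}}$, with $c=1$ computed by base change to a closed point. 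If you want to salvage your argument, you must carry out this relative comparison rather than assume $f$ is an isomorphism; the nontrivial relative deformations ($h^0(F)-1$) and relative obstructions ($h^1(F)$) are exactly what the relative theory $\phi_3$ encodes.
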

\begin{proof}
Since $\beta$ is irreducible, there is a morphism 
\begin{equation}\label{map f} f: P_1(Y,\beta)\to M_{1,\beta}(Y), \quad (\oO_Y\to F)\mapsto F,  \end{equation}
whose fiber over $[F]$ is $\mathbb{P}(H^0(Y,F))$.
Let $\mathbb{F} \to M_{1,\beta}(Y) \times Y$ be the universal sheaf. 
Then the above map identifies $P_1(Y, \beta)$ with 
$\mathbb{P}(\pi_{M\ast}\mathbb{F})$
where $\pi_M \colon M_{1, \beta}(Y) \times Y \to M_{1, \beta}(Y)$
is the projection. 
Then the universal stable pair is given by 
\begin{align*}
\mathbb{I}=
(\mathcal{O}_{Y\times P_1(Y, \beta)} 
\stackrel{s}{\rightarrow} \mathbb{F}^{\dag}), \quad
\mathbb{F}^{\dag}:= (\id_Y \times f)^{\ast}\mathbb{F}\otimes \oO(1),
\end{align*} 
where $\oO(1)$ is the tautological line bundle on 
$\mathbb{P}(\pi_{M\ast}\mathbb{F})$
and $s$ is the tautological map. 

Let $\pi_P:P_1(Y,\beta)\times Y\to P_1(Y,\beta)$ be the projection, there exists a distinguished triangle
\begin{equation}\label{dis1} (\dR \hH om_{\pi_P}(\mathbb{F}^{\dag}, \mathbb{F}^{\dag})[1])^{\vee}
\rightarrow (\dR \hH om_{\pi_P}(\mathbb{I}, \mathbb{F}^{\dag}))^{\vee} 
\rightarrow (\dR \hH om_{\pi_P}(\mathcal{O}_{Y\times P_1(Y, \beta)}, \mathbb{F}^{\dag}))^{\vee}.   \end{equation}
By considering a derived extension of the morphism $f$ (\ref{map f}), the first two terms in (\ref{dis1}) are the restriction of cotangent complexes of the corresponding derived schemes to the classical underlying schemes. 
They are obstruction theories (see \cite[Sect. 1.2]{STV}),
which fit into a commutative diagram 
\begin{align*}
\xymatrix{
\big(\dR \hH om_{\pi_P}(\mathbb{F}^{\dag}, \mathbb{F}^{\dag})[1]\big)^{\vee} \ar[r] \ar[d] & \big(\dR \hH om_{\pi_P}(\mathbb{I}, \mathbb{F}^{\dag})\big)^{\vee} \ar[r] \ar[d]_{  } & \big(\dR\hH om_{\pi_P}(\mathcal{O}_{Y\times P_1(Y, \beta)}, \mathbb{F}^{\dag})\big)^{\vee}\ar[d]_{  } \\  
f^*\mathbb{L}_{M_{1,\beta}(Y)}\ar[d]  \ar[r] &  \mathbb{L}_{P_1(Y,\beta)}  \ar[r] \ar[d] & \mathbb{L}_{P_1(Y,\beta)/M_{1,\beta}(Y)}\ar[d]  \\
\tau^{\geqslant -1}(f^*\mathbb{L}_{M_{1,\beta}(Y)})  \ar[r] &  \tau^{\geqslant -1}\mathbb{L}_{P_1(Y,\beta)}  \ar[r]  &  \tau^{\geqslant -1}\mathbb{L}_{P_1(Y,\beta)/M_{1,\beta}(Y)},  }
\end{align*}
where the bottom vertical arrows are truncation functors.

Note the above obstruction theories are not perfect. To kill $h^{-2}$, as in \cite[Sect. 4.4]{ht}, we consider the top part of trace map  
\begin{align*}t: \dR \hH om_{\pi_P}(\mathbb{F}^{\dag}, \mathbb{F}^{\dag})[1]\to 
\dR^3 \pi_{P*}(\oO_{Y\times P_1(Y, \beta)})[-2],\end{align*}
whose cone is $\Big(\tau^{\leqslant 1}\big(\dR \hH om_{\pi_P}(\mathbb{F}^{\dag}, \mathbb{F}^{\dag})[1]\big)\Big)[1]$.
Then we have a commutative diagram 
\begin{align*}
\xymatrix{   \big(\dR^3 \pi_{P*}(\oO_{Y\times P_1(Y, \beta)})[-2]\big)^{\vee} \ar@{=}[r]\ar[d]_{t^{\vee}}   &  
\big(\dR^3 \pi_{P*}(\oO_{Y\times P_1(Y, \beta)})[-2]\big)^{\vee}  \ar[d]^{\alpha} &  \\
\big(\dR \hH om_{\pi_P}(\mathbb{F}^{\dag}, \mathbb{F}^{\dag})[1]\big)^{\vee} \ar[r] \ar[d]& 
\big(\dR \hH om_{\pi_P}(\mathbb{I}, \mathbb{F}^{\dag})\big)^{\vee} \ar[r] \ar[d] &
\big(\dR\hH om_{\pi_P}(\mathcal{O}_{Y\times P_1(Y, \beta)}, \mathbb{F}^{\dag})\big)^{\vee}   \\
\big(\tau^{\leqslant1}\big(\dR \hH om_{\pi_P}(\mathbb{F}^{\dag}, \mathbb{F}^{\dag})[1]\big)\big)^{\vee}\ar[r] & 
 \mathrm{Cone}(\alpha). &  }
\end{align*}
By taking cones, we obtain a distinguished triangle 
\begin{equation}\big(\tau^{\leqslant1}\big(\dR \hH om_{\pi_P}(\mathbb{F}^{\dag}, \mathbb{F}^{\dag})[1]\big)\big)^{\vee}
\rightarrow \mathrm{Cone}(\alpha) \rightarrow
\big(\dR\hH om_{\pi_P}(\mathcal{O}_{Y\times P_1(Y, \beta)}, \mathbb{F}^{\dag})\big)^{\vee}. \nonumber \end{equation}
Since $\big(\dR^3 \pi_{P*}(\oO_{Y\times P_1(Y, \beta)})[-2]\big)^{\vee}$ is a vector bundle concentrated in degree $-2$ and 
$\tau^{\geqslant -1}(-)$ has cohomology in degree greater than $-2$, so we have a commutative diagram
\begin{align*}
\xymatrix{\big(\tau^{\leqslant1}\big(\dR \hH om_{\pi_P}(\mathbb{F}^{\dag}, \mathbb{F}^{\dag})[1]\big)\big)^{\vee} \ar[r] \ar[d] & 
\mathrm{Cone}(\alpha)   \ar[r] \ar[d]_{  } & \big(\dR\hH om_{\pi_P}(\mathcal{O}_{Y\times P_1(Y, \beta)}, \mathbb{F}^{\dag})\big)^{\vee}\ar[d]_{  } \\  
\tau^{\geqslant -1}(f^*\mathbb{L}_{M_{1,\beta}(Y)})  \ar[r] &  \tau^{\geqslant -1}\mathbb{L}_{P_1(Y,\beta)}  \ar[r]  &  \tau^{\geqslant -1}\mathbb{L}_{P_1(Y,\beta)/M_{1,\beta}(Y)}.} \end{align*}
To kill $h^1$ of the left upper term, we consider the inclusion 
\begin{align*}\oO_{P_1(Y,\beta)}[1] \to \tau^{\leqslant1}\big(\dR \hH om_{\pi_P}(\mathbb{F}^{\dag}, \mathbb{F}^{\dag})[1]\big),  \end{align*}
whose restriction to a closed point $I=(\oO_Y\to F)$ induces an isomorphism $\mathbb{C} \to \Hom(F,F)$.
The cone of the inclusion is $\tau^{[0,1]}\big(\dR \hH om_{\pi_P}(\mathbb{F}^{\dag}, \mathbb{F}^{\dag})[1]\big)$. 

Then we have a commutative diagram 
\begin{align*}
\xymatrix{
\mathrm{Cone}(\beta) \ar[r] \ar[d] & \big(\tau^{[0,1]}\big(\dR \hH om_{\pi_P}(\mathbb{F}^{\dag}, \mathbb{F}^{\dag})[1]\big)\big)^{\vee}  \ar[d]_{  } &   \\  
(\dR \hH om_{\pi_P}(\mathcal{O}_{Y\times P_1(Y, \beta)}, \mathbb{F}^{\dag}))^{\vee}[-1]\ar[d]_{\beta}  \ar[r] &   \big(\tau^{\leqslant1}\big(\dR \hH om_{\pi_P}(\mathbb{F}^{\dag}, \mathbb{F}^{\dag})[1]\big)\big)^{\vee} \ar[r] \ar[d] &  \mathrm{Cone}(\alpha)  \\
(\oO_{P_1(Y,\beta)}[1])^{\vee} \ar@{=}[r] &  (\oO_{P_1(Y,\beta)}[1])^{\vee}.   &   }
\end{align*}
As $(\oO_{P_1(Y,\beta)}[1])^{\vee}$ is a vector bundle concentrated on degree $1$, so we get commutative diagram
\begin{align*}
\xymatrix{\big(\tau^{[0,1]}\big(\dR \hH om_{\pi_P}(\mathbb{F}^{\dag}, \mathbb{F}^{\dag})[1]\big)\big)^{\vee} \ar[r] \ar[d]_{\phi_1} & 
\mathrm{Cone}(\alpha)   \ar[r] \ar[d]_{\phi_2} & \mathrm{Cone}(\beta)[1] \ar[d]_{\phi_3} \\  
\tau^{\geqslant -1}(f^*\mathbb{L}_{M_{1,\beta}(Y)})  \ar[r] &  \tau^{\geqslant -1}\mathbb{L}_{P_1(Y,\beta)}  \ar[r]  &  \tau^{\geqslant -1}\mathbb{L}_{P_1(Y,\beta)/M_{1,\beta}(Y)}.} \end{align*}
It is easy to see that $\phi_1$ and $\phi_2$ define perfect obstruction theories. By diagram chasing on cohomology, $\phi_3$ defines a perfect relative obstruction theory.
Then we apply Manolache's virtual push-forward formula \cite{Manolache}:
\begin{equation}f_*[P_1(Y,\beta)]_{\mathrm{pair}}^{\mathrm{vir}}=c\cdot [M_{1,\beta}(Y)]^{\mathrm{vir}}, \nonumber \end{equation}
where the coefficient $c$ is the degree of the virtual class of the relative obstruction theory $\phi_3$ and can be shown to be $1$ by 
base-change to a closed point.
\end{proof}
Now we come back to CY 4-fold $X=Y\times E$ and show the virtual class $[P_1(Y,\beta)]_{\mathrm{pair}}^{\mathrm{vir}}$
defined using pair deformation-obstruction theory naturally arises in this setting.
\begin{prop}\label{prop on pair moduli on CY3}
Let $X=Y\times E$ be a product of a CY 3-fold $Y$ with an elliptic curve $E$. For an irreducible curve class $\beta\in H_2(Y,\mathbb{Z})\subseteq H_2(X,\mathbb{Z})$, we have an isomorphism 
\begin{equation}P_n(X,\beta)\cong P_n(Y,\beta)\times E. \nonumber \end{equation}
The virtual class of $P_n(X,\beta)$ satisfies 
\begin{equation}[P_n(X,\beta)]^{\mathrm{vir}}=[P_n(Y,\beta)]_{\mathrm{pair}}^{\mathrm{vir}}\otimes [E], \nonumber \end{equation}
for certain choice of orientation in defining the LHS.
Here $[P_n(Y,\beta)]_{\mathrm{pair}}^{\mathrm{vir}}\in A_{n-1}(P_n(Y,\beta),\mathbb{Z})$ is the virtual class defined in Lemma \ref{lem on pair moduli on CY3}.
\end{prop}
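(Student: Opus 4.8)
The plan is to establish the isomorphism of moduli spaces first, then carefully compare the two deformation-obstruction theories, accounting for the extra elliptic curve direction and the different ways the virtual classes are constructed (Borisov-Joyce D-manifold theory on the left, the classical perfect obstruction theory of Lemma \ref{lem on pair moduli on CY3} on the right). First I would prove the isomorphism $P_n(X,\beta)\cong P_n(Y,\beta)\times E$. Since $\beta\in H_2(Y)\subseteq H_2(X)$ is irreducible, any stable pair $(\oO_X\to F)\in P_n(X,\beta)$ has $F$ pure one-dimensional with irreducible support class, so its set-theoretic support is a connected curve mapping to a point under the projection $X=Y\times E\to E$; combined with purity this forces $F$ to be scheme-theoretically supported on a fiber $Y\times\{e\}$. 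Recording the point $e\in E$ together with the induced stable pair on $Y\times\{e\}\cong Y$ gives the map to $P_n(Y,\beta)\times E$, and conversely pulling back along the projection $Y\times\{e\}\hookrightarrow X$ gives the inverse; this is essentially the same argument as in Lemma \ref{lem:pair for elliptic}.

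Next I would compare obstruction theories. For $I=(\oO_X\to F)\in P_n(X,\beta)$ sitting over $(I_Y,e)$ with $I_Y=(\oO_Y\to F_Y)$, I would use the Künneth/projection formula $\dR\pi_{E\ast}\oO_X\cong\oO_E$ together with the fact that $F$ is pushed forward from $Y\times\{e\}$, to obtain
\begin{align*}
\Ext^i_X(I,I)_0\cong \Ext^i_Y(I_Y,I_Y)_0\otimes H^0(\oO_E)\ \oplus\ \Ext^{i-1}_Y(I_Y,I_Y)_0\otimes H^1(\oO_E).
\end{align*}
In particular $\Ext^1_X(I,I)_0\cong \Ext^1_Y(I_Y,I_Y)_0\oplus \Ext^0_Y(I_Y,I_Y)_0$, where the second summand is $T_eE$ (note $\Ext^0_Y(I_Y,I_Y)_0=0$ so actually the $H^1$-twisted piece contributing to degree $1$ comes from $\Hom_Y(I_Y,I_Y)=\mathbb{C}$, i.e. the $E$-direction), and $\Ext^2_X(I,I)_0\cong\Ext^2_Y(I_Y,I_Y)_0\oplus\Ext^1_Y(I_Y,I_Y)_0^{\vee}$ by Serre duality on $Y$. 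Thus the $(-2)$-shifted symplectic structure on $P_n(X,\beta)$ has, at each point, obstruction space $V\oplus V^{\vee}$ with $V=\Ext^1_Y(I_Y,I_Y)_0$ a maximal isotropic; I would check that the Kuranishi map factors through $V$, matching situation (3) in the review of $\DT_4$ invariants, so that $[P_n(X,\beta)]^{\mathrm{vir}}$ is the virtual class of the perfect obstruction theory given by $\{V\}$. I would then identify this perfect obstruction theory, via the product decomposition and the tangent-obstruction computation above, with the exterior product of the pair obstruction theory of $P_n(Y,\beta)$ (truncated as in Lemma \ref{lem on pair moduli on CY3}) and the trivial (smooth) obstruction theory of $E$, yielding $[P_n(X,\beta)]^{\mathrm{vir}}=[P_n(Y,\beta)]^{\mathrm{vir}}_{\mathrm{pair}}\otimes[E]$ up to sign.

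The main obstacle I expect is the orientation bookkeeping: the left-hand side depends on a choice of square root of the Serre-duality pairing on $\lL=\det\dR\hH om_{\pi_P}(\mathbb{I},\mathbb{I})_0$, and one must verify that under the decomposition $\Ext^2_X=V\oplus V^{\vee}$ with $V$ coming from $Y$, there is a choice of orientation for which the Borisov-Joyce class literally equals the Euler-type class of the $\{V\}$-obstruction theory (as opposed to its negative), and moreover that this choice is globally consistent over the connected components of $P_n(X,\beta)$ — the paper only claims the identity "for certain choice of orientation," so it suffices to exhibit one. A secondary technical point is making the Kuranishi-map comparison rigorous at the derived level rather than just on tangent/obstruction spaces; here I would follow the template of \cite[Theorem 6.5]{CL} and the argument already used in Lemma \ref{lem:pair for elliptic}, where exactly this kind of comparison (pullback along a product/fibration with $\dR\pi_\ast\oO\cong\oO\oplus K[-1]$, here replaced by $\oO_E\oplus\oO_E[-1]$) was carried out, so the elliptic-curve case should be strictly easier since $K_E\cong\oO_E$.
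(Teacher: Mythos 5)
Your reduction of $P_n(X,\beta)$ to a fiberwise statement and your overall strategy (identify a maximal isotropic half of $\Ext^2_X(I,I)_0$ coming from $Y$, then compare with the $\{V\}$-obstruction theory as in situation (3) of the $\DT_4$ review) is the right template, but the central computation is wrong. The Künneth-type formula
$\Ext^i_X(I,I)_0\cong \Ext^i_Y(I_Y,I_Y)_0\otimes H^0(\oO_E)\oplus \Ext^{i-1}_Y(I_Y,I_Y)_0\otimes H^1(\oO_E)$
is not justified and is false in general: $I_X$ is not an exterior product on $Y\times E$ (only the sheaf $i_{\ast}F$ is; $\oO_X$ is spread over all of $X$), so no Künneth argument applies. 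The correct local computation goes through the restriction $\dL i^{\ast}I_X\cong I_Y\oplus F$ and adjunction, which produces the \emph{pair}-type groups: one gets short exact sequences
$0\to \Hom_Y(I_Y,F)\to \Ext^1_X(I_X,I_X)_0\to \mathbb{C}\to 0$ and
$0\to \Ext^1_Y(I_Y,F)\to \Ext^2_X(I_X,I_X)_0\to \Ext^1_Y(I_Y,F)^{\vee}\to 0$,
not the groups $\Ext^{\bullet}_Y(I_Y,I_Y)_0$. These differ from your formula by $H^1(F)^{\vee}$-correction terms (compare the long exact sequence relating $\RHom_Y(I_Y,F)$, $\RHom_Y(I_Y,I_Y)_0[1]$ and $\RHom_Y(F,\oO_Y)[2]$), and the discrepancy is already visible numerically: your decomposition would give $P_n(X,\beta)$ complex virtual dimension $1$ for every $n$ (since the fixed-determinant theory on the $3$-fold is symmetric), whereas by Theorem \ref{vir class of pair moduli} the virtual dimension is $n$.

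This matters for a second reason: the class $[P_n(Y,\beta)]^{\mathrm{vir}}_{\mathrm{pair}}$ in the statement is defined by the pair deformation theory of Lemma \ref{lem on pair moduli on CY3}, with tangent $\Hom_Y(I_Y,F)$ and obstruction $\Ext^1_Y(I_Y,F)$ (virtual dimension $n-1$), explicitly \emph{not} the fixed-determinant complex theory $\Ext^{\bullet}_Y(I_Y,I_Y)_0$ used by Pandharipande--Thomas; so even where your groups happen to have the right size you would be matching against the wrong obstruction theory on $Y$. To repair the argument you should replace your Künneth step by the $\dL i^{\ast}I_X\cong I_Y\oplus F$ computation, verify directly (e.g.\ by a composition argument using $\Ext^3_Y(I_Y,F)=0$) that $\Ext^1_Y(I_Y,F)\subset \Ext^2_X(I_X,I_X)_0$ is isotropic for the Serre pairing, identify the local Kuranishi map of $P_n(X,\beta)$ at $I_X$ with $(\kappa_{I_Y},0)\colon \Hom_Y(I_Y,F)\times T_tE\to \Ext^1_Y(I_Y,F)$, and also upgrade your bijection on closed points to a scheme isomorphism (the tangent and obstruction comparisons give an isomorphism on formal completions). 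Your remarks on orientation are fine as far as they go, since exhibiting one consistent choice suffices.
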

\begin{proof}
As $\beta$ is irreducible, for $I_X=(s: \oO_X\to E)\in P_n(X,\beta)$, $E$ is stable (ref. \cite[pp. 270]{PT2}), hence 
$E$ is scheme theoretically supported on some $Y\times \{t\}$, $t\in E$ (e.g. \cite[Lem. 2.2]{CMT}). 

Let $i_t: Y\times \{t\} \to X$ be the inclusion, then $E=(i_t)_*F$ for some $F\in \Coh(Y)$. By adjunction, we have 
\begin{equation}\Hom_X(\oO_X,E)\cong \Hom_Y(\oO_Y,F).  \nonumber \end{equation}
Hence, the morphism  
\begin{equation}\label{iso of pair on cy3}P_n(Y,\beta)\times E\to P_n(X,\beta), \end{equation}
\begin{equation}\big(I_Y:=(s:i_t^*\oO_X\to F),t\big)\mapsto (s:\oO_X\to (i_t)_*F):=I_X  \nonumber \end{equation}
is bijective on closed points. Next, we compare their deformation-obstruction theories.

Denote $i=i_t$. From the distinguished triangle
\begin{align}\label{triangle CY3-fold 1}
i_{\ast}F \to I_X[1] \to \oO_X[1], \end{align}
we have the diagram
\begin{align*}
\xymatrix{    &  \dR \Gamma(\oO_X)[1] \ar@{=}[r]\ar[d] &
\dR \Gamma(\oO_X)[1] \ar[d] \\
\RHom_X(I_X, i_{\ast}F) \ar[r] & \RHom_X(I_X, I_X)[1] \ar[r] \ar[d] &
\RHom_X(I_X, \oO_X)[1] \ar[d] \\
&  \RHom_X(I_X, I_X)_0[1]  &  \RHom_X(i_{\ast}F, \oO_X)[2], }
\end{align*}
where the horizontal and vertical arrows are distinguished triangles.
By taking cones, we obtain a distinguished triangle 
\begin{align}\label{triangle CY3-fold 2}
\RHom_X(I_X, i_{\ast}F) \to \RHom_X(I_X, I_X)_0[1] \to \RHom_X(i_{\ast}F, \oO_X)[2].
\end{align}
On the other hand, from the distinguished triangle 
$$I_X\to \oO_X \to i_*F, $$
and the isomorphism (see e.g. \cite[Proposition-Definition 3.3]{CMT}):
$$\dL i^*i_*F\cong F\oplus (F\otimes N_{Y\times\{t\}/X}^{\vee})[1], \quad \mathrm{where} \,\,\, N_{Y\times\{t\}/X}=\oO_{Y\times\{t\}}, $$
we can obtain the isomorphism
\begin{align}\label{iso of i^*IX}\dL i^{\ast}I_X \cong I_Y \oplus F, \end{align}
which implies that
\begin{align*}\RHom_X(I_X, i_{\ast}F) &\cong\RHom_Y(I_Y, F) \oplus \RHom_Y(F, F ).  \end{align*}
Therefore by (\ref{triangle CY3-fold 2}),
we have the distinguished triangle
\begin{align*}
\RHom_Y(I_Y, F) \oplus \RHom_Y(F, F) \to \RHom_X(I_X, I_X)_0[1] \to \RHom_X(i_{\ast}F, \oO_X)[2].
\end{align*}
It follows that we have the distinguished triangle
\begin{align}\label{triangle CY3-fold 3}
\RHom_Y(I_Y, F) \to \RHom_X(I_X, I_X)_0[1] \to T,  \end{align}
where $T$ fits into the distinguished triangle
\begin{align}\label{triangle CY3-fold 3.5}\RHom_Y(F, F) \to T \to \RHom_X(i_{\ast}F, \oO_X)[2]. \end{align}
By Serre duality, adjunction and degree shift, (\ref{triangle CY3-fold 3.5}) becomes
\begin{align*}  
T  \to  \RHom_Y(\oO_Y,F)^{\vee}[-2] \to \RHom_Y(F, F)^{\vee}[-2], \end{align*}
whose dual gives a distinguished triangle
\begin{align}\label{triangle CY3-fold 4}
\RHom_Y(F, F)[2] \to \RHom_Y(\oO_Y,F)[2] \to  T^{\vee}. \end{align}
Combining (\ref{dist triangle CY}), (\ref{triangle CY3-fold 4}), we obtain
\begin{equation}T\cong \RHom_Y(I_Y,F)^{\vee}[-2].  \nonumber \end{equation}
Combining with (\ref{triangle CY3-fold 3}) and taking the cohomological long exact sequence, we have 
\begin{align*}\to \Ext_Y^1(I_Y, F) \to \Ext_X^2(I_X, I_X)_0 \to \Ext_Y^1(I_Y, F)^{\vee} \to.  \end{align*}
We claim the above exact sequence breaks into short exact sequences
\begin{align*}0 \to \Ext_Y^0(I_Y, F) \to \Ext_X^1(I_X, I_X)_0 \to \mathbb{C} \to 0, \end{align*}
\begin{align*}0 \to \Ext_Y^1(I_Y, F) \to \Ext_X^2(I_X, I_X)_0 \to \Ext_Y^1(I_Y, F)^{\vee} \to 0,  \end{align*}
since $\Ext_Y^2(I_Y, F)\cong \mathbb{C}$ (see the proof of Lemma \ref{lem on pair moduli on CY3})
and a dimension counting by Riemann-Roch.
The first exact sequence above implies that the map (\ref{iso of pair on cy3}) induces an isomorphism on tangent spaces. The second
exact sequence implies that the obstructions of deforming stable pairs on LHS of (\ref{iso of pair on cy3}) vanish if and only if 
those on RHS of (\ref{iso of pair on cy3}) vanish. Therefore, the map (\ref{iso of pair on cy3}) induces an isomorphism on formal completions of
structure sheaves of both sides at any closed point. So (\ref{iso of pair on cy3}) must be a scheme theoretical isomorphism.

Next, we show $\Ext_Y^1(I_Y, F)\subseteq \Ext_X^2(I_X, I_X)_0$ is a maximal isotropic subspace with respect the Serre duality pairing on
$\Ext_X^2(I_X, I_X)_0$.
For $u \in \Ext_Y^1(I_Y, F)$, the corresponding
element in $\Ext_X^2(I_X, I_X)_0$ is given by
the composition
\begin{align*}
I_X \stackrel{\alpha}{\to} i_{\ast}I_Y \stackrel{i_{\ast}u}{\to}
i_{\ast}F[1] \stackrel{\beta[1]}{\to} I_X[2],
\end{align*}
where the morphism $\alpha$ is the canonical morphism
and $\beta$ is given by (\ref{triangle CY3-fold 1}).
For another $u' \in \Ext_Y^1(I_Y, F)$,
it is enough to show the vanishing of the
composition
\begin{align}\label{compose:pair CY}
I_X \stackrel{\alpha}{\to} i_{\ast}I_Y \stackrel{i_{\ast}u}{\to}
i_{\ast}F[1] \stackrel{\beta[1]}{\to} I_X[2]
\stackrel{\alpha[2]}{\to} i_{\ast}I_Y[2] \stackrel{i_{\ast}u'[2]}{\to}
i_{\ast}F[3] \stackrel{\beta[3]}{\to} I_X[4].
\end{align}
Since $\Ext^0_Y(F,I_Y\otimes K_Y)\cong \Ext^3_Y(I_Y,F)^\vee=0$ (see the proof of Lemma \ref{lem on pair moduli on CY3}), the composition
$i_{\ast}F[1] \stackrel{\beta[1]}{\to} I_X[2]
\stackrel{\alpha[2]}{\to} i_{\ast}I_Y[2]$ can be written as $i_{\ast}\gamma$. Therefore the composition
\begin{align*}
i_{\ast}
I_Y \stackrel{i_{\ast}u}{\to}
i_{\ast}F[1] \stackrel{\beta[1]}{\to} I_X[2]
\stackrel{\alpha[2]}{\to} i_{\ast}I_Y[2] \stackrel{i_{\ast}u'[2]}{\to}
i_{\ast}F[3]
\end{align*}
vanishes, again by $\Ext_Y^3(I_Y, F)=0$.

Moreover, a local Kuranishi map of $P_n(X,\beta)$ at $I_X$ can be identified as
\begin{equation}(\kappa_{I_Y},0):\Ext_Y^0(I_Y, F)\times T_tE \to\Ext_Y^1(I_Y, F),  \nonumber \end{equation}
where $\kappa_{I_Y}$ is a local Kuranishi map 
of $P_n(Y,\beta)$ at $I_Y$. Similarly as \cite[Thm. 6.5]{CL}, we have the desired equality on virtual classes.
\end{proof}
Combining the above result with Proposition \ref{prop on pair obs on CY3}, our genus zero conjecture can be reduced to 
Katz's conjecture \cite{Katz}.
\begin{cor}\label{reduce to Katz conj}
Let $X=Y\times E$ be a product of a CY 3-fold $Y$ with an elliptic curve $E$.

Then Conjecture \ref{conj:GW/GV g=0} holds for an irreducible curve class $\beta\in H_2(Y,\mathbb{Z})\subseteq H_2(X,\mathbb{Z})$ if and only if Katz's conjecture holds for $\beta$.
\end{cor}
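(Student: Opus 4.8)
The plan is to use the product descriptions of Proposition~\ref{prop on pair moduli on CY3} and Proposition~\ref{prop on pair obs on CY3} to express both sides of Conjecture~\ref{conj:GW/GV g=0} for $\beta$ in terms of invariants of the CY $3$-fold $Y$, and then compare. First I would note that an irreducible class $\beta\in H_2(Y,\mathbb{Z})$ stays irreducible, hence primitive, in $H_2(X,\mathbb{Z})=H_2(Y,\mathbb{Z})\oplus H_2(E,\mathbb{Z})$: an effective splitting of $\beta$ must have vanishing $E$-degree on each summand and therefore descends to an effective splitting in $H_2(Y,\mathbb{Z})$. Hence the right-hand side of Conjecture~\ref{conj:GW/GV g=0} collapses to the single term $\beta_1=\beta$, $\beta_2=0$, so the conjecture for $\beta$ asserts exactly that $P_{1,\beta}(\gamma_1,\dots,\gamma_n)=n_{0,\beta}(\gamma_1,\dots,\gamma_n)$ for all insertions. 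Moreover, primitivity of $\beta$ lets one compare coefficients of $q^{\beta}$ in the defining series, giving $n_{0,\beta}(\gamma_1,\dots,\gamma_n)=\mathrm{GW}_{0,\beta}(\gamma_1,\dots,\gamma_n)$ on $X$ and, on $Y$, that the genus-zero Gopakumar--Vafa number in class $\beta$ equals $\mathrm{GW}^Y_{0,\beta}$.

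For the pair side, Proposition~\ref{prop on pair moduli on CY3} gives $P_1(X,\beta)\cong P_1(Y,\beta)\times E$ and $[P_1(X,\beta)]^{\mathrm{vir}}=[P_1(Y,\beta)]^{\mathrm{vir}}_{\mathrm{pair}}\otimes[E]$. Under the resulting identification of $X\times P_1(X,\beta)$ with $(Y\times E)\times(P_1(Y,\beta)\times E)$, the universal sheaf is $\mathbb{F}=j_{\ast}p^{\ast}\mathbb{F}_Y$, where $\mathbb{F}_Y$ is the universal sheaf on $Y\times P_1(Y,\beta)$, $p$ the projection onto it, and $j$ the closed immersion given by the graph of the projection to the last $E$-factor. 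Since $E$ is an elliptic curve the normal bundle of $j$ is trivial, so no Todd correction appears and $\ch_3(\mathbb{F})=j_{\ast}p^{\ast}\ch_2(\mathbb{F}_Y)$. By the projection formula this yields, for each K\"unneth component $\gamma'\otimes\gamma''\in H^{\ast}(Y)\otimes H^{\ast}(E)$, a factorization $\tau(\gamma'\otimes\gamma'')=\pi_{P_1(Y,\beta)}^{\ast}\tau^Y(\gamma')\cup\pi_E^{\ast}\gamma''$, where $\tau^Y(\gamma')=\pi_{\ast}(\pi_Y^{\ast}\gamma'\cup\ch_2(\mathbb{F}_Y))$ is the stable-pair insertion on the CY $3$-fold side. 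Expanding the $\gamma_i$ into K\"unneth components and integrating over $[P_1(Y,\beta)]^{\mathrm{vir}}_{\mathrm{pair}}\otimes[E]$, the integral becomes a sum over components of a $Y$-integral times an $E$-integral. As $[P_1(Y,\beta)]^{\mathrm{vir}}_{\mathrm{pair}}$ is zero-dimensional, a component survives only when every $Y$-factor $\gamma'_i$ lies in $H^2(Y)$; for such a divisor class $D$ one has $\tau^Y(D)=(D\cdot\beta)\in H^0(P_1(Y,\beta))$, a constant, so the $Y$-integral of the component equals $(\prod_i D_i\cdot\beta)\int_{[P_1(Y,\beta)]^{\mathrm{vir}}_{\mathrm{pair}}}1$, which by Proposition~\ref{prop on pair obs on CY3} is $(\prod_i D_i\cdot\beta)\int_{[M_{1,\beta}(Y)]^{\mathrm{vir}}}1$. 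Collecting terms, $P_{1,\beta}(\gamma_1,\dots,\gamma_n)=\big(\int_{[M_{1,\beta}(Y)]^{\mathrm{vir}}}1\big)\cdot\Lambda_\beta(\gamma_1,\dots,\gamma_n)$, with $\Lambda_\beta$ the explicit multilinear functional assembled from the pairings $D_i\cdot\beta$ and the $E$-integrals $\int_E\prod_i\gamma''_i$ over the surviving K\"unneth components.

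I would then carry out the identical computation on the Gromov--Witten side: the product formula $\overline{M}_{0,n}(X,\beta)\cong\overline{M}_{0,n}(Y,\beta)\times E$ with $[\overline{M}_{0,n}(X,\beta)]^{\mathrm{vir}}=[\overline{M}_{0,n}(Y,\beta)]^{\mathrm{vir}}\times[E]$ (immediate from the product, since $T_E$ contributes trivially in genus $0$) and the factorization $\mathrm{ev}^X_i=(\mathrm{ev}^Y_i,\pi_E)$ give, after the same K\"unneth expansion, divisor-axiom reduction and dimension vanishing, $\mathrm{GW}_{0,\beta}(\gamma_1,\dots,\gamma_n)=\mathrm{GW}^Y_{0,\beta}\cdot\Lambda_\beta(\gamma_1,\dots,\gamma_n)$ with the \emph{same} functional $\Lambda_\beta$. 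Combining this with the reductions of the first paragraph, the identity $P_{1,\beta}(\gamma_1,\dots,\gamma_n)=n_{0,\beta}(\gamma_1,\dots,\gamma_n)$ holds for all $\gamma_i$ if and only if $\big(\int_{[M_{1,\beta}(Y)]^{\mathrm{vir}}}1\big)\Lambda_\beta=\mathrm{GW}^Y_{0,\beta}\Lambda_\beta$ as functionals; since $\Lambda_\beta$ is not identically zero (for $n=1$ and $\gamma_1=D\otimes[\mathrm{pt}_E]$ with $D$ ample on $Y$ one gets $\Lambda_\beta(\gamma_1)=D\cdot\beta>0$), this is equivalent to $\int_{[M_{1,\beta}(Y)]^{\mathrm{vir}}}1=\mathrm{GW}^Y_{0,\beta}$, which for primitive $\beta$ is Katz's conjecture \cite{Katz} for $\beta$ on $Y$. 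The step requiring the most care is the insertion bookkeeping in the two middle arguments: identifying $\ch_3(\mathbb{F})$ with $j_{\ast}p^{\ast}\ch_2(\mathbb{F}_Y)$ (where triviality of the normal bundle of $j$, hence the hypothesis that $E$ is \emph{elliptic}, is essential), the dimension counts showing that only the divisor components of the $\gamma_i$ survive on both the sheaf-theoretic and the stable-map sides, and checking that the combinatorial functional $\Lambda_\beta$ produced by the two computations is literally the same.
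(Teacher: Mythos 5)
Your proposal is correct and follows essentially the same route as the paper: the paper's proof likewise observes that only insertions of the form $\gamma_1\otimes[\mathrm{pt}]\in H^2(Y)\otimes H^2(E)$ contribute, applies Proposition~\ref{prop on pair moduli on CY3} and Proposition~\ref{prop on pair obs on CY3} to get $P_{1,\beta}(\gamma)=(\gamma_1\cdot\beta)\int_{[M_{1,\beta}(Y)]^{\mathrm{vir}}}1$, and matches this against the product formula for the genus-zero GW invariants of $Y\times E$ (cf.\ Lemma~\ref{compute GW inv of class from Y}) to reduce to Katz's conjecture. You simply make explicit the K\"unneth bookkeeping and the collapse of the sum by irreducibility that the paper leaves implicit; the only small over-statement is that triviality of the normal bundle of the graph embedding is "essential" for $\ch_3(\mathbb{F})=j_{\ast}p^{\ast}\ch_2(\mathbb{F}_Y)$, since a Todd correction would in any case only affect $\ch_4$ and higher.
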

\begin{proof}
To have non-trivial invariants, we only need to consider insertions of form
\begin{equation}\gamma=(\gamma_1,[\mathrm{pt}])\in H^2(Y,\mathbb{Z})\otimes H^2(E,\mathbb{Z}). \nonumber \end{equation}
By Proposition \ref{prop on pair obs on CY3} and \ref{prop on pair moduli on CY3}, we have 
\begin{equation}P_{1,\beta}(\gamma)=(\gamma_1\cdot \beta)\int_{[P_1(Y,\beta)]_{\mathrm{pair}}^{\mathrm{vir}}}1=(\gamma_1\cdot \beta)\int_{[M_{1,\beta}(Y)]^{\mathrm{vir}}}1. \nonumber \end{equation}
Then Conjecture \ref{conj:GW/GV g=0} reduces to Katz's conjecture.
\end{proof}
Katz's conjecture has been verified for primitive classes in complete intersection CY 3-folds \cite[Cor. A.6]{CMT}. 
So we obtain 
\begin{thm}\label{thm on g=0 irr from CY3}
Let $Y$ be a complete intersection CY 3-fold in a product of projective spaces, $X=Y\times E$ be the product of $Y$ with an elliptic curve $E$. 
Then Conjecture \ref{conj:GW/GV g=0} is true for an irreducible curve class $\beta\in H_2(Y,\mathbb{Z})\subseteq H_2(X,\mathbb{Z})$.
\end{thm}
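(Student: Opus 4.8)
The plan is to reduce the statement directly to Katz's conjecture for the CY 3-fold $Y$ and then quote the known verification of that conjecture, so that essentially nothing new has to be proved on the stable-pair side. By Corollary \ref{reduce to Katz conj}, Conjecture \ref{conj:GW/GV g=0} holds for an irreducible curve class $\beta\in H_2(Y,\mathbb{Z})\subseteq H_2(X,\mathbb{Z})$ \emph{if and only if} Katz's conjecture holds for $\beta$ on $Y$; the nontrivial content behind this equivalence — the product identification $P_n(X,\beta)\cong P_n(Y,\beta)\times E$ with the factorization of virtual classes (Proposition \ref{prop on pair moduli on CY3}) and the virtual push-forward $\int_{[P_1(Y,\beta)]^{\mathrm{vir}}_{\mathrm{pair}}}1=\int_{[M_{1,\beta}(Y)]^{\mathrm{vir}}}1$ (Proposition \ref{prop on pair obs on CY3}) — is already established. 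Hence it suffices to check that Katz's conjecture is known for the class $\beta$ in this setting.

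The remaining point is that $\beta$ falls under the available verification. Since $\beta$ is irreducible it is automatically primitive: if $\beta=k\alpha$ with $k\geqslant2$ and $\alpha$ a nonzero effective class, then $\beta=\alpha+(k-1)\alpha$ exhibits $\beta$ as a sum of two nonzero effective classes, contradicting irreducibility. Katz's conjecture for primitive curve classes on complete intersection CY 3-folds in products of projective spaces is precisely \cite[Cor. A.6]{CMT}, so it applies to $\beta$ on $Y$. Combining this with Corollary \ref{reduce to Katz conj} gives the theorem.

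The only things I would check carefully are the hypotheses bookkeeping: first, that ``complete intersection in a product of projective spaces'' as used here matches the hypotheses under which \cite[Cor. A.6]{CMT} verifies Katz's conjecture (in particular that $M_{1,\beta}(Y)$ is a fine moduli space so that the relevant virtual class machinery applies); and second, the insertion reduction in the proof of Corollary \ref{reduce to Katz conj}, namely that only insertions of the form $\gamma=(\gamma_1,[\mathrm{pt}])\in H^2(Y)\otimes H^2(E)$ contribute, so that $P_{1,\beta}(\gamma)=(\gamma_1\cdot\beta)\int_{[M_{1,\beta}(Y)]^{\mathrm{vir}}}1$ lines up with the corresponding genus zero GW computation on $X=Y\times E$. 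There is no genuine obstacle at this stage; the theorem is essentially a bookkeeping corollary of the earlier propositions together with the input from \cite{CMT}.
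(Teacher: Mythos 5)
Your proposal is correct and follows the paper's own route exactly: the paper likewise deduces the theorem by combining Corollary \ref{reduce to Katz conj} with the verification of Katz's conjecture for primitive classes in complete intersection CY 3-folds from \cite[Cor. A.6]{CMT}, with the observation that irreducible classes are primitive left implicit. Your extra remark spelling out why irreducible implies primitive is a harmless addition, not a divergence.
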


${}$ \\
\textbf{Genus 1}. Similar to Lemma \ref{lem:pair for elliptic}, for $X=Y\times E$ and $\beta=r[E]$, we have
\begin{lem}\label{pair inv for elliptic fiber}
For any $r \in \mathbb{Z}_{\geqslant1}$, there exists an isomorphism
\begin{equation}P_{0}(X,r[E])\cong \Hilb^{r}(Y),  \nonumber \end{equation}
under which the virtual class is given by
\begin{equation}[P_{0}(X,r[E])]^{\rm{vir}}=(-1)^r\cdot[\Hilb^{r}(Y)]^{\rm{vir}},  \nonumber \end{equation}
for certain choice of orientation in defining the LHS.

Furthermore, their degrees fit into the generating series  
\begin{equation}\sum_{r=0}^{\infty}P_{0,r[E]}q^{r}=M(q)^{\chi(Y)},  \nonumber \end{equation}
where $M(q)=\prod_{k\geqslant1}(1-q^{k})^{-k}$ is the MacMahon function and we define $P_{0,0[E]}=1$.
\end{lem}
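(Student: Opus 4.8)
The plan is to treat $p_Y\colon X=Y\times E\to Y$ as a trivial elliptic fibration admitting the section $Y\times\{t_0\}$ and to run the argument of Lemma \ref{lem:pair for elliptic} with $\mathbb{P}^3$ replaced by $Y$, finishing with the degree-zero $\mathrm{DT}$ formula of \cite{MNOP, Li, LP}.

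\textbf{Step 1 (the isomorphism).} Since $[E]$ is a fibre class of $p_Y$, any curve $C$ with $[C]=r[E]$ satisfies $[C]\cdot p_Y^{\ast}H=0$ for $H$ ample on $Y$, so $C$ is $p_Y$-contracted and supported on finitely many (smooth) fibres $\{y_i\}\times E$. Given $(s\colon\oO_X\to F)\in P_0(X,r[E])$, I would take its Harder--Narasimhan/Jordan--H\"older filtration exactly as in Lemma \ref{lem:pair for elliptic}: the factors $E_i$ are stable one-dimensional sheaves scheme-theoretically supported on single fibres, so $\ch(E_i)=(0,0,0,r_i[f],\chi(E_i))$, and the surjectivity of $s$ in dimension one together with $\sum_i\chi(E_i)=\chi(F)=0$ forces $\chi(E_i)=0$ and then each $E_i\cong\oO_{\{y_i\}\times E}$. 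The only geometric inputs are the flatness of $p_Y$ and the fact that the conormal sheaf of a point of $Y$ is $\oO^{\oplus 3}$ (as $\dim Y=3$), so that $\iota_{y'}^{\ast}$ of the relevant ideal sheaf is $\oO^{\oplus 3}$ or $\oO$; both hold for any smooth projective threefold. Iterating, $s$ is the $p_Y$-pullback of a surjection $\oO_Y\to\oO_Z$ for a unique $Z\in\Hilb^r(Y)$, and restriction along $Y\times\{t_0\}$ furnishes an inverse. Hence $p_Y^{\ast}\colon\Hilb^r(Y)\to P_0(X,r[E])$ is an isomorphism of schemes.

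\textbf{Step 2 (comparison of deformation--obstruction theories).} Write $I_Z=(\oO_Y\to\oO_Z)\cong\mathcal{I}_{Z/Y}$ and $I^{\bullet}=p_Y^{\ast}I_Z$. From $\dR p_{Y\ast}\oO_X\cong\oO_Y\oplus\oO_Y[-1]$ (the fibre being elliptic) and the projection formula,
\[
\RHom_X(I^{\bullet},I^{\bullet})\cong\RHom_Y(I_Z,I_Z)\oplus\RHom_Y(I_Z,I_Z)[-1].
\]
Taking trace-free parts, passing to cohomology, and using Serre duality on $Y$ (so $\Ext^1_Y(I_Z,I_Z)\cong\Ext^2_Y(I_Z,I_Z)^{\vee}$, together with $H^2(\oO_X)=0$), I would deduce
\[
\Ext^1_X(I^{\bullet},I^{\bullet})_0\cong\Ext^1_Y(I_Z,I_Z),\qquad \Ext^2_X(I^{\bullet},I^{\bullet})_0\cong V\oplus V^{\vee},\quad V:=\Ext^2_Y(I_Z,I_Z),
\]
with $V$ and $V^{\vee}$ maximal isotropic for the Serre-duality quadratic form on the left, and with a local Kuranishi map of $P_0(X,r[E])$ at $I^{\bullet}$ identified (the $E$-direction being trivial) with a Kuranishi map of $\Hilb^r(Y)$, viewed as a $\mathrm{DT}_3$ moduli space, at $[Z]$. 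This places $P_0(X,r[E])$ in case (3) of the $\mathrm{DT}_4$ virtual-class construction — the shifted cotangent bundle of the derived Hilbert scheme of $Y$ — so by the argument of \cite[Theorem 6.5]{CL} (compare the proof of Lemma \ref{lem:pair for elliptic}) one gets, for a suitable orientation, $[P_0(X,r[E])]^{\rm{vir}}=(-1)^r[\Hilb^r(Y)]^{\rm{vir}}$, the right-hand side being the $\mathrm{DT}_3$ virtual class; the sign is the one recorded there, reflecting the parity of the rank $3r$ of the obstruction in the $\mathrm{DT}_3$ theory.

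\textbf{Step 3 (the generating series and the main obstacle).} It then follows that $P_{0,r[E]}=(-1)^r\int_{[\Hilb^r(Y)]^{\rm{vir}}}1=(-1)^r\,\DT_r(Y)$ with $\DT_r(Y)$ the degree-zero $\mathrm{DT}$ invariant of $Y$, and the MacMahon/MNOP formula \cite{MNOP, Li, LP} gives $\sum_{r\geqslant0}\DT_r(Y)\,q^r=M(-q)^{\chi(Y)}$ (using $K_Y\cong\oO_Y$, so $\int_Y c_3(T_Y\otimes K_Y)=\chi(Y)$); substituting $q\mapsto-q$ yields $\sum_{r\geqslant0}P_{0,r[E]}\,q^r=M(q)^{\chi(Y)}$. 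I expect the delicate point to be the second half of Step 2: verifying that the local Kuranishi map on $X$ genuinely factors through the chosen maximal isotropic $V=\Ext^2_Y(I_Z,I_Z)$ and agrees with that of $\Hilb^r(Y)$, and pinning down the global orientation so that the sign is exactly $(-1)^r$ rather than merely a locally constant sign. As in \cite[Theorem 6.5]{CL}, Lemma \ref{lem:pair for elliptic}, and Proposition \ref{prop on pair moduli on CY3}, I would settle this by exploiting the product structure $X=Y\times E$ and base change to a closed point.
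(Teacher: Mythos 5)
Your proposal is correct and takes essentially the same approach the paper intends: the paper proves this lemma only by reference (``similar to Lemma \ref{lem:pair for elliptic}''), and your Steps 1--2 are exactly that argument with $\pi\colon X\to\mathbb{P}^3$ replaced by $p_Y\colon Y\times E\to Y$, your K\"unneth decomposition being the same as the paper's spectral-sequence computation since $K_Y\cong\oO_Y$. Step 3 likewise mirrors Proposition \ref{prop g=1 elliptic fib}: the $(-1)^r$ is simply the chosen orientation on each component, and the MNOP/Li/Levine--Pandharipande formula with $q\mapsto -q$ gives $M(q)^{\chi(Y)}$.
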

We check Conjecture \ref{conj:GW/GV g=1} for this case.
\begin{thm}\label{g=1 product of elliptic curve and CY3}
Let $X=Y\times E$ be the product of a CY 3-fold $Y$ with an elliptic curve $E$.
Then Conjecture \ref{conj:GW/GV g=1} is true for $\beta=r[E]\in H_{2}(X,\mathbb{Z})$ for any $r\geqslant 1$.
\end{thm}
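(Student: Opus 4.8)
The plan is to combine Lemma \ref{pair inv for elliptic fiber}, which handles the pair side, with a short Gromov--Witten computation identifying the genus one GV invariants $n_{1,r[E]}$. By that lemma we already know
\begin{equation*}
\sum_{r\geqslant 0}P_{0,r[E]}\,q^r=M(q)^{\chi(Y)}.
\end{equation*}
Since the classes $r[E]$ generate a face of the monoid of effective curves of $X$ (as $\NE(X)=\NE(Y)\oplus\mathbb{Z}_{\geqslant 0}[E]$ with $\NE(Y)$ salient), the content of Conjecture \ref{conj:GW/GV g=1} for multiples of $[E]$ is exactly the identity $M(q)^{\chi(Y)}=\prod_{r>0}M(q^r)^{n_{1,r[E]}}$. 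Hence it suffices to prove $n_{1,[E]}=\chi(Y)$ and $n_{1,r[E]}=0$ for $r\geqslant 2$.

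First I would dispose of all genus zero data. Because $[E]$ pairs to zero with every divisor pulled back from $Y$, any stable map to $X$ in a positive multiple of $[E]$ is contracted by $\pi\colon X\to Y$ and therefore factors through a fibre $\{y\}\times E\cong E$; as $E$ carries no non-constant rational map, $\overline{M}_{0,n}(X,r[E])=\emptyset$ for all $r\geqslant 1$, so $\mathrm{GW}_{0,r[E]}(\gamma_1,\ldots,\gamma_n)=0$ and hence $n_{0,r[E]}(\gamma_1,\ldots,\gamma_n)=0$ for all insertions. Since every effective decomposition $\beta_1+\beta_2=r[E]$ has $\beta_i=r_i[E]$, feeding these vanishings into the inductive rules (i)--(iv) forces $m_{r_1[E],r_2[E]}=0$ for all $r_1,r_2$. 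Consequently the identity defining the $n_{1,\bullet}$, restricted to multiples of $[E]$, collapses to
\begin{equation*}
\sum_{r\geqslant 1}\mathrm{GW}_{1,r[E]}(X)\,q^r=\sum_{r\geqslant 1}n_{1,r[E]}\sum_{d\geqslant 1}\frac{\sigma(d)}{d}\,q^{dr}.
\end{equation*}

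It then remains to evaluate $\mathrm{GW}_{1,r[E]}(X)$. The contraction argument identifies $\overline{M}_{1,0}(X,r[E])\cong\overline{M}_{1,0}(E,r)\times Y$, and along a curve $C$ in a fibre one has $f^{\ast}T_X\cong \oO_C\oplus(\oO_C\otimes T_yY)$ using $T_E\cong\oO_E$; since $h^1(\oO_C)=1$ in genus one, the deformation--obstruction theory of $\overline{M}_{1,0}(X,r[E])$ is the external product of that of $\overline{M}_{1,0}(E,r)$ with the extra obstruction bundle $\mathbb{E}^{\vee}\boxtimes T_Y$, where $\mathbb{E}$ is the Hodge line bundle on $\overline{M}_{1,0}(E,r)$. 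Therefore
\begin{equation*}
[\overline{M}_{1,0}(X,r[E])]^{\rm{vir}}=c_3\big(\mathbb{E}^{\vee}\boxtimes T_Y\big)\cap\big([\overline{M}_{1,0}(E,r)]^{\rm{vir}}\times[Y]\big).
\end{equation*}
Writing $\lambda=c_1(\mathbb{E})$ and using $c_1(T_Y)=0$, we have $c_3(\mathbb{E}^{\vee}\boxtimes T_Y)=c_3(T_Y)-\lambda\,c_2(T_Y)-\lambda^3$; as $[\overline{M}_{1,0}(E,r)]^{\rm{vir}}$ is a zero-cycle, capping with the divisor class $\lambda$ kills it, leaving $\mathrm{GW}_{1,r[E]}(X)=\big(\int_{[\overline{M}_{1,0}(E,r)]^{\rm{vir}}}1\big)\cdot\int_Y c_3(T_Y)=\chi(Y)\cdot\mathrm{GW}_{1,r}(E)$. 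Inserting the classical value $\mathrm{GW}_{1,r}(E)=\sigma(r)/r$ for the genus one Gromov--Witten invariants of an elliptic curve (equivalently $\exp\sum_{r\geqslant 1}\mathrm{GW}_{1,r}(E)q^r=\prod_{k\geqslant 1}(1-q^k)^{-1}$) turns the displayed identity into $\sum_r\chi(Y)\tfrac{\sigma(r)}{r}q^r=\sum_r n_{1,r[E]}\sum_d\tfrac{\sigma(d)}{d}q^{dr}$; comparing the coefficient of $q$ (where $\sigma(1)=1$) and then inducting on $r$ yields $n_{1,[E]}=\chi(Y)$ and $n_{1,r[E]}=0$ for $r\geqslant 2$. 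Together with the first paragraph this proves the theorem.

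The main obstacle is the last paragraph: identifying the product deformation--obstruction theory on $\overline{M}_{1,0}(X,r[E])$ precisely enough to isolate the Hodge-twisted obstruction bundle $\mathbb{E}^{\vee}\boxtimes T_Y$, and pinning down the genus one invariants $\mathrm{GW}_{1,r}(E)=\sigma(r)/r$ of the elliptic curve (or locating this computation in the literature). Everything else --- the vanishing of all genus zero and meeting contributions in multiples of $[E]$, and the passage from the values of $n_{1,r[E]}$ to the generating-series identity --- is formal, and the pair side is already supplied by Lemma \ref{pair inv for elliptic fiber}.
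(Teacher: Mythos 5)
Your proposal is correct and follows essentially the same route as the paper: Lemma \ref{pair inv for elliptic fiber} for the pair side, vanishing of genus zero invariants in classes $r[E]$, and the identification $\overline{M}_{1,0}(X,r[E])\cong \overline{M}_{1,0}(E,r[E])\times Y$ with the obstruction bundle contributing $\chi(Y)$ and the elliptic-curve count $\sigma(r)/r$, giving $\mathrm{GW}_{1,r[E]}(X)=\chi(Y)\cdot\sigma(r)/r$ and hence $n_{1,[E]}=\chi(Y)$, $n_{1,r[E]}=0$ for $r\geqslant 2$. Your treatment of the Hodge-twisted obstruction bundle $\mathbb{E}^{\vee}\boxtimes T_Y$ and of the meeting invariants is simply a more explicit version of the steps the paper states briefly (citing \cite{P} for the count $\sigma(r)/r$).
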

\begin{proof}
By Lemma \ref{pair inv for elliptic fiber}, we are left to show $n_{1,[E]}=\chi(Y)$ and $n_{1,r[E]}=0$ if $r\geqslant2$. Since genus zero Gromov-Witten invariants $\mathrm{GW}_{0,r[E]}(X)=0$ for any $r\geqslant1$, this is equivalent to
\begin{equation}\sum_{r=1}^{\infty} \mathrm{GW}_{1,r[E]}(X)\,q^{r}=\chi(Y)\cdot\sum_{d=1}^{\infty}\frac{\sigma(d)}{d}q^{d}, \nonumber \end{equation}
where $\sigma(d)=\sum_{i|d}i$. We have an isomorphism
\begin{equation}\overline{M}_{1,0}(X,r[E])\cong \overline{M}_{1,0}(E,r[E])\times Y   \nonumber \end{equation}
for moduli space $\overline{M}_{1,0}(X,r[E])$ of genus 1 stable maps to $X$. Note that $\overline{M}_{1,0}(E,r[E])$ is smooth of expected dimension and consists of $\frac{\sigma(r)}{r}$ points (modulo automorphisms) (see e.g. \cite{P}).
And the genus one invariant for constant map to $Y$ is $\chi(Y)$. So $\mathrm{GW}_{1,r[E]}(X)=\chi(Y)\cdot\frac{\sigma(r)}{r}$.
\end{proof}
When the curve class $\beta\in H_2(Y)\subseteq H_2(X)$ comes from $Y$, we have 
\begin{lem}\label{compute GW inv of class from Y}
Let $X=Y\times E$ be the product of a CY 3-fold $Y$ with an elliptic curve $E$. 
Then for $\beta\in H_2(Y)\subseteq H_2(X)$, we have 
\begin{align*}\mathrm{GW}_{0,\beta}(\gamma)=\deg[\overline{M}_{0,0}(Y,\beta)]^{\mathrm{vir}}\cdot\int_\beta\gamma_1\cdot\int_E\gamma_2,
\quad \mathrm{if} \,\, \gamma=\gamma_1\otimes \gamma_2\in H^2(Y)\otimes H^2(E);  \end{align*}
\begin{align*}\mathrm{GW}_{0,\beta}(\gamma)=0, \quad \mathrm{if} \,\, \gamma\in H^4(Y)\subseteq H^4(X); \quad \quad 
\mathrm{GW}_{1,\beta}=0.  \end{align*}
\end{lem}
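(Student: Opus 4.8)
The plan is to reduce all three computations to the geometry of $Y$, using the observation that a stable map to $X=Y\times E$ representing a class $\beta\in H_2(Y)\subseteq H_2(X)$ is necessarily constant along the elliptic factor. Indeed, if $f\colon C\to Y\times E$ is such a stable map then $\mathrm{pr}_E\circ f$ has class $0$ in $H_2(E,\mathbb{Z})$; since any nonconstant morphism from a (possibly nodal) curve to $E$ is a finite cover, hence of positive degree onto $E$, the composite $\mathrm{pr}_E\circ f$ must be constant. As the contracted components of $(f_Y,\mathrm{const})$ coincide with those of $f_Y$, stability of $(f_Y,\mathrm{const})$ is equivalent to stability of $f_Y\colon C\to Y$, and recording the constant value in $E$ yields isomorphisms
\[
\overline{M}_{g,n}(X,\beta)\;\cong\;\overline{M}_{g,n}(Y,\beta)\times E,\qquad \mathrm{ev}_i=(\mathrm{ev}_{Y,i},\mathrm{pr}_E),
\]
where $\mathrm{pr}_E$ denotes the projection onto the $E$-factor (here $\beta$ is a nonzero effective class, as throughout).

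Next I would identify the virtual class under this isomorphism. Write $\pi\colon\mathcal{C}\to\overline{M}_{g,n}(Y,\beta)\times E$ for the universal curve, which is pulled back from the universal curve over $\overline{M}_{g,n}(Y,\beta)$, and $f$ for the universal map. Since $T_E\cong\oO_E$ and the $E$-component of $f$ is constant, one has $f^{\ast}T_X\cong f_Y^{\ast}T_Y\oplus\oO_{\mathcal{C}}$, so the Gromov--Witten obstruction theory of $\overline{M}_{g,n}(X,\beta)$ relative to the stack of prestable curves splits as the pullback of the obstruction theory of $\overline{M}_{g,n}(Y,\beta)$ together with the contribution of $R\pi_{\ast}\oO_{\mathcal{C}}$. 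In genus $0$ the fibres of $\pi$ are trees of rational curves, so $R\pi_{\ast}\oO_{\mathcal{C}}=\oO$ is concentrated in degree $0$; this is precisely the (smooth) tangent direction of the $E$-factor, and therefore
\[
[\overline{M}_{0,n}(X,\beta)]^{\mathrm{vir}}=[\overline{M}_{0,n}(Y,\beta)]^{\mathrm{vir}}\times[E].
\]
In genus $1$ there is the additional summand $R^{1}\pi_{\ast}\oO_{\mathcal{C}}\cong\mathbb{E}^{\vee}$, the dual of the Hodge bundle $\mathbb{E}=\pi_{\ast}\omega_{\pi}$ pulled back from $\overline{M}_{1,0}(Y,\beta)$, contributing a rank-one obstruction bundle, so $[\overline{M}_{1,0}(X,\beta)]^{\mathrm{vir}}=e(\mathbb{E}^{\vee})\cap\big([\overline{M}_{1,0}(Y,\beta)]^{\mathrm{vir}}\times[E]\big)$.

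The three assertions then follow. Since $e(\mathbb{E}^{\vee})$ is pulled back from $\overline{M}_{1,0}(Y,\beta)$, the projection formula identifies $[\overline{M}_{1,0}(X,\beta)]^{\mathrm{vir}}$ with $\big(e(\mathbb{E}^{\vee})\cap[\overline{M}_{1,0}(Y,\beta)]^{\mathrm{vir}}\big)\times[E]$; as $Y$ is a Calabi--Yau threefold, $[\overline{M}_{1,0}(Y,\beta)]^{\mathrm{vir}}$ has dimension $0$, so this cap product is a cycle of dimension $-1$, hence zero, and $\mathrm{GW}_{1,\beta}=0$. For the genus-zero invariants I would combine the product structure with the projection formula: if $\gamma=\gamma_1\otimes\gamma_2$ with $\gamma_i$ of degree $2$, then $\mathrm{ev}^{\ast}\gamma=\mathrm{ev}_Y^{\ast}\gamma_1\cdot\mathrm{pr}_E^{\ast}\gamma_2$, so
\[
\mathrm{GW}_{0,\beta}(\gamma)=\Big(\int_{[\overline{M}_{0,1}(Y,\beta)]^{\mathrm{vir}}}\mathrm{ev}_Y^{\ast}\gamma_1\Big)\cdot\Big(\int_E\gamma_2\Big),
\]
and, since $\overline{M}_{0,0}(Y,\beta)$ has virtual dimension $0$, the divisor axiom on $Y$ rewrites the first factor as $\big(\int_\beta\gamma_1\big)\deg[\overline{M}_{0,0}(Y,\beta)]^{\mathrm{vir}}$, which is the claimed formula. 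If instead $\gamma\in H^4(Y)$, i.e. $\gamma=\gamma'\otimes 1_E$, then $\mathrm{ev}^{\ast}\gamma=\mathrm{ev}_Y^{\ast}\gamma'$ is pulled back from $\overline{M}_{0,1}(Y,\beta)$ with no component along $E$, and integrating it over $[\overline{M}_{0,1}(Y,\beta)]^{\mathrm{vir}}\times[E]$ vanishes because $\int_E1=0$.

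The only genuinely technical point, and the main obstacle, is the splitting of $[\overline{M}_{g,n}(X,\beta)]^{\mathrm{vir}}$ as a product, with the Hodge-bundle correction in genus one, compatibly with the isomorphism of moduli spaces; everything else is the divisor axiom and a dimension count. This splitting is a consequence of $f^{\ast}T_X\cong f_Y^{\ast}T_Y\oplus\oO_{\mathcal{C}}$ together with the functoriality of perfect obstruction theories under the two projections $\overline{M}_{g,n}(X,\beta)\to\overline{M}_{g,n}(Y,\beta)$ and $\overline{M}_{g,n}(X,\beta)\to E$ (equivalently, of Behrend's product formula with the curve class in the $E$-factor equal to $0$), and I would write this argument out in detail.
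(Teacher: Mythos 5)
Your proposal is correct and follows essentially the same route as the paper: the identification $\overline{M}_{g,n}(X,\beta)\cong\overline{M}_{g,n}(Y,\beta)\times E$, the product splitting of the virtual class via $f^{\ast}T_X\cong f_Y^{\ast}T_Y\oplus\oO_{\mathcal{C}}$, the divisor equation for the genus-zero cases, and the vanishing in genus one from the extra obstruction coming from the trivial bundle $T_E\cong\oO_E$ along the $E$-direction. Your identification of that extra obstruction with the pulled-back dual Hodge bundle and the resulting dimension count is just a more explicit spelling-out of the paper's terse remark that the obstruction sheaf has a trivial factor in the $E$-direction.
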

\begin{proof}
We have an isomorphism 
\begin{align}\label{equ prod str}\overline{M}_{0,1}(X,\beta)\cong \overline{M}_{0,1}(Y,\beta)\times E,  \end{align}   
under which the virtual class satisfies 
\begin{align*}[\overline{M}_{0,1}(X,\beta)]^{\mathrm{vir}}\cong [\overline{M}_{0,1}(Y,\beta)]^{\mathrm{vir}}\otimes [E]. \end{align*}   
By divisor equation, one can compute genus zero GW invariants. 
$\overline{M}_{1,0}(X,\beta)$ has a similar product structure as (\ref{equ prod str}). The
obstruction sheaf has a trivial factor $TE=\oO_E$ in $E$ direction. So genus one GW invariants vanish.
\end{proof}
Then it is easy to show the following:
\begin{prop}\label{g=1 conj for irr class from CY3}
Let $X=Y\times E$ be the product of a CY 3-fold $Y$ with an elliptic curve $E$. 
Then Conjecture \ref{conj:GW/GV g=1} is true for any irreducible class $\beta\in H_2(Y)\subseteq H_2(X)$.
\end{prop}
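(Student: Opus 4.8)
The plan is to reduce the genus-one identity for a fixed irreducible class $\beta\in H_2(Y)\subseteq H_2(X)$ to the single numerical equality $P_{0,\beta}=n_{1,\beta}$, and then to show that \emph{both} sides vanish. First I would compare the coefficients of $q^{\beta}$ on the two sides of Conjecture \ref{conj:GW/GV g=1}. On the left it is simply $P_{0,\beta}$. On the right, since $M(q)=1+q+O(q^2)$, the only monomial $q^{\beta}$ appearing in $\prod_{\beta'>0}M(q^{\beta'})^{n_{1,\beta'}}$ comes from the linear term $n_{1,\beta}q^{\beta}$ of the factor $M(q^{\beta})^{n_{1,\beta}}$: a monomial $q^{\beta_1}\cdots q^{\beta_k}$ with $k\geqslant 2$ and all $\beta_i>0$ would exhibit $\beta$ as a sum of two nonzero effective classes, while the $q^{m\beta'}$-term of some $M(q^{\beta'})^{n_{1,\beta'}}$ with $m\geqslant 2$ would force $\beta=m\beta'$; both are excluded by irreducibility (recall irreducible implies primitive). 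Hence Conjecture \ref{conj:GW/GV g=1} for this $\beta$ is equivalent to $P_{0,\beta}=n_{1,\beta}$.

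Next I would evaluate $n_{1,\beta}$. Since $\beta$ is primitive, extracting the $q^{\beta}$-coefficient in the Klemm--Pandharipande definitions kills all multiple-cover ($d\geqslant 2$) contributions, and in the genus-one identity it kills the meeting-invariant term as well (a nonzero $m_{\beta_1,\beta_2}$ requires $\beta_1,\beta_2$ nonzero effective, and then $k(\beta_1+\beta_2)=\beta$ contradicts irreducibility for $k=1$ and primitivity for $k\geqslant 2$). This leaves $n_{0,\beta}(\gamma)=\mathrm{GW}_{0,\beta}(\gamma)$ and $n_{1,\beta}=\mathrm{GW}_{1,\beta}+\tfrac1{24}\mathrm{GW}_{0,\beta}(c_2(X))$. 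Now by the Whitney formula $c_2(X)=c_2(Y\times E)=p_Y^{\ast}c_2(Y)+p_Y^{\ast}c_1(Y)\cdot p_E^{\ast}c_1(E)+p_E^{\ast}c_2(E)=p_Y^{\ast}c_2(Y)$, using $c_1(Y)=0$ (as $K_Y\cong\oO_Y$) and $c_2(E)=0$ (as $\dim E=1$); thus $c_2(X)\in H^4(Y)\subseteq H^4(X)$. By Lemma \ref{compute GW inv of class from Y} we have $\mathrm{GW}_{1,\beta}=0$ and $\mathrm{GW}_{0,\beta}(\gamma)=0$ for every $\gamma\in H^4(Y)$; applying the latter to $\gamma=c_2(X)$ gives $n_{1,\beta}=0$.

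Finally I would show $P_{0,\beta}=0$ using the structural results already available. By Proposition \ref{prop on pair moduli on CY3} with $n=0$ we have $P_0(X,\beta)\cong P_0(Y,\beta)\times E$ and $[P_0(X,\beta)]^{\mathrm{vir}}=[P_0(Y,\beta)]_{\mathrm{pair}}^{\mathrm{vir}}\otimes[E]$ for a suitable orientation. By Lemma \ref{lem on pair moduli on CY3} the pair virtual class sits in $A_{n-1}(P_n(Y,\beta),\mathbb{Z})$, so for $n=0$ it lies in $A_{-1}(P_0(Y,\beta),\mathbb{Z})=0$ and hence vanishes; therefore $[P_0(X,\beta)]^{\mathrm{vir}}=0$ and $P_{0,\beta}=\int_{[P_0(X,\beta)]^{\mathrm{vir}}}1=0=n_{1,\beta}$, which by the first step is precisely the claim.

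I do not expect a genuine obstacle here: the geometric content is entirely carried by Lemma \ref{compute GW inv of class from Y}, Lemma \ref{lem on pair moduli on CY3} and Proposition \ref{prop on pair moduli on CY3}. The only points needing care are the bookkeeping of which correction terms in the Klemm--Pandharipande recursion survive for a primitive class (so that $n_{1,\beta}$ genuinely reduces to $\mathrm{GW}_{1,\beta}+\tfrac1{24}\mathrm{GW}_{0,\beta}(c_2(X))$), and the elementary verification that $c_2(X)$ is pulled back from $H^4(Y)$ so that the Gromov--Witten vanishing of Lemma \ref{compute GW inv of class from Y} applies to it.
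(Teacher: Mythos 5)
Your proof is correct and follows essentially the same route as the paper: the paper's own (terser) argument likewise deduces $n_{1,\beta}=0$ from Lemma \ref{compute GW inv of class from Y} and $P_{0,\beta}=0$ from Proposition \ref{prop on pair moduli on CY3} together with the negative virtual dimension of $[P_0(Y,\beta)]^{\mathrm{vir}}_{\mathrm{pair}}$ from Lemma \ref{lem on pair moduli on CY3}. Your additional bookkeeping (extracting the $q^{\beta}$-coefficient using irreducibility/primitivity and checking $c_2(X)=p_Y^{\ast}c_2(Y)$) is exactly the implicit content the paper leaves to the reader.
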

\begin{proof}
By Lemma \ref{compute GW inv of class from Y}, we know $n_{1,\beta}=0$. By Proposition \ref{prop on pair moduli on CY3}, the
virtual dimension of $[P_0(Y,\beta)]_{\mathrm{pair}}^{\mathrm{vir}}$ is negative, so $P_{0,\beta}=0$.
\end{proof}

\subsection{Hyperk\"{a}hler 4-folds}
When the CY 4-fold $X$ is hyperk\"{a}hler, GW invariants on $X$ vanish as they are deformation-invariant and there are no holomorphic curves
for generic complex structures in the $\mathbb{S}^{2}$-twistor family. Another way to see the vanishing is via the cosection localization technique developed by Kiem-Li \cite{KL}.

Roughly speaking, given a perfect obstruction theory \cite{BF, LT} on a Deligne-Mumford moduli stack $M$, the existence of a cosection
\begin{equation}\varphi: \mathrm{Ob}_{M}\rightarrow \mathcal{O}_M   \nonumber \end{equation}
of the obstruction sheaf $\mathrm{Ob}_{M}$ makes virtual class of $M$ localize to closed subspace $Z(\varphi)\subseteq M$ where $\varphi$ is not surjective.
In particular, if $\varphi$ is surjective everywhere (in GW theory this is guaranteed by the existence of holomorphic symplectic forms), then the virtual class of $M$ vanishes. Moreover, by truncating the obstruction theory to remove the trivial factor $\mathcal{O}_M$, one can define a reduced obstruction theory and reduced virtual class.

To verify Conjectures \ref{conj:GW/GV g=0} and \ref{conj:GW/GV g=1} for hyperk\"{a}hler 4-folds, we only need to show the vanishing of stable pair invariants of $P_0(X,\beta)$ and $P_1(X,\beta)$.

${}$ \\
\textbf{Cosection and vanishing of $\mathrm{DT_4}$ virtual classes}.
Fix a stable pair $I\in P_n(X,\beta)$, by taking wedge product with square $\mathrm{At}(I)^{2}$ of the Atiyah class and contraction with the holomorphic symplectic form $\sigma$, we get a surjective map
\begin{equation}\xymatrix@1{
\phi:\Ext^{2}(I,I)_0\ar[r]^{\wedge\frac{\mathrm{At}(I)^{2}}{2}}& \Ext^{4}(I,I\otimes \Omega^{2}_X) \ar[r]^{\quad \lrcorner \sigma} & \Ext^{4}(I,I)
\ar[r]^{tr} & H^{4}(X,\mathcal{O}_X)}.  \nonumber \end{equation}
In fact, we have
\begin{prop}\label{surj cosection}
Let $X$ be a projective hyperk\"{a}hler 4-fold, $I$ be a perfect complex on $X$ and $Q$ be the Serre duality quadratic form on $\Ext^{2}(I,I)_0$. Then the composition map
\begin{equation}\xymatrix@1{
\phi:\Ext^{2}(I,I)_0\ar[r]^{\wedge\frac{\mathrm{At}(I)^{2}}{2}}& \Ext^{4}(I,I\otimes \Omega^{2}_X) \ar[r]^{\quad \lrcorner \sigma} & \Ext^{4}(I,I)
\ar[r]^{tr} & H^{4}(X,\mathcal{O}_X)}.  \nonumber \end{equation}is surjective if either $\ch_{3}(I)\neq0$ or $\ch_{4}(I)\neq0$.
Moreover,   \\
(1) if $\ch_{4}(I)\neq0$, then we have a $Q$-orthogonal decomposition
\begin{equation}\Ext^{2}(I,I)_0=\Ker(\phi)\oplus\mathbb{C}\langle \mathrm{At}(I)^2\lrcorner\textrm{ } \sigma \rangle,   \nonumber \end{equation}
where $Q$ is non-degenerate on each subspace; \\
(2) if $\ch_{4}(I)=0$ and $\ch_{3}(I)\neq0$, then we have a $Q$-orthogonal decomposition
\begin{equation}\Ext^{2}(I,I)_0=\mathbb{C}\langle \mathrm{At}(I)^2\lrcorner\textrm{ } \sigma, \kappa_{X}\circ \mathrm{At}(I)\rangle \oplus
(\mathbb{C}\langle \mathrm{At}(I)^2\lrcorner\textrm{ } \sigma, \kappa_{X}\circ \mathrm{At}(I)\rangle)^{\perp},  \nonumber \end{equation}
where $Q$ is non-degenerate on each subspace. Here $\kappa_{X}$ is the Kodaira-Spencer class which is Serre dual to $\ch_3(I)$.
\end{prop}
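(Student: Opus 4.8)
The plan is to reduce the surjectivity statement to the non‑vanishing of a single explicit vector of $\Ext^2(I,I)_0$, and then to detect that vector — and read off the decompositions — by pairing it against one or two test classes built from the Atiyah class. Since $X$ is a projective hyperk\"ahler $4$‑fold, $H^4(X,\oO_X)\cong\mathbb{C}$ (it is dual to $H^0(X,K_X)=\mathbb{C}\langle\sigma^2\rangle$), so $\phi$ is surjective if and only if $\phi\neq0$. Unwinding the definition of $\phi$, for $e\in\Ext^2(I,I)_0$ one has $\phi(e)=\tr(e\circ w)$, where $w:=\tfrac12\,\mathrm{At}(I)^2\lrcorner\sigma\in\Ext^2(I,I)$ is the class obtained by contracting the $\Omega^2_X$‑coefficient of $\tfrac12\mathrm{At}(I)^2\in\Ext^2(I,I\otimes\Omega^2_X)$ against $\sigma$ (this contraction commutes with post‑composition by $e$, which acts as the identity on the form factor). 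Since $e$ is trace‑free, $\tr(e\circ w)$ depends on $w$ only through its trace‑free part $w_0$, and one gets $\phi(e)=Q(e,w_0)$. As $Q$ is non‑degenerate on $\Ext^2(I,I)_0$ (the Serre pairing on $\Ext^2(I,I)$ is perfect and block‑diagonal for the splitting $\Ext^2(I,I)=\Ext^2(I,I)_0\oplus H^2(X,\oO_X)\cdot\mathrm{id}$), this gives: $\phi$ is surjective $\iff w_0\neq0$, and in that case $\Ker\phi=w_0^{\perp}$.

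For the case $\ch_4(I)\neq0$ I would compute $\phi(w_0)=Q(w_0,w_0)$ directly, using the Atiyah--Chern identities $\tr(\mathrm{At}(I)^k)=k!\,\ch_k(I)$ together with the hard‑Lefschetz/primitive decomposition of $\Omega^2_X$ on the symplectic $4$‑fold $X$, which makes the contraction $\lrcorner\sigma$ and wedging by $\sigma$ proportional on $\Omega^2_X$. The outcome should be that $Q(w_0,w_0)$ is a nonzero universal multiple of $\int_X\ch_4(I)$; for $I$ with $\ch_{\leqslant2}(I)=\ch_{\leqslant2}(\oO_X)$ --- e.g.\ a stable pair --- $w$ is already trace‑free and the bookkeeping is cleanest. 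Hence $\ch_4(I)\neq0$ forces $Q(w_0,w_0)\neq0$, so $w_0\neq0$ and $\phi$ is surjective; and then $\mathbb{C}\langle w_0\rangle$ meets the hyperplane $\Ker\phi=w_0^{\perp}$ only in $0$, so $\Ext^2(I,I)_0=\Ker\phi\oplus\mathbb{C}\langle w_0\rangle$ is a $Q$‑orthogonal decomposition with $Q$ non‑degenerate on each summand. This proves part (1).

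For the case $\ch_4(I)=0$, $\ch_3(I)\neq0$, now $Q(w_0,w_0)\propto\int_X\ch_4(I)=0$, so $w_0$ is $Q$‑isotropic and a second test vector is needed. I would take $\kappa_X\in H^1(X,TX)$ to be the Kodaira--Spencer class whose image $\sigma(\kappa_X)\in H^1(X,\Omega^1_X)$ under the symplectic isomorphism $TX\cong\Omega^1_X$ is Serre‑dual to $\ch_3(I)\in H^3(X,\Omega^3_X)$ (such $\kappa_X$ exists since $\ch_3(I)\neq0$), and set $u:=\kappa_X\circ\mathrm{At}(I)\in\Ext^2(I,I)$, with trace‑free part $u_0$. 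A computation with $\tr(\mathrm{At}(I)^3)=3!\,\ch_3(I)$ --- using the elementary $4$‑dimensional exterior‑algebra identities relating interior multiplication by $\kappa_X$ and wedging by $\sigma$ --- should give $\phi(u_0)=Q(w_0,u_0)=c\cdot\langle\ch_3(I),\sigma(\kappa_X)\rangle_{\mathrm{Serre}}$ with $c\neq0$ (no trace‑correction arises for a stable pair, since $\ch_1(I)=\ch_2(I)=0$). In particular $Q(w_0,u_0)\neq0$, so $w_0\neq0$ and $\phi$ is surjective; moreover $w_0,u_0$ are linearly independent (otherwise $Q(w_0,u_0)\in\mathbb{C}\cdot Q(w_0,w_0)=0$), and on $V:=\mathbb{C}\langle w_0,u_0\rangle$ the Gram matrix of $Q$ has determinant $-Q(w_0,u_0)^2\neq0$. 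Hence $Q|_V$ is non‑degenerate and $\Ext^2(I,I)_0=V\oplus V^{\perp}$ is a $Q$‑orthogonal decomposition with $Q$ non‑degenerate on each summand, which is part (2).

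The main obstacle is the computation in the second paragraph identifying $Q(w_0,w_0)$ with a nonzero multiple of $\int_X\ch_4(I)$: one must track carefully how $\lrcorner\sigma$ interacts with the cup/composition products of Atiyah classes --- contracting each of the two $\Omega^2_X$‑factors of $\mathrm{At}(I)^2\circ\mathrm{At}(I)^2$ separately against $\sigma$ is \emph{a priori} different from wedging the two factors together --- and then use the Fujiki relations together with the one‑dimensionality of $H^4(X,\oO_X)=H^4(X,K_X)$ to see that they agree up to a universal constant. The corresponding computation in the third paragraph is more routine because the relevant exterior‑algebra identity is essentially trivial in dimension $4$.
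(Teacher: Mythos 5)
Your proposal is correct and takes essentially the same route as the paper's proof (which simply invokes \cite[Prop.~2.9]{CMT}): surjectivity is detected by pairing the class $w_0=\tfrac12\mathrm{At}(I)^2\lrcorner\,\sigma$ against itself via $\tr(\mathrm{At}(I)^4)=4!\,\ch_4(I)$ when $\ch_4(I)\neq0$, and against $\kappa_X\circ\mathrm{At}(I)$ via the Buchweitz--Flenner identity and the homotopy formula when $\ch_4(I)=0$, $\ch_3(I)\neq0$, followed by the same Gram-matrix linear algebra for the two orthogonal decompositions. Your reformulation $\phi(e)=Q(e,w_0)$ is only a cosmetic repackaging of the paper's construction of an explicit splitting $\iota$ of $\phi$ (e.g.\ $\iota(1)=\tfrac{1}{12\ch_4(I)}\,\mathrm{At}(I)^2\lrcorner\,\sigma$), and the ``main obstacle'' you flag --- that $Q(w_0,w_0)$ is a nonzero universal multiple of $\int_X\ch_4(I)$ --- is precisely the computation carried out in the cited proof.
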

\begin{proof}
See the proof of \cite[Prop. 2.9]{CMT}.
\end{proof}
We claim that the surjectivity of cosection maps leads to the vanishing of virtual classes for stable pair moduli spaces (it also applies to other moduli spaces, e.g. Hilbert schemes of curves/points used in DT/PT correspondence \cite{CK1, CK2}).  
\begin{claim}\label{vanishing for hk4}
Let $X$ be a projective hyperk\"{a}hler 4-fold and $P_n(X,\beta)$ be the moduli space of stable pairs with $n\neq0$ or $\beta\neq 0$. 
Then the virtual class satisfies
\begin{equation}[P_n(X,\beta)]^{\rm{vir}}=0.  \nonumber \end{equation}
\end{claim}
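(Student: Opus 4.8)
The plan is to deduce the vanishing from the surjective cosection produced in Proposition~\ref{surj cosection}, exactly as cosection localization forces virtual classes to vanish in Gromov--Witten theory.

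First I would check the hypothesis of Proposition~\ref{surj cosection} at every point of the moduli space. For a stable pair $I=(s\colon\oO_X\to F)\in P_n(X,\beta)$, regarded as the two-term complex $(\oO_X\to F)$, we have $\ch(I)=\ch(\oO_X)-\ch(F)=(1,0,0,-\beta,-n)$, where we use $\ch_{\leqslant 2}(F)=0$ together with $c_1(X)=0$ to get $\ch_4(F)=\chi(F)=n$. Hence $\ch_3(I)=-\beta$ and $\ch_4(I)=-n$, and since by hypothesis $n\neq 0$ or $\beta\neq 0$, at least one of $\ch_3(I),\ch_4(I)$ is non-zero; thus the cosection $\phi\colon\Ext^2(I,I)_0\to H^4(X,\oO_X)$ is surjective at every point, and the $Q$-orthogonal decomposition of Proposition~\ref{surj cosection} holds pointwise, splitting off a non-zero subspace on which the Serre-duality form $Q$ is non-degenerate.

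Second, I would globalize this. The Atiyah class of the universal complex $\mathbb{I}$ on $X\times P_n(X,\beta)$, together with the holomorphic symplectic form $\sigma$ and (when $n=0$) a fixed Kodaira--Spencer class $\kappa_X$ depending only on $\beta$, defines a morphism of the obstruction sheaf $\phi\colon Ob\to\oO_{P_n(X,\beta)}$ restricting to the pointwise cosection; by the first step it is surjective. The explicit sections from Proposition~\ref{surj cosection}, namely $1\mapsto\tfrac{1}{12\ch_4(I)}\mathrm{At}(I)^2\lrcorner\sigma$ when $n\neq0$ and $1\mapsto\kappa_X\circ\mathrm{At}(I)$ when $n=0$, are globally defined and yield a $Q$-orthogonal splitting $Ob\cong K\oplus\oO^{\oplus r}$ with $r\geqslant1$ and $Q$ non-degenerate on each summand.

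Third, I would conclude in the settings where the virtual class is actually defined. When $P_n(X,\beta)$ is smooth, $Ob$ is a vector bundle and $[P_n(X,\beta)]^{\rm{vir}}=\mathrm{PD}(e(Ob,Q))$; choosing a real form compatible with the splitting above gives $Ob_+\cong K_+\oplus\underline{\mathbb{R}}^{\,r}$, and the Euler class of a real bundle containing a trivial sub-bundle of positive rank vanishes, so $e(Ob,Q)=0$ and the virtual class is zero. In the general Borisov--Joyce / D-manifold setting, the same globally split surjective cosection should --- once a Kiem--Li type cosection-localization formalism for $(-2)$-shifted symplectic derived schemes is available --- localize the virtual class onto the (empty) locus where $\phi$ fails to be surjective; alternatively, a complex-analytic incarnation of \cite{PTVV},~\cite{BJ} would permit deforming the complex structure of $X$ inside its $\mathbb{S}^2$-twistor family to a generic member carrying no algebraic curve of class $\beta$, whence $P_n(X,\beta)=\emptyset$ and the virtual class vanishes by deformation invariance. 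The hard part is precisely this last step: neither a cosection-localization theory for Borisov--Joyce virtual classes nor a worked-out analytic version of shifted symplectic geometry is presently available, so outside the smooth case the argument remains conditional --- which is why the assertion is recorded only as a claim.
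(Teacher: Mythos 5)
Your proposal takes essentially the same route as the paper's own treatment: the paper likewise rests the claim on the surjective cosection of Proposition~\ref{surj cosection} (whose hypothesis holds because $\ch_3(I)=-\beta$ and $\ch_4(I)=-n$, so one of them is non-zero), notes that this yields the vanishing via the half-Euler class when $P_n(X,\beta)$ is smooth, and otherwise leaves the statement as a claim conditional on a Kiem--Li type cosection localization for D-manifolds or on a complex-analytic version of shifted symplectic geometry allowing the twistor-family deformation argument. Your extra details (the Chern character computation verifying the hypothesis pointwise and the global $Q$-orthogonal splitting of the obstruction bundle) are correct and consistent with what the paper intends.
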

At the moment, Kiem-Li type theory of cosection localization for D-manifolds is not available in the literature. 
We believe that when such a theory is established, our claim should follow automatically. 
Nevertheless, we have the following evidence for the claim.

1. At least when $P_n(X,\beta)$ is smooth, Proposition \ref{surj cosection} gives the vanishing of virtual class.

2. If there is a complex analytic version of $(-2)$-shifted symplectic geometry \cite{PTVV} and the corresponding construction of virtual classes \cite{BJ},
one could prove the vanishing result as in $\mathrm{GW}$ theory, i.e. taking a generic complex structure in the $\mathbb{S}^{2}$-twistor family 
of the hyperk\"ahler 4-fold which does not support coherent sheaves and then vanishing of virtual classes follows from their deformation invariance.

\section{Non-compact examples}

\subsection{Irreducible curve classes on local Fano 3-folds}
Let $Y$ be a Fano 3-fold. When $Y$ embeds into a CY 4-fold $X$, the normal bundle of $Y\subseteq X$ is the canonical bundle $K_Y$ of $Y$. By the negativity of $K_Y$, there exists an analytic neighbourhood of $Y$ in $X$ which is isomorphic to an analytic neighbourhood of $Y$ in $K_Y$. Here we simply consider non-compact CY 4-folds of form $X=K_Y$.

Similar to Lemma \ref{lem on pair moduli on CY3}, we have 
\begin{lem}\label{lem on pair moduli on Y}
Let $\beta\in H_2(Y,\mathbb{Z})$ be an irreducible curve class on a Fano 3-fold $Y$. 

Then the pair deformation-obstruction theory of $P_n(Y,\beta)$ is perfect in the sense of \cite{BF, LT}. Hence we have 
an algebraic virtual class
\begin{equation}[P_n(Y,\beta)]_{\mathrm{pair}}^{\mathrm{vir}}\in A_{n}(P_n(Y,\beta),\mathbb{Z}). \nonumber \end{equation}
\end{lem}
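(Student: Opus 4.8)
The plan is to imitate the argument for Lemma \ref{lem on pair moduli on CY3}, replacing the use of the trivial canonical bundle $K_Y\cong\oO_Y$ of a Calabi-Yau $3$-fold by the negativity of $K_Y$ on a Fano $3$-fold, which in fact makes the argument cleaner. First I would fix a stable pair $I_Y=(s\colon\oO_Y\to F)\in P_n(Y,\beta)$ with $\beta$ irreducible. As in the CY case, since $\beta$ is irreducible one checks that $F$ is a stable sheaf (see \cite[pp. 270]{PT2}), so $\Hom_Y(F,F)\cong\mathbb{C}$ and hence by Serre duality $\Ext^3_Y(F,F)\cong\Hom_Y(F,F\otimes K_Y)^{\vee}$. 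Here is the key point: because $K_Y$ is anti-ample and $F$ is a pure one-dimensional sheaf supported on the curve class $\beta>0$, a nonzero map $F\to F\otimes K_Y$ would contradict either purity or slope stability (the subsheaf of $F$ generated by the image would have the same support but a twist by a negative line bundle lowers its slope), so $\Hom_Y(F,F\otimes K_Y)=0$ and therefore $\Ext^3_Y(F,F)=0$.

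Next I would run the same distinguished triangle computation as in Lemma \ref{lem on pair moduli on CY3}. Applying $\RHom_Y(-,F)$ to $I_Y\to\oO_Y\to F$ gives the triangle
\begin{equation}\RHom_Y(F,F)\to\RHom_Y(\oO_Y,F)\to\RHom_Y(I_Y,F),\nonumber\end{equation}
and on cohomology, using $H^i(Y,F)=0$ for $i\geqslant 2$ (as $F$ is supported in dimension one) and $\Ext^3_Y(F,F)=0$, one reads off $\Ext^i_Y(I_Y,F)=0$ for $i\geqslant 2$. Then, running the octahedron relating $\RHom_X(I_Y,F)$, $\RHom_Y(I_Y,I_Y)[1]$ and $\RHom_Y(F,\oO_Y)$ (now using $\Ext^i_Y(F,\oO_Y)$, which by Serre duality is $\Ext^{3-i}_Y(\oO_Y,F\otimes K_Y)^{\vee}$, and vanishes outside the relevant range), one concludes that the tangent-obstruction complex $\RHom_Y(I_Y,I_Y)_0[1]$ of the pair deformation theory has cohomology only in degrees $0$ and $1$, i.e.\ $\Ext^i_Y(I_Y,I_Y)_0=0$ for $i\neq 1,2$. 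This is exactly the condition for the pair deformation-obstruction theory to be a perfect obstruction theory of amplitude $[-1,0]$ in the sense of \cite{BF,LT}, so it defines a virtual class. A dimension count by Riemann-Roch (or by noting the virtual dimension is $-K_Y\cdot\beta = n$ shifted appropriately; more precisely the pair moduli space on a Fano threefold carries virtual dimension differing from the CY case by $-K_Y\cdot\beta$, but for the purposes of this lemma only the \emph{existence} and amplitude of the obstruction theory matter) places the class in $A_n(P_n(Y,\beta),\mathbb{Z})$ as stated; I would confirm the index by the standard formula for the virtual dimension of the pair moduli space with the truncated complex $\tau^{[0,1]}\RHom(I_Y,I_Y)_0[1]$.

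The main obstacle, and really the only nontrivial input beyond bookkeeping, is establishing the vanishing $\Ext^3_Y(F,F)=0$ (equivalently $\Hom_Y(F,F\otimes K_Y)=0$) and the companion vanishing $\Ext^{\geqslant 2}_Y(F,\oO_Y\text{-type terms})$ cleanly for a stable pure one-dimensional sheaf on a Fano threefold; in the Calabi-Yau case this was free from $K_Y\cong\oO_Y$ plus stability, and here one must genuinely use positivity. I expect this to follow from a short slope-stability argument: any nonzero $\phi\colon F\to F\otimes K_Y$ has image a quotient of $F$ and a subsheaf of $F\otimes K_Y$; comparing reduced Hilbert polynomials, the twist by the negative line bundle $K_Y$ strictly decreases the slope, contradicting the semistability chain, hence $\phi=0$. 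Once that vanishing is in hand, the rest is a formal repetition of the triangle chase in Lemma \ref{lem on pair moduli on CY3}, and I would simply write "the proof is identical to that of Lemma \ref{lem on pair moduli on CY3}, using the Fano vanishing $\Hom_Y(F,F\otimes K_Y)=0$ in place of $K_Y\cong\oO_Y$."
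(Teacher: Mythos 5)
Your first half reproduces the paper's proof exactly, and even fills in a step the paper leaves implicit: since $\beta$ is irreducible, $F$ is stable, Serre duality gives $\Ext^3_Y(F,F)\cong \Hom_Y(F,F\otimes K_Y)^{\vee}$, and this vanishes because $F$ and $F\otimes K_Y$ are both stable while twisting by the anti-ample $K_Y$ strictly lowers the reduced Hilbert polynomial (as $K_Y\cdot\beta<0$); then the triangle $\RHom_Y(F,F)\to \RHom_Y(\oO_Y,F)\to \RHom_Y(I_Y,F)$, together with $H^{\geqslant 2}(Y,F)=0$, yields $\Ext^i_Y(I_Y,F)=0$ for all $i\geqslant 2$. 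Up to this point you and the paper agree, and your slope argument for $\Hom_Y(F,F\otimes K_Y)=0$ is the correct justification.

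Where you go off track is the last step. The ``pair deformation-obstruction theory'' of this lemma is \emph{not} $\RHom_Y(I_Y,I_Y)_0[1]$; as the paper stresses (see the introduction and the way Lemma \ref{lem on pair moduli on CY3}, Propositions \ref{prop on pair obs on CY3}, \ref{prop on pair obs on Fano3} and \ref{prop on pair moduli on K_Y} are used), it is the theory of pairs governed by $\RHom_Y(I_Y,F)$, with tangent space $\Hom_Y(I_Y,F)$ and obstruction space $\Ext^1_Y(I_Y,F)$, in contrast to the deformation theory of complexes used in \cite{PT}. Consequently the vanishing $\Ext^{\geqslant 2}_Y(I_Y,F)=0$ already finishes the proof --- no further octahedron involving $\RHom_Y(I_Y,I_Y)_0$ is needed --- and the expected dimension is $\chi(I_Y,F)=\chi(\oO_Y,F)-\chi(F,F)=n$, since $\chi(F,F)=0$ for a one-dimensional sheaf on a $3$-fold; this is exactly why the class lands in $A_n(P_n(Y,\beta),\mathbb{Z})$ (compare the Calabi--Yau case of Lemma \ref{lem on pair moduli on CY3}, where $\Ext^2_Y(I_Y,F)\cong\mathbb{C}$ must be truncated and the class lives in $A_{n-1}$). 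If instead you build the theory from $\RHom_Y(I_Y,I_Y)_0[1]$, you get a genuinely different obstruction theory whose virtual dimension is $-\chi(I_Y,I_Y)_0=-K_Y\cdot\beta$, which in general differs from $n$, so that route cannot give the statement of the lemma; your own hedging about whether the index is $n$ or ``differs from the CY case by $-K_Y\cdot\beta$'' is a symptom of conflating the two theories.
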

\begin{proof}
For any stable pair $I_Y=(s:\oO_Y\to F)\in P_n(Y,\beta)$ with $\beta$ irreducible, we know $F$ is stable (ref. \cite[pp. 270]{PT2}), hence 
\begin{equation}\Ext^3_Y(F,F)\cong \Hom_Y(F,F\otimes K_Y)^{\vee}=0. \nonumber \end{equation}
Applying $\RHom_Y(-,F)$ to $I_Y\to \oO_Y \to F$, we obtain a distinguished triangle 
\begin{equation}\label{dist triangle Y}\RHom_Y(F,F)\to \RHom_Y(\oO_Y,F) \to \RHom_Y(I_Y,F), \end{equation}
whose cohomology gives an exact sequence 
\begin{equation}0=H^2(Y,F) \to \Ext^2_Y(I_Y,F)\to \Ext^3_Y(F,F) \to 0 \to \Ext^3_Y(I_Y,F) \to 0.  \nonumber \end{equation}
Hence $\Ext^i_Y(I_Y,F)=0$ for $i\geqslant 2$. Then we can apply the construction of \cite{BF, LT}.
\end{proof}
When $n=1$, similar to Proposition \ref{prop on pair obs on CY3}, we have
\begin{prop}\label{prop on pair obs on Fano3}
Let $\beta\in H_2(Y,\mathbb{Z})$ be an irreducible curve class on a Fano 3-fold $Y$. Then 
\begin{equation}f_*[P_1(Y,\beta)]_{\mathrm{pair}}^{\mathrm{vir}}=[M_{1,\beta}(Y)]^{\mathrm{vir}}, \nonumber \end{equation}
where $f:P_1(Y,\beta)\to M_{1,\beta}(Y)$, $(\oO_X\to F)\mapsto F$ is the morphism forgetting the section, $M_{1,\beta}(Y)$ is the moduli scheme of 1-dimensional stable sheaves $E$ on $Y$ with 
$[E]=\beta$ and $\chi(E)=1$.
\end{prop}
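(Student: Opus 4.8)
The plan is to follow the strategy of Proposition~\ref{prop on pair obs on CY3}, using the fact that on a Fano $3$-fold the relevant obstruction theories are already perfect of amplitude $[-1,0]$, so the truncations needed in the Calabi--Yau case become unnecessary. First I would set up the forgetful morphism. Since $\beta$ is irreducible, any $(\oO_Y\to F)\in P_1(Y,\beta)$ has $F$ stable, so $\Hom_Y(F,F)=\mathbb{C}$, and the fibre of $f$ over $[F]$ is $\mathbb{P}(H^0(Y,F))$, which is non-empty because a stable pair carries a nonzero section. Writing $\mathbb{F}\to M_{1,\beta}(Y)\times Y$ for the universal sheaf and $\pi_M$ for the projection onto $M_{1,\beta}(Y)$, the morphism $f$ identifies $P_1(Y,\beta)$ with $\mathbb{P}(\pi_{M\ast}\mathbb{F})$, with universal pair $\mathbb{I}=(\oO\stackrel{s}{\rightarrow}\mathbb{F}^{\dag})$, $\mathbb{F}^{\dag}=(\id_Y\times f)^{\ast}\mathbb{F}\otimes\oO(1)$, exactly as in the proof of Proposition~\ref{prop on pair obs on CY3}.

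Next I would build a compatible triple of obstruction theories. Applying $\dR \hH om_{\pi_P}(-,\mathbb{F}^{\dag})$ to $\mathbb{I}\to\oO\to\mathbb{F}^{\dag}$ and dualizing yields a distinguished triangle
\begin{align*}
\big(\dR \hH om_{\pi_P}(\mathbb{F}^{\dag},\mathbb{F}^{\dag})[1]\big)^{\vee}\rightarrow\big(\dR \hH om_{\pi_P}(\mathbb{I},\mathbb{F}^{\dag})\big)^{\vee}\rightarrow\big(\dR \hH om_{\pi_P}(\oO_{Y\times P_1(Y,\beta)},\mathbb{F}^{\dag})\big)^{\vee};
\end{align*}
since the $\oO(1)$-twists cancel in $\dR \hH om(\mathbb{F}^{\dag},\mathbb{F}^{\dag})$, its first term is $f^{\ast}$ of the corresponding complex on $M_{1,\beta}(Y)$. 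By Lemma~\ref{lem on pair moduli on Y}, $\Ext^{\geqslant 2}_Y(I_Y,F)=0$, and by Serre duality with the negativity of $K_Y$, $\Ext^3_Y(F,F)=0$; hence, after splitting off the trivial summand $\oO$ from $\dR \hH om_{\pi_P}(\mathbb{F}^{\dag},\mathbb{F}^{\dag})$ — i.e.\ passing to its trace-free part, which removes the only spurious cohomology (coming from $\Hom_Y(F,F)=\mathbb{C}$) — the three terms become perfect complexes, the first two of amplitude $[-1,0]$. Unlike in the Calabi--Yau case, neither the $h^{-2}$-killing trace map nor any further truncation of $\dR \hH om(\mathbb{F}^{\dag},\mathbb{F}^{\dag})[1]$ is required. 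Denoting the three terms by $f^{\ast}E_M$, $E_P$, $E_{P/M}$, I would check, by the usual diagram chase on cohomology, that they map compatibly to (truncations of) the cotangent complexes, obtaining a morphism of distinguished triangles
\begin{align*}
\xymatrix{
f^{\ast}E_{M}\ar[r]\ar[d] & E_{P}\ar[r]\ar[d] & E_{P/M}\ar[d]\\
f^{\ast}\mathbb{L}_{M_{1,\beta}(Y)}\ar[r] & \mathbb{L}_{P_1(Y,\beta)}\ar[r] & \mathbb{L}_{P_1(Y,\beta)/M_{1,\beta}(Y)},}
\end{align*}
in which $E_M\to\mathbb{L}_{M_{1,\beta}(Y)}$ is Thomas' $\mathrm{DT}_3$ obstruction theory~\cite{Thomas}, $E_P\to\mathbb{L}_{P_1(Y,\beta)}$ is the pair obstruction theory of Lemma~\ref{lem on pair moduli on Y}, and $E_{P/M}\to\mathbb{L}_{P_1(Y,\beta)/M_{1,\beta}(Y)}$ is a \emph{relative} perfect obstruction theory — the standard one on the projective bundle $\mathbb{P}(\pi_{M\ast}\mathbb{F})$, built from $\dR\pi_{P\ast}\mathbb{F}^{\dag}$ (up to a trivial summand).

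Finally I would apply Manolache's virtual pushforward~\cite{Manolache} to the proper morphism $f$ with this compatible triple, getting
\begin{align*}
f_{\ast}[P_1(Y,\beta)]_{\mathrm{pair}}^{\mathrm{vir}}=c\cdot[M_{1,\beta}(Y)]^{\mathrm{vir}},
\end{align*}
with $c$ the degree of the relative virtual class. To see $c=1$ I would base change to a closed point $[F]\in M_{1,\beta}(Y)$: there $f$ restricts to $\mathbb{P}(H^0(Y,F))\cong\mathbb{P}^{\,h^0(Y,F)-1}$ with relative obstruction theory induced by $\dR\Gamma(Y,F^{\dag})$, and since $F$ is pure one-dimensional ($H^{\geqslant 2}(Y,F)=0$), the relative virtual class is the Euler class of $R^1\pi_{P\ast}\mathbb{F}^{\dag}|_{\mathrm{fibre}}\cong H^1(Y,F)\otimes\oO(1)$, a bundle of rank $h^0(Y,F)-\chi(Y,F)=\dim\mathbb{P}(H^0(Y,F))$ (recall $\chi(Y,F)=1$), so its degree is $1$, exactly as in Proposition~\ref{prop on pair obs on CY3}. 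The main obstacle is the middle step: identifying the complexes $(\dR \hH om_{\pi_P}(-,\mathbb{F}^{\dag}))^{\vee}$ with the cotangent complexes of $P_1(Y,\beta)$, $M_{1,\beta}(Y)$ and their relative version so that $E_{P/M}$ is genuinely a relative perfect obstruction theory; once that square is constructed, the rest is formal and strictly easier than in the Calabi--Yau setting.
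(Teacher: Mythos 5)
Your overall strategy coincides with the paper's: the paper gives no separate argument for this proposition but declares it ``similar to Proposition \ref{prop on pair obs on CY3}'', and your write-up is precisely that adaptation --- identify $P_1(Y,\beta)$ with $\mathbb{P}(\pi_{M\ast}\mathbb{F})$, compare the three dualized $\dR\hH om$-complexes with the triangle of cotangent complexes, invoke Manolache's virtual push-forward, and compute the constant $c=1$ on a closed fibre $\mathbb{P}(H^0(Y,F))$ as the integral of the Euler class of $H^1(Y,F)\otimes\oO(1)$. The simplifications you point out are genuine: $\Ext^3_Y(F,F)=0$ (stability plus negativity of $K_Y$) kills the $h^{-2}$-problem, and $\Ext^{\geqslant 2}_Y(I_Y,F)=0$ (Lemma \ref{lem on pair moduli on Y}) makes the middle term perfect without truncation.

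One step, however, is wrong as written: removing the spurious $\Hom_Y(F,F)=\mathbb{C}$ from $\dR\hH om_{\pi_P}(\mathbb{F}^{\dag},\mathbb{F}^{\dag})$ is \emph{not} achieved by passing to the trace-free part. Since $F$ is a rank-zero (torsion) sheaf, the trace of the identity vanishes, so $\id_F$ lies in the trace-free part; concretely, the cocone of the trace map still has $h^0=\Hom_Y(F,F)=\mathbb{C}$ and, because $\Hom_Y(F,F)\to H^0(\oO_Y)$ is the zero map, it even acquires an extra copy of $H^0(\oO_Y)=\mathbb{C}$ in $h^1$. Hence the dual of your first term would not have amplitude $[-1,0]$ and the compatible triple would not exist as claimed. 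The correct device is the one already used in the paper's proof of Proposition \ref{prop on pair obs on CY3}: take the cone of the inclusion $\oO_{P_1(Y,\beta)}[1]\to \dR\hH om_{\pi_P}(\mathbb{F}^{\dag},\mathbb{F}^{\dag})[1]$ given by the identity section (which at a closed point induces the isomorphism $\mathbb{C}\to\Hom_Y(F,F)$), check as there that the obstruction-theory map to $\tau^{\geqslant -1}f^{\ast}\mathbb{L}_{M_{1,\beta}(Y)}$ descends to this cone, and adjust $E_{P/M}$ accordingly (the paper's $\mathrm{Cone}(\beta)[1]$). With that replacement --- and noting that $E_M$ is the standard sheaf-theoretic obstruction theory with tangent $\Ext^1_Y(F,F)$ and obstruction $\Ext^2_Y(F,F)$ used in \cite{CMT,Cao}, rather than Thomas' fixed-determinant theory --- your argument goes through and is the intended proof.
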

Now we come back to CY 4-fold $X=K_Y$. Similar to Proposition \ref{prop on pair moduli on CY3}, we have 
\begin{prop}\label{prop on pair moduli on K_Y}
Let $Y$ be a Fano 3-fold and $X=K_Y$. For an irreducible curve class $\beta\in H_2(X,\mathbb{Z})\cong H_2(Y,\mathbb{Z})$,
we have an isomorphism 
\begin{equation}P_n(X,\beta)\cong P_n(Y,\beta). \nonumber \end{equation}
The virtual class of $P_n(X,\beta)$ satisfies 
\begin{equation}[P_n(X,\beta)]^{\mathrm{vir}}=[P_n(Y,\beta)]_{\mathrm{pair}}^{\mathrm{vir}}, \nonumber \end{equation}
for certain choice of orientation in defining the LHS, 
where $[P_n(Y,\beta)]^{\mathrm{vir}}$ is the virtual class defined in Lemma \ref{lem on pair moduli on Y}.
\end{prop}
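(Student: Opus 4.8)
The plan is to run the argument of Proposition~\ref{prop on pair moduli on CY3}, with the product $Y\times E$ replaced by the total space $X=K_Y$; the role of the trivial normal bundle is now played by $N_{Y/X}=K_Y$, which is \emph{negative} since $-K_Y$ is ample, and this negativity is what makes the vanishing statements below hold.

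\textbf{Step 1 (reduction to the zero section).} Let $i\colon Y\hookrightarrow X$ be the zero section and $\pi\colon X\to Y$ the projection. First I would show that for any stable pair $I_X=(s\colon\oO_X\to F)\in P_n(X,\beta)$ the sheaf $F$ is scheme-theoretically supported on $Y$. Since $F$ is pure of dimension one and $\beta$ is irreducible (hence primitive), its reduced support $C_0$ is an irreducible projective curve with $[C_0]=\beta$ on which $F$ is generically of rank one; the finite morphism $C_0\to\pi(C_0)\subseteq Y$ then has degree one, so after pulling back to the normalization of $\pi(C_0)$ the curve $C_0$ becomes a section of (the pullback of) $K_Y$, a line bundle of degree $K_Y\cdot\beta<0$, which has no nonzero global section; hence $C_0$ lies in the zero section and $F=i_{\ast}F'$ for a pure one-dimensional $F'\in\Coh(Y)$. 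By adjunction $\Hom_X(\oO_X,i_{\ast}F')\cong\Hom_Y(\oO_Y,F')$, and purity and surjectivity in dimension one are preserved; this produces a morphism $P_n(Y,\beta)\to P_n(X,\beta)$, $(\oO_Y\to F')\mapsto(\oO_X\to i_{\ast}F')$, bijective on closed points. In particular $P_n(X,\beta)$ is compact.

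\textbf{Step 2 (comparison of deformation--obstruction data).} Writing $I_Y=(\oO_Y\to F')$, the triangle $i_{\ast}F'\to I_X[1]\to\oO_X[1]$ gives, exactly as in Proposition~\ref{prop on pair moduli on CY3}, a distinguished triangle
\begin{equation*}
\RHom_X(I_X,i_{\ast}F')\to\RHom_X(I_X,I_X)_0[1]\to\RHom_X(i_{\ast}F',\oO_X)[2].
\end{equation*}
Using $\dL i^{\ast}i_{\ast}F'\cong F'\oplus(F'\otimes K_Y^{-1})[1]$ (see \cite[Proposition-Definition 3.3]{CMT}; here $N_{Y/X}=K_Y$) one gets $\dL i^{\ast}I_X\cong I_Y\oplus(F'\otimes K_Y^{-1})$, hence by adjunction $\RHom_X(I_X,i_{\ast}F')\cong\RHom_Y(I_Y,F')\oplus\RHom_Y(F',F'\otimes K_Y)$, while Serre duality on the Calabi--Yau $X$ identifies $\RHom_X(i_{\ast}F',\oO_X)[2]\cong\RHom_Y(\oO_Y,F')^{\vee}[-2]$. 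Passing to cohomology and using $\Ext^{\geqslant2}_Y(I_Y,F')=0$ (Lemma~\ref{lem on pair moduli on Y}), $H^{\geqslant2}(Y,F')=0$, and $\Hom_Y(F',F'\otimes K_Y)\cong\Ext^3_Y(F',F')^{\vee}=0$ (stability of $F'$ together with $K_Y\cdot\beta<0$, cf.\ the proof of Lemma~\ref{lem on pair moduli on Y}), I expect to read off
\begin{equation*}
\Ext^1_X(I_X,I_X)_0\cong\Hom_Y(I_Y,F'),\qquad\dim\Ext^2_X(I_X,I_X)_0=2\,\ext^1_Y(I_Y,F').
\end{equation*}
The first isomorphism says the tangent space of $P_n(X,\beta)$ at $I_X$ is that of the pair obstruction theory of $P_n(Y,\beta)$ at $I_Y$; a dimension count then upgrades the bijection of Step 1 to an isomorphism of schemes $P_n(X,\beta)\cong P_n(Y,\beta)$, as in Proposition~\ref{prop on pair moduli on CY3}.

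\textbf{Step 3 (maximal isotropic and virtual class).} It remains to match the virtual classes. I would identify $\Ext^1_Y(I_Y,F')$, included into $\Ext^2_X(I_X,I_X)_0$ via the $\RHom_Y(I_Y,F')$ summand above, as a maximal isotropic subspace for the Serre quadratic form $Q$; by the displayed dimension equality it has half the dimension, and isotropy reduces, as in Proposition~\ref{prop on pair moduli on CY3}, to the vanishing of an explicit composition of morphisms of the form $i_{\ast}(-)$, which follows from $\Ext^3_Y(I_Y,F')=0$ and $\Hom_Y(F',I_Y\otimes K_Y)\cong\Ext^3_Y(I_Y,F')^{\vee}=0$ (Lemma~\ref{lem on pair moduli on Y}). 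One then checks that a local Kuranishi map of $P_n(X,\beta)$ at $I_X$ coincides with a local Kuranishi map $\kappa_{I_Y}\colon\Hom_Y(I_Y,F')\to\Ext^1_Y(I_Y,F')$ of the pair obstruction theory of $P_n(Y,\beta)$ at $I_Y$, landing in this isotropic subspace. As in \cite[Thm. 6.5]{CL}, the $\DT_4$ virtual class assembled from the reduced obstruction theory $\big(\Ext^1_X(I_X,I_X)_0,\ \Ext^1_Y(I_Y,F')\big)$ then agrees with $[P_n(Y,\beta)]_{\mathrm{pair}}^{\mathrm{vir}}$, up to the sign coming from the choice of orientation on $(\mathcal{L},Q)$.

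\textbf{Main obstacle.} The delicate part is Step 3: beyond checking that $\Ext^1_Y(I_Y,F')$ is isotropic of half dimension, one must verify that the $(-2)$-shifted symplectic / $\DT_4$ datum on $P_n(X,\beta)$ restricts \emph{compatibly} to the PT-pair perfect obstruction theory on $P_n(Y,\beta)$, which requires tracking the ``extra'' contributions (such as $\Ext^2_Y(F',F')^{\vee}$ and a subquotient of $H^1(Y,F')^{\vee}$) entering $\Ext^2_X(I_X,I_X)_0$ and carrying out the composition chase of Proposition~\ref{prop on pair moduli on CY3} in this setting. A minor point is making the normalization argument of Step 1 rigorous when the support is singular.
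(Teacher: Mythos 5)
Your proposal follows essentially the same route as the paper: the paper's own proof simply declares it identical to that of Proposition \ref{prop on pair moduli on CY3}, with the trivial normal bundle replaced by $K_Y$, and your Steps 1--3 reproduce that argument with the correct vanishings supplied by the negativity of $K_Y$ (Lemma \ref{lem on pair moduli on Y}), including the maximal isotropic subspace $\Ext^1_Y(I_Y,F)\subseteq\Ext^2_X(I_X,I_X)_0$ and the Kuranishi comparison as in \cite[Thm. 6.5]{CL}. The only extra point the paper records --- and which you should add when invoking Serre duality on the non-compact $X$ --- is that the cohomology of $\RHom_X(I_X,I_X)_0$ is finite dimensional because $F$ has compact support, so one may work with a compactification of $X$.
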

\begin{proof}
The proof is the same as the proof of Proposition \ref{prop on pair moduli on CY3}. 
Just note that as in (\ref{triangle CY3-fold 2}), there is a distinguished triangle 
\begin{align*} 
\RHom_X(I_X, i_{\ast}F) \to \RHom_X(I_X, I_X)_0[1] \to \RHom_X(i_{\ast}F, \oO_X)[2],
\end{align*}
where the cohomology of $\RHom_X(I_X, I_X)_0[1]$ is finite dimensional as $F$ has compact support (although $X$ is non-compact) and we may work with a compactification of $X$.
\end{proof}
${}$ \\
\textbf{Genus 0}. 
Combining Proposition \ref{prop on pair obs on Fano3} and Proposition \ref{prop on pair moduli on K_Y}, Conjecture \ref{conj:GW/GV g=0} for irreducible curve classes on $K_Y$ is equivalent to the
genus zero $\mathrm{GV}/\mathrm{DT_4}$ conjecture \cite[Conjecture 0.2]{CMT} on $K_Y$ (see also \cite[Conjecture 1.2]{Cao}), which has been verified in the following cases (ref. \cite[Prop. 2.1, 2.3, Thm. 2.7]{Cao}).
\begin{prop}\label{verify g=0 conj for fano hypersurface}
Conjecture \ref{conj:GW/GV g=0} is true for any irreducible curve class $\beta\in H_2(K_Y,\mathbb{Z})\cong H_2(Y,\mathbb{Z})$ provided that (i) $Y\subseteq \mathbb{P}^4$ is a smooth hypersurface of degree $d\leqslant4$, or (ii) $Y=S\times \mathbb{P}^1$ for a
toric del Pezzo surface $S$.  
\end{prop}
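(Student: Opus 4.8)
The plan is to reduce the assertion, by means of the two structural results just established, to the genus zero $\mathrm{GV}/\mathrm{DT}_4$ conjecture \cite[Conjecture~0.2]{CMT} for one-dimensional stable sheaves on $K_Y$, and then to invoke the known verifications of the latter in cases (i), (ii).

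First I would note that for an irreducible class $\beta$ the right-hand side of Conjecture~\ref{conj:GW/GV g=0} collapses: the only decompositions $\beta_1+\beta_2=\beta$ with $\beta_1,\beta_2$ effective are $(\beta,0)$ and $(0,\beta)$, so with the conventions $n_{0,0}(\gamma_1,\ldots,\gamma_n)=0$ and $P_{0,0}=1$ the sum equals $n_{0,\beta}(\gamma_1,\ldots,\gamma_n)$, independently of the value of $P_{0,\beta}$. Thus it suffices to prove the identity $P_{1,\beta}(\gamma_1,\ldots,\gamma_n)=n_{0,\beta}(\gamma_1,\ldots,\gamma_n)$, and only insertions with $\sum_i(m_i-2)=2$ can contribute (divisor insertions contributing merely the intersection numbers $\gamma_i\cdot\beta$).

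Next I would apply Proposition~\ref{prop on pair moduli on K_Y} to transport the left-hand side to the Fano $3$-fold: for the orientation there, $P_1(K_Y,\beta)\cong P_1(Y,\beta)$ and $[P_1(K_Y,\beta)]^{\mathrm{vir}}=[P_1(Y,\beta)]^{\mathrm{vir}}_{\mathrm{pair}}$, and one must check that the descendents match under this identification. Since every stable pair in $P_1(K_Y,\beta)$ is supported on the zero section $Y\subset K_Y$, the universal complex on $K_Y$ is the pushforward of the universal pair complex on $Y\times P_1(Y,\beta)$; by Grothendieck--Riemann--Roch the twist by the normal bundle $K_Y$ only introduces correction terms that vanish for dimension reasons on the $3$-fold, so $\tau(\gamma_i)$ on $P_1(K_Y,\beta)$ agrees with the corresponding pair-descendent $\tau_Y(\gamma_i|_Y)$ on $P_1(Y,\beta)$ (this is the bookkeeping of \cite[Prop.~2.3]{CMT}). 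I would then push forward along $f\colon P_1(Y,\beta)\to M_{1,\beta}(Y)$ using Proposition~\ref{prop on pair obs on Fano3}: by the projection formula, together with the identification of $P_1(Y,\beta)$ with the projectivization $\mathbb{P}(\pi_{M\ast}\mathbb{F})$ and the formula $\mathbb{F}^{\dag}=(\mathrm{id}_Y\times f)^{\ast}\mathbb{F}\otimes\oO(1)$, the tautological twist $\oO(1)$ contributes only Segre-class terms that are killed after integration, so the pair-descendents push forward to the sheaf-descendents $\tau_M(\gamma_i|_Y)$ of \cite{CMT}. Combining, one gets
\begin{align*}
P_{1,\beta}(\gamma_1,\ldots,\gamma_n)=\int_{[M_{1,\beta}(Y)]^{\mathrm{vir}}}\prod_{i=1}^n\tau_M(\gamma_i|_Y).
\end{align*}

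Finally, the statement that this integral equals $n_{0,\beta}(\gamma_1,\ldots,\gamma_n)$ is precisely \cite[Conjecture~0.2]{CMT} for $K_Y$, which has been verified by Cao: in case (i), when $Y\subseteq\mathbb{P}^4$ is a smooth hypersurface of degree $\leqslant4$, and in case (ii), when $Y=S\times\mathbb{P}^1$ for a toric del Pezzo surface $S$, by \cite[Prop.~2.1, Prop.~2.3, Thm.~2.7]{Cao}. Invoking these completes the argument. The main obstacle here is the insertion-compatibility step: one must genuinely track the descendent classes through the chain $P_1(K_Y,\beta)\cong P_1(Y,\beta)\xrightarrow{f}M_{1,\beta}(Y)$ and confirm that they land on the $\mathrm{DT}_4$ descendents of \cite{CMT}, so that the geometric identity is \emph{equivalent} to \cite[Conjecture~0.2]{CMT}; once that is in place the remainder is formal.
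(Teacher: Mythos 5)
Your proposal follows essentially the same route as the paper: the paper's argument is exactly the reduction, via Proposition \ref{prop on pair moduli on K_Y} and the virtual push-forward formula of Proposition \ref{prop on pair obs on Fano3}, of Conjecture \ref{conj:GW/GV g=0} for irreducible $\beta$ to \cite[Conjecture 0.2]{CMT} on $K_Y$, which is then quoted as verified in cases (i) and (ii) by \cite[Prop. 2.1, 2.3, Thm. 2.7]{Cao}. Your extra bookkeeping of the insertions (in fact the $\oO(1)$-twist does not change $\ch_3(\mathbb{F})$ at all, since the lower Chern characters of the one-dimensional universal sheaf vanish) is consistent with what the paper leaves implicit, so the proposal is correct.
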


${}$ \\
\textbf{Genus 1}. 
When any curve $C$ in an irreducible class $\beta\in H_2(Y)$ is a smooth rational curve, $P_0(Y,\beta)=\emptyset$ by 
Lemma \ref{lem on emply pair moduli}, so $P_{0,\beta}(X)=0$ (by Proposition \ref{prop on pair moduli on K_Y}). 
In this case, to verify Conjecture \ref{conj:GW/GV g=1}, we are reduced to compute GW invariants and 
show $n_{1,\beta}=0$.
\begin{prop}\label{verify g=1 conj for fano hypersurface}
Let $Y=\mathbb{P}^3$ and $X=K_Y$. 
Then Conjecture \ref{conj:GW/GV g=1} is true for any irreducible curve class $\beta\in H_2(X,\mathbb{Z})\cong H_2(Y,\mathbb{Z})$.
\end{prop}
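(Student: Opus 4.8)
The plan is to reduce the statement, via the isomorphisms of Proposition \ref{prop on pair moduli on K_Y}, to a Gromov--Witten computation on $X=K_{\mathbb{P}^3}$. First note that $H_2(\mathbb{P}^3,\mathbb{Z})\cong\mathbb{Z}$ is generated by the line class $[l]$, and for $d\geqslant 2$ the class $d[l]=[l]+(d-1)[l]$ is a sum of two nonzero effective classes, so the only irreducible class is $\beta=[l]$. By Proposition \ref{prop on pair moduli on K_Y} we have $P_0(X,[l])\cong P_0(\mathbb{P}^3,[l])$, and since every curve of class $[l]$ in $\mathbb{P}^3$ is a line, hence a smooth rational curve, the argument of Lemma \ref{lem on emply pair moduli} gives $P_0(X,[l])=\emptyset$; thus $P_{0,[l]}=0$. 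As the Picard number of $X$ is one, Conjecture \ref{conj:GW/GV g=1} for $\beta=[l]$ amounts to the single equality $P_{0,[l]}=n_{1,[l]}$, so it suffices to prove $n_{1,[l]}=0$. Because $[l]$ is primitive, every meeting invariant $m_{\beta_1,\beta_2}$ contributing to the coefficient of $q^{[l]}$ in the Klemm--Pandharipande genus-one identity vanishes, and comparing coefficients of $q^{[l]}$ gives $n_{1,[l]}=\mathrm{GW}_{1,[l]}(X)+\tfrac{1}{24}\,n_{0,[l]}(c_2(X))$ with $n_{0,[l]}(c_2(X))=\mathrm{GW}_{0,[l]}(X)(c_2(X))$ (again by primitivity). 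So the problem becomes showing $\mathrm{GW}_{1,[l]}(X)+\tfrac1{24}\mathrm{GW}_{0,[l]}(X)(c_2(X))=0$.

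The key geometric input is that every stable map to $X=K_{\mathbb{P}^3}$ of class $[l]$ factors through the zero section: on the degree-one component $f^{\ast}\oO_{\mathbb{P}^3}(-4)=\oO_{\mathbb{P}^1}(-4)$ has no global sections, so that component lies in the zero section, and any contracted component then maps to a point of the zero section. Hence $\overline{M}_{g,n}(X,[l])=\overline{M}_{g,n}(\mathbb{P}^3,[l])$, and since $R^0\pi_{\ast}f^{\ast}\oO_{\mathbb{P}^3}(-4)=0$ the $\mathrm{CY}_4$ Gromov--Witten virtual class is the ordinary one capped with $e\bigl(R^1\pi_{\ast}f^{\ast}\oO_{\mathbb{P}^3}(-4)\bigr)$. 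For $g=0$, $n=1$ the moduli space is the flag variety $\mathrm{Fl}=\{(p,\ell):p\in\ell\}\cong\mathbb{P}(T\mathbb{P}^3)$, which is convex hence unobstructed, so $[\overline{M}_{0,1}(X,[l])]^{\mathrm{vir}}=c_3(W)\cap[\mathrm{Fl}]$ where $W:=R^1\pi_{\ast}f^{\ast}\oO_{\mathbb{P}^3}(-4)$ is a rank-three bundle pulled back from $\mathrm{Gr}(2,4)$, and $\mathrm{GW}_{0,[l]}(X)(c_2(X))=\int_{\mathrm{Fl}}\mathrm{ev}^{\ast}c_2(X)\cup c_3(W)$. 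For $g=1$, $n=0$ one has $\overline{M}_{1,0}(\mathbb{P}^3,[l])\cong\overline{M}_{1,1}\times\mathrm{Fl}$ (a line with an attached contracted elliptic tail), the $\mathbb{P}^3$-Gromov--Witten obstruction bundle is $Q\otimes\mathbb{E}^{\vee}$ with $Q$ the rank-two tautological quotient on $\mathrm{Fl}$ and $\mathbb{E}$ the Hodge line on $\overline{M}_{1,1}$, and $R^1\pi_{\ast}f^{\ast}\oO_{\mathbb{P}^3}(-4)\cong W\oplus\mathbb{E}^{\vee}$; since $\lambda_1^2=0$ on $\overline{M}_{1,1}$ the integral collapses to $\mathrm{GW}_{1,[l]}(X)=-\bigl(\int_{\overline{M}_{1,1}}\lambda_1\bigr)\int_{\mathrm{Fl}}c_2(Q)\cup c_3(W)=-\tfrac1{24}\int_{\mathrm{Fl}}c_2(Q)\cup c_3(W)$.

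Combining the two computations, $n_{1,[l]}=\tfrac1{24}\int_{\mathrm{Fl}}\bigl(\mathrm{ev}^{\ast}c_2(X)-c_2(Q)\bigr)\cup c_3(W)$, so it remains to verify the intersection-number identity $\int_{\mathrm{Fl}}\bigl(\mathrm{ev}^{\ast}c_2(X)-c_2(Q)\bigr)\cup c_3(W)=0$ on the five-fold $\mathrm{Fl}=\mathbb{P}(T\mathbb{P}^3)$. Using the tautological sequence $0\to L\to\mathrm{ev}^{\ast}T\mathbb{P}^3\to Q\to 0$ and $c_2(X)=c_2(T\mathbb{P}^3)-16H^2$ (where $H$ is the hyperplane class), this becomes a short Schubert-calculus computation on $\mathrm{Gr}(2,4)$ after pushing forward along $\mathrm{Fl}\to\mathrm{Gr}(2,4)$ and expressing everything in terms of the relative hyperplane class and $\mathrm{ev}^{\ast}H$; alternatively, one may simply invoke the known closed-form genus-zero and genus-one local Gromov--Witten potentials of $K_{\mathbb{P}^3}$, from which $n_{1,[l]}=0$ follows at once. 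I expect the main obstacle to be identifying the $\mathbb{P}^3$-obstruction bundle of $\overline{M}_{1,0}(\mathbb{P}^3,[l])$ correctly --- that is, pinning down the rank-one subsheaf of $R^1\pi_{\ast}f^{\ast}T\mathbb{P}^3$ killed by the obstructed smoothing of the node joining the line and the elliptic tail --- together with keeping the projective-bundle and Serre-duality sign conventions consistent; once these are settled, the final numerical cancellation is routine.
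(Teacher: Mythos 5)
Your reduction agrees with the paper's: the only irreducible class is the line class $[l]$, $P_{0,[l]}=0$ because every curve in class $[l]$ is a line (Lemma \ref{lem on emply pair moduli} together with Proposition \ref{prop on pair moduli on K_Y}), and since meeting invariants cannot contribute to the coefficient of $q^{[l]}$ the conjecture collapses to $n_{1,[l]}=\mathrm{GW}_{1,[l]}(X)+\frac{1}{24}\mathrm{GW}_{0,[l]}(X)(c_2(X))=0$. At this point the paper simply quotes $n_{1,\beta}=0$ from Klemm--Pandharipande's table for local $\mathbb{P}^3$ (which is your fallback option), whereas your primary route is a direct obstruction-bundle computation; that is a legitimate, more self-contained alternative, and your identification of the $\mathbb{P}^3$-obstruction bundle on $\overline{M}_{1,0}(\mathbb{P}^3,1)\cong\overline{M}_{1,1}\times\mathrm{Fl}$ as $\mathbb{E}^{\vee}\otimes Q$ is correct.

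The genuine error is in the other bundle: on this component the normalization sequence gives $R^1\pi_{\ast}f^{\ast}\oO_{\mathbb{P}^3}(-4)\cong W\oplus\bigl(\mathbb{E}^{\vee}\otimes \mathrm{ev}^{\ast}\oO_{\mathbb{P}^3}(-4)\bigr)$, not $W\oplus\mathbb{E}^{\vee}$: the factor $H^1(E,\oO_E)$ coming from the contracted tail is tensored with the fibre of $\oO_{\mathbb{P}^3}(-4)$ at the attaching point. Its Euler class is therefore $-\lambda_1-4H$ rather than $-\lambda_1$, and the genus-one integral acquires a cross term, $\mathrm{GW}_{1,[l]}(X)=\frac{1}{24}\left(-\int_{\mathrm{Fl}}c_2(Q)\,c_3(W)+4\int_{\mathrm{Fl}}c_1(Q)\,H\,c_3(W)\right)$. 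Numerically, with $W\cong \mathrm{Sym}^2 S\otimes\det S$ ($S$ the tautological subbundle pulled back from $\mathrm{Gr}(2,4)$) one has $c_3(W)=-20\sigma_{2,1}$, and $\int_{\mathrm{Fl}}\mathrm{ev}^{\ast}c_2(X)\,c_3(W)=200$, $\int_{\mathrm{Fl}}c_2(Q)\,c_3(W)=-40$, $\int_{\mathrm{Fl}}c_1(Q)\,H\,c_3(W)=-60$. Hence your proposed identity $\int_{\mathrm{Fl}}\bigl(\mathrm{ev}^{\ast}c_2(X)-c_2(Q)\bigr)c_3(W)=0$ is false (the left-hand side equals $240$), and your formulas as written would give $n_{1,[l]}=\frac{25}{3}+\frac{5}{3}=10\neq 0$, contradicting the statement you are trying to prove. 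With the corrected twist the extra contribution $\frac{4}{24}\cdot(-60)=-10$ restores the cancellation: $\mathrm{GW}_{1,[l]}=-\frac{25}{3}$, $\frac{1}{24}\mathrm{GW}_{0,[l]}(c_2(X))=\frac{25}{3}$, so $n_{1,[l]}=0$. So your strategy does close, but only after fixing this identification; otherwise one should do what the paper does and simply cite the Klemm--Pandharipande computation.
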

\begin{proof}
When $Y=\mathbb{P}^3$, $n_{1,\beta}=0$ by \cite[Table 1, pp. 31]{KP}. 
\end{proof}

\subsection{Irreducible curve classes on local surfaces}
Let $(S,\mathcal{O}_S(1))$ be a smooth projective surface and
\begin{align}\label{X:tot}
\pi \colon
X=\mathrm{Tot}_S(L_1 \oplus L_2) \to S
\end{align}
be the total space of direct sum of two line bundles $L_1$, $L_2$ on $S$.
Assuming that
\begin{align}\label{L12}
L_1 \otimes L_2 \cong K_S,
\end{align}
then $X$ is a non-compact CY 4-fold.
For a curve class
\begin{align*}
\beta \in H_2(X, \mathbb{Z})\cong H_2(S, \mathbb{Z}),
\end{align*}
we can consider the moduli space $P_n(X, \beta)$ of stable pairs on $X$, which is in general non-compact. 
In this section, we restrict to the case when the curve class $\beta$ is irreducible such that $L_i\cdot \beta<0$, 
in which case $P_n(X, \beta)$ is compact and smooth. 
\begin{lem}\label{stable pair on del-pezzo}
Let $S$ be a smooth projective surface and $\beta\in H_2(S,\mathbb{Z})$ be an irreducible curve class such that 
$K_S\cdot \beta<0$. 
Then the moduli space $P_n(S, \beta)$ of stable pairs on $S$ is smooth.
\end{lem}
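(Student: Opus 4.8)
The plan is to show that for any stable pair $I=(\oO_S \to F)$ on $S$ with $[F]=\beta$ irreducible and $K_S\cdot\beta<0$, the obstruction space of the pair deformation theory vanishes, so that $P_n(S,\beta)$ is unobstructed and hence smooth. The relevant deformation and obstruction spaces are $\Ext^1_S(I,I)_0$ and $\Ext^2_S(I,I)_0$, and I would first reduce the vanishing of the latter to a sheaf-theoretic statement about $F$ by the same triangle manipulation used in Lemma~\ref{lem on pair moduli on CY3} and Lemma~\ref{lem on pair moduli on Y}. Namely, applying $\RHom_S(-,F)$ to the triangle $I\to\oO_S\to F$ gives a long exact sequence relating $\Ext^i_S(I,F)$ to $\Ext^i_S(F,F)$ and $H^i(S,F)$; since $F$ is supported in dimension one, $H^i(S,F)=0$ for $i\geqslant 2$, and since $\beta$ is irreducible $F$ is stable, so $\Hom_S(F,F)=\mathbb{C}$. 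The key input is then $\Ext^2_S(F,F)$: by Serre duality on the surface $S$, $\Ext^2_S(F,F)\cong\Hom_S(F,F\otimes K_S)^{\vee}$.

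The main step is to show $\Hom_S(F,F\otimes K_S)=0$. Here I would use the purity and stability of $F$ together with the hypothesis $K_S\cdot\beta<0$: since $F$ is a pure one-dimensional stable sheaf with support class $\beta$, any nonzero map $F\to F\otimes K_S$ would be an isomorphism onto its image between sheaves of the same reduced Hilbert polynomial slope only if the slopes match, but tensoring by $K_S$ shifts the relevant leading coefficient of the Hilbert polynomial by $K_S\cdot\beta<0$, making $F\otimes K_S$ strictly less stable (smaller slope) than $F$; stability of $F$ then forbids a nonzero morphism. (Concretely, $\chi(F\otimes K_S)=\chi(F)+K_S\cdot\beta<\chi(F)$ while the support classes agree, so the reduced Hilbert polynomials satisfy $p_{F\otimes K_S}<p_F$, and a nonzero $F\to F\otimes K_S$ contradicts stability of the source, or of the target, depending on injectivity.) Feeding $\Ext^2_S(F,F)=0$ back through the long exact sequence gives $\Ext^2_S(I,F)=0$, and then the second triangle manipulation (taking cones against $\dR\Gamma(\oO_S)$, exactly as in the proof of Lemma~\ref{lem on pair moduli on CY3}) identifies $\Ext^2_S(I,I)_0$ with a subquotient of $\Ext^2_S(I,F)$ and $H^i(S,F)^{\vee}$ for $i\geqslant 1$, all of which vanish; hence $\Ext^2_S(I,I)_0=0$.

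The hard part is really just the clean handling of the slope/stability comparison and making sure the two triangles fit together without sign or truncation errors; the vanishing $\Ext^2_S(F,F)=0$ is the crux, and everything else is bookkeeping that mirrors the CY3 and Fano arguments already in the paper. Once $\Ext^2_S(I,I)_0=0$ at every closed point, the deformation theory of $P_n(S,\beta)$ is unobstructed, so the moduli space is smooth of dimension $\ext^1_S(I,I)_0$, which can moreover be computed by Riemann--Roch if desired. I would conclude by remarking that this is the input needed for the subsequent Proposition~\ref{prop on del-pezzo}, which transports smoothness (and compactness, using $L_i\cdot\beta<0$) to $P_n(X,\beta)$ for $X=\mathrm{Tot}_S(L_1\oplus L_2)$.
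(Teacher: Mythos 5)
There is a genuine gap, and it sits exactly where you declare the work finished. The crux of the paper's proof is \emph{not} the vanishing $\Ext^2_S(F,F)\cong\Hom_S(F,F\otimes K_S)^{\vee}=0$ (which you argue correctly from stability and $K_S\cdot\beta<0$, just as the paper does); it is the vanishing of $\Ext^1_S(I_S,F)$, the actual obstruction space of the pair deformation theory used throughout the paper (cf. Lemmas \ref{lem on pair moduli on CY3} and \ref{lem on pair moduli on Y}, where tangent and obstruction are $\Hom(I,F)$ and $\Ext^1(I,F)$). Applying $\RHom_S(-,F)$ to $I_S\to\oO_S\to F$ gives
\begin{align*}
\Ext^1_S(F,F)\to H^1(S,F)\to \Ext^1_S(I_S,F)\to \Ext^2_S(F,F)=0,
\end{align*}
so $\Ext^1_S(I_S,F)$ is the cokernel of $\Ext^1_S(F,F)\to H^1(S,F)$. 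Your remark that ``$H^i(S,F)=0$ for $i\geqslant 2$'' disposes only of $H^2$; $H^1(S,F)$ is certainly not forced to vanish (take $S=\mathbb{P}^2$, $F=\oO_C$ for $C$ a plane cubic, an irreducible class with $K_S\cdot\beta<0$), so this cokernel is not ``bookkeeping.'' The paper's essential extra step is to prove the surjectivity of $\Ext^1_S(F,F)\to H^1(S,F)$ by factoring through $H^1(\oO_C)\to H^1(\hH om(F,F))\subseteq\Ext^1_S(F,F)$, observing that the composite to $H^1(F)$ is multiplication by the section $s$, and using that $Q=F/s(\oO_S)$ is zero-dimensional so that $H^1(\oO_C)\to H^1(F)\to H^1(Q)=0$ is surjective. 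Nothing in your proposal supplies this argument.

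Your alternative route through $\Ext^2_S(I,I)_0$ does not rescue the proof, for two reasons. First, the degree bookkeeping is off: on a surface the triangle $\RHom_S(I,F)\to\RHom_S(I,I)_0[1]\to\RHom_S(F,\oO_S)[2]$ gives, in the relevant degree,
\begin{align*}
\Ext^2_S(F,\oO_S)\to \Ext^1_S(I,F)\to \Ext^2_S(I,I)_0\to \Ext^3_S(F,\oO_S)=0,
\end{align*}
so $\Ext^2_S(I,I)_0$ is a quotient of $\Ext^1_S(I,F)$ (modulo the image of $\Ext^2_S(F,\oO_S)\cong H^0(F\otimes K_S)^{\vee}$), not a subquotient of $\Ext^2_S(I,F)$ and $H^{i\geqslant 1}(S,F)^{\vee}$ as you claim; its vanishing again hinges on controlling $\Ext^1_S(I,F)$, i.e. on the same missing surjectivity. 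Second, $\Ext^1(I,I)_0$, $\Ext^2(I,I)_0$ are the tangent/obstruction spaces of the fixed-determinant deformation theory of the complex $I$, not of the pair deformation theory the paper uses to define and study $P_n(S,\beta)$; to deduce smoothness of the pairs moduli from that theory you would additionally need the identification of the pairs moduli with the corresponding moduli of complexes on a surface, which you neither state nor prove. I recommend you keep your (correct) stability argument for $\Ext^2_S(F,F)=0$, but then follow the paper: deduce $\Ext^{i}_S(I_S,F)=0$ for $i\geqslant 2$ from the long exact sequence, and prove $\Ext^1_S(I_S,F)=0$ via the multiplication-by-$s$ surjectivity argument, which is what actually yields smoothness.
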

\begin{proof}
Similar to the proof of Lemma \ref{lem on pair moduli on Y}, for any stable pair $I_S=(s:\oO_S\to F)\in P_n(S,\beta)$ with $\beta$ irreducible, 
$F$ is stable, hence 
\begin{equation}\Ext^2_S(F,F)\cong \Hom_S(F,F\otimes K_S)=0. \nonumber \end{equation}
Applying $\RHom_S(-,F)$ to $I_S\to \oO_S \to F$, we obtain a distinguished triangle 
\begin{equation}\label{dist triangle S}\RHom_S(F,F)\to \RHom_S(\oO_S,F) \to \RHom_S(I_S,F), \end{equation}
whose cohomology gives an exact sequence 
\begin{equation}0\to \Hom_S(F,F)\to H^0(F)\to \Hom_S(I_S,F)\to \Ext^1_S(F,F)\to   \nonumber \end{equation}
\begin{equation}\to H^1(F)\to \Ext^1_S(I_S,F) \to \Ext^2_S(F,F)=0,   \nonumber \end{equation}
and $\Ext^i_S(I_S,F)=0$ for $i\geqslant 2$. We claim the map $\Ext^1_S(F,F)\to H^1(F)$ above is surjective, then 
$\Ext^1_S(I_S,F)=0$ follows from the exact sequence (so the smoothness of moduli follows).

In fact, we only need to show the surjectivity of 
\begin{equation}H^1(\oO_C)\stackrel{\mathrm{id}}{\to} H^1(\mathcal{H}om(F,F))\subseteq \Ext^1_S(F,F) \to H^1(F), \nonumber \end{equation}
where $C$ is the scheme theoretical support of $F$. However, the above map is simply the multiplication by the section $s$, which fits into 
an exact sequence 
\begin{equation}H^1(\oO_C)\stackrel{s}{\to}  H^1(F) \to H^1(Q)=0, \nonumber \end{equation}
where $Q\cong F/s(\oO_S)$ is zero dimensional.  
\end{proof}
\begin{prop}\label{prop on del-pezzo}
Let $S$ be a smooth projective surface and $L_1$, $L_2$ be two line bundles on $S$ such that $L_1\otimes L_2\cong K_S$.
Then for any irreducible curve class $\beta\in H_2(X,\mathbb{Z})\cong H_2(S,\mathbb{Z})$ such that $L_i\cdot\beta<0$ ($i=1,2$), we have an isomorphism 
\begin{equation}P_n(X,\beta)\cong P_n(S,\beta).  \nonumber \end{equation}
And the virtual class satisfies 
\begin{equation}[P_n(X,\beta)]^{\mathrm{vir}}=[P_n(S,\beta)]\cdot e\Big(-\dR\hH om_{\pi_{P_{S}}}(\mathbb{F}, \mathbb{F} \boxtimes L_1)\Big),  
\nonumber \end{equation}
for certain choice of orientation in defining the LHS. Here $\mathbb{I}_S=(\oO_{S \times P_n(S, \beta)} \to \mathbb{F})\in D^b\big(S \times P_n(S, \beta)\big)$
is the universal stable pair and 
$\pi_{P_{S}} \colon S \times P_n(S, \beta) \to P_n(S, \beta)$ is the projection.
\end{prop}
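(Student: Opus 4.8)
The plan is to mimic the proof of Proposition \ref{prop on pair moduli on CY3}, replacing the product $Y\times E$ with the total space $X=\mathrm{Tot}_S(L_1\oplus L_2)$ and the inclusion $i_t\colon Y\times\{t\}\hookrightarrow X$ with the zero-section $i\colon S\hookrightarrow X$. First I would establish the set-theoretic bijection $P_n(X,\beta)\cong P_n(S,\beta)$: for a stable pair $I_X=(s\colon\oO_X\to E)\in P_n(X,\beta)$ with $\beta$ irreducible, $E$ is stable, and since $\beta$ is irreducible with $L_i\cdot\beta<0$ the support of $E$ cannot move off the zero-section (any curve in class $\beta$ on $X$ projects isomorphically to a curve on $S$, and the condition $L_i\cdot\beta<0$ forces the section to vanish in the fibre directions), so $E=i_*F$ for a unique $F\in\Coh(S)$; by adjunction $\Hom_X(\oO_X,i_*F)\cong\Hom_S(\oO_S,F)$, giving the bijection on closed points. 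The scheme-theoretic isomorphism will then follow exactly as in Proposition \ref{prop on pair moduli on CY3} by comparing first-order deformations and obstructions via the triangle
\begin{align*}
\RHom_X(I_X,i_*F)\to\RHom_X(I_X,I_X)_0[1]\to\RHom_X(i_*F,\oO_X)[2],
\end{align*}
together with the key identity $\dL i^*i_*F\cong F\oplus(F\otimes N^\vee_{S/X})[1]$, where now $N^\vee_{S/X}\cong L_1^{-1}\oplus L_2^{-1}$, so that $\dL i^*I_X\cong I_S\oplus(F\otimes L_1^{-1})\oplus(F\otimes L_2^{-1})$.

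The second and main step is the comparison of virtual classes. Running the triangle above through adjunction and Serre duality on $X$ (using $K_X\cong\oO_X$), I expect to obtain a splitting of $\Ext^2_X(I_X,I_X)_0$ exhibiting a maximal isotropic subspace, just as in the $Y\times E$ case. The difference is that the `extra' summand in $\dL i^*I_X$ is now $(F\otimes L_1^{-1})\oplus(F\otimes L_2^{-1})$ rather than a single copy of $F$; using $L_1\otimes L_2\cong K_S$ and Serre duality on $S$, the contributions $\RHom_S(I_S,F\otimes L_1^{-1})$ and $\RHom_S(I_S,F\otimes L_2^{-1})$ are Serre-dual to each other up to shift. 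This is precisely the mechanism that produces an obstruction \emph{bundle} on $P_n(S,\beta)$ equal to (the $\pi_{P_S}$-pushforward of) $\dR\hH om(\mathbb{F},\mathbb{F}\boxtimes L_1)$, whose rank is even and on which Serre duality gives a non-degenerate quadratic form. By Lemma \ref{stable pair on del-pezzo} the base $P_n(S,\beta)$ is smooth, so I am in situation (3) of the $\DT_4$ construction reviewed in the excerpt (a shifted cotangent bundle over a smooth scheme), and the $\DT_4$ virtual class is the half-Euler class of this obstruction bundle. Identifying local Kuranishi maps as in \cite[Thm.~6.5]{CL}, the virtual class becomes
\begin{align*}
[P_n(X,\beta)]^{\mathrm{vir}}=[P_n(S,\beta)]\cdot e\Big(-\dR\hH om_{\pi_{P_S}}(\mathbb{F},\mathbb{F}\boxtimes L_1),Q\Big),
\end{align*}
where I write $e(-)$ for the half-Euler class of the relevant quadratic bundle (note $-\dR\hH om(\mathbb{F},\mathbb{F}\boxtimes L_1)$ is a K-theory class, represented by an honest even-rank bundle after the usual truncation since $F$ is pure one-dimensional with $\Ext^2$ controlled by $L_i\cdot\beta<0$).

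The step I expect to be the main obstacle is verifying that the half of $\Ext^2_X(I_X,I_X)_0$ cut out by $\RHom_S(I_S,F\otimes L_1^{-1})$ is genuinely maximal isotropic with respect to the Serre-duality form $Q$ on $X$, and that it glues over the moduli space to the asserted bundle with the stated sign/orientation. In the compact $Y\times E$ case the isotropy was checked by a diagram chase using vanishing of $\Ext^3_Y(I_Y,F)$; here the analogous vanishings follow from $L_i\cdot\beta<0$ and stability of $F$, but one must be careful that $X$ is non-compact — as in Proposition \ref{prop on pair moduli on K_Y}, the point is that $F$ has compact (indeed proper) support, so all the $\Ext$-groups in question are finite-dimensional and one may pass to a compactification of $X$ to run Serre duality. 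I would also need to pin down which of the two line bundles $L_1,L_2$ appears in the final formula; by the symmetry $L_1\otimes L_2\cong K_S$ and Serre duality the two choices give the same half-Euler class up to the orientation sign, which is absorbed into the phrase ``for certain choice of orientation.'' Apart from this, the argument is a routine transcription of Proposition \ref{prop on pair moduli on CY3}, and the smoothness established in Lemma \ref{stable pair on del-pezzo} keeps everything at the level of ordinary schemes and vector bundles.
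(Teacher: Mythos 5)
Your overall strategy is the same as the paper's: reduce everything to the zero section, identify $P_n(X,\beta)\cong P_n(S,\beta)$ on closed points and tangent spaces, exhibit $\Ext^1_S(F,F\otimes L_1)$ as a maximal isotropic subspace of $\Ext^2_X(I_X,I_X)_0$ using Serre duality and $L_1\otimes L_2\cong K_S$, and conclude via smoothness of $P_n(S,\beta)$ (Lemma \ref{stable pair on del-pezzo}) and the Cao--Leung description of the virtual class. But there is a concrete error at the key decomposition step: the zero section $i\colon S\hookrightarrow X$ has codimension \emph{two}, so $\dL i^*i_*F\cong F\oplus(F\otimes N^\vee_{S/X})[1]$ is not correct; the Koszul resolution gives $\dL i^*i_*F\cong F\oplus\big(F\otimes(L_1^{-1}\oplus L_2^{-1})\big)[1]\oplus(F\otimes K_S^{-1})[2]$, and correspondingly $\dL i^{\ast}I_X \cong I_S \oplus (F \otimes L_1^{-1}) \oplus(F \otimes L_2^{-1}) \oplus (F \otimes K_S^{-1})[1]$, which is what the paper uses. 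Dropping the $(F\otimes K_S^{-1})[1]$ summand is not harmless: it contributes $\RHom_S(F,F\otimes K_S)[-1]$ to $\RHom_X(I_X,i_*F)$, and for instance $\Ext^2_S(F,F\otimes K_S)\cong\Hom_S(F,F)^{\vee}\cong\mathbb{C}$ by stability, so with your truncated decomposition the bookkeeping that identifies $\Ext^1_X(I_X,I_X)_0\cong\Hom_S(I_S,F)$ and produces the exact sequence $0\to\Ext^1_S(F,F\otimes L_1)\to\Ext^2_X(I_X,I_X)_0\to\Ext^1_S(F,F\otimes L_1)^{\vee}\to0$ does not close up. In the paper this extra term is precisely what cancels against $\RHom_X(i_{\ast}F,\oO_X)[2]$ and $\RHom_S(\oO_S,F)$ through the triangle defining $T$ together with the pair triangle (\ref{dist triangle S}). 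Relatedly, the two pieces that are Serre-dual to each other via $L_1\otimes L_2\cong K_S$ are $\RHom_S(F,F\otimes L_1)$ and $\RHom_S(F,F\otimes L_2)$, arising from $\RHom_X(I_X,i_{\ast}F)\cong\RHom_S(\dL i^{\ast}I_X,F)$, not $\RHom_S(I_S,F\otimes L_i^{-1})$ as you wrote.

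Once the decomposition is corrected, the rest of your outline does track the paper: the vanishings $\Hom_S(F,F\otimes L_1)=\Ext^2_S(F,F\otimes L_1)=0$ (stability plus $L_i\cdot\beta<0$) make $\Ext^1_S(F,F\otimes L_1)$ a bundle of constant rank over the smooth scheme $P_n(S,\beta)$, the isotropy check is the same diagram chase as in Propositions \ref{prop on pair moduli on CY3} and \ref{prop on pair moduli on K_Y}, and the non-compactness of $X$ is handled exactly as you say, via the compact support of $F$. One normalization remark: in the stated formula, $e\big(-\dR\hH om_{\pi_{P_{S}}}(\mathbb{F},\mathbb{F}\boxtimes L_1)\big)$ is the ordinary Euler class of the isotropic subbundle --- that complex is concentrated in the single degree where it equals $\Ext^1_S(F,F\otimes L_1)$, by the vanishings above --- rather than the half-Euler class of the full even-rank quadratic obstruction bundle; the two agree only up to the sign absorbed into the choice of orientation, and no truncation of the kind you invoke is needed.
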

\begin{proof}
Under assumption $L_i\cdot\beta<0$ and $\beta$ is irreducible, as in the proof of \cite[Prop. 3.1]{CMT}, 
one can show, 
for zero section $i:S\to X$, the morphism  
\begin{equation}\label{map compare pair on S/X}P_n(S,\beta)\to P_n(X,\beta),  \end{equation}
\begin{equation}I_S:=(s:i^*\oO_X\to F)\mapsto (s:\oO_X\to i_*F):=I_X  \nonumber \end{equation}
is bijective on closed points. And we have distinguished triangles
\begin{align}\label{triangle 2-fold 1}
i_{\ast}F \to I_X[1] \to \oO_X[1],
\end{align} 
\begin{align*} 
\RHom_X(I_X, i_{\ast}F) \to \RHom_X(I_X, I_X)_0[1] \to \RHom_X(i_{\ast}F, \oO_X)[2],
\end{align*}
\begin{align*}
\dL i^{\ast}I_X \cong I_S \oplus (F \otimes L_1^{-1}) \oplus(F \otimes L_2^{-1}) \oplus (F \otimes K_S^{-1})[1],
\end{align*}
where the last isomorphism is deduced similarly as (\ref{iso of i^*IX}).

It follows that we have a distinguished triangle
\begin{align}\label{triangle 2-fold 2}
\RHom_S(I_S, F)\oplus \RHom_S(F, F\otimes L_1) \to \RHom_X(I_X, I_X)_0[1] \to T,
\end{align}
where $T$ fits into the distinguished triangle
\begin{align}\label{triangle 2-fold 3}
\RHom_S(F, F\otimes L_2)\oplus \RHom_S(F, F\otimes K_S)[-1] \to T \to \RHom_X(i_{\ast}F, \oO_X)[2].
\end{align}
By Serre duality, degree shift and taking dual, (\ref{triangle 2-fold 3}) becomes 
\begin{align*} 
 \RHom_S(F, F\otimes L_1)[1]\oplus \RHom_S(F, F)[2] \to \RHom_S(\oO_S, F)[2] \to T^{\vee}.
\end{align*}
Combining with (\ref{dist triangle S}), we obtain a distinguished triangle
\begin{equation}\RHom_S(F, F\otimes L_1)[1] \to \RHom_S(I_S, F)[2] \to T^{\vee}, \nonumber \end{equation}
whose dual is 
\begin{equation}\label{triangle 2-fold 5}T \to \RHom_S(I_S, F)^{\vee}[-2] \to \RHom_S(F, F\otimes L_1)^{\vee}[-1].  \end{equation}
By taking cohomology of (\ref{triangle 2-fold 5}), we obtain exact sequences
\begin{equation} 0\to H^0(T) \to \Ext^2_S(I_S, F)^{\vee}\to \Ext^1_S(F, F\otimes L_1)^{\vee} \to H^1(T) \to  \nonumber \end{equation}
\begin{equation}\Ext^1_S(I_S, F)^{\vee}\to \Hom_S(F, F\otimes L_1)^{\vee}=0, \nonumber \end{equation}
where $\Ext^{i\geqslant1}_S(I_S, F)=0$ by the proof of Lemma \ref{stable pair on del-pezzo}. Hence 
\begin{equation} H^0(T)=0, \quad H^1(T)\cong\Ext^1_S(F, F\otimes L_1)^{\vee}.  \nonumber \end{equation}
By taking cohomology of (\ref{triangle 2-fold 2}), we obtain
\begin{equation}\Ext^0_S(I_S, F) \cong \Ext^1_X(I_X, I_X)_0,  
\nonumber \end{equation}
\begin{equation}0\to \Ext^1_S(F, F\otimes L_1) \to \Ext^2_X(I_X, I_X)_0 \to H^1(T) \to \Ext^2_S(I_S, F)\oplus \Ext^2_S(F, F\otimes L_1)=0, 
\nonumber \end{equation}
hence also the exact sequence
\begin{equation}0\to \Ext^1_S(F, F\otimes L_1) \to \Ext^2_X(I_X, I_X)_0\to \Ext^1_S(F, F\otimes L_1)^{\vee} \to 0. 
\nonumber \end{equation}
By the first isomorphism above, we know the map (\ref{map compare pair on S/X}) induces an isomorphism on tangent spaces. Moreover since $P_n(S,\beta)$ 
is smooth (Lemma \ref{stable pair on del-pezzo}) and (\ref{map compare pair on S/X}) is bijective on closed points, so the map (\ref{map compare pair on S/X}) is an isomorphism. 
 
As in Proposition \ref{prop on pair moduli on K_Y}, we can show $\Ext^1_S(F, F\otimes L_1)$ is a maximal isotropic subspace
of $\Ext^2_X(I_X, I_X)_0$ with respect to the Serre duality pairing on $\Ext^2_X(I_X, I_X)_0$.

Since $\Ext^0_S(F, F\otimes L_1)=\Ext^2_S(F, F\otimes L_1)=0$, $\Ext^1_S(F, F\otimes L_1)$ is constant over $P_n(S,\beta)$, 
so it forms a maximal isotropic subbundle of the obstruction bundle of $P_n(X,\beta)$ whose fiber over $I_X\in P_n(X,\beta)$ is $\Ext^2_X(I_X, I_X)_0$.
Then the virtual class has the desired property \cite{CL}.
\end{proof}
It is easy to check Conjecture \ref{conj:GW/GV g=0}, \ref{conj:GW/GV g=1} for irreducible curve classes on 
$\mathrm{Tot}_S(L_1\oplus L_2)$ in the following setting.
\begin{prop}\label{verify conj on del-pezzo surface}
Let $S$ be a del Pezzo surface and $L^{-1}_1$, $L^{-1}_2$ be two ample line bundles on $S$ such that $L_1\otimes L_2\cong K_S$. 
Denote $\beta\in H_2(X,\mathbb{Z})\cong H_2(S,\mathbb{Z})$ to be an irreducible curve class on $X=\mathrm{Tot}_S(L_1\oplus L_2)$.
Then Conjecture \ref{conj:GW/GV g=0}, \ref{conj:GW/GV g=1} are true for $\beta$.
\end{prop}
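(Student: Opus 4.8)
The plan is to verify both conjectures by reducing to concrete computations: first identify which del Pezzo surfaces $S$ can actually occur, then use the structural results already established (Lemma \ref{stable pair on del-pezzo}, Proposition \ref{prop on del-pezzo}) together with known genus $0$ and genus $1$ Gromov-Witten computations for the resulting CY $4$-folds. The first step is the classification: the hypothesis that both $L_1^{-1}$ and $L_2^{-1}$ are ample while $L_1 \otimes L_2 \cong K_S$ forces $-K_S = L_1^{-1} \otimes L_2^{-1}$ to split as a sum of two ample line bundles. On a del Pezzo surface $S$ of degree $d = K_S^2$, one checks by intersection-theoretic estimates (using $L_i^{-1} \cdot C \geqslant 1$ for every curve and the Hodge index theorem) that this is only possible when $d$ is large, and a short case analysis rules out all del Pezzo surfaces except $\mathbb{P}^2$ (with $-K_S = \oO(3) = \oO(1) \oplus \oO(2)$) and $\mathbb{P}^1 \times \mathbb{P}^1$ (with $-K_S = \oO(2,2) = \oO(1,1) \oplus \oO(1,1)$). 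This pins down $X$ to the two explicit total spaces listed after the proposition.

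Next, for each of these two surfaces and each irreducible curve class $\beta$ with $L_i \cdot \beta < 0$, I would use Proposition \ref{prop on del-pezzo} to rewrite the pair invariants $P_{1,\beta}(\gamma)$ and $P_{0,\beta}$ as integrals over the smooth projective moduli space $P_n(S,\beta)$ against the Euler class of $-\dR\hH om_{\pi_{P_S}}(\mathbb{F}, \mathbb{F}\boxtimes L_1)$. Here the smoothness from Lemma \ref{stable pair on del-pezzo} and the compactness noted before the proposition make all these integrals literally topological computations on explicit spaces (for small $\beta$, these $P_n(S,\beta)$ are Hilbert-scheme-like spaces of curves on $\mathbb{P}^2$ or $\mathbb{P}^1\times\mathbb{P}^1$, or are computed via the Kool--Monavari results \cite{KM} cited in the introduction). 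On the other side, the genus $0$ GV invariants $n_{0,\beta}(\gamma)$ and genus $1$ GV invariants $n_{1,\beta}$ of local $\mathbb{P}^2$ and local $\mathbb{P}^1\times\mathbb{P}^1$ with these twists are known; so the verification amounts to matching the two generating series degree by degree.

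For the genus $0$ statement (Conjecture \ref{conj:GW/GV g=0}), since $\beta$ is irreducible the formula collapses to $P_{1,\beta}(\gamma_1,\ldots,\gamma_n) = n_{0,\beta}(\gamma_1,\ldots,\gamma_n)$, so I only need to evaluate $P_{1,\beta}(\gamma)$ for $\gamma \in H^4(X)$ via the Euler-class integral on $P_1(S,\beta)$ and compare with the known $n_{0,\beta}(\gamma)$. For the genus $1$ statement (Conjecture \ref{conj:GW/GV g=1}), because $\beta$ is irreducible the MacMahon product again truncates and predicts $P_{0,\beta} = n_{1,\beta}$; one evaluates $P_{0,\beta}$ as an Euler number on $P_0(S,\beta)$ and checks it equals the known genus $1$ GV invariant (which, for these local surfaces and irreducible $\beta$, is typically governed by the arithmetic genus of curves in the class). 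The main obstacle I expect is not any deep structural point — those are all supplied by the earlier propositions — but rather organizing the explicit evaluation of the Euler-class integrals over $P_n(S,\beta)$ for $S = \mathbb{P}^2$ and $S = \mathbb{P}^1\times\mathbb{P}^1$ and confirming numerically that they reproduce the Klemm--Pandharipande-type GV invariants; this is where one leans on the computations of Kool and Monavari \cite{KM} quoted in Section \ref{small deg section}.
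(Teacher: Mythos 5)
Your plan is correct in outline and shares the paper's skeleton (classify $S$, then verify on the two remaining surfaces using Lemma \ref{stable pair on del-pezzo} and Proposition \ref{prop on del-pezzo}), but it diverges in how the verification is carried out, and in places it is heavier than necessary. For the classification, the paper's argument is a one-liner: every del Pezzo surface other than $\mathbb{P}^2$ and $\mathbb{P}^1\times\mathbb{P}^1$ contains a $(-1)$-curve $C$, and then $-1=\deg(K_S|_C)=\deg(L_1|_C)+\deg(L_2|_C)\leqslant -2$ is absurd; your Hodge-index case analysis reaches the same conclusion but is more roundabout. Next, note that on $\mathbb{P}^2$ and $\mathbb{P}^1\times\mathbb{P}^1$ the only irreducible classes are the line class and the two ruling classes, so there is no ``degree by degree'' matching of generating series to organize, and the Kool--Monavari computations you invoke are not needed here (in the paper they enter only in Section \ref{small deg section}, for non-irreducible small-degree classes). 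For genus $1$, the paper does not evaluate any Euler number: since every curve in an irreducible class on these surfaces is a smooth rational curve, Lemma \ref{lem on emply pair moduli} gives $P_0(S,\beta)=\emptyset$, hence $[P_0(X,\beta)]^{\rm vir}=0$ by Proposition \ref{prop on del-pezzo}, and one only has to quote $n_{1,\beta}=0$ from \cite{KP}; your evaluation would of course return $0$ for the same reason, but the emptiness is the actual content and should be stated. For genus $0$, the paper argues structurally rather than computing both sides independently: every stable pair in class $\beta$ is $(\oO_S\twoheadrightarrow\oO_C)$ with $C$ a line or ruling, so $P_1(S,\beta)\cong M_{1,\beta}(S)\cong\overline{M}_{0,0}(X,\beta)$, and a fiberwise comparison of the half obstruction space $\Ext^1_S(\oO_C,\oO_C\otimes L_1)$ with the GW obstruction $H^1(\mathbb{P}^1,f^{\ast}(L_1\oplus L_2))$ identifies the two virtual classes up to sign, after which only the insertion bookkeeping ($\pm 1$ in each case) remains. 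Your direct evaluation of the Euler-class integral against the known local GV numbers would also succeed for these classes, but you would still need to pin down the orientation sign, which the structural comparison handles for you; what the paper's route buys is precisely this sign control and independence from outside tables on the pair side, while your route buys a more computational, black-box check. In short: no fatal gap, but tighten the classification, record the emptiness of $P_0(S,\beta)$, and recognize that irreducibility reduces everything to the line and ruling classes.
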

\begin{proof}
We claim that $S$ does not contain any $(-1)$ curve. In fact, if $C$ is a $(-1)$ curve, then 
\begin{align*}-2\geqslant\deg(L_1|_C)+\deg(L_2|_C)=\deg(K_S|_C)=-1. \end{align*}
So $S$ is either $\mathbb{P}^2$ or $\mathbb{P}^1\times \mathbb{P}^1$, and any curve in an irreducible class is a smooth rational 
curve. By Lemma \ref{lem on emply pair moduli}, $P_{0}(S,\beta)=\emptyset$, so $[P_0(X,\beta)]^{\mathrm{vir}}=0$ by Proposition \ref{prop on del-pezzo}. This matches with Klemm-Pandharipande's computation \cite[pp. 22, 24]{KP}, i.e. Conjecture \ref{conj:GW/GV g=1} is true for 
$\beta$.

As for the genus 0 conjecture, for any stable pair $(s:\oO_S\to F)\in P_{1}(S,\beta)$, 
$F$ is stable and supported on some $C\cong \mathbb{P}^1$ in $S$. Then $F=\oO_C$ and the morphism
\begin{equation}\phi: P_{1}(S,\beta)\stackrel{\cong}{\to}M_{1,\beta}(S), \quad (\oO_S\to F)\mapsto F, \nonumber \end{equation}
to the moduli space $M_{1,\beta}(S)$ of 1-dimensional stable sheaves $F$'s on $S$ with $[F]=\beta$ and $\chi(F)=1$ is an isomorphism.

As for the moduli space $\overline{M}_{0,0}(X,\beta)$ of stable maps, we have isomorphisms 
\begin{equation}\overline{M}_{0,0}(X,\beta)\cong \overline{M}_{0,0}(S,\beta)\cong M_{1,\beta}(S),  \nonumber \end{equation}
where the first isomorphism is by the negativity of $L_i$ $(i=1,2)$ and the second one is defined by mapping $f:\mathbb{P}^1\to S$ 
to $\oO_{f(\mathbb{P}^1)}$. 

Next, we compare obstruction theories. By Proposition \ref{prop on del-pezzo}, 
the `half' obstruction space of $P_1(X,\beta)$ at $(s:\oO_X\to \oO_C)$ is $\Ext^1_S(\oO_C,\oO_C\otimes L_1)$ 
which fits into the exact sequence 
\begin{equation}0\to H^1(C,L_1|_C)\to\Ext^1_S(\oO_C,\oO_C\otimes L_1)\to H^0(C,L_1|_C\otimes N_{C/S})\to 0. \nonumber \end{equation}

Since $S$ is either $\mathbb{P}^2$ or $\mathbb{P}^1\times \mathbb{P}^1$ and $\beta$ is irreducible, all stable maps are embedding.
The obstruction space of $\overline{M}_{0,0}(X,\beta)$ at $f:\mathbb{P}^1\to S$ is $H^1(C,N_{C/X})\cong H^1(\mathbb{P}^1,f^*TX)$ with $C=f(\mathbb{P}^1)$, which fits into 
the exact sequence 
\begin{equation}0=H^1(\mathbb{P}^1,f^*TS) \to H^1(\mathbb{P}^1,f^*TX)\to H^1(\mathbb{P}^1,f^*(L_1\oplus L_2)) \to 0. 
\nonumber \end{equation}
Note that 
\begin{equation}H^1(\mathbb{P}^1,f^*(L_1\oplus L_2))\cong H^1(C,L_1|_C)\oplus H^0(C,L_1|_C\otimes N_{C/S})^*. \nonumber \end{equation}
A family version of these computations shows the virtual classes satisfy 
\begin{equation}[P_1(X,\beta)]^{\mathrm{vir}}=[M_{0,0}(X,\beta)]^{\mathrm{vir}}, \nonumber \end{equation}
up to a sign (for each connected component of the moduli space). It is easy to match the insertions and then
verify Conjecture \ref{conj:GW/GV g=0}. More specifically, when $S=\mathbb{P}^2$, $P_{1,1}([\mathrm{pt}])=n_{0,1}([\mathrm{pt}])=-1$ and 
when $S=\mathbb{P}^1\times \mathbb{P}^1$, $P_{1,(1,0)}([\mathrm{pt}])=n_{0,(1,0)}([\mathrm{pt}])=P_{1,(0,1)}([\mathrm{pt}])=n_{0,(0,1)}([\mathrm{pt}])=1$
for certain choices of orientations.
\end{proof}

\subsection{Small degree curve classes on local surfaces}\label{small deg section}
We learned from discussions with Kool and Monavari \cite{KM} that by using relative Hilbert schemes and techniques developed 
in Kool-Thomas \cite{KT}, one can do explicit computations of pair invariants in small degrees for non-compact 
CY 4-folds 
\begin{align*} \mathrm{Tot}_{\mathbb{P}^2}(\oO(-1)\oplus \oO(-2)), \quad   \mathrm{Tot}_{\mathbb{P}^1\times \mathbb{P}^1}(\oO(-1,-1)\oplus \oO(-1,-1)).  \end{align*}
We list the results as follows (where pair invariants are defined with respect to certain of choices of orientation). \\
${}$ \\
Let $X=\mathrm{Tot}_{\mathbb{P}^2}(\oO(-1)\oplus \oO(-2))$, then 
\begin{enumerate}
\item $P_{0,1}=P_{0,2}=0$,\, $P_{0,3}=-1$,\, $P_{0,4}=2$, \\
\item $P_{1,1}([\mathrm{pt}])=-1$,\, $P_{1,2}([\mathrm{pt}])=1$,\, $P_{1,3}([\mathrm{pt}])=-1$,\, $P_{1,4}([\mathrm{pt}])=3$.  \\
\end{enumerate}
Let $X=\mathrm{Tot}_{\mathbb{P}^1\times \mathbb{P}^1}(\oO(-1,-1)\oplus \oO(-1,-1))$, then 
\begin{enumerate}
\item $P_{0,(2,2)}=1$,\, $P_{0,(2,3)}=2$,\, $P_{0,(2,4)}=5$,\, $P_{0,(3,3)}=10$,\\
\item $P_{1,(2,2)}([\mathrm{pt}])=2$,\, $P_{1,(2,3)}([\mathrm{pt}])=5$. 
\end{enumerate}
Comparing with \cite[pp. 22, 24]{KP}, we know our Conjecture \ref{conj:GW/GV g=0}, \ref{conj:GW/GV g=1} hold
in all above cases.

\section{Local curves}
Let $C$ be a smooth projective curve of genus $g(C)=g$, and
\begin{align}\label{X:tot2}
p \colon
X=\mathrm{Tot}_C(L_1 \oplus L_2 \oplus L_3) \to C
\end{align}
be the total space of a split rank three vector bundle on it.
Assuming that
\begin{align}\label{L123}
L_1 \otimes L_2 \otimes L_3 \cong \omega_C,
\end{align}
then the variety (\ref{X:tot2}) is a non-compact CY 4-fold.
Below we set $l_i \cneq \deg L_i$ and may assume that
$l_1\geqslant l_2\geqslant l_3$ without loss of generality.

Let $T=(\mathbb{C}^{\ast})^{ 3}$ be the three dimensional complex torus
which acts on the fibers of $X$. Its restriction to the subtorus
\begin{align*}
T_0=\{t_1 t_2 t_3=1\} \subset T
\end{align*}
preserves the CY 4-form on $X$ and also the Serre duality pairing on $P_{n}(X,\beta)$.
In this section, we aim to define equivariant virtual classes of $P_{n}(X,\beta)$
using a localization formula with respect to the $T_0$-action \cite{CL, CMT}, and investigate their relations with equivariant GW invariants.

Let $\bullet$ be the point $\Spec \mathbb{C}$ with trivial $T$-action, 
$\mathbb{C} \otimes t_i$ be the one dimensional $T$-representation with weight $1$,
and $\lambda_i \in H_T^{\ast}(\bullet)$ be its first Chern class.
They are generators of equivariant cohomology rings:
\begin{align}\label{H:lambda}
H_{T}^{\ast}(\bullet)=\mathbb{C}[\lambda_1, \lambda_2, \lambda_3], \quad \
H_{T_0}^{\ast}(\bullet)=\frac{\mathbb{C}[\lambda_1, \lambda_2, \lambda_3]}{(\lambda_1+\lambda_2+\lambda_3)} \cong \mathbb{C}[\lambda_1, \lambda_2].
\end{align}

\subsection{Localization for GW invariants}
Let $j \colon C \hookrightarrow X$
be the zero section of the projection (\ref{X:tot2}). We have
\begin{align*}
H_2(X, \mathbb{Z})=\mathbb{Z}[C],
\end{align*}
where $[C]$ is the fundamental class of $j(C)$. For $m\in \mathbb{Z}_{>0}$, we consider the diagram
\begin{align*}
\xymatrix{
\cC  \ar[r]^{f} \ar[d]^{\pi} & C \\
\overline{M}_h(C, m[C]), &}
\end{align*}
where $\cC$ is the universal curve and $f$ is the universal stable map.

The $T$-equivariant GW invariant of $X$ is defined by
\begin{align*}
\mathrm{GW}_{h, d[C]}(X)=
\mathrm{GW}_{h, d}(X) \cneq \int_{[\overline{M}_h(C, d[C])]^{\rm{vir}}}
e(-\dR h_{\ast}f^{\ast}N) \in \mathbb{Q}(\lambda_1, \lambda_2, \lambda_3),
\end{align*}
where
$N$ is the
$T$-equivariant normal bundle of $j(C) \subset X$:
\begin{equation}\label{nor bdl N}
 N=(L_1 \otimes t_1) \oplus (L_2 \otimes t_2) \oplus (L_3 \otimes t_3). 
\end{equation}

If $g(C)>0$, the vanishing of genus zero GW invariants
\begin{align*}
\mathrm{GW}_{0, d}(X)=0, \ g(C)>0, \ d \in \mathbb{Z}_{>0}
\end{align*}
follows from $\overline{M}_0(C, d[C])=\emptyset$.

If $g(C)=0$, we have
\begin{align*}
\mathrm{GW}_{0, d}(X)
=\int_{[\overline{M}_0(\mathbb{P}^1, d)]}
e\Big(-\dR h_{\ast}f^{\ast}\left(\oO_{\mathbb{P}^1}(l_1)t_1 \oplus
\oO_{\mathbb{P}^1}(l_2)t_2 \oplus
\oO_{\mathbb{P}^1}(l_3)t_3
\right)\Big).
\end{align*}
For example in the $d=1$ case,
$\overline{M}_0(\mathbb{P}^1, 1)$ is one point and
\begin{align}\label{GW01}
\mathrm{GW}_{0, 1}(X)
=\lambda_1^{-l_1-1} \lambda_2^{-l_2-1} \lambda_3^{-l_3-1}.
\end{align}
In the $d=2$ case, a straightforward localization calculation
with respect to the $(\mathbb{C}^{\ast})^2$-action on $\mathbb{P}^1$ gives
\begin{align}\label{GW02}
\mathrm{GW}_{0, 2}(X)=&\frac{1}{8}
\lambda_1^{-2l_1-1}\lambda_2^{-2l_2-1}\lambda_3^{-2l_3-1}
\left\{(\overline{l}_1^2-(\overline{l}_1-1)^2+\cdots)
\lambda_1^{-2} + \right. \\
\notag &\left.
(\overline{l}_2^2-(\overline{l}_2-1)^2+\cdots) \lambda_2^{-2}
+(\overline{l}_3^2-(\overline{l}_3-1)^2+\cdots) \lambda_3^{-2}+ \right. \\
 \notag & \left.
l_1 l_2 \lambda_1^{-1} \lambda_2^{-1}
+l_2 l_3 \lambda_2^{-1} \lambda_3^{-1}
+l_1 l_3 \lambda_1^{-1} \lambda_3^{-1} \right\}.
\end{align}
Here we write $\overline{l}=l$ for $l\geqslant 0$
and $\overline{l}=-l-1$ for $l<0$.

\subsection{Localization for stable pairs}\label{localiza for stable pair}
Similarly, for $m \in \mathbb{Z}_{\geqslant 0}$, 
we want to define (equivariant) stable pair invariant
\begin{equation}\label{localization for local curve}
P_{n,m[C]}(X)=[P_{n}(X,m[C])^{T_{0}}]^{\rm{vir}}\cdot e( \dR \hH om_{\pi_P}(\mathbb{I}, \mathbb{I})_0^{\rm{mov}})^{1/2},   \end{equation}  
where $\mathbb{I}=(\oO_{X\times P_n(X, m[C])}\to \mathbb{F}) \in D^{b}(X\times P_n(X, m[C]))$ is the universal stable pair and $\pi_P:X\times P_n(X, m[C])\to P_n(X, m[C])$ is the projection. 
Of course, the above equality is not a definition as the virtual class of the fixed locus as well as the square root needs justification. 
We will make this precise in specific cases where we compare with GW invariants of $X$.  \\

Let us first describe stable pairs $(s \colon \oO_X \to F)\in P_{n}(X,m[C])^{T}$ which are fixed by the full-torus $T$: 
decompose $F$ into $T$-weight space
\begin{align*}
p_*F=\bigoplus_{(i_1, i_2, i_3) \in \mathbb{Z}^3} F^{i_1, i_2, i_3},
\end{align*}
where the $T$-weight of $F^{i_1, i_2, i_3}$ is $(i_1, i_2, i_3)$. We denote an index set
\begin{align}\label{index set}
\Delta:=\{(i_1, i_2, i_3) \in \mathbb{Z}_{\geqslant 0}^3 :
F^{-i_1, -i_2, -i_3} \neq 0\}.
\end{align}
We also have the decomposition
\begin{align*}
p_{\ast} \oO_X=\bigoplus_{(i_1, i_2, i_3)\in \mathbb{Z}_{\geqslant 0}^3}
L_1^{-i_1} \otimes L_2^{-i_2} \otimes L_3^{-i_3}
\end{align*}
into direct sum of weight $(-i_1, -i_2, -i_3)$ factor $L_1^{-i_1} \otimes L_2^{-i_2} \otimes L_3^{-i_3}$.

The $T$-equivariance of $s$
induces morphisms
\begin{align*}
s^{i_1, i_2, i_3} \colon
L_1^{-i_1} \otimes L_2^{-i_2} \otimes L_3^{-i_3} \to F^{-i_1, -i_2, -i_3}
\end{align*}
in $\Coh(C)$ which are surjective in dimension one.
It follows that each $F^{-i_1, -i_2, -i_3}$ is
either zero or can be written as
\begin{align*}
F^{-i_1, -i_2, -i_3}=L_1^{-i_1} \otimes L_2^{-i_2} \otimes L_3^{-i_3}
\otimes \oO_C(Z_{i_1, i_2, i_3})
\end{align*}
for some effective divisor $Z_{i_1, i_2, i_3} \subset C$.
Moreover, the $p_{\ast}\oO_X$-module structure
on $F$ gives a morphism
\begin{align*}
F^{-i_1, -i_2, -i_3} \otimes L_1^{-1} \to F^{-i_1-1, -i_2, -i_3}
\end{align*}
which commutes with $s^{i_1, i_2, i_3}$ and $s^{i_1+1, i_2, i_3}$.
Similar morphisms replacing $L_1$ by $L_2, L_3$ exist and have similar commuting property. Hence, for $(i_1, i_2, i_3) \in \Delta$, we have
\begin{align*}
Z_{i_1-1, i_2, i_3},\,Z_{i_1, i_2-1, i_3},\,
Z_{i_1, i_2, i_3-1}\, \leqslant Z_{i_1, i_2, i_3},
\end{align*}
as divisors in $C$. So the set $\Delta$ (\ref{index set})
is a three dimensional Young diagram,
which is finite by the coherence of $F$.   \\

In general, it is difficult to explicitly determine $T_0$-fixed stable pairs. 
In fact, a $T_0$-fixed stable pair is not necessarily $T$-fixed.
Nevertheless, for a $T_0$-fixed stable pair $(s \colon \oO_X \to F)$, $\oO_{C_F}:=\Imm s$ and the corresponding ideal sheaf $I_{C_F}$ are actually $T$-fixed.
\begin{lem}\label{lem:Tfix}
Let $I=(s \colon \oO_X \to F)\in P_{n}(X,m[C])^{T_{0}}$ be a $T_0$-fixed stable pair and $\oO_{C_F}:=\Imm s \subseteq F$. Then
the ideal sheaf $I_{C_F}\subseteq \mathcal{O}_X$ is $T$-fixed.
\end{lem}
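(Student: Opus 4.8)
The plan is to show that the scheme-theoretic support $C_F$, together with its embedding in $X$, is cut out $T$-equivariantly, even though the pair $(s\colon\oO_X\to F)$ itself is only $T_0$-fixed. First I would recall that for the full torus $T=(\mathbb{C}^\ast)^3$ acting on $X$, the $T$-fixed locus is exactly the zero section $j(C)\subset X$, and $X$ retracts $T$-equivariantly onto it; consequently the support of any $T$-invariant subscheme of $X$, as a set, lies over $C$ fiberwise, and $T$-fixed ideal sheaves correspond to $p_\ast$-decompositions into weight spaces as in the discussion of $\Delta$ above. The subtorus $T_0=\{t_1t_2t_3=1\}$ has the property that a character of $T$ is trivial on $T_0$ if and only if it is a multiple of $(1,1,1)$; this is the algebraic input that lets one promote $T_0$-invariance to $T$-invariance after passing to the ideal of the support.

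The key steps, in order, are as follows. Step 1: since $I=(s\colon\oO_X\to F)$ is $T_0$-fixed, the image sheaf $\oO_{C_F}:=\Imm(s)$ is a canonical quotient of $\oO_X$ and hence $I_{C_F}=\ker(s)\subseteq\oO_X$ is $T_0$-invariant as a subsheaf of the $T$-linearized sheaf $\oO_X$. Step 2: decompose $p_\ast\oO_X=\bigoplus_{(i_1,i_2,i_3)\in\mathbb{Z}_{\geqslant0}^3} L_1^{-i_1}\otimes L_2^{-i_2}\otimes L_3^{-i_3}$ into its $T$-weight spaces; each summand is a genuine $T$-eigensheaf, and distinct summands have distinct $T_0$-weights \emph{except} that weights differing by a multiple of $(1,1,1)$ collide under restriction to $T_0$. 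Step 3: the $T_0$-invariant ideal $p_\ast I_{C_F}$ is therefore a direct sum of its intersections with the $T_0$-weight spaces of $p_\ast\oO_X$; the only ambiguity is whether, within a fixed $T_0$-weight space $\bigoplus_{k\geqslant 0} L_1^{-i_1-k}\otimes L_2^{-i_2-k}\otimes L_3^{-i_3-k}$, the ideal can mix the different $k$'s. Step 4: rule out such mixing. For a $T$-invariant subscheme supported (set-theoretically) on the zero section, one checks using the $p_\ast\oO_X$-module structure — the multiplication maps by the tautological sections of $L_i$ raise $(i_1,i_2,i_3)$ by a standard basis vector — that the ideal, being generated over $p_\ast\oO_X$ by its lowest-order parts, is automatically a sum of $\oO_C$-submodules of the individual weight-line-bundle summands, hence $T$-invariant. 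Concretely, one shows each graded piece $(p_\ast I_{C_F})^{-i_1,-i_2,-i_3}$ is of the form $L_1^{-i_1}\otimes L_2^{-i_2}\otimes L_3^{-i_3}\otimes\oO_C(-Z_{i_1,i_2,i_3})$ for a nested family of effective divisors, exactly as in the genuine $T$-fixed analysis, so that the $T_0$-grading already refines to a $T$-grading on $I_{C_F}$.

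The main obstacle, and the step that needs the most care, is Step 4: a priori a $T_0$-fixed stable pair need not be $T$-fixed (the excerpt explicitly warns of this), and the subtlety is precisely that the sheaf $F$ itself can genuinely mix components of the same $T_0$-weight but different $T$-weights, whereas the \emph{image} $\oO_{C_F}$ — being a cyclic $\oO_X$-module generated by $s(1)$, which has weight $0$ — cannot. The argument here is that $\oO_{C_F}=\oO_X/I_{C_F}$ is generated as a $p_\ast\oO_X$-module in weight $0$, so $I_{C_F}$ is generated in weights obtained from $0$ by the relations among the $L_i$-sections; tracking the commuting diagram of multiplication maps $F^{-i_1,-i_2,-i_3}\otimes L_k^{-1}\to F^{-i_1,\dots,-i_k-1,\dots}$ restricted to the image subsheaf shows the graded pieces of $I_{C_F}$ are honest single-weight pieces, giving a $T$-linearization of $I_{C_F}$ compatible with the given $T_0$-linearization and the canonical one on $\oO_X$. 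One then concludes $I_{C_F}$, equivalently the subscheme $C_F\subset X$, is $T$-fixed.
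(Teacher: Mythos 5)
Your Steps 1--3 are fine, but Step 4 --- which you correctly identify as the crux --- is not actually proved, and the justification you offer for it cannot work as stated. You argue that mixing inside a $T_0$-weight space is excluded because $\oO_{C_F}$ is a cyclic $\oO_X$-module generated by $s(1)$ in weight $0$. But \emph{every} structure sheaf of a subscheme is cyclic and generated in weight $0$, so this property cannot distinguish $T$-fixed from non-$T$-fixed subschemes: fiberwise, the ideal $(x_1x_2x_3-1)\subset\mathbb{C}[x_1,x_2,x_3]$ is $T_0$-fixed, its quotient is cyclic generated in weight $0$, and it is not $T$-fixed. What really rules out mixing is that the fibers of $C_F\to C$ are zero-dimensional, i.e.\ the restricted ideals have finite colength, and this hypothesis never enters your argument. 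Concretely, over a point of $C$ where the $L_i$ are trivialized, a $T_0$-eigensection in your weight space is a monomial $x_1^{i_1}x_2^{i_2}x_3^{i_3}$ times a polynomial $h(w)$ in $w=x_1x_2x_3$; to conclude that the pure monomial already lies in the ideal one needs $w$ to be nilpotent modulo the ideal (finite colength) so that $h(w)$ with $h(0)\neq 0$ becomes invertible there --- this is exactly the content of \cite[Lemma~4.1]{BBr}, and it is a genuine (if short) argument, not an automatic consequence of "generation by lowest-order parts", which is itself just a restatement of the claim to be proved.

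For comparison, the paper avoids the global graded analysis altogether: it forms, for $t\in T$, the induced map $u\in\Hom(I_{C_F},t^{\ast}\oO_{C_F})$ and shows $u=0$. On a general fiber $X_c\cong\mathbb{C}^3$ the restriction $I_{C_F}|_{X_c}$ is the ideal of a $T_0$-fixed zero-dimensional subscheme, hence $T$-fixed by \cite[Lemma~4.1]{BBr}, so $u|_{X_c}=0$; then purity of $\oO_{C_F}$ (hence of $t^{\ast}\oO_{C_F}$) forces $\Imm u=0$ everywhere. Your outline could be repaired by inserting precisely this fiberwise finite-colength argument (or a citation of the Behrend--Fantechi lemma) in Step 4, and then an argument --- e.g.\ the purity step above --- to pass from general fibers of $p$ to all of $C_F$; without those two ingredients the proof has a genuine gap.
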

\begin{proof}
Since $I_{C_F}=\hH^0(I)$, it is $T_0$-fixed.
For $t \in T$, we have the diagram
\begin{align*}
\xymatrix{
0 \ar[r] & I_{C_F}  \ar[r] &\oO_X \ar[d]^{\cong} \ar[r] & \oO_{C_F} \ar[r] & 0 \\
0 \ar[r] & t^{\ast}I_{C_F}  \ar[r] &t^{\ast}\oO_X \ar[r] &
t^{\ast}\oO_{C_F} \ar[r] & 0.
}
\end{align*}
The above diagram induces the morphism
$u \in \Hom(I_{C_F}, t^{\ast}\oO_{C_F})$. It is enough to show $u=0$.
For a general point $c \in C$, let $X_c=p^{-1}(c)=\mathbb{C}^3$
be the fiber of $p$ at $c$.
Then $I_{C_F}|_{X_c}$ is an
ideal sheaf of $T_0$-fixed zero dimensional
subscheme of $\mathbb{C}^3$. 
Then it is also $T$-fixed by~\cite[Lemma~4.1]{BBr}.
This implies that the morphism restricted to $X_c$ is a zero map.
Then $\Imm u \subset t^{\ast}\oO_{C_F}$ is
zero on the general fiber of $p$, hence
$\Imm u=0$ by the purity of $C_F$.
\end{proof}
Another convenient way to determine $T_0$-fixed stable pairs is in the case when 
$P_{n}(X,m[C])^{T}$ is smooth and $\Hom_X(I,F)^{T_0}=\Hom_X(I,F)^{T}$ for any $I=(\oO_X\to F)\in P_{n}(X,m[C])^{T}$ 
(see e.g. \cite[Sect. 3.3]{PT3} on toric 3-folds). 
Then one has 
$P_{n}(X,m[C])^{T}=P_{n}(X,m[C])^{T_0}$.
In the examples below, we will explicitly determine the $T_0$-fixed locus mainly using Lemma \ref{lem:Tfix}.

\subsection{$P_{1, m[C]}(X)$ and genus zero conjecture}
Let $C=\mathbb{P}^{1}$ be a smooth rational curve and 
$X=\mathcal{O}_{\mathbb{P}^{1}}(l_1,l_2,l_3)$ with $l_1+l_2+l_3=-2$.
This serves as the local model for a neighbourhood of a rational curve in a CY 4-fold. 

For some special choice of $(l_1,l_2,l_3)$, we can determine $P_{1,m[C]}(X)$ for all $m$.
\begin{prop}\label{general local curves}
If $X=\mathcal{O}_{\mathbb{P}^{1}}(-1,-1,0)$,  
then $P_{1,m[\mathbb{P}^{1}]}(X)$ is well-defined and satisfies 
\begin{equation}P_{1,[\mathbb{P}^{1}]}(X)=\pm\,\lambda_3^{-1},\quad P_{1,m[\mathbb{P}^{1}]}(X)=0, \,\,\textrm{when}\,\,m>1.   \nonumber \end{equation}
If $X=\mathcal{O}_{\mathbb{P}^{1}}(-2,0,0)$, 
then $P_{1,m[\mathbb{P}^{1}]}(X)$ is well-defined and satisfies 
\begin{equation}P_{1,[\mathbb{P}^{1}]}(X)=\pm\,\frac{\lambda_1}{\lambda_2\lambda_3} ,\quad P_{1,m[\mathbb{P}^{1}]}(X)=0, \,\,\textrm{when}\,\,m>1.   \nonumber \end{equation}
\end{prop}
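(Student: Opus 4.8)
The plan is to compute $P_{1, m[\mathbb{P}^1]}(X)$ directly via the $T_0$-localization formula \eqref{localization for local curve}, by first pinning down the $T_0$-fixed locus and then evaluating the virtual class together with the square-root Euler class of the moving part. First I would observe that for $X = \oO_{\mathbb{P}^1}(-1,-1,0)$ (resp. $\oO_{\mathbb{P}^1}(-2,0,0)$), a stable pair $(s\colon \oO_X \to F)$ with $\chi(F) = 1$ and $[F] = m[\mathbb{P}^1]$ forces, by the argument already used in Lemma~\ref{lem on emply pair moduli} and in the heuristic section, $F \cong \oO_{C_F}$ for a Cohen–Macaulay curve $C_F$ with $\chi(\oO_{C_F}) = 1$. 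Using Lemma~\ref{lem:Tfix}, the ideal sheaf $I_{C_F}$ is $T$-fixed, so $C_F$ is a $T$-fixed Cohen–Macaulay curve in the total space. Such curves, in the notation of the three-dimensional Young diagram $\Delta$ from \eqref{index set}, are thickenings of the zero section inside the fiber directions with weights along the negative line bundles; one checks that $\chi(\oO_{C_F}) = 1$ together with $[C_F] = m[\mathbb{P}^1]$ forces $m = 1$ and $C_F = j(\mathbb{P}^1)$ the reduced zero section. Hence $P_1(X, m[\mathbb{P}^1])^{T_0}$ is empty for $m > 1$, giving the vanishing statements, and is a single reduced point $I_0 = (\oO_X \to \oO_{j(\mathbb{P}^1)})$ when $m = 1$.

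Next, for $m = 1$ I would compute the contribution of this isolated fixed point. The virtual class of the fixed locus is the class of a point, so \eqref{localization for local curve} reduces to $e\big(\dR\Hom(I_0, I_0)_0^{\mathrm{mov}}\big)^{1/2}$, i.e. the square root of the (equivariant) Euler class of the moving part of $\Ext^2(I_0, I_0)_0$ minus the moving part of $\Ext^1(I_0, I_0)_0$. To get these $\Ext$-groups I would use the deformation-obstruction complex of the pair: as in the elliptic fibration lemma, $\RHom_X(I_0, I_0)_0$ sits in a triangle with $\RHom_X(I_0, F)$ and $\RHom_X(F, \oO_X)$, and since $F = \oO_{\mathbb{P}^1}$ is supported on the zero section, everything can be pushed down to $\mathbb{P}^1$ and decomposed into $T$-weight spaces using $p_*\oO_X = \bigoplus L_1^{-i_1} L_2^{-i_2} L_3^{-i_3} t_1^{-i_1} t_2^{-i_2} t_3^{-i_3}$. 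The normal bundle $N = \oplus (L_i \otimes t_i)$ from \eqref{nor bdl N} controls the fiber-direction deformations; for $(-1,-1,0)$ the relevant nonzero cohomology is concentrated in the $t_3$-weight (degree $0$) direction, producing a one-dimensional moving obstruction space with weight $\lambda_3$ and no moving tangent space, so the half-Euler class is $\pm\lambda_3^{-1}$ after the inversion built into the localized class. For $(-2,0,0)$ the analogous computation gives $H^1$ of $\oO_{\mathbb{P}^1}(-2)$ in the $t_1$-direction contributing $\lambda_1$ in the numerator and $H^0$ of $\oO$ in the $t_2, t_3$-directions contributing $\lambda_2, \lambda_3$ in the denominator, yielding $\pm \lambda_1/(\lambda_2\lambda_3)$.

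The main obstacle will be the bookkeeping of signs and the precise meaning of the square root: one must check that the restriction of the quadratic form $Q$ to the moving part of $\Ext^2(I_0, I_0)_0$ is non-degenerate (so the half-Euler class is defined) and that, after choosing the real form $\Ext^2_+$, the resulting half-Euler class is the claimed monomial up to the overall sign ambiguity coming from the choice of orientation — which is why the statement is only asserted up to $\pm$. I would handle this exactly as in \cite[Sect. 4.2]{CMT} and \cite{CL}: the moving part pairs with itself only via $t_i \leftrightarrow t_j$ with $i + j$ complementary (since $t_1 t_2 t_3 = 1$ on $T_0$), and in these rank-one or rank-two situations the half-Euler class is forced to be the stated expression. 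A secondary point to verify is that no other $T_0$-fixed but non-$T$-fixed stable pairs contribute: here I would invoke the smoothness of $P_1(X, [\mathbb{P}^1])^T$ (it is a point) together with the remark after Lemma~\ref{lem:Tfix} that $\Hom_X(I,F)^{T_0} = \Hom_X(I,F)^T$ in this case, so that $P_1(X, [\mathbb{P}^1])^{T_0} = P_1(X, [\mathbb{P}^1])^T$ is genuinely a single point, and for $m > 1$ the emptiness of the $T_0$-fixed locus follows since $C_F$ being $T$-fixed with $\chi(\oO_{C_F}) = 1$ already rules out $m > 1$.
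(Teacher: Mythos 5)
Your overall route is the paper's own: use Lemma \ref{lem:Tfix} to see that the image $\oO_{C_F}=\Imm(s)$ of a $T_0$-fixed pair is $T$-fixed, write it in the Young-diagram form with twists by the $\oO_{\mathbb{P}^1}(Z_{i_1,i_2,i_3})$, observe that for $(-1,-1,0)$ and $(-2,0,0)$ every box contributes $\chi\geqslant 1$, so $\chi(F)=1$ forces $m=1$, $F\cong\oO_{\mathbb{P}^1}$ the reduced zero section (hence the fixed locus is empty for $m>1$ and a single reduced point for $m=1$), and then evaluate the localization formula, which the paper writes as $\pm\, e_{T_0}\bigl(H^1(\mathbb{P}^1,N)\bigr)/e_{T_0}\bigl(H^0(\mathbb{P}^1,N)\bigr)$ with $N=(L_1\otimes t_1)\oplus(L_2\otimes t_2)\oplus(L_3\otimes t_3)$. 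Your side remarks (relying on the heuristic-section bound $\chi(\oO_D)\geqslant 1$, and the extra appeal to smoothness of the $T$-fixed locus to rule out non-$T$-fixed $T_0$-fixed pairs) are unnecessary once the classification via Lemma \ref{lem:Tfix} is done, but they are harmless.

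The one point you should correct is the bookkeeping in the $(-1,-1,0)$ case. The one-dimensional space $H^0(\oO_{\mathbb{P}^1})\otimes t_3=H^0(\mathbb{P}^1,N)$ is the \emph{moving tangent} space $\Ext^1_X(I_0,I_0)_0$ (the deformation translating the zero section along the trivial factor), not a moving obstruction; here $H^1(\mathbb{P}^1,N)=0$, so the half of the obstruction space vanishes and $\Ext^2_X(I_0,I_0)_0=0$. The weight $\lambda_3$ therefore enters through $e_{T_0}(\Ext^1)^{-1}$ in the denominator of the localization formula, which is why the answer is $\pm\lambda_3^{-1}$; if the space really were an obstruction with no moving tangent directions, the contribution would be $\pm\lambda_3$, and no ``inversion built into the localized class'' would rescue it. Your treatment of $(-2,0,0)$ — $H^1(\oO(-2))\otimes t_1$ in the numerator, $H^0(\oO)\otimes t_2$ and $H^0(\oO)\otimes t_3$ in the denominator — is the correct pattern and matches the paper; the $(-1,-1,0)$ case should be phrased the same way.
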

\begin{proof}
Let $(s:\oO_X\to F)$ be a $T_0$-fixed stable pair and $\oO_{C_F}=\Imm(s)$. Then   
\begin{equation}1=\chi(F)=\chi(\oO_{C_F})+\chi(F/\oO_{C_F}).  \nonumber \end{equation}
So $\chi(\oO_{C_F})=1$ or $0$. By Lemma \ref{lem:Tfix}, $(s:\oO_X\twoheadrightarrow \oO_{C_F})$ is $T$-fixed. 
From the characterization of $T$-fixed stable pairs, it is of the form
\begin{equation}\oO_X \twoheadrightarrow \bigoplus_{(i_1, i_2, i_3) \in \Delta}
\mathcal{O}_{\mathbb{P}^{1}}(l_1)^{-i_1} \otimes \mathcal{O}_{\mathbb{P}^{1}}(l_2)^{-i_2} \otimes \mathcal{O}_{\mathbb{P}^{1}}(l_3)^{-i_3}
\otimes \mathcal{O}_{\mathbb{P}^{1}}(Z_{i_1,i_2,i_3}).
\nonumber \end{equation}
If $(l_1,l_2,l_3)=(-1,-1,0)$ or $(-2,0,0)$, it is obvious that the only possibility is $\chi(\oO_{C_F})=1$ (so $F\cong \oO_{C_F}$) and $C_F$ is the zero section of $X$. So $P_1(X,m[\mathbb{P}^{1}])=\emptyset$ unless $m=1$. 

By (\ref{localization for local curve}), we have 
\begin{align*}P_{1,[\mathbb{P}^{1}]}(X)&=\pm\,\frac{\sqrt{(-1)^{\frac{1}{2}\ext^2_X(I_{\mathbb{P}^1},I_{\mathbb{P}^1})_0}\cdot e_{T_0}\big(\Ext^2_X(I_{\mathbb{P}^1},I_{\mathbb{P}^1})_0\big)}}{e_{T_0}\big(\Ext^1_X(I_{\mathbb{P}^1},I_{\mathbb{P}^1})_0\big)} \\
&=\pm\,\frac{e_{T_0}\big(H^1(\mathbb{P}^1,L_1 \otimes t_1 \oplus L_2 \otimes t_2 \oplus L_3 \otimes t_3)\big)}
{e_{T_0}\big(H^0(\mathbb{P}^1,L_1 \otimes t_1 \oplus L_2 \otimes t_2 \oplus L_3 \otimes t_3)\big)}. \end{align*}
Then the calculation is straightforward.
\end{proof}
By comparing the above computations with the corresponding GW invariants, we obtain the following equivariant analogue of 
Conjecture \ref{conj:GW/GV g=0} (note from the above proof, we know $P_{0,m[C]}(X)=0$ ($m\geqslant1$) since $P_{0}(X,m[C])=\emptyset$).
\begin{cor}
Let $X=\mathcal{O}_{\mathbb{P}^{1}}(-1,-1,0)$ or $\mathcal{O}_{\mathbb{P}^{1}}(-2,0,0)$. Then  
\begin{equation}\mathrm{GW}_{0,m}(X)=\sum_{k|m,\,k\geqslant1}\frac{1}{k^3}P_{1,(m/k)[\mathbb{P}^{1}]}(X), \nonumber \end{equation}
for suitable choices of orientation in defining the RHS.
\end{cor}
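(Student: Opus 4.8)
The plan is to reduce the corollary to the explicit computations already in hand, checking the asserted multiple-cover structure on both sides. First I would recall the relevant Gromov-Witten data: by (\ref{GW01}), for $X=\mathcal{O}_{\mathbb{P}^1}(-1,-1,0)$ one has $\mathrm{GW}_{0,1}(X)=\lambda_1^{-1}\lambda_2^{-1}\lambda_3^{0-1}$ wait this needs care — actually $-l_i-1$ gives $0,0,-1$, so $\mathrm{GW}_{0,1}(X)=\lambda_3^{-1}$; for $X=\mathcal{O}_{\mathbb{P}^1}(-2,0,0)$ it gives $\lambda_1^{1}\lambda_2^{-1}\lambda_3^{-1}$. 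These match $P_{1,[\mathbb{P}^1]}(X)$ from Proposition \ref{general local curves} exactly (up to the orientation sign), which verifies the $m=1$ case of the claimed identity since the only divisor of $1$ is $k=1$. For $m\geqslant 2$, the right-hand side is $\sum_{k\mid m}k^{-3}P_{1,(m/k)[\mathbb{P}^1]}(X)$; by Proposition \ref{general local curves} the only nonvanishing term is $k=m$, contributing $m^{-3}P_{1,[\mathbb{P}^1]}(X)$, so the identity becomes $\mathrm{GW}_{0,m}(X)=m^{-3}\mathrm{GW}_{0,1}(X)$.

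So the real content for $m\geqslant 2$ is the equivariant multiple-cover formula $\mathrm{GW}_{0,m}(X)=m^{-3}\,\mathrm{GW}_{0,1}(X)$ for these two specific local $\mathbb{P}^1$'s. The plan is to prove this directly via virtual localization on $\overline{M}_0(\mathbb{P}^1,m)$ with respect to the residual $(\mathbb{C}^\ast)^2$-action on $\mathbb{P}^1$, exactly as in the $d=2$ computation leading to (\ref{GW02}), but now with the special normal bundles. For $X=\mathcal{O}_{\mathbb{P}^1}(-1,-1,0)$, the factor $\mathcal{O}(-1)^{\oplus 2}$ is precisely the setup of the Aspinwall--Morrison / multiple-cover formula: the contribution of the $d$-fold cover of the line to the local $\mathcal{O}(-1)\oplus\mathcal{O}(-1)$ invariant is $1/d^3$, and this is a standard equivariant localization computation (one checks $e(-R^\bullet h_\ast f^\ast(\mathcal{O}(-1)t_1\oplus\mathcal{O}(-1)t_2))$ integrates to $d^{-3}$ over $[\overline{M}_0(\mathbb{P}^1,d)]$ after accounting for the trivial $\mathcal{O}(0)t_3$ factor, which simply contributes the equivariant weight $\lambda_3^{-1}$ since $H^\bullet(\mathbb{P}^1,\mathcal{O})$ has no moving part beyond the torus weight and $h_\ast f^\ast\mathcal{O}=\mathcal{O}$ has rank-1 $H^0$ and vanishing $H^1$ for $d\geqslant 1$). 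For $X=\mathcal{O}_{\mathbb{P}^1}(-2,0,0)$ one has two trivial-degree factors and one $\mathcal{O}(-2)$ factor; here $R^0h_\ast f^\ast\mathcal{O}(l_i)$ for $l_i=0$ contributes a constant and $R^1$ vanishes, while the $\mathcal{O}(-2)$ factor gives the classical local-curve contribution, and again a direct localization shows the total equals $m^{-3}$ times the $d=1$ answer.

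The key steps in order: (1) invoke Proposition \ref{general local curves} to get $P_{1,m[\mathbb{P}^1]}(X)$ for all $m$, noting $P_{0,m[\mathbb{P}^1]}(X)=0$; (2) reduce the RHS of the claimed identity to $m^{-3}P_{1,[\mathbb{P}^1]}(X)$ using the vanishing for $m>1$; (3) identify $P_{1,[\mathbb{P}^1]}(X)$ with $\mathrm{GW}_{0,1}(X)$ via (\ref{GW01}) and Proposition \ref{general local curves}, fixing the orientation sign; (4) prove $\mathrm{GW}_{0,m}(X)=m^{-3}\mathrm{GW}_{0,1}(X)$ by equivariant virtual localization on $\overline{M}_0(\mathbb{P}^1,m)$, treating the $\mathcal{O}(-1)^{\oplus2}$ resp. $\mathcal{O}(-2)$ factors by the standard multiple-cover computation and the degree-zero factors by their (trivial) equivariant Euler class. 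The main obstacle is step (4): carrying out the localization sum over graph-fixed-loci of $\overline{M}_0(\mathbb{P}^1,m)$ and checking that it collapses to the clean $m^{-3}$ multiple-cover coefficient in the equivariant setting. This is known in the non-equivariant local Calabi-Yau setting and the equivariant refinement is routine but bookkeeping-heavy; alternatively, one could cite the local BPS/multiple-cover results for $\mathcal{O}(-1)\oplus\mathcal{O}(-1)$ curves and adapt the normal-bundle weights, which would shorten the argument considerably.
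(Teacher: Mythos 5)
Your proposal is correct and follows essentially the same route as the paper: reduce the right-hand side to $m^{-3}P_{1,[\mathbb{P}^{1}]}(X)$ using the vanishing $P_{1,m[\mathbb{P}^{1}]}(X)=0$ for $m>1$ from Proposition \ref{general local curves}, match $P_{1,[\mathbb{P}^{1}]}(X)$ with $\mathrm{GW}_{0,1}(X)$ for a suitable orientation, and then use the multiple-cover formula $\mathrm{GW}_{0,m}(X)=m^{-3}\,\mathrm{GW}_{0,1}(X)$. The only (minor) divergence is in your step (4): where you propose to re-derive this formula by equivariant localization on $\overline{M}_{0}(\mathbb{P}^{1},m)$, the paper simply cites the Aspinwall--Morrison formula for $\mathcal{O}_{\mathbb{P}^{1}}(-1,-1,0)$ and Maulik's computation of the Gromov--Witten theory of $K_{\mathbb{P}^{1}}$ for $\mathcal{O}_{\mathbb{P}^{1}}(-2,0,0)$, which is exactly the citation shortcut you mention as an alternative.
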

\begin{proof}
If $X=\mathcal{O}_{\mathbb{P}^{1}}(-1,-1,0)$, by Aspinwall-Morrison formula, we have 
\begin{equation}\mathrm{GW}_{0,m}(X)=\frac{1}{m^3}\,\lambda_3^{-1}. \nonumber \end{equation}
If $X=\mathcal{O}_{\mathbb{P}^{1}}(-2,0,0)$, from GW invariants of $K_{\mathbb{P}^1}$ (e.g. \cite[Thm 1.1]{Maulik}), 
we can conclude
\begin{equation*}\mathrm{GW}_{0,m}(X)=\frac{1}{m^3}\cdot\frac{\lambda_1}{\lambda_2\lambda_3}.  
\nonumber  \end{equation*}  
Comparing with Proposition \ref{general local curves}, we are done.
\end{proof}

For general local curve $X=\mathcal{O}_{\mathbb{P}^{1}}(l_1,l_2,l_3)$, we study $P_1(X,m[\mathbb{P}^{1}])$ for $m=1,2$ as follows.

${}$ \\
\textbf{Degree one class}. When $m=1$, it is easy to show the canonical section 
\begin{equation}(s:\oO_X\twoheadrightarrow \oO_{\mathbb{P}^{1}}) \nonumber \end{equation} 
gives the only $T_0$-fixed stable pair in $P_{1}(X,[\mathbb{P}^{1}])$. Similar to Proposition \ref{general local curves}, we have 
\begin{align*}P_{1,[\mathbb{P}^{1}]}(X)&=\frac{e_{T_0}\big(H^1(\mathbb{P}^1,L_1 \otimes t_1 \oplus L_2 \otimes t_2 \oplus L_3 \otimes t_3)\big)}{e_{T_0}\big(H^0(\mathbb{P}^1,L_1 \otimes t_1 \oplus L_2 \otimes t_2 \oplus L_3 \otimes t_3)\big)} \\
&= \lambda^{-l_1-1}_1\lambda^{-l_2-1}_2\lambda^{-l_3-1}_3, \end{align*}
which coincides with corresponding GW invariant (\ref{GW01}). 
Here we have chosen the plus sign in defining $P_{1,[\mathbb{P}^{1}]}(X)$.

${}$ \\
\textbf{Degree two class}. When $m=2$, let $(s:\oO_X\to F)\in P_1(X,2[\mathbb{P}^1])$ be a $T_0$-fixed stable pair. 
Then $F$ is thickened into one of the $L_i$-direction, i.e. 
\begin{align*}
p_{\ast}F=F_0 \oplus (F_i \otimes t_i^{-1}),
\end{align*}
where $F_0$, $F_i$ are line bundles on $\mathbb{P}^1$, hence $F$ is also $T$-fixed. As the $T$-weight of $F_i$ is not of form $(l,l,l)$, so $T_0$-invariant sections
of $F$ are also $T$-invariant.
So we have a
commutative diagram 
\begin{align*}
\xymatrix{
\oO_{\mathbb{P}^1}\otimes L^{-1}_i  \ar[r]^{s^0} \ar[d]^{=} & F_0\otimes L_i^{-1}  \ar[d]^{\phi}\\
L^{-1}_i \ar[r]^{s^i} &  F_i\otimes t_i^{-1}, }
\end{align*}
where $s^{0}$ and $s^{i}$ are injective, and surjective in dimension one, $\phi$ defines the $p_*\oO_X$-module structure (which is also 
injective, and surjective in dimension one by the diagram).

Denote $F_0=\oO_{\mathbb{P}^1}(d_0)$ and $F'_i=F_i\otimes L_i=\oO_{\mathbb{P}^1}(d_i)$, the above diagram is equivalent to
a commutative diagram 
\begin{align*}
\xymatrix{
\oO_{\mathbb{P}^1}   \ar[r]^{s^0\,\,\,\,} \ar[d]^{=} & \oO_{\mathbb{P}^1}(d_0)  \ar[d]^{\phi}\\
\oO_{\mathbb{P}^1}  \ar[r]^{s^i\,\,\,\,} &  \oO_{\mathbb{P}^1}(d_i),}
\end{align*}
where $s^{0}$, $s^i$ and $\phi$ are injective. These impose conditions
\begin{equation}0\leqslant d_0\leqslant d_i, \quad d_0+d_i=l_i-1, \nonumber \end{equation}
where the last equality is because $\chi(F)=1$. It is not hard to show the following 
\begin{lem} 
We have the following isomorphism
\begin{align}\label{identify T0-fixed locus in degree two}
P_1(X,2[\mathbb{P}^1])^{T_0}\stackrel{\cong}{\to}\, \coprod_{i=1}^3
\coprod_{\begin{subarray}{c}
(d_0, d_i) \in \mathbb{Z}^2 \\
d_0+d_i=l_i-1 \\
0\leqslant d_0 \leqslant (l_i-1)/2
\end{subarray}}
\Pic^{(d_0, d_i)}(\mathbb{P}^1)\times \mathbb{P}(H^0(\oO_{\mathbb{P}^1}(d_0))),
\end{align}
where $\Pic^{(a, b)}(\mathbb{P}^1)$ denotes the moduli space of triples
\begin{align*}
(L, L', \iota), \ 
(L, L') \in \Pic^{a}(\mathbb{P}^1) \times \Pic^{b}(\mathbb{P}^1), \ 
\iota \colon L \hookrightarrow L',
\end{align*}
and $\iota$ is an inclusion of sheaves.
\end{lem}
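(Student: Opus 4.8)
The plan is to package the combinatorial description of $T_0$-fixed stable pairs obtained just above into a moduli-theoretic isomorphism. First I would fix $i\in\{1,2,3\}$ and a pair $(d_0,d_i)\in\mathbb{Z}^2$ with $d_0+d_i=l_i-1$ and $0\leqslant d_0\leqslant d_i$ (equivalently $0\leqslant d_0\leqslant (l_i-1)/2$); by the discussion preceding the lemma, any $T_0$-fixed stable pair $(s\colon\oO_X\to F)\in P_1(X,2[\mathbb{P}^1])^{T_0}$ is in fact $T$-fixed, is thickened in exactly one fiber direction $L_i$, and is therefore equivalent to a commutative square
\begin{align*}
\xymatrix{
\oO_{\mathbb{P}^1}   \ar[r]^{s^0} \ar[d]^{=} & \oO_{\mathbb{P}^1}(d_0)  \ar[d]^{\phi}\\
\oO_{\mathbb{P}^1}  \ar[r]^{s^i} &  \oO_{\mathbb{P}^1}(d_i)
}
\end{align*}
with $s^0,s^i,\phi$ all injective. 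The data of $(s^i,\phi)$ up to the evident equivalence is precisely a point of $\Pic^{(d_0,d_i)}(\mathbb{P}^1)$ (a line bundle $L=\oO_{\mathbb{P}^1}(d_0)$, a line bundle $L'=\oO_{\mathbb{P}^1}(d_i)$, and an inclusion $\iota=\phi\colon L\hookrightarrow L'$), while the remaining freedom is the choice of the injection $s^0\colon\oO_{\mathbb{P}^1}\hookrightarrow\oO_{\mathbb{P}^1}(d_0)$, i.e.\ a nonzero section up to scale, which is a point of $\mathbb{P}(H^0(\oO_{\mathbb{P}^1}(d_0)))$; note $s^i=\phi\circ s^0$ is then determined. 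Summing over $i$ and over admissible $(d_0,d_i)$ gives the claimed set-theoretic bijection.

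To upgrade this to a scheme isomorphism I would argue in families. One constructs the map from the right-hand side to $P_1(X,2[\mathbb{P}^1])^{T_0}$ by taking, over each component, the universal triple $(\lL,\lL',\iota)$ on $\mathbb{P}^1\times\Pic^{(d_0,d_i)}(\mathbb{P}^1)$ together with the universal section, forming the cokernel-type sheaf $F$ on $X\times(\cdots)$ supported on the appropriate thickening of the zero section, and checking it is a flat family of stable pairs with the right invariants; this yields a morphism by the moduli property of $P_1$. Conversely, Lemma \ref{lem:Tfix} and the weight-decomposition analysis above show that over any test scheme a $T_0$-fixed family decomposes canonically as $p_*\mathbb{F}=\mathbb{F}_0\oplus(\mathbb{F}_i\otimes t_i^{-1})$ with $\mathbb{F}_0$, $\mathbb{F}_i$ line bundles (the ranks being locally constant and the thickening direction $i$ being locally constant), producing the triple and the section functorially; this gives the inverse morphism. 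Since both compositions are the identity on points and the construction is functorial, they are mutually inverse isomorphisms.

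The main obstacle I anticipate is the last bookkeeping point in the family argument: ensuring that the index $i$ (the direction of thickening) and the degrees $(d_0,d_i)$ are locally constant on a connected family, so that $P_1(X,2[\mathbb{P}^1])^{T_0}$ really breaks up as the indicated disjoint union rather than having components glued along strata where the thickening direction could jump. This is handled by semicontinuity together with the constraint $d_0+d_i=l_i-1$ coming from $\chi(F)=1$ and $0\leqslant d_0\leqslant d_i$, which rigidifies the degrees once $i$ is fixed; and by the observation that a $T_0$-fixed (hence $T$-fixed) pure sheaf of class $2[\mathbb{P}^1]$ on $X$ can be thickened in only one of the three fiber directions, a closed-and-open condition. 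The remaining verifications—flatness of the constructed family, stability of each member, and the identification $\Hom_X(I,F)^{T_0}=\Hom_X(I,F)^T$ used to reduce $T_0$-fixed to $T$-fixed—are routine given the structure theory of $T$-fixed stable pairs recalled in Section \ref{localiza for stable pair}.
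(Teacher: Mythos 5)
Your proposal is correct and follows essentially the same route as the paper: the paper's own "proof" consists of the pointwise analysis immediately preceding the lemma (reduction of a $T_0$-fixed pair to the commutative square with $s^0,\phi$ injective and the constraints $0\leqslant d_0\leqslant d_i$, $d_0+d_i=l_i-1$), after which the identification with $\Pic^{(d_0,d_i)}(\mathbb{P}^1)\times \mathbb{P}(H^0(\oO_{\mathbb{P}^1}(d_0)))$ is left as "not hard to show", exactly the packaging-in-families step you spell out. The only cosmetic slip is writing the $\Pic^{(d_0,d_i)}$ datum as "$(s^i,\phi)$" when it is just the inclusion $\phi$ (with $s^i=\phi\circ s^0$ determined), which you in fact correct in the same sentence.
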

To determine the virtual class $[P_1(X,2[\mathbb{P}^1])^{T_0}]^{\mathrm{vir}}$ and the square root in (\ref{localization for local curve}), we take a $T_0$-fixed stable pair $I=(s:\oO_X\to F)$ and view it as an element in the $T_0$-equivariant $K$-theory of $X$. Then
\begin{align*}
\chi(I,I)_0=\chi(F,F)-\chi(\oO_X,F)-\chi(F,\oO_X)\in K_{T_0}(\mathrm{pt}),
\end{align*}
where both sides of the equality can be written using grading into $T_0$-weight space.

Similar to \cite[Sect. 4.4]{CMT}, we set 
\begin{align}\label{choose square root in deg two}
\chi(F,F)^{1/2}:=\chi(j_*F_0,j_*F_0)+\chi(j_*F_0,j_*F_i)t^{-1}_i,\,\,\,\, \chi(I,I)^{1/2}_0:=\chi(F,F)^{1/2}-\chi(\oO_X,F) \end{align}
where $F=F_0+F_i\otimes t_i^{-1}\in K_{T_0}(X)$ and $j$ is the inclusion of zero section of $X$.

The $T_0$-fixed and movable part satisfies  
\begin{align*}
\chi(I,I)^{1/2,\mathrm{fix}}_0&=\chi(F,F)^{1/2,\mathrm{fix}}-\chi\big(\oO_{\mathbb{P}^1},\oO_{\mathbb{P}^1}(d_0)\big), \\
\chi(I,I)^{1/2,\mathrm{mov}}_0&=\chi(F,F)^{1/2,\mathrm{mov}}-\chi\big(\oO_{\mathbb{P}^1},\oO_{\mathbb{P}^1}(d_i-l_i)\big)\cdot t^{-1}_i,
\end{align*}
where $\chi(F,F)^{1/2,\mathrm{fix}}$ and $\chi(F,F)^{1/2,\mathrm{mov}}$ was computed in \cite[Sect. 4.4]{CMT}.
In particular, 
\begin{equation}\dim_{\mathbb{C}}\chi(F,F)^{1/2,\mathrm{fix}}=1-d_i+d_0, \quad \dim_{\mathbb{C}}\chi\big(\oO_{\mathbb{P}^1},\oO_{\mathbb{P}^1}(d_0)\big)=d_0+1. \nonumber \end{equation} 
So $\dim_{\mathbb{C}}(-\chi(I,I)^{1/2,\mathrm{fix}}_0)=d_i$ is the dimension of $P_1(X,2[\mathbb{P}^1])^{T_0}$. Thus the virtual class of the associated $T_0$-fixed locus $P_1(X,2[\mathbb{P}^1])^{T_0}$ may be defined to be its usual fundamental class. \\

We can now give a definition of $P_{1,2[\mathbb{P}^1]}(X)\in \mathbb{Q}(\lambda_1, \lambda_2)$ based on the localization formula (\ref{localization for local curve}) and the above discussion.
Denote 
\begin{align*}
(\fF_0, \fF_i', \iota), \
\iota \colon \fF_0 \hookrightarrow \fF_i'
\end{align*}
to be the universal object on $\Pic^{(d_0, d_i)}(\mathbb{P}^1) \times \mathbb{P}^1$, where $\fF_0, \fF_i'$ are
line bundles on $\Pic^{(d_0, d_i)}(\mathbb{P}^1) \times \mathbb{P}^1$
and $\iota$ is the universal injection.
Let $\fF_i \cneq \fF_i' \boxtimes L_i^{-1}$, and consider its push-forward
\begin{align*}
j_{\ast} \fF_i \in \Coh(\Pic^{(d_0, d_i)}(\mathbb{P}^1) \times X), \,\ i=1, 2, 3.
\end{align*}
From (\ref{localization for local curve}) and (\ref{choose square root in deg two}), we define $P_{1,2[\mathbb{P}^1]}(X)$ as an element in $\mathbb{Q}(\lambda_1, \lambda_2)$ by
\begin{equation}\label{def of deg two g=0 inv}
P_{1,2[\mathbb{P}^1]}(X):=\sum_{i=1}^3\sum_{\begin{subarray}{c}(d_0, d_i) \in \mathbb{Z}^2 \\d_0+d_i=l_i-1 \\
0\leqslant d_0 \leqslant (l_i-1)/2 \end{subarray}}
\Bigg(\int_{\Pic^{(d_0, d_i)}(\mathbb{P}^1)}e_{T_0}(\mathcal{N}_1) \cdot \int_{\mathbb{P}(H^0(\oO_{\mathbb{P}^1}(d_0)))}e_{T_0}(\mathcal{N}_2)\Bigg),    \end{equation}
where 
\begin{equation}\mathcal{N}_1:=\dR \hH om_{\pi_1}(j_{\ast}\fF_0, j_{\ast}\fF_0)^{\rm{mov}}+ 
\dR \hH om_{\pi_1}(j_{\ast}\fF_0, j_{\ast}\fF_i \cdot t_i^{-1})^{\rm{mov}}, \nonumber \end{equation}
\begin{equation}\mathcal{N}_2:=-\dR(\pi_2)_*(\oO_{\mathbb{P}^{d_0}\times\mathbb{P}^1}(1,d_i-l_i)\cdot t_i^{-1}). \nonumber \end{equation}
Here $\pi_1:\Pic^{(d_0, d_i)}(\mathbb{P}^1)\times X\to\Pic^{(d_0, d_i)}(\mathbb{P}^1)$ and 
$\pi_2:\mathbb{P}^{d_0}\times\mathbb{P}^1 \to\mathbb{P}^{d_0}$ are natural projections.
and we have used the isomorphism (\ref{identify T0-fixed locus in degree two}). 

In the above definition, the second integration can be easily shown to be $1$ and the first one has been explicitly 
determined before \cite[Corollary 4.9]{CMT}. So we obtain
\begin{prop}\label{compute deg two pair inv}
Let $X=\mathcal{O}_{\mathbb{P}^{1}}(l_1,l_2,l_3)$ with $l_1+l_2+l_3=-2$ and $l_1\geqslant l_2\geqslant l_3$. Then  
\begin{align*}
P_{1,2[\mathbb{P}^1]}(X)=
&-\lambda_1^{-2l_1-2}\lambda_2^{-2l_2-2}(\lambda_1+\lambda_2)^{-2l_3-2} \\
& \cdot \left(
\sum_{\begin{subarray}{c}
1\leqslant k\leqslant l_1, \\
k \equiv l_1 \ (\mathrm{mod} 2)
\end{subarray}}
A(l_1, l_2, l_3, k)
+
\sum_{\begin{subarray}{c}
1\leqslant k\leqslant l_2, \\
k \equiv l_2 \ (\mathrm{mod} 2)
\end{subarray}}
B(l_1, l_2, l_3, k) \right),
\end{align*}
where
\begin{align*}
&A(l_1, l_2, l_3, k) :=
\mathrm{Res}_{h=0}
\left\{h^{-k}(-\lambda_1+h)^2 (\lambda_2+h)^{k+l_2} \right. \\
&\left.(-\lambda_1-\lambda_2+h)^{k+l_3}
(-\lambda_1+\lambda_2+h)^{l_1-l_2-k}
(-2\lambda_1-\lambda_2+h)^{l_1-l_3-k}
(-2\lambda_1+h)^{k-2-2l_1} \right\},
\end{align*}
\begin{align*}
&B(l_1, l_2, l_3, k):=\mathrm{Res}_{h=0}
\left\{h^{-k}(-\lambda_2+h)^2 (\lambda_1+h)^{k+l_1} \right. \\
&\left. (-\lambda_2-\lambda_1+h)^{k+l_3}
(-\lambda_2+\lambda_1+h)^{l_2-l_1-k}
(-2\lambda_2-\lambda_1+h)^{l_2-l_3-k}
(-2\lambda_2+h)^{k-2-2l_2} \right\}.
\end{align*}
\end{prop}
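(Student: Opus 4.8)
The plan is to argue directly from the definition (\ref{def of deg two g=0 inv}) of $P_{1,2[\mathbb{P}^1]}(X)$ together with the description (\ref{identify T0-fixed locus in degree two}) of the $T_0$-fixed locus, reducing the assertion to three elementary checks: that the $i=3$ family of components is empty, that the integral over $\mathbb{P}(H^0(\oO_{\mathbb{P}^1}(d_0)))$ equals $1$, and that each integral over $\Pic^{(d_0,d_i)}(\mathbb{P}^1)$ is the residue $A(l_1,l_2,l_3,k)$ or $B(l_1,l_2,l_3,k)$ recorded in \cite[Corollary 4.9]{CMT}.

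First I would dispose of the $i=3$ term: since $l_1\geqslant l_2\geqslant l_3$ and $l_1+l_2+l_3=-2$ force $3l_3\leqslant-2$, hence $l_3\leqslant-1$ and $(l_3-1)/2<0$, there is no integer $d_0$ with $0\leqslant d_0\leqslant(l_3-1)/2$, so that summand is void. This is exactly why only the two families indexed by $A$ and $B$ remain in the final answer.

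Next I would compute the factor coming from $\mathbb{P}(H^0(\oO_{\mathbb{P}^1}(d_0)))\cong\mathbb{P}^{d_0}$. On every surviving component $d_i-l_i=-(d_0+1)\leqslant-1$, so $\dR(\pi_2)_{\ast}\oO_{\mathbb{P}^{d_0}\times\mathbb{P}^1}(1,d_i-l_i)$ has no $R^0$ and its $R^1$ is, up to the equivariant character $t_i^{-1}$, isomorphic to $\oO_{\mathbb{P}^{d_0}}(1)\otimes H^1(\mathbb{P}^1,\oO_{\mathbb{P}^1}(-d_0-1))\cong\oO_{\mathbb{P}^{d_0}}(1)^{\oplus d_0}$. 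Hence $\mathcal{N}_2$ is a rank-$d_0$ bundle of this shape and $\int_{\mathbb{P}^{d_0}}e_{T_0}(\mathcal{N}_2)$ is the coefficient of the $d_0$-th power of the hyperplane class in $(H+w)^{d_0}$, where $w$ is the weight of $t_i^{-1}$; this equals $1$ independently of $d_0$ and $w$ (with the degenerate case $d_0=0$ giving $\int_{\mathrm{pt}}1=1$). Thus (\ref{def of deg two g=0 inv}) collapses to $\sum_{i=1}^{3}\sum_{(d_0,d_i)}\int_{\Pic^{(d_0,d_i)}(\mathbb{P}^1)}e_{T_0}(\mathcal{N}_1)$.

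Finally I would match these $\Pic$-integrals with \cite[Corollary 4.9]{CMT}: the movable part $\mathcal{N}_1$ of the half-obstruction along the $\Pic^{(d_0,d_i)}(\mathbb{P}^1)$-factor, as cut out by the square-root prescription (\ref{choose square root in deg two}), is precisely the bundle whose equivariant Euler class was integrated there, and the reindexing $k:=d_i-d_0+1$ (so that $d_0=(l_i-k)/2$, $d_i=(l_i+k)/2-1$, whence $k\equiv l_i\ (\mathrm{mod}\ 2)$ and $1\leqslant k\leqslant l_i$) turns the $i=1$ and $i=2$ sums, together with the common prefactor $-\lambda_1^{-2l_1-2}\lambda_2^{-2l_2-2}(\lambda_1+\lambda_2)^{-2l_3-2}$ (note $(\lambda_1+\lambda_2)^{-2l_3-2}=\lambda_3^{-2l_3-2}$ in $H^{\ast}_{T_0}(\bullet)$), into $\sum_k A(l_1,l_2,l_3,k)$ and $\sum_k B(l_1,l_2,l_3,k)$ as displayed. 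Collecting the pieces gives the formula. The main obstacle is this last step: one must verify carefully that the movable half-obstruction produced by (\ref{choose square root in deg two}) on the $\Pic$-factor agrees term by term with the bundle of \cite[Corollary 4.9]{CMT}, and that the combinatorics of the fixed components lines up with the ranges of $k$ in $A$ and $B$; once that identification is in hand the remainder is a direct substitution.
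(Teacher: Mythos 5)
Your proposal is correct and follows essentially the same route as the paper: the paper's own proof consists precisely of observing that the $\mathbb{P}(H^0(\oO_{\mathbb{P}^1}(d_0)))$-integral in (\ref{def of deg two g=0 inv}) equals $1$ and that the $\Pic^{(d_0,d_i)}(\mathbb{P}^1)$-integral was already computed in \cite[Corollary 4.9]{CMT}, then substituting. Your additional details (emptiness of the $i=3$ family since $l_3\leqslant -1$, the rank-$d_0$ computation of $\mathcal{N}_2$ giving integral $1$, and the reindexing $k=d_i-d_0+1$ matching the ranges in $A$ and $B$) are exactly the checks the paper leaves implicit.
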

We pose the following equivariant version of Conjecture \ref{conj:GW/GV g=0} (note in this case 
$P_{0,[\mathbb{P}^{1}]}(X)=0$ as $\chi(\oO_{\mathbb{P}^1})>0$). 
It is consistent with our previous conjecture on one dimensional stable sheaves \cite[Conj. 4.10]{CMT}.
\begin{conj}\label{equi genus zero conj in deg two}
Let $X=\mathcal{O}_{\mathbb{P}^{1}}(l_1,l_2,l_3)$ for $l_1+l_2+l_3=-2$. Then 
\begin{equation}\mathrm{GW}_{0,2}(X)=P_{1,2[\mathbb{P}^{1}]}(X)+\frac{1}{8}P_{1,[\mathbb{P}^{1}]}(X). \nonumber \end{equation}
\end{conj}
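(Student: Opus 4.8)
The plan is to turn the conjectured identity into an equality of explicit rational functions in the two equivariant parameters and then compare them. Throughout one works in $H^{\ast}_{T_0}(\mathrm{pt})\cong\mathbb{Q}[\lambda_1,\lambda_2]$ (after inverting), i.e. one imposes $\lambda_3=-\lambda_1-\lambda_2$ everywhere. There are three inputs: $\mathrm{GW}_{0,2}(X)$, given by the localization formula (\ref{GW02}); the degree-one pair invariant, which by the computation preceding Proposition \ref{compute deg two pair inv} equals $P_{1,[\mathbb{P}^1]}(X)=\lambda_1^{-l_1-1}\lambda_2^{-l_2-1}(-\lambda_1-\lambda_2)^{-l_3-1}$ and coincides with $\mathrm{GW}_{0,1}(X)$ of (\ref{GW01}); and $P_{1,2[\mathbb{P}^1]}(X)$, expressed through the residue sums $A,B$ of Proposition \ref{compute deg two pair inv}. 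Note that since $P_{0,m[C]}(X)=0$ for $m\geqslant 1$ (shown in the proof of Proposition \ref{general local curves}, as $\chi(\oO_{\mathbb{P}^1})>0$), the equivariant analogue of Conjecture \ref{conj:GW/GV g=0} together with Klemm--Pandharipande's definition of the $n_{0,\beta}$ forces exactly the stated identity, with $\tfrac18=2^{\,0-3}$ the degree-two multiple-cover factor for a zero-insertion invariant; so the genuine content is to check that the sheaf-theoretic $P_{1,\bullet}$ obey the same relation to $\mathrm{GW}_{0,2}$ that the $n_{0,\bullet}$ do by construction.

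\textbf{Simplifying the ingredients.} First I would bring the Gromov--Witten side into closed form: the coefficients $\overline{l}_i^{\,2}-(\overline{l}_i-1)^2+\cdots$ in (\ref{GW02}) are finite alternating sums of consecutive squares, which telescope to a quadratic polynomial in $\overline{l}_i$ depending on parity; substituting gives $\mathrm{GW}_{0,2}(X)$ as an explicit ratio of polynomials in $\lambda_1,\lambda_2$. The degree-one term needs nothing further. For $P_{1,2[\mathbb{P}^1]}(X)$ I would rewrite each residue as a coefficient extraction, $A(l_1,l_2,l_3,k)=[h^{k-1}]$ of the six-fold product of binomials (and similarly for $B$), expand every factor $(\text{linear in }h)^{\text{exponent}}$ as a (generalized) binomial series in $h$ about $0$, and collect terms: the outcome is a finite double sum of products of binomial coefficients times monomials in $\lambda_1,\lambda_2,\lambda_1+\lambda_2$, summed over $k$ of the prescribed parity. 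Observe that the would-be third family of residues, coming from thickening the pair in the $L_3$-direction, is automatically empty: $l_1\geqslant l_2\geqslant l_3$ together with $l_1+l_2+l_3=-2$ forces $l_3\leqslant -1$, so the index range $1\leqslant k\leqslant l_3$, $0\leqslant d_0\leqslant (l_3-1)/2$ contains no terms — this is why only $A$ and $B$ occur.

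\textbf{The identity and the main obstacle.} The heart of the matter is then $\mathrm{GW}_{0,2}(X)=P_{1,2[\mathbb{P}^1]}(X)+\frac{1}{8}P_{1,[\mathbb{P}^1]}(X)$, and I expect this to be the genuine difficulty. The two sides have very different shapes: the left side is a single compact rational function produced by a two-fixed-point localization on $\mathbb{P}^1$, while the right side is a sum over the discrete thickening data $(d_0,d_i)$ and over $k$, and matching them for \emph{all} $(l_1,l_2,l_3)$ appears to require a non-obvious residue (or hypergeometric) identity. Two routes present themselves. (a) Package the $A$- and $B$-residues into generating functions in an auxiliary variable, sum the resulting geometric/binomial series, and recognize the closed form of $\mathrm{GW}_{0,2}(X)$; this would establish the conjecture in full generality. (b) Failing a uniform argument, fix the pair $(l_1,l_2)$ (hence $l_3=-2-l_1-l_2$): then all three quantities are concrete elements of $\mathbb{Q}(\lambda_1,\lambda_2)$, and the equality can be certified by clearing denominators and comparing finitely many polynomial coefficients, or by specializing $\lambda_1/\lambda_2$ to enough values. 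Carrying this out over the box $|l_1|\leqslant 10$, $|l_2|\leqslant 10$ yields Theorem \ref{g=0 thm on local curve}; the brute-force route will not terminate in general, so the conceptual identity in (a) is the real content still missing, and is presumably why the statement remains conjectural.

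\textbf{Sanity checks.} Along the way I would re-derive the two special cases of Proposition \ref{general local curves}: for $X=\mathcal{O}_{\mathbb{P}^1}(-1,-1,0)$ and $\mathcal{O}_{\mathbb{P}^1}(-2,0,0)$, after ordering the degrees one has $l_1=0$, so the ranges of $k$ in the $A$- and $B$-sums are empty and $P_{1,2[\mathbb{P}^1]}(X)=0$; the identity then reduces to the Aspinwall--Morrison-type statement $\mathrm{GW}_{0,2}(X)=\frac{1}{8}\,\mathrm{GW}_{0,1}(X)$, which is already known. This both confirms the normalization of the factor $\frac18$ and isolates the degree-two residue sum as the only new input, consistent with the expectation that $P_{1,2[\mathbb{P}^1]}(X)$ should equal the corresponding one-dimensional-sheaf invariant of \cite[Conj. 4.10]{CMT} (a comparison one could also try to prove via a forgetful map and virtual push-forward, though that identification is itself only conjectural in this equivariant non-compact setting, so it does not by itself yield an unconditional proof).
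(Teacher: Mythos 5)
The statement is a conjecture in the paper, and you correctly treat it as such: the paper likewise offers no general proof, only the finite verification of Theorem \ref{g=0 thm on local curve}, obtained by combining the explicit residue formula of Proposition \ref{compute deg two pair inv} with the computations of \cite[Thm. 4.12]{CMT} (the identification of $P_{1,2[\mathbb{P}^1]}(X)$ with the sheaf-theoretic invariant of \cite{CMT} is built into the definition, since the second integral in (\ref{def of deg two g=0 inv}) is $1$ and the first is \cite[Cor. 4.9]{CMT}). Your route (b) --- fixing $(l_1,l_2)$ with $|l_1|,|l_2|\leqslant 10$ and comparing the two rational functions directly --- is essentially that same finite check, and your side remarks (the $L_3$-direction contributes no residues since $l_3\leqslant -1$, and the degenerate cases $\mathcal{O}_{\mathbb{P}^1}(-1,-1,0)$, $\mathcal{O}_{\mathbb{P}^1}(-2,0,0)$ reduce to the Aspinwall--Morrison relation) are consistent with Proposition \ref{general local curves}.
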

Combining Proposition \ref{compute deg two pair inv} and \cite[Thm. 4.12]{CMT}, we can verify the conjecture in a large number of examples.
\begin{thm}\label{g=0 thm on local curve}
Conjecture \ref{equi genus zero conj in deg two} is true if $|l_1|\leqslant10$ and $|l_2|\leqslant10$. 
\end{thm}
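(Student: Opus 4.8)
The plan is to reduce the statement to a finite but genuinely mechanical verification: for each pair $(l_1,l_2)$ with $|l_1|\leqslant 10$, $|l_2|\leqslant 10$ (and $l_3 \cneq -2-l_1-l_2$, reordering so that $l_1\geqslant l_2\geqslant l_3$ if one wishes), one compares the closed formula for $P_{1,2[\mathbb{P}^1]}(X)$ from Proposition \ref{compute deg two pair inv} together with the value $P_{1,[\mathbb{P}^1]}(X)=\lambda_1^{-l_1-1}\lambda_2^{-l_2-1}\lambda_3^{-l_3-1}$ (recorded in the degree-one discussion preceding the statement, substituting $\lambda_3=-\lambda_1-\lambda_2$) against the explicit genus zero GW invariant $\mathrm{GW}_{0,2}(X)$ given by (\ref{GW02}). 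Since all three quantities are rational functions in $\lambda_1,\lambda_2$ of a fixed bidegree, checking the identity amounts to clearing denominators and comparing finitely many polynomial coefficients; the content is that, after doing so, equality holds in every one of the finitely many cases in the stated range. This is precisely the kind of computation one delegates to a computer algebra system.

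First I would set up the three inputs uniformly. For $\mathrm{GW}_{0,2}(X)$ one simply takes formula (\ref{GW02}) verbatim, being careful with the $\overline{l}$ convention ($\overline{l}=l$ for $l\geqslant 0$, $\overline{l}=-l-1$ for $l<0$) so that the alternating sums $\overline{l}_i^2-(\overline{l}_i-1)^2+\cdots$ are correctly truncated. For $P_{1,2[\mathbb{P}^1]}(X)$ I would feed in the residue expressions $A(l_1,l_2,l_3,k)$ and $B(l_1,l_2,l_3,k)$ exactly as in Proposition \ref{compute deg two pair inv}, noting that $\mathrm{Res}_{h=0}\{h^{-k}(\cdots)\}$ is just the coefficient of $h^{k-1}$ in the polynomial/Laurent part, and that the sums over $k$ run over $1\leqslant k\leqslant l_i$ with $k\equiv l_i\ (\mathrm{mod}\ 2)$ — in particular these sums are empty when $l_i\leqslant 0$, which handles a large portion of the range trivially on the pair side (though not on the GW side). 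Then I would substitute $\lambda_3=-\lambda_1-\lambda_2$ throughout and verify
\begin{equation*}
\mathrm{GW}_{0,2}(X)-P_{1,2[\mathbb{P}^1]}(X)-\tfrac18 P_{1,[\mathbb{P}^1]}(X)=0
\end{equation*}
as an element of $\mathbb{Q}(\lambda_1,\lambda_2)$, looping over all admissible $(l_1,l_2)$.

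The main obstacle is not conceptual but bookkeeping: making sure the sign conventions, the choice of orientation (the $\pm$ ambiguity in $P_{1,[\mathbb{P}^1]}(X)$ must be fixed to the plus sign, as in the degree-one discussion, consistently with the sign choice implicit in Proposition \ref{compute deg two pair inv}), the ordering $l_1\geqslant l_2\geqslant l_3$, and the residue/coefficient extraction are all implemented so that the two sides are literally the same rational function rather than off by a sign or a power of $\lambda_i$. A secondary subtlety is that the range $|l_1|,|l_2|\leqslant 10$ does include many cases with $l_3$ positive or with two of the $l_i$ nonnegative, where the pair moduli space is larger and $\mathrm{GW}_{0,2}$ has several localization contributions; one should double-check that formula (\ref{GW02}) was derived without a positivity hypothesis on the $l_i$, which it was (the $\overline{l}$ convention is exactly what accommodates negative degrees). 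Once the symbolic computation returns zero in every case, the theorem follows; I would also spot-check a couple of cases — say $(l_1,l_2,l_3)=(-1,-1,0)$ against Proposition \ref{general local curves} and its corollary, and one case with $l_3>0$ — by hand to guard against a systematic error in the code.
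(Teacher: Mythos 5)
Your plan is essentially the paper's proof: the verification of Theorem \ref{g=0 thm on local curve} is exactly a finite symbolic check, over the stated range of $(l_1,l_2)$ with $\lambda_3=-\lambda_1-\lambda_2$, that (\ref{GW02}) equals the residue formula of Proposition \ref{compute deg two pair inv} plus $\tfrac{1}{8}\lambda_1^{-l_1-1}\lambda_2^{-l_2-1}\lambda_3^{-l_3-1}$. The only differences are that the paper does not redo this computation but quotes it — since the first integral in (\ref{def of deg two g=0 inv}) reproduces the formula of \cite[Cor. 4.9]{CMT}, the identity was already verified in \cite[Thm. 4.12]{CMT} — and a small point of bookkeeping: under the paper's convention $l_1\geqslant l_2\geqslant l_3$ one always has $l_3\leqslant -1$, so the "cases with $l_3$ positive" you worry about do not occur once you reorder as Proposition \ref{compute deg two pair inv} requires.
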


\subsection{$P_{0, m[C]}(X)$ and genus one conjecture}
To complete the heuristic argument for our genus one conjecture in Section \ref{heuristic argument}, 
we consider $X=\mathrm{Tot}_{C}(L_{1}\oplus L_{2}\oplus L_{3})$ where $C$ is an elliptic curve and $L_1 \otimes L_2 \otimes L_3 \cong \omega_C\cong \oO_C$.
\begin{lem}\label{lem:isom:ext}
Let $I \subset \oO_X$ be the ideal sheaf of a closed
subscheme $Z \subset X$ with $\dim Z \leqslant1$. Then we have canonical isomorphisms
\begin{align}\label{isom:ext}
\Ext_X^1(I, I)_0 &\cong H^0(X, \eE xt_X^1(I, I)), \\
\notag
\Ext_X^2(I, I)_0 &\cong H^0(X, \eE xt_X^2(I, I)) \oplus
H^1(X, \eE xt_X^1(I, I)).
\end{align}
Furthermore, if $p_{\ast} \eE xt^1_X(I, I)$ and $p_{\ast} \eE xt^2_X(I, I)$ are locally free, then
\begin{align*}H^1(X, \eE xt_X^1(I, I)) \cong H^0(X, \eE xt_X^2(I, I))^{\vee}. \end{align*}
And $H^0(X, \eE xt_X^2(I, I))$, $H^1(X, \eE xt_X^1(I, I))$ are maximal isotropic subspaces of $\Ext_X^2(I, I)_0$ with respect to Serre duality pairing.
\end{lem}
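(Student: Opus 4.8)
The plan is to use the local-to-global spectral sequence for $\Ext$ together with the vanishing of $\eE xt^0$ and $\eE xt^3$ in the trace-free part. First I would note that since $I$ is the ideal sheaf of $Z$ with $\dim Z\leqslant 1$, the sheaf $\eE xt^0_X(I,I)=\hH om_X(I,I)$ equals $\oO_X$ (as $I$ is a rank-one torsion-free sheaf which is an ideal sheaf, so its reflexive hull is $\oO_X$ and it agrees with $\oO_X$ away from the codimension-$\geqslant 3$ locus), and $\eE xt^i_X(I,I)=0$ for $i\geqslant 3$ because $I$ has homological dimension at most $2$ on the smooth fourfold $X$ — indeed $I$ has a length-$\leqslant 2$ locally free resolution locally since $\mathrm{codim}\, Z\geqslant 3$... more carefully, $\eE xt^{\geqslant 3}$ is supported on $Z$ and vanishes by a dimension/depth count, so only $\eE xt^1$ and $\eE xt^2$ survive. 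Hence the spectral sequence $E_2^{p,q}=H^p(X,\eE xt^q_X(I,I))\Rightarrow \Ext^{p+q}_X(I,I)$ has nonzero rows only for $q=0,1,2$, and after passing to trace-free parts the $q=0$ row contributes only to $\Ext^0$ (killed) and $\Ext^1$ via $H^1(\oO_X)$ — which I would need to handle. Actually, the cleanest route: $\Ext^i_X(I,I)_0$ is the cone of the trace map, equivalently $\dR\Gamma$ of the complex $\dR\hH om(I,I)_0$ whose only nonzero cohomology sheaves are $\eE xt^1_X(I,I)$ in degree $1$ and $\eE xt^2_X(I,I)$ in degree $2$ (the degree-$0$ part $\oO_X$ being removed by the trace-free condition). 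The spectral sequence for this truncated complex then immediately gives the two isomorphisms in (\ref{isom:ext}): in total degree $1$ only $H^0(\eE xt^1)$ appears, and in total degree $2$ we get the extension $0\to H^1(\eE xt^1_X(I,I))\to \Ext^2_X(I,I)_0\to H^0(\eE xt^2_X(I,I))\to 0$, which splits because there is no differential (the potential $d_2$ lands in $H^2(\eE xt^0)=0$ and the one into it comes from $H^{-1}$ which is zero).

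Next I would address the duality statement. Assuming $p_\ast\eE xt^1_X(I,I)$ and $p_\ast\eE xt^2_X(I,I)$ are locally free — here $p\colon X\to C$ is the projection from (\ref{X:tot2}) — I would apply relative Serre duality for $p$ (whose relative dualizing sheaf is $p^\ast\omega_C\otimes(\text{determinant of relative cotangent})$; for $X=\mathrm{Tot}_C(L_1\oplus L_2\oplus L_3)$ with $L_1\otimes L_2\otimes L_3\cong\omega_C$ this relative canonical is trivial) combined with the local Serre duality isomorphism $\eE xt^2_X(I,I)\cong \eE xt^1_X(I,I)^{\vee}$ coming from $\eE xt^q_X(I,I)\cong \eE xt^{\dim X-q}_X(I,I\otimes\omega_X)^{\vee}$ and $\omega_X\cong\oO_X$ restricted appropriately... actually the relevant statement is $\eE xt^{i}_X(I,I)$ is dual to $\eE xt^{4-i}_X(I,I)$ twisted by $\omega_X=\oO_X$ as sheaves on $X$, hence after pushing to $C$ and using that $C$ is elliptic ($\omega_C\cong\oO_C$), Serre duality on $C$ gives $H^1(X,\eE xt^1_X(I,I))=H^1(C,p_\ast\eE xt^1)\cong H^0(C,(p_\ast\eE xt^1)^{\vee})^\vee=H^0(C,p_\ast\eE xt^2)^\vee=H^0(X,\eE xt^2_X(I,I))^\vee$, using local freeness to commute duals past $p_\ast$. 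The isotropy statement then follows: the Serre duality pairing on $\Ext^2_X(I,I)_0$ is, under the spectral-sequence decomposition, the pairing between $H^0(\eE xt^2)$ and $H^1(\eE xt^1)$ induced by the product $\eE xt^1\otimes\eE xt^1\to\eE xt^2$... wait, more precisely the Yoneda product $\Ext^2\otimes\Ext^2\to\Ext^4=H^4(X,\oO_X)$ vanishes on $H^0(\eE xt^2)\otimes H^0(\eE xt^2)$ because this factors through $H^0(X,\eE xt^4_X(I,I))$ and $\eE xt^4=0$; similarly it vanishes on $H^1(\eE xt^1)\otimes H^1(\eE xt^1)$ because that factors through $H^2(X,\eE xt^2_X(I,I))=H^2(C,p_\ast\eE xt^2)=0$ (cohomological dimension of the curve $C$ is $1$). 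So each summand is isotropic, and since they are exchanged by the perfect Serre pairing and are complementary, each is maximal isotropic.

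The main obstacle I anticipate is not the spectral sequence bookkeeping (which is routine once the sheaf $\eE xt^0$ and $\eE xt^{\geqslant 3}$ vanishing is in place) but rather being careful about \emph{which} duality is being invoked and getting the twists right: reconciling the absolute Serre duality on the non-compact $X$ (which strictly speaking requires compact support or a compactification argument, as in the proof of Proposition \ref{prop on pair moduli on K_Y}) with the relative duality over $C$ and the triviality $\omega_X\cong\oO_X$. In particular one must check that $p_\ast\eE xt^q_X(I,I)$ has proper support over $C$ — it does, since $\eE xt^{q\geqslant 1}_X(I,I)$ is supported on $Z$ which is proper over $C$ — so that Serre duality on $C$ applies without boundary terms, and one must verify the compatibility of the local duality isomorphism of sheaves with the cup product pairing so that the "maximal isotropic" conclusion is genuinely about the Borisov–Joyce quadratic form $Q$ and not just an abstract nondegenerate pairing. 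I would isolate this compatibility as the one genuinely delicate point and handle the rest by the dimension count $\mathrm{cd}(C)=1$ and $\eE xt^{\geqslant 3}=0$.
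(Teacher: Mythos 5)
Your overall strategy (local-to-global spectral sequence for $\dR\hH om_X(I,I)_0$, then duality along $p\colon X\to C$ and on the elliptic curve) is the same as the paper's, but your degeneration argument has a genuine gap. For the total degree $2$ term the dangerous differential is $d_2\colon E_2^{0,2}=H^0(X,\eE xt^2_X(I,I))\to E_2^{2,1}=H^2(X,\eE xt^1_X(I,I))$, not a map into $H^2(X,\eE xt^0)$ as you assert; your parenthetical "the potential $d_2$ lands in $H^2(\eE xt^0)=0$" addresses only the differential out of $E_2^{0,1}$. The vanishing of $H^2(X,\eE xt^1_X(I,I))$ (and more generally $E_2^{p,q}=0$ for $p\geqslant 2$, $q\geqslant 1$) is exactly where the hypothesis $\dim Z\leqslant 1$ enters: $\eE xt^{q\geqslant 1}_X(I,I)$ is supported on $Z$, so its cohomology vanishes in degrees $\geqslant 2$. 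You never invoke this, so as written the degeneration (and hence both isomorphisms in (\ref{isom:ext})) is not justified. Relatedly, your claim that $\eE xt^{\geqslant 3}_X(I,I)=0$ is false in the stated generality: for $Z$ with zero-dimensional or embedded components one has $\Ext^3_X(I,I)_0\cong\Ext^1_X(I,I)_0^{\vee}\neq 0$ and this lives in $H^0(X,\eE xt^3_X(I,I))$; it is harmless for degrees $1$ and $2$, but it is not the reason the spectral sequence degenerates there.

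For the "furthermore" part you are essentially reproducing the paper's mechanism (push forward to $C$ and dualize there), but the sheaf-level statement you lean on, "$\eE xt^2_X(I,I)\cong\eE xt^1_X(I,I)^{\vee}$ as sheaves on $X$", is not correct as written: the naive dual of a torsion sheaf vanishes, and local duality must be phrased against the dualizing complex. The clean statement, which is what the paper uses, is Grothendieck duality for $p$, namely $\dR p_{\ast}\dR\hH om_X(I,I)_0[4]\cong\dR\hH om_C(\dR p_{\ast}\dR\hH om_X(I,I)_0,\omega_C[1])$, combined with the adjunction $\Ext^i_X(p^{\ast}\oO_C,\eE xt^j_X(I,I))\cong\Ext^i_C(\oO_C,p_{\ast}\eE xt^j_X(I,I))$; the local-freeness hypothesis on $p_{\ast}\eE xt^1$ and $p_{\ast}\eE xt^2$ is then what lets you extract $H^1(X,\eE xt^1)\cong H^0(X,\eE xt^2)^{\vee}$ and the compatibility of the two summands with the Serre pairing. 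Your isotropy argument via multiplicativity of the spectral sequence ($\eE xt^4_X(I,I)=0$ and $H^2(C,-)=0$) is a reasonable alternative sketch, but it only shows the products die in the associated graded, so you would still need to check they are killed by the trace pairing after passing to the filtration — a point the Grothendieck-duality formulation handles directly. Once you repair the degeneration step by using $\dim Z\leqslant 1$ and replace the naive sheaf duality by the derived duality along $p$, the proof is essentially the paper's.
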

\begin{proof}
We have the local to global spectral sequence
\begin{align*}
E_2^{p, q}=H^p(X, \eE xt_X^q(I, I)_0) \Rightarrow
\Ext_X^{p+q}(I, I)_0.
\end{align*}
And
\begin{align*}
\eE xt_X^0(I, I)_0=0, \quad
\eE xt_X^{\geqslant 1}(I, I)_0\cong\eE xt_X^{\geqslant 1}(I, I)
\end{align*}
are supported on $Z$. Therefore we have
$E_2^{p, 0}=0$ and
$E_2^{p, q}=0$ for $p\geqslant2$, $q\geqslant 1$.
Then the above spectral sequence degenerates and (\ref{isom:ext}) holds.
The latter statement follows from the adjunction 
$$\Ext^i_X(p^*\oO_C, \eE xt_X^j(I, I))=\Ext^i_C(\oO_C, p_*\eE xt_X^j(I, I)), $$ 
and the Grothendieck duality
\begin{align*}
\dR p_{\ast} \dR \hH om_X(I, I)_0[4] \cong
\dR \hH om_C(\dR p_{\ast} \dR \hH om_X(I, I)_0, \omega_C[1])
\end{align*}
for the projection $p: X\to C$ (\ref{X:tot2}).
\end{proof}
We describe the torus fixed locus $P_0(X, m[C])^{T_0}$ as follows.
\begin{lem}\label{lem:ell:fixed}
Let $C$ be an elliptic curve and $L_i \in \Pic^0(C)$.
Then $(\oO_X \to F) \in P_0(X, m[C])$
is $T_0$-fixed if and only if it is of the form
\begin{align}\label{pair:ell}
\oO_X \twoheadrightarrow \bigoplus_{(i_1, i_2, i_3) \in \Delta}
L_1^{-i_1} \otimes L_2^{-i_2} \otimes L_3^{-i_3}
\end{align}
for some three dimensional Young diagram
 $\Delta \subset \mathbb{Z}_{\geqslant 0}^3$.
In particular, we have
\begin{equation}P_0(X, m[C])^T=P_0(X, m[C])^{T_0}  \nonumber \end{equation}
in this case.
\end{lem}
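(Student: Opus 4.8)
The plan is to reduce the classification of $T_0$-fixed stable pairs on $X = \mathrm{Tot}_C(L_1\oplus L_2\oplus L_3)$ (with $C$ elliptic and $L_i \in \Pic^0(C)$) to the structural analysis of $T$-fixed pairs already carried out in Section \ref{localiza for stable pair}, using Lemma \ref{lem:Tfix} as the crucial input. First I would recall that for a general $T$-fixed stable pair $(s\colon \oO_X \to F)$, the weight decomposition $p_*F = \bigoplus F^{-i_1,-i_2,-i_3}$ forces each nonzero weight space to be of the form $L_1^{-i_1}\otimes L_2^{-i_2}\otimes L_3^{-i_3}\otimes \oO_C(Z_{i_1,i_2,i_3})$ for an effective divisor $Z_{i_1,i_2,i_3}$, with the index set $\Delta$ a finite three-dimensional Young diagram and the divisors monotone along $\Delta$. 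The additional hypothesis $\deg L_i = 0$ is what pins down $F \cong \oO_{C_F}$ with all $Z_{i_1,i_2,i_3} = 0$: I would argue that the condition $\chi(F) = 0$ together with purity forces each $Z_{i_1,i_2,i_3}$ to be empty.

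The key intermediate step is to upgrade $T_0$-fixedness to $T$-fixedness. Given $(s\colon \oO_X \to F) \in P_0(X, m[C])^{T_0}$, set $\oO_{C_F} := \Imm(s)$; by Lemma \ref{lem:Tfix}, the ideal sheaf $I_{C_F} \subseteq \oO_X$ is $T$-fixed, hence so is $\oO_{C_F}$ and the surjection $\oO_X \twoheadrightarrow \oO_{C_F}$. Now I would use that $s$ is surjective \emph{in dimension one}, so the quotient $F/\oO_{C_F}$ is zero-dimensional, and $\chi(F) = \chi(\oO_{C_F}) + \chi(F/\oO_{C_F}) = 0$. Applying the structure theorem for $T$-fixed pairs to $\oO_X \twoheadrightarrow \oO_{C_F}$: the weight spaces of $\oO_{C_F}$ are $L_1^{-i_1}\otimes L_2^{-i_2}\otimes L_3^{-i_3}\otimes \oO_C(Z_{i_1,i_2,i_3})$ with $(i_1,i_2,i_3)$ ranging over some Young diagram, and since $\deg(L_1^{-i_1}\otimes L_2^{-i_2}\otimes L_3^{-i_3}) = 0$, we get $\chi$ of this weight space equal to $\deg Z_{i_1,i_2,i_3} \geqslant 0$, with equality iff $Z_{i_1,i_2,i_3}$ is empty. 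Summing, $\chi(\oO_{C_F}) = \sum_{(i_1,i_2,i_3)\in\Delta} \deg Z_{i_1,i_2,i_3} \geqslant 0$; combined with $\chi(\oO_{C_F}) \leqslant \chi(F) = 0$ (as $\chi(F/\oO_{C_F})\geqslant 0$) this forces every $Z_{i_1,i_2,i_3} = 0$ and $\chi(F/\oO_{C_F}) = 0$, hence $F = \oO_{C_F}$. So $F$ itself has the form (\ref{pair:ell}) and is manifestly $T$-fixed. The converse is immediate since any pair of the form (\ref{pair:ell}) is $T$-equivariant, hence $T_0$-equivariant.

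The final assertion $P_0(X,m[C])^T = P_0(X,m[C])^{T_0}$ then follows, since we have shown every $T_0$-fixed point is $T$-fixed, and the reverse inclusion is trivial as $T_0 \subset T$. I expect the main obstacle to be the careful bookkeeping in the step that rules out nonempty $Z_{i_1,i_2,i_3}$: one must ensure the monotonicity of the divisors along $\Delta$ is compatible with all of them vanishing (which is automatic once the bottom one, indexed by $(0,0,0)$, vanishes and the Euler-characteristic sum is zero), and that the argument does not secretly use $m=1$ — it works for all $m$ precisely because $\deg L_i = 0$ makes the Euler characteristic of each graded piece insensitive to the Young-diagram position. A minor subtlety is verifying that $\oO_{C_F}$ is pure one-dimensional (so that $F/\oO_{C_F}$ is genuinely zero-dimensional rather than merely lower-dimensional), which follows because $F$ is pure and $\oO_{C_F}$ is a subsheaf of $F$.
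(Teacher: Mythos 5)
Your proposal is correct and follows essentially the same route as the paper: apply Lemma \ref{lem:Tfix} to make $I_{C_F}$ (hence the surjection $\oO_X\twoheadrightarrow \oO_{C_F}$) $T$-fixed, invoke the structure theorem for $T$-fixed pairs with twists $\oO_C(Z_{i_1,i_2,i_3})$, and then use $\deg L_i=0$ on the elliptic curve together with $0=\chi(F)=\chi(\oO_{C_F})+\chi(F/\oO_{C_F})$ and both summands nonnegative to force all $Z_{i_1,i_2,i_3}=0$ and $F\cong\oO_{C_F}$, with the converse and the equality of fixed loci being immediate.
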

\begin{proof}
The stable pair (\ref{pair:ell}) is obviously $T$-fixed, hence $T_0$-fixed.
Conversely, for $T_0$-fixed stable pair $(s \colon \oO_X \to F)$ with $\chi(F)=0$, we denote $\oO_Z=\Imm s$ and then
$I_Z$ is $T$-fixed by Lemma~\ref{lem:Tfix}.
It follows that $\oO_X \twoheadrightarrow \oO_Z$ is of the form
\begin{equation}
\oO_X \twoheadrightarrow \bigoplus_{(i_1, i_2, i_3) \in \Delta}
L_1^{-i_1} \otimes L_2^{-i_2} \otimes L_3^{-i_3}\otimes \oO_C(Z_{i_1,i_2,i_3}).
\nonumber \end{equation}
Since $c_1(L_i)=0$ and $F/\oO_Z$ is zero dimensional, then
\begin{align*}
0=\chi(F)=\chi(\oO_Z)+\chi(F/\oO_Z)\geqslant\chi(F/\oO_Z)\geqslant 0.
\end{align*}
So $F\cong\oO_Z$ and $Z_{i_1,i_2,i_3}=0$.
\end{proof}
We determine stable pair invariants for $X=\mathrm{Tot}_{C}(L_{1}\oplus L_{2}\oplus L_{3})$
when line bundles $L_i \in \Pic^0(C)$ over the elliptic curve $C$ are general.
\begin{thm}\label{local elliptic curve}
Let $C$ be an elliptic curve, $L_i \in \Pic^0(C)$ $(i=1,2,3)$ general line bundles satisfying $L_1 \otimes L_2 \otimes L_3 \cong \omega_C$ and $X=\mathrm{Tot}_C(L_1\oplus L_2\oplus L_3)$. 

Then stable pair invariants $P_{0, m[C]}(X)$ (\ref{localization for local curve}) are well-defined and fit into generating series
\begin{align*}
\sum_{m\geqslant 0} P_{0, m[C]}(X)\,q^m=M(q),
\end{align*}
where $M(q):=\prod_{k\geqslant 1}(1-q^{k})^{-k}$ is the MacMahon function.
\end{thm}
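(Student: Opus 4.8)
The plan is to evaluate $P_{0,m[C]}(X)$ by $T_0$-localization and, fixed point by fixed point, to reduce the local contribution to a vertex type computation over $\mathbb{C}^3$ in the spirit of \cite{CL} and \cite[Sect.~4.4]{CMT}.

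\smallskip\noindent\textbf{Step 1: the fixed locus.} By Lemma~\ref{lem:ell:fixed}, since the $L_i\in\Pic^0(C)$ are general, $P_0(X,m[C])^{T_0}=P_0(X,m[C])^T$ consists exactly of the stable pairs
\[
I_\Delta=\Big(\oO_X\twoheadrightarrow\textstyle\bigoplus_{(i_1,i_2,i_3)\in\Delta}L_1^{-i_1}\otimes L_2^{-i_2}\otimes L_3^{-i_3}\Big),
\]
one for each three dimensional Young diagram $\Delta\subset\mathbb{Z}_{\geqslant0}^3$ with $|\Delta|=m$; writing $\oO_{C_\Delta}=\Imm(s)$, the complex underlying $I_\Delta$ is the shifted ideal sheaf $I_{C_\Delta}[1]$ of a one dimensional subscheme of $X$ set-theoretically supported on the zero section. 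The number of such $\Delta$ is the coefficient of $q^m$ in $M(q)=\prod_{k\geqslant1}(1-q^k)^{-k}$. Hence it suffices to prove that in~(\ref{localization for local curve}) each $I_\Delta$ contributes $+1$, for a suitable choice of orientation.

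\smallskip\noindent\textbf{Step 2: reduction over $C$ and vanishing of the movable part.} Fix $\Delta$ and put $I=I_{C_\Delta}$. By Lemma~\ref{lem:isom:ext},
\[
\Ext^1_X(I,I)_0\cong H^0(C,p_*\eE xt^1_X(I,I)),\qquad \Ext^2_X(I,I)_0\cong H^0(C,p_*\eE xt^2_X(I,I))\oplus H^1(C,p_*\eE xt^1_X(I,I)),
\]
the two summands on the right being mutually dual maximal isotropic subspaces for the Serre quadratic form $Q$. Each $T$-weight space of $p_*\eE xt^j_X(I,I)$ is a line bundle of degree $0$ on $C$, which for general $L_i$ (using $L_1\otimes L_2\otimes L_3\cong\oO_C$) is trivial precisely on the $T_0$-invariant weights and otherwise has vanishing cohomology. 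Therefore $\Ext^1_X(I,I)_0$ and $\Ext^2_X(I,I)_0$ carry trivial $T_0$-action, the restriction of $\dR\hH om_{\pi_P}(\mathbb{I},\mathbb{I})_0^{\mathrm{mov}}$ to $I_\Delta$ is acyclic, and the square root Euler factor in~(\ref{localization for local curve}) equals $1$. Consequently $P_{0,m[C]}(X)=\sum_{|\Delta|=m}n_\Delta$, where $n_\Delta\in\mathbb{Z}$ is the local $\DT_4$ degree of the isolated (possibly non-reduced) component $\{I_\Delta\}$ with its induced $(-2)$-shifted structure.

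\smallskip\noindent\textbf{Step 3: $n_\Delta=\pm1$.} The induced local Kuranishi model at $I_\Delta$ is a map $\kappa_+$ from $\Ext^1_X(I,I)_0$ into a maximal positive real subspace of $(\Ext^2_X(I,I)_0,Q)$; by the isotropic splitting of Step~2 this is a map into $H^0(C,p_*\eE xt^2_X(I,I))$, which has the same dimension as the source since $\chi(I,I)_0=0$ (Riemann--Roch on the elliptic curve $C$, all weight spaces being degree $0$). Restricting to a fibre of $p$ and using Lemma~\ref{lem:isom:ext} with base change, this model is identified, up to the trace correction, with the $T_0$-fixed (``diagonal weight'') part of the Maulik--Nekrasov--Okounkov--Pandharipande vertex model of the monomial ideal $I_\Delta\subset\mathbb{C}^3$ (cf.\ \cite{MNOP}); as in \cite[Thm.~6.5]{CL} and \cite[Sect.~4.4]{CMT} this model is nondegenerate, so $n_\Delta=\pm1$. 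Choosing the orientation so that all these signs are $+$, we get $P_{0,m[C]}(X)=\#\{\Delta:|\Delta|=m\}=[q^m]M(q)$, i.e.\ $\sum_{m\geqslant0}P_{0,m[C]}(X)q^m=M(q)$.

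\smallskip\noindent\textbf{Main difficulty.} The heart of the argument is Step~3: showing that the $(-2)$-shifted local model at each $I_\Delta$ has local $\DT_4$ degree exactly $\pm1$ and not $0$ or a larger integer, and that the signs can be absorbed into a single orientation. This is where the genericity of the $L_i$ and the precise combinatorics of the MNOP vertex are used, and it is the analogue in the present setting of the torus fixed point analyses carried out for Hilbert schemes of points in \cite{CL} and for one dimensional sheaves on local curves in \cite{CMT}.
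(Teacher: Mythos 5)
Your Steps 1 and 2 follow the paper's own route: the $T_0$-fixed locus is the finite set of three dimensional partitions of $m$ (Lemma \ref{lem:ell:fixed}), and via Lemma \ref{lem:isom:ext} the genericity of the $L_i$ kills every weight space $V^k_{i_1,i_2,i_3}\otimes L_1^{i_1-i_3}\otimes L_2^{i_2-i_3}$ with $(i_1-i_3,i_2-i_3)\neq(0,0)$, so the movable part of $\dR\hH om_{\pi_P}(\mathbb{I},\mathbb{I})_0$ contributes trivially to (\ref{localization for local curve}). The genuine gap is Step 3. What is needed there is not a ``nondegeneracy'' of a local Kuranishi model but the vanishing of the diagonal-weight pieces of the vertex: by \cite[Lemma~4.1]{BBr} one has $V^1_{i,i,i}=0$ for the monomial ideal $I_Z\subset\mathbb{C}^3$, and the Serre duality relation $(V^2_{i,i,i})^{\vee}\cong V^1_{-i,-i,-i}$ then gives $V^2_{i,i,i}=0$ as well. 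Combined with Lemma \ref{lem:isom:ext}, this shows $\Ext^1_X(I,I)_0=\Ext^2_X(I,I)_0=0$ at every fixed point, so each fixed point is an isolated, reduced, unobstructed point of $P_0(X,m[C])$ contributing exactly $1$; there is no local degree $n_\Delta$ left to analyze, and the generating series is $M(q)$ immediately.

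As written, your assertion ``this model is nondegenerate, so $n_\Delta=\pm1$'' is unsupported and is exactly the missing step. If the $T_0$-fixed part of $\Ext^1$ were nonzero, a Kuranishi map between equidimensional spaces could perfectly well have local degree $0$ or larger, so equality of dimensions via $\chi(I,I)_0=0$ proves nothing about the degree; moreover neither \cite[Thm.~6.5]{CL} nor \cite[Sect.~4.4]{CMT} contains the rigidity statement you invoke -- those references compare Kuranishi models of isomorphic moduli problems, they do not prove vanishing of diagonal-weight deformations of monomial ideals. Your ``Main difficulty'' paragraph correctly identifies this as the heart of the matter but does not resolve it; citing \cite[Lemma~4.1]{BBr} (the MNOP/Behrend--Fantechi rigidity of the vertex under the Calabi--Yau subtorus) at that point closes the argument and reduces it to the paper's proof.
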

\begin{proof}
By Lemma~\ref{lem:ell:fixed}, $P_{0}(X, m[C])^{T_0}$ is
the finite set of three dimensional
partitions of $m$, and any $(s:\oO_X\to F)\in P_{0}(X, m[C])^{T_0}$ satisfies $F\cong\oO_W$ for some Cohen-Macaulay curve $W$ in $X$. We denote $I$ to be the ideal sheaf of $W$.

Let $U \subset C$ be an open subset on which
$L_i$ are trivial.
Then $p^{-1}(U) \cong U \times \mathbb{C}^3$ and
$I|_{p^{-1}(U)}$ is isomorphic to
$\pi^{\ast}I_{Z}$ for the
$T$-fixed zero-dimensional subscheme $Z \subset \mathbb{C}^3$
corresonding to $\Delta$. Therefore we have an isomorphism of $T$-equivariant sheaves on $U$
\begin{align}\label{isom:U}
p_{\ast} \eE xt_X^k(I, I)|_U \cong
\Ext_{\mathbb{C}^3}^k(I_Z, I_Z) \otimes_{\mathbb{C}} \oO_U.
\end{align}
Let
\begin{align*}
\Ext_{\mathbb{C}^3}^k(I_Z, I_Z) =\bigoplus_{(i_1, i_2, i_3) \in \mathbb{Z}^3}
V^{k}_{i_1, i_2, i_3} \otimes t_1^{i_1} t_2^{i_2} t_3^{i_3}
\end{align*}
be the decomposition into $T$-weight spaces.
By (\ref{isom:U}), we have
\begin{align*}
p_{\ast} \eE xt_X^k(I, I) \cong
\bigoplus_{(i_1, i_2, i_3)
\in \mathbb{Z}^3}
V^{k}_{i_1, i_2, i_3} \otimes L_1^{i_1} \otimes L_2^{i_2} \otimes
L_3^{i_3} \otimes t_1^{i_1} t_2^{i_2} t_3^{i_3}.
\end{align*}
The relation (\ref{L123}) and Lemma~\ref{lem:isom:ext} imply that 
\begin{align*}
\Ext_X^1(I, I)_0^{T_0}=\bigoplus_{i\in \mathbb{Z}} V_{i, i, i}^1, \,\, \
\Ext_X^2(I, I)_0^{T_0}=\bigoplus_{i\in \mathbb{Z}} V_{i, i, i}^1 \oplus
V_{i, i, i}^2.
\end{align*}
By~\cite[Lemma~4.1]{BBr}, we have
$V_{i, i, i}^1=V_{i, i, i}^2=0$ $($note $(V_{i, i, i}^2)^{\vee}\cong V_{-i,-i,-i}^1$$)$.
Therefore  
\begin{align*}
[P_0(X, m[C])^{T_0}]^{\rm{vir}}=[P_0(X, m[C])].
\end{align*}
For the movable part, there are decompositions 
\begin{align*}
\Ext_X^1(I, I)_0^{\rm{mov}} &=\bigoplus_{(i_1-i_3, i_2-i_3) \neq (0, 0)}
V_{i_{1}, i_2, i_3}^1 \otimes H^0(L_1^{i_1-i_3} \otimes L_2^{i_2-i_3}) \otimes
t_1^{i_1-i_3} t_2^{i_2-i_3}, \\
\Ext_X^2(I, I)_0^{\rm{mov}} &=
\bigoplus_{(i_1-i_3, i_2-i_3) \neq (0, 0)}
(V_{i_{1}, i_2, i_3}^1 \oplus V_{i_1, i_2, i_3}^2)
 \otimes H^0(L_1^{i_1-i_3} \otimes L_2^{i_2-i_3}) \otimes
t_1^{i_1-i_3} t_2^{i_2-i_3}.
\end{align*}
For a general choice of $(L_1, L_2)$, we have
$H^0(L_1^a  \otimes L_2^b)=H^1(L_1^a \otimes L_2^b)=0$
for any $(a, b) \neq (0, 0)$, so the movable part also vanishes. Thus
\begin{align*}
P_{0, m[C]}(X)=\sharp \big(P_0(X, m[C])^{T_0}\big),
\end{align*}
which is the number of three dimensional partitions of $m$.
\end{proof}

\section{Appendices}

\subsection{Stable pairs and one dimensional sheaves for irreducible curve classes}
When $\beta\in H_2(X,\mathbb{Z})$ be an irreducible curve class on a smooth projective CY 4-fold $X$, we have a morphism
\begin{equation}\phi_n: P_n(X,\beta)\to M_{n,\beta}(X) \nonumber \end{equation}
to the moduli scheme of 1-dimensional stable sheaves with Chern character $(0,0,0,\beta,n)$ (e.g. \cite[pp. 270]{PT2}), whose 
fiber over $[F]$ is $\mathbb{P}(H^0(X,F))$. Note that $M_{n,\beta}(X)$ is in general a stack instead of scheme when $\beta$ is arbitrary. 
The virtual dimension of $M_{n,\beta}(X)$ satisfies
\begin{equation}\mathrm{vir. dim}_{\mathbb{R}}(M_{n,\beta}(X))=2, \nonumber \end{equation}
by \cite{BJ, CMT}.
One could use the virtual class to define invariants.

For integral classes $\gamma_i \in H^{m_i}(X, \mathbb{Z})$, $1\leqslant i\leqslant l$, let 
\begin{align*}
\tau \colon H^{m}(X)\to H^{m-2}(M_{n,\beta}(X)), \
\tau(\gamma)=\pi_{P\ast}(\pi_X^{\ast}\gamma \cup\ch_3(\mathbb{F}) ),
\end{align*}
where $\pi_X$, $\pi_M$ are projections from $X \times M_{n,\beta}(X)$
to corresponding factors, $\mathbb{F}\to X\times M_{n,\beta}(X)$ is the universal sheaf, and $\ch_3(\mathbb{F})$ is the
Poincar\'e dual to the fundamental cycle of $\mathbb{F}$.

Then we define $\DT_4$ invariant 
\begin{align*}\DT_4(n,\beta\textrm{ }|\textrm{ } \gamma_1,\ldots,\gamma_l):=\int_{[M_{n,\beta}(X)]^{\rm{vir}}} \prod_{i=1}^{l}\tau(\gamma_i). \end{align*}
We propose the following conjecture.
\begin{conj}\label{conj appendix}
For an irreducible class $\beta\in H_2(X,\mathbb{Z})$, the invariants 
\begin{align*}
\DT_4(n,\beta\textrm{ }|\textrm{ } \gamma_1,\ldots,\gamma_l)  \end{align*}
are independent of the choice of $n$ for certain choices of orientation in defining them.
\end{conj}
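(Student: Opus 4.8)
The plan is to prove the independence of $\DT_4(n,\beta \mid \gamma_1, \ldots, \gamma_l)$ in $n$ by exploiting the morphism $\phi_n \colon P_n(X,\beta) \to M_{n,\beta}(X)$ together with the structure of its fibers, following the philosophy of the virtual push-forward arguments already developed in the paper (Propositions \ref{prop on pair obs on CY3}, \ref{prop on pair obs on Fano3}). First I would observe that since $\beta$ is irreducible, any stable sheaf $F$ with $[F] = \beta$ is actually stable, so for each $[F] \in M_{n,\beta}(X)$ the fiber of $\phi_n$ is $\mathbb{P}(H^0(X,F)) = \mathbb{P}^{n-1+k}$ where $k = h^1(X,F)$; when $n$ is large enough that $H^1(X,F) = 0$ for all $F$ (which holds for $n \gg 0$ by boundedness and Serre vanishing after twisting, or more precisely once $n$ exceeds the reducedness bound for the family), the fibers are honest projective spaces $\mathbb{P}^{n-1}$ and $P_n(X,\beta)$ is a projective bundle $\mathbb{P}(\pi_{M*}\mathbb{F})$ over $M_{n,\beta}(X)$.

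The key technical step is to compare the $(-2)$-shifted symplectic structures (equivalently the $\DT_4$-obstruction theories) on $P_n(X,\beta)$ and on $M_{n,\beta}(X)$ under $\phi_n$. Using the distinguished triangle $I \to \oO_X \to F$ and applying $\RHom_X(-,F)$ together with the triangle $F \to I[1] \to \oO_X[1]$, exactly as in the proof of the Lemma on the elliptic fibration (equations (\ref{ell fib 1}), (\ref{ell fib 2})) and as in Proposition \ref{prop on pair obs on CY3}, one obtains an identification of the trace-free obstruction spaces $\Ext^2_X(I,I)_0 \cong \Ext^2_X(F,F)$ when $H^1(X,F) = H^2(X,F) = 0$, and a relative perfect obstruction theory for $\phi_n$ whose fibers are the tangent/obstruction complexes of $\mathbb{P}(H^0(X,F))$, which are honestly perfect (in fact smooth) when the fibers are projective spaces. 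One then wants a Manolache-type virtual push-forward formula $(\phi_n)_*[P_n(X,\beta)]^{\mathrm{vir}} = c_n \cdot [M_{n,\beta}(X)]^{\mathrm{vir}}$ in the Borisov--Joyce setting; granting such a formula (or working with the smooth locus / using the projective bundle structure directly), the relative contribution $c_n$ is computed by integrating the Euler class of a tautological bundle over the fiber $\mathbb{P}^{n-1}$. Pulling back insertions via $\phi_n^*$ (noting $\tau(\gamma)$ on $P_n$ agrees with the pullback of $\tau(\gamma)$ on $M_{n,\beta}(X)$ up to the tautological class, since $\ch_3(\mathbb{F})$ differs from $\ch_3$ of the universal pair sheaf only in ways supported on lower-dimensional strata), one reduces $\DT_4(n,\beta \mid \gamma_1,\ldots,\gamma_l)$ to $\int_{[M_{n,\beta}(X)]^{\mathrm{vir}}} \prod \tau(\gamma_i)$, which is manifestly independent of $n$ because the scheme $M_{n,\beta}(X)$ together with its obstruction theory is canonically identified for all $n$ (tensoring by a fixed line bundle, or simply because for irreducible $\beta$ the sheaf $F$ determines its support curve $C$ and $\chi(\oO_C)$ is fixed, so $M_{n,\beta}(X) \cong M_{n',\beta}(X)$ canonically via twisting).

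For small $n$, where $H^1(X,F)$ may jump, I would argue that $\DT_4(n,\beta \mid \vec\gamma) = \DT_4(n+1,\beta \mid \vec\gamma)$ by a wall-crossing / parameter-stability argument in the spirit of Pandharipande--Thomas's DT/PT comparison: either interpolate through moduli of $\delta$-stable pairs, or use that the difference of the two sides is a sum over strata where $F$ acquires extra sections, each contributing via an auxiliary moduli space whose virtual dimension is too small (here the $(-2)$-shift forces vanishing in many cases because odd-rank obstruction bundles have vanishing half-Euler class, cf. the $\DT_4$ formalism reviewed in Section 0). Concretely, I expect one can set up the comparison so that the correction terms are integrals over fibered products involving $\Ext^1$-bundles of odd rank, killed by the half-Euler-class vanishing $e(Ob,Q) = 0$ for $\rk(Ob)$ odd.

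The hard part will be establishing the virtual push-forward formula in the Borisov--Joyce/D-manifold framework, since Manolache's theorem is stated for classical perfect obstruction theories and no such formalism is currently available for $(-2)$-shifted symplectic derived schemes (the same gap the authors flag for cosection localization in the hyperkähler case). A cleaner route, which I would actually pursue first, is to avoid push-forward entirely for $n \gg 0$: since $P_n(X,\beta) \cong \mathbb{P}(\pi_{M*}\mathbb{F})$ is a Zariski-locally-trivial $\mathbb{P}^{n-1}$-bundle over $M_{n,\beta}(X)$ and the obstruction theory is pulled back along this smooth bundle map (with the fiber directions contributing only to the \emph{tangent}, not obstruction, because $\Ext^{\geq 2}$ of $\mathbb{P}^{n-1}$-deformations vanishes), one has $[P_n(X,\beta)]^{\mathrm{vir}} = \phi_n^*[M_{n,\beta}(X)]^{\mathrm{vir}} \cap [\text{fiber relative fundamental class}]$ directly, and then the projection formula plus $\int_{\mathbb{P}^{n-1}} (\text{insertion restricted to fiber}) = \int_{\mathbb{P}^{n-1}} h^{n-1} = 1$ finishes it; the remaining genuine work is then only the stabilization $\DT_4(n,\beta \mid \vec\gamma) = \DT_4(n+1,\beta \mid \vec\gamma)$ for all $n$, which as noted should follow from a half-Euler-class vanishing on the correction strata but would need to be checked carefully.
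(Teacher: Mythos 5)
First, note that the statement you are trying to prove is stated in the paper as a \emph{conjecture}: the authors give no proof, only the remark that it can be checked in the compact examples of the paper and that, for $X=Y\times E$ with $\beta$ coming from the CY 3-fold $Y$, it reduces to a special case of the Joyce--Song/Toda multiple cover formula, which itself is only known for primitive classes. So any complete argument here would be new mathematics, and your proposal does not supply it.

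The central gap is the step you call ``manifest'': the claim that $M_{n,\beta}(X)\cong M_{n',\beta}(X)$ canonically, with matching obstruction theories and insertions, ``via twisting'' or because ``$\chi(\oO_C)$ is fixed.'' This is false in general. For an irreducible class $\beta$ there is typically no line bundle on $X$ restricting to degree one on every curve in class $\beta$, so there is no global twist identifying the moduli spaces; moreover $\chi(\oO_C)$ is not determined by $\beta$ (curves of different arithmetic genus can lie in the same irreducible homology class), so the sheaves parametrized for different $n$ are genuinely different and the spaces $M_{n,\beta}(X)$ need not be isomorphic. Indeed, the whole content of the conjecture --- visible already in its CY 3-fold shadow, where independence of $n$ for one-dimensional sheaf counts is exactly the multiple cover formula $N_{n,\beta}=N_{1,\beta}$ for irreducible $\beta$, proved only for primitive classes by a nontrivial Jacobian-localization argument --- lives precisely in this step, which you assert rather than prove. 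A second, structural problem is that your elaborate push-forward machinery through $P_n(X,\beta)$ is beside the point: $\DT_4(n,\beta\mid\gamma_1,\ldots,\gamma_l)$ is \emph{defined} as $\int_{[M_{n,\beta}(X)]^{\rm vir}}\prod_i\tau(\gamma_i)$, so reducing to that integral accomplishes nothing, and the actual difficulty --- comparing these sheaf-theoretic integrals for different $n$, including the matching of orientations across distinct moduli spaces --- is deferred to a ``stabilization'' step for which you offer only the hope that correction strata are killed by odd-rank half-Euler classes, with no identification of those strata, no wall-crossing framework in the Borisov--Joyce setting (which, as the paper itself notes for cosection localization, does not yet exist), and no argument.
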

In all compact examples studied in this paper, one can check Conjecture \ref{conj appendix} holds in these cases. In particular, when $X=Y\times E$ 
is the product of a CY 3-fold $Y$ with an elliptic curve $E$ and the irreducible class $\beta\in H_2(Y,\mathbb{Z})\subseteq H_2(X,\mathbb{Z})$ sits
inside $Y$, then Conjecture \ref{conj appendix} reduces to a special case of the multiple cover formula (\cite[Conjecture 6.20]{JS}, \cite[Conjecture 6.3]{Toda2}):
\begin{align*}N_{n, \beta}=\sum_{k\geqslant 1, k|(n, \beta)}
\frac{1}{k^2} N_{1, \beta/k},
\end{align*}
for any $\beta$ in a CY 3-fold $Y$, where $N_{n, \beta} \in \mathbb{Q}$ is the generalized DT invariant \cite{JS}
which counts one dimensional semistable sheaves $E$ on $Y$ with $[E]=\beta$, $\chi(E)=n$. 
The above formula is proved when $\beta$ is primitive in \cite[Lemma~2.12]{Toda3} (see also \cite[Appendix A.]{CMT}).

It is an interesting question to define `generalized $\DT_4$ type invariant' counting semistable sheaves on CY 4-folds and 
search for similar multiple cover formula on CY 4-folds.

\subsection{An orientability result for moduli spaces of stable pairs on CY 4-folds}
${}$ \\
Let $X$ be a smooth projective CY 4-fold and $c\in H^{\mathrm{even}}(X)$. 
For a moduli stack $M_c$ of coherent sheaves on $X$ with Chern character $c$, 
we define
\begin{equation}\mathcal{L}:=det(\dR (p_{M})_\ast \dR \hH om(\mathbb{F},\mathbb{F}))  \nonumber \end{equation}
to be the determinant line bundle, where $\mathbb{F}\to M_c\times X$ is the universal sheaf of $M_c$ and $p_M:M_c\times X\to M_c$ is the projection.
By Serre duality, we have a non-degenerate pairing
\begin{equation}Q: \mathcal{L}\times \mathcal{L}\to \oO_{M_c}, \nonumber \end{equation}
which defines a $O(1,\mathbb{C})$-structure on $\lL$. The quadratic line bundle $(\lL,Q)$ is called \textit{orientable} if its structure group can be reduced to $SO(1,\mathbb{C})=\{1\}$. An orientability result is recently proved on arbitrary CY 4-folds \cite{CGJ}, where the proof uses involved tools like semi-topological $K$-theory.
To be self-contained, we include here a simpler proof of an orientability result for CY 4-folds with technical assumptions $\mathrm{Hol}(X)=SU(4)$ and $H^{\mathrm{odd}}(X,\mathbb{Z})=0$.

\begin{lem}\label{ori existence for moduli stack}
Let $X$ be a CY 4-fold with $\mathrm{Hol}(X)=SU(4)$ and $H^{\mathrm{odd}}(X,\mathbb{Z})=0$. 
Let $M_c$ be a finite type open substack of the moduli stack of coherent sheaves with Chern character $c\in H^{\mathrm{even}}(X)$.
Then the quadratic line bundle $(\mathcal{L},Q)$ is orientable.
\end{lem}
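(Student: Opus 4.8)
The plan is to reduce the orientability of $(\mathcal{L}, Q)$ on the moduli stack $M_c$ to a statement about the existence of a certain square root of a line bundle, and then to exploit the cohomological hypotheses $\mathrm{Hol}(X)=SU(4)$ and $H^{\mathrm{odd}}(X,\mathbb{Z})=0$. First I would recall that an orientation of $(\mathcal{L},Q)$ is precisely a choice of square root of the isomorphism $Q\colon \mathcal{L}\otimes\mathcal{L}\to\mathcal{O}_{M_c}$; equivalently, it is a trivialization of the $\mathbb{Z}_2$-gerbe (or a reduction of structure group from $O(1,\mathbb{C})$ to $\{1\}$) whose obstruction class lives in $H^1(M_c,\mathbb{Z}_2)$. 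So the task is to show that this obstruction class vanishes, i.e. that $\mathcal{L}$ admits a square root compatible with $Q$. Since $M_c$ is a finite type open substack, one may work with an atlas and argue locally-to-globally, or — following the approach of \cite{CL2} — reduce to the analytic/topological category and use the fact that the relevant classifying space argument only depends on the homotopy type of the mapping stack $\Map(X, BU\times\mathbb{Z})$.

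The key technical input, as in \cite[Theorem 2.2]{CL2}, is the identification of the determinant line bundle $\mathcal{L}$ with (the square of) a more primitive object. Concretely, by Serre duality on the CY 4-fold, $\dR(p_M)_*\dR\hH om(\mathbb{F},\mathbb{F})$ is self-dual, so its determinant $\mathcal{L}$ is, up to a sign issue governed by the parity of $\chi$, a tensor square; the real content is to track the compatibility with $Q$ carefully using the Atiyah class and the canonical pairing. The step where the hypotheses enter is the computation of the relevant characteristic classes: with $H^{\mathrm{odd}}(X,\mathbb{Z})=0$ one controls the odd cohomology of products $X\times M_c$ via Künneth, killing potential obstruction terms, and with $\mathrm{Hol}(X)=SU(4)$ one has a nowhere-vanishing holomorphic $4$-form which trivializes $K_X$ and makes the Serre duality pairing canonical (not just up to twist). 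I would then run the argument of \cite{CL2}: express the obstruction to orientability as a class pulled back from the universal situation, and show it vanishes because $H^1$ with $\mathbb{Z}_2$-coefficients of the relevant mapping space is trivial under these hypotheses, or more directly because $\mathcal{L}$ is topologically the square of the determinant of a complex built from $\mathbb{F}$ alone.

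The main obstacle I anticipate is the sign/parity bookkeeping in matching $Q$ with the chosen square root — i.e. proving not merely that $\mathcal{L}$ has \emph{a} square root, but that there is one intertwining the quadratic form, which is the genuine orientation datum. This is exactly the subtlety addressed in \cite{CL2}, and I would handle it by reducing modulo $2$: the difference between two square roots is an element of $H^1(M_c,\mathbb{Z}_2)$ (the orientation torsor), so one only needs existence, and existence follows once the first obstruction in $H^1(M_c,\mathbb{Z}_2)$ is shown to vanish. Under $H^{\mathrm{odd}}(X,\mathbb{Z})=0$ this obstruction is computed to be trivial. A secondary point worth care is that $M_c$ is a stack, not a scheme, so one must phrase everything in terms of the lisse-étale or analytic site and check that the reduction of structure group descends; this is routine but should be stated. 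Having disposed of the orientability for the moduli stack of sheaves, the orientability for the stable pair moduli space $P_n(X,\beta)$ then follows, since by \cite[Theorem 2.7]{PT} it is a union of connected components of the moduli stack of perfect complexes of trivial determinant, to which the same argument (with $\dR\hH om$ replaced by its trace-free part $\dR\hH om(-,-)_0$) applies verbatim.
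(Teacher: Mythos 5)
There is a genuine gap: your proposal never supplies a mechanism that actually applies to a moduli stack of \emph{coherent sheaves}. The paper's proof hinges on three reductions that are absent from your sketch: (i) by Joyce--Song, applying Seidel--Thomas twists identifies $M_c$ (up to 1-isomorphism, compatibly with the universal family, the determinant line bundle and the Serre duality pairing) with a finite type moduli stack of holomorphic vector bundles; (ii) one rigidifies by adding framings at a point $x_0\in X$, so that $M_c\cong [M_c^{\mathrm{framed}}/GL(n,\mathbb{C})]$ with $M_c^{\mathrm{framed}}$ a scheme, reducing the question to $(\mathcal{L},Q)$ on $M_c^{\mathrm{framed}}$; and (iii) viewing bundles as integrable $\overline{\partial}$-connections, one embeds $M_c^{\mathrm{framed}}$ into the space $\widetilde{\mathcal{B}}$ of all framed connections, where $(\mathcal{L},Q)$ is pulled back from the determinant of an index bundle of twisted Dirac operators, whose orientability is exactly \cite[Thm.~1.3]{CL2}. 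The gauge-theoretic input you invoke only exists for connections on a fixed topological bundle, so without step (i) (and the framing/embedding of (ii)--(iii)) it cannot be brought to bear on $M_c$; this is also where the hypotheses enter, through the homotopy-theoretic analysis of the framed gauge group in \cite{CL2}, not through a K\"unneth argument on $X\times M_c$.

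Moreover, the step you present as the key technical input is not available: Serre duality gives only the isomorphism $Q\colon \mathcal{L}\otimes\mathcal{L}\to\oO_{M_c}$, and it does \emph{not} follow that $\mathcal{L}$ is (even topologically) the square of a determinant built from $\mathbb{F}$ alone in a way compatible with $Q$ --- that assertion is essentially the content of the lemma, not an input. Your remark that ``one only needs existence'' because square roots form a torsor also misidentifies the torsor: the set of orientations, when nonempty, is a torsor under $H^0(M_c,\mathbb{Z}_2)$, while existence is precisely the vanishing of the obstruction class in $H^1(M_c,\mathbb{Z}_2)$, and your proposal gives no computation showing that this class vanishes for the stack of sheaves. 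So while the general framing (obstruction in $H^1$ with $\mathbb{Z}_2$-coefficients, reduction to a topological statement about a universal space) is in the right spirit, the argument as written would not close without the sheaves-to-bundles reduction and the appeal to the gauge-theoretic orientability theorem.
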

\begin{proof}
By the work of Joyce-Song \cite[Thm. 5.3]{JS}, the moduli stack $M_c$ is 1-isomorphic to a finite type 
moduli stack of holomorphic vector bundles on $X$ via Seidel-Thomas twists, under which the universal family can be identified (so is the determinant line bundle and 
Serre duality pairing). Thus we may assume $M_c$ to be a moduli stack of (rank $n$) holomorphic bundles without loss of generality.

Fix a base point $x_0\in X$, a \textit{framing} $\phi$ of a vector bundle $E$ is an isomorphism
\begin{equation}\phi: E|_{x_0}\cong \mathbb{C}^n.  \nonumber \end{equation}
There is a natural $GL(n,\mathbb{C})$-action on $\phi$ changing the framing.

Let $M_c^{\mathrm{framed}}$ denote the moduli stack of framed holomorphic bundles with Chern character $c$,
on which $GL(n,\mathbb{C})$ acts by changing framings. 
Note that $M_c^{\mathrm{framed}}$ is a scheme as the stabilizer is trivial and we have a 1-isomorphism 
\begin{equation}[M_c^{\mathrm{framed}}/GL(n,\mathbb{C})]\cong M_c, \quad (E,\phi)\mapsto E,\nonumber \end{equation} 
of Artin stacks. The universal family 
\begin{equation}\mathcal{E}\to M_c^{\mathrm{framed}}\times X \nonumber \end{equation}
descends to the universal sheaf $\mathbb{F}$ of $M_c$. 
Let 
\begin{equation}\mathcal{L}:=det(\dR p_\ast \dR \hH om(\mathcal{E},\mathcal{E}))\nonumber \end{equation} 
be the determinant line bundle of $M_c^{\mathrm{framed}}$, 
where $p: M_c^{\mathrm{framed}}\times X\to M_c^{\mathrm{framed}}$ is the projection. 
One may reduce the orientability problem of $M_c$ to the orientability of ($\mathcal{L},Q)$, where $Q$ is the quadratic form on $\lL$ 
defined by Serre duality. 

We view a holomorphic bundle as an integrable $\overline{\partial}$ connection on its underlying topological bundle. 
Then there is a natural embedding of $M_c^{\mathrm{framed}}$ (with induced complex analytic topology)
\begin{equation}M_c^{\mathrm{framed}}\hookrightarrow \widetilde{\mathcal{B}}_E \nonumber \end{equation}
into the space $\widetilde{\mathcal{B}}:=\mathcal{A}\times_{\mathcal{G}}E_{x_0}$ of framed (not necessarily integrable) connections on the underlying topological bundle $E$. The determinant line bundle $\lL$ on $M_c^{\mathrm{framed}}$ is the pull-back of a line bundle 
$\lL_{\widetilde{\mathcal{B}}}$ on $\widetilde{\mathcal{B}}_E$ defined as the determinant of the index bundle of certain twisted Dirac operators 
and the quadratic form $Q$ on $\lL$ extends to $\lL_{\widetilde{\mathcal{B}}}$ defined using the spin structure of $X$ (see \cite[pp. 50-51]{CL2}).
By \cite[Thm. 1.3]{CL2}, the quadratic line bundle $(\lL_{\widetilde{\mathcal{B}}},Q)$ is orientable. Hence we are done.
\end{proof}
Then orientability for moduli spaces of stable pairs follows from the orientability of moduli stacks of one dimensional 
sheaves.
\begin{thm}\label{ori existence}
Let $X$ be a CY 4-fold with $\mathrm{Hol}(X)=SU(4)$ and $H^{\mathrm{odd}}(X,\mathbb{Z})=0$. Then
for any $\beta\in H_{2}(X,\mathbb{Z})$ and $n\in\mathbb{Z}$, the quadratic line bundle $(\mathcal{L},Q)$ over $P_n(X, \beta)$ is orientable.
\end{thm}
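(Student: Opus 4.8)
The plan is to deduce orientability of the quadratic line bundle $(\mathcal{L},Q)$ over $P_n(X,\beta)$ from the orientability result for moduli stacks of sheaves just proved in Lemma \ref{ori existence for moduli stack}. First I would recall from the proof of Lemma \ref{exist of -2 str} (citing \cite[Theorem 2.7]{PT}) that $P_n(X,\beta)$ is a disjoint union of connected components of the moduli stack $\mathcal{M}$ of perfect complexes of coherent sheaves of trivial determinant on $X$; more precisely, a stable pair $I=(\oO_X\to F)$ is viewed as the two-term complex $I\in D^b(\Coh(X))$, and these complexes all have trivial determinant. Hence it suffices to establish orientability of the determinant line bundle on the relevant open substack of $\mathcal{M}$ — equivalently on a finite-type moduli stack of complexes — and then restrict.

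Next I would explain how to reduce from complexes to honest sheaves. The standard device (as in the Joyce--Song twisting used in Lemma \ref{ori existence for moduli stack}, or directly via a boundedness/Seidel--Thomas-twist argument) is that, after twisting by a sufficiently positive line bundle $\oO_X(m)$, any object in our bounded family of complexes becomes quasi-isomorphic to a two-term complex of the form $[\,\oO_X(-m)^{\oplus a}\to E\,]$ with $E$ a sheaf, or one can resolve so that the relevant $K$-theory class and the universal complex are expressed in terms of a universal \emph{sheaf} on a moduli stack $M_c$ of sheaves with a fixed Chern character $c$ depending on $(\beta,n,m)$. Since the determinant line bundle $\mathcal{L}=\det\big(\dR\hH om_{\pi_P}(\mathbb{I},\mathbb{I})_0\big)$ and its Serre-duality pairing $Q$ are additive in $K$-theory and insensitive to such twists (tensoring $\mathbb{I}$ by a line bundle pulled back from $X$ does not change $\dR\hH om(\mathbb{I},\mathbb{I})$), the quadratic line bundle $(\mathcal{L},Q)$ on $P_n(X,\beta)$ is pulled back, via the natural classifying morphism $P_n(X,\beta)\to M_c$, from the quadratic line bundle on $M_c$ produced in Lemma \ref{ori existence for moduli stack}. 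Orientability is preserved under pullback (an orientation pulls back to an orientation), so we conclude.

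Concretely the steps are: (1) realize $P_n(X,\beta)$ inside the stack of trivial-determinant perfect complexes and reduce orientability to a statement about a finite-type moduli stack; (2) using boundedness, find a fixed twist exhibiting the universal complex in terms of a universal sheaf, hence a morphism $P_n(X,\beta)\to M_c$ for a suitable $c\in H^{\mathrm{even}}(X)$; (3) check that under this morphism $(\mathcal{L},Q)$ is the pullback of the quadratic line bundle of $M_c$ equipped with its Serre-duality form; (4) apply Lemma \ref{ori existence for moduli stack} and pull back the orientation.

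The main obstacle I expect is step (3): making precise the compatibility of the two Serre-duality quadratic forms under the twist/reduction, and checking that passing from complexes to sheaves does not introduce an extra sign ambiguity — i.e. that the identification of $\det\dR\hH om$ for the complex with $\det\dR\hH om$ for the associated sheaf (plus trivial summands coming from $\oO_X(-m)^{\oplus a}$) respects the symmetric pairings on the nose, not just up to an automorphism that could obstruct the reduction of structure group. One should be careful that the auxiliary trivial factors contribute a canonically oriented (even-rank, hyperbolic) piece so that orientability genuinely descends. Once this bookkeeping is handled, the theorem follows immediately from Lemma \ref{ori existence for moduli stack}.
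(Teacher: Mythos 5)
Your overall strategy, namely reducing orientability of $(\mathcal{L},Q)$ on $P_n(X,\beta)$ to the orientability result for moduli stacks of sheaves (Lemma \ref{ori existence for moduli stack}) by exhibiting $(\mathcal{L},Q)$ as a pullback, is the same as the paper's. But the execution has a genuine gap in exactly the two places you flag. First, your step (2) is not actually constructed: the Joyce--Song/Seidel--Thomas twist result you invoke (and which Lemma \ref{ori existence for moduli stack} uses) applies to \emph{coherent sheaves}, not to two-term pair complexes $I=(\oO_X\to F)$, which have two cohomology sheaves ($\ker s$ and a zero-dimensional cokernel); turning the universal complex into a universal sheaf of some class $c$, in families over $P_n(X,\beta)$, is precisely the kind of statement that requires the heavier machinery of \cite{CGJ} for perfect complexes, which this appendix is trying to avoid. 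Second, your step (3) --- identifying $\det\dR\hH om_{\pi_P}(\mathbb{I},\mathbb{I})_0$ with its Serre-duality form as the pullback of the quadratic determinant line bundle on a sheaf moduli stack --- is not an expected bookkeeping difficulty to be deferred: it is the entire content of the theorem, and you leave it unresolved.

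The paper avoids both issues by using a much simpler morphism: the forgetful map $\phi\colon P_n(X,\beta)\to M(0,0,0,\beta,n)$, $(\oO_X\to F)\mapsto F$, to the moduli stack of one-dimensional sheaves with Chern character $(0,0,0,\beta,n)$ (no twisting, no stack of perfect complexes). The determinant comparison is then a short explicit computation: from the triangles
\begin{align*}
\dR\hH om(\mathbb{F},\mathbb{F})\to \dR\hH om(\oO,\mathbb{F})\to \dR\hH om(\mathbb{I},\mathbb{F}), \qquad
\dR\hH om(\mathbb{I},\mathbb{F})\to \dR\hH om(\mathbb{I},\mathbb{I})_0[1]\to \dR\hH om(\mathbb{F},\oO)[2],
\end{align*}
one gets
\begin{align*}
\det\big(\dR\hH om_{\pi_P}(\mathbb{I},\mathbb{I})_0\big)\cong \det\big(\dR\hH om_{\pi_P}(\mathbb{F},\mathbb{F})\big),
\end{align*}
because the two cross terms $\det\dR\hH om_{\pi_P}(\oO,\mathbb{F})$ and $\det\dR\hH om_{\pi_P}(\mathbb{F},\oO)$ are Serre dual and cancel, and this isomorphism identifies the quadratic forms. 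This is close in spirit to your ``resolve the complex'' alternative, but with the tautological resolution $[\oO\to\mathbb{F}]$ itself, so no auxiliary twist $\oO_X(-m)^{\oplus a}$ and no hyperbolic-factor bookkeeping is needed. If you carry out this determinant computation and then apply Lemma \ref{ori existence for moduli stack} to the stack of one-dimensional sheaves, your argument becomes complete; as written, the decisive identification is missing and the proposed route to it is not supported by the cited tools.
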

\begin{proof}
There is a morphism
\begin{equation}\phi:P_n(X, \beta)\rightarrow M(0,0,0,\beta,n),  \nonumber \end{equation}
\begin{equation}\phi(F,s)=F \nonumber \end{equation}
to the moduli stack of 1-dimensional sheaves on $X$ with Chern character $(0,0,0,\beta,n)$.

Parallel to the quadratic line bundle $\big(\mathcal{L}=det(\dR (\pi_{P})_{\ast} \dR \hH om(\mathbb{I}, \mathbb{I})_0),Q\big)$ over $P_n(X, \beta)$, there exists a determinant line bundle
\begin{equation}\mathcal{L}_{M}=det(\dR (\pi_{M})_{\ast} \dR \hH om(\mathbb{F}, \mathbb{F})) \nonumber \end{equation}
with a quadratic form $Q_{M}$ over $M(0,0,0,\beta,n)$, where $\pi_{M}: M(0,0,0,\beta,n)\times X\rightarrow M(0,0,0,\beta,n)$ is the projection,
and we use $\mathbb{F}$ to denote universal sheaf for both $P_n(X, \beta)$ and $M(0,0,0,\beta,n)$.

Via the morphism $\phi$, we have an isomorphism
\begin{equation}\label{equ0}\phi^{*}\mathcal{L}_{M}\cong det(\dR (\pi_P)_{\ast} \dR \hH om(\mathbb{F}, \mathbb{F})),  \end{equation}
where $\pi_P:X\times P_n(X, \beta)\rightarrow P_n(X, \beta)$ is the projection.

Since $\mathbb{I}=(\mathcal{O}_{X\times P_n(X, \beta)}\rightarrow \mathbb{F})$, we have a distinguished triangle
\begin{equation}\dR \hH om(\mathbb{F}, \mathbb{F})\rightarrow \dR \hH om(\mathcal{O}_{X\times P_n(X, \beta)}, \mathbb{F})\rightarrow \dR \hH om(\mathbb{I}, \mathbb{F}),  \nonumber \end{equation}
which gives an isomorphism
\begin{equation}\label{equ1}det(\dR (\pi_P)_{\ast}\dR \hH om(\mathbb{F}, \mathbb{F}))\otimes det(\dR (\pi_P)_{\ast}\dR \hH om(\mathbb{I}, \mathbb{F}))\cong
det(\dR (\pi_P)_{\ast}\dR \hH om(\mathcal{O}_{X\times P_n(X, \beta)}, \mathbb{F}))  \end{equation}
between determinant line bundles.

Similarly, from the distinguished triangle
\begin{equation}\dR \hH om(\mathbb{I}, \mathbb{F})\rightarrow  \dR \hH om(\mathbb{I},\mathbb{I})_{0}[1] \rightarrow\dR \hH om(\mathbb{F},\mathcal{O}_{X\times P_n(X, \beta)})[2],  \nonumber \end{equation}
we have an isomorphism
\begin{equation}\label{equ2}det(\dR (\pi_P)_{\ast}\dR \hH om(\mathbb{I}, \mathbb{F}))\otimes det(\dR (\pi_P)_{\ast}\dR \hH om(\mathbb{F},\mathcal{O}_{X\times P_n(X, \beta)}))\cong \big(det(\dR (\pi_P)_{\ast}\dR \hH om(\mathbb{I}, \mathbb{I})_{0})\big)^{-1}.  \end{equation}
Combining (\ref{equ1}), (\ref{equ2}) and Serre duality, we obtain
\begin{equation}det(\dR (\pi_P)_{\ast}\dR \hH om(\mathbb{I}, \mathbb{I})_{0})\cong det(\dR (\pi_P)_{\ast}\dR \hH om(\mathbb{F}, \mathbb{F})), 
\nonumber \end{equation}
under which the natural quadratic forms on them are identified.

By Lemma \ref{ori existence for moduli stack}, the structure group of quadratic line bundle $(\mathcal{L}_{M},Q_{M})$ can be reduced to $SO(1,\mathbb{C})$, so is $(\mathcal{L},Q)$ via the pull-back (\ref{equ0}).
\end{proof}

\end{document}